\definecolor{cite}{rgb}{0.30,0.60,1.00}
\definecolor{url}{rgb}{0.00,0.00,0.80}
\definecolor{link}{rgb}{0.40,0.10,0.20}
\newtheorem{theorem}{Theorem}[section]
\newtheorem{proposition}[theorem]{Proposition}
\newtheorem{lemma}[theorem]{Lemma}
\newtheorem{corollary}[theorem]{Corollary}
\theoremstyle{definition}
\theoremstyle{definition}
\newtheorem{remark}[theorem]{Remark}
\theoremstyle{definition}
\newtheorem{example}[theorem]{Example}
\newcommand{\zIntegers}{\mathbb{Z}}
\newcommand{\cComplex}{\mathbb{C}}
\newcommand{\multiplicativegroup}[1]{#1^{\times}}
\newcommand{\Hom}{\mathrm{Hom}}
\newcommand{\idmap}{\mathrm{id}}
\newcommand{\conjugate}[1]{\overline{#1}}
\newcommand{\isomorphic}{\cong}
\newcommand{\lengthof}{\mathfrak{n}}
\newcommand{\abs}[1]{\left|#1\right|}
\newcommand{\sizeof}[1]{\left|#1\right|}
\newcommand{\lcm}{\operatorname{lcm}}
\newcommand{\partitionset}{\mathcal{P}}
\newcommand{\standardForm}[2]{\left\langle #1,#2\right\rangle}
\newcommand{\fieldCharacter}{\psi}
\newcommand{\centralCharacter}[1]{\omega_{#1}}
\newcommand{\Ind}[3]{\mathrm{Ind}_{#1}^{#2}\left(#3\right)}
\newcommand{\Whittaker}{\mathcal{W}}
\newcommand{\Contragradient}[1]{#1^{\vee}}
\newcommand{\representationDeclaration}[1]{#1}
\newcommand{\besselFunction}{\mathcal{J}}
\newcommand{\besselFunctionOfFiniteFieldRepresentation}{\besselFunction_{\finiteFieldRepresentation, \fieldCharacter}}
\newcommand{\lquot}[2]{{#1}\backslash{#2}}
\newcommand{\grpIndex}[2]{\left[#1:#2\right]}
\newcommand{\transpose}[1]{\, {}^{t}#1}
\newcommand{\IdentityMatrix}[1]{I_{#1}}
\newcommand{\diag}{\mathrm{diag}}
\newcommand{\trace}{\operatorname{tr}}
\newcommand{\GL}{\mathrm{GL}}
\newcommand{\UnipotentSubgroup}{U}
\newcommand{\FieldNorm}[2]{\mathrm{N}_{#1:#2}}
\newcommand{\FieldTrace}{\mathrm{Tr}}
\newcommand{\finiteFieldRepresentation}{\pi}
\newcommand{\finiteField}{\mathbb{F}}
\newcommand{\finiteFieldExtension}[1]{\finiteField_{#1}}
\newcommand{\FieldExtension}[2]{{#1} \slash {#2}}
\newcommand{\algebraicClosure}[1]{\overline{#1}}
\newcommand{\charactergroup}[1]{\widehat{\multiplicativegroup{\finiteFieldExtension{#1}}}}
\newcommand{\limitcharactergroup}{\Gamma}
\newcommand{\Galois}{\operatorname{Gal}}
\newcommand{\Frobenius}{\operatorname{Fr}}
\newcommand{\restrictionOfScalars}[3]{\operatorname{Res}_{#1 \slash #2}{#3}}
\newcommand{\multiplcativeScheme}{\mathbb{G}_m}
\newcommand{\affineLine}{\mathbb{A}^1}
\newcommand{\ladicnumbers}{\algebraicClosure{\mathbb{Q}_{\ell}}}
\newcommand{\artinScrier}{\operatorname{AS}_\fieldCharacter}
\newcommand{\convolutionWithCompactSupport}{\boldsymbol{\mathrm{R}}}
\newcommand{\etaleNorm}{\mathrm{N}}
\newcommand{\squareMatrix}{\operatorname{Mat}}
\newcommand{\leeYang}{\operatorname{LY}}
\newcommand{\frobeniusDegree}{\operatorname{deg}}
\newcommand{\Steinberg}{\operatorname{St}}
\title[Bessel function values for generic representations of $\GL_n$]{On values of the Bessel function for generic representations of finite general linear groups}
\author{Elad Zelingher}
\address{Department of Mathematics, University of Michigan, 1844 East Hall, 530 Church Street, Ann Arbor, MI 48109-1043 USA}
\email{eladz@umich.edu}
\keywords{Kloosterman sums, Bessel functions, Kloosterman sheaves}
\subjclass[2010]{20C33, 11L05, 11T24}
\begin{document}

\begin{abstract}
	We find a recursive expression for the Bessel function of S. I. Gelfand for irreducible generic representations of $\GL_n\left(\finiteField_q\right)$. We show that special values of the Bessel function can be realized as the coefficients of $L$-functions associated with exotic Kloosterman sums, and as traces of exterior powers of Katz's exotic Kloosterman sheaves. As an application, we show that certain polynomials, having special values of the Bessel function as their coefficients, have all of their roots lying on the unit circle. As another application, we show that special values of the Bessel function of the Shintani base change of an irreducible generic representation are related to special values of the Bessel function of the representation through Dickson polynomials.
\end{abstract}

\maketitle

\section{Introduction}\label{sec:introduction}

Let $\finiteField$ be a finite field with $q$ elements\footnote{In \cite{Gelfand70} it is assumed that $q \ne 2$, but our results are also true when $q = 2$.}, let $\fieldCharacter \colon \finiteField \rightarrow \multiplicativegroup{\cComplex}$ be a non-trivial additive character. For an irreducible generic representation $\finiteFieldRepresentation$ of $\GL_n\left(\finiteField\right)$, S. I. Gelfand defined its Bessel function $\besselFunctionOfFiniteFieldRepresentation$ in \cite[Section 4]{Gelfand70}, and gave a formula $$ \besselFunctionOfFiniteFieldRepresentation\left(g\right) = \frac{1}{\sizeof{\UnipotentSubgroup_n}} \sum_{u \in \UnipotentSubgroup_n}{\fieldCharacter^{-1}\left(u\right) \trace \left(\finiteFieldRepresentation \left(g u\right)\right) },$$
where $\UnipotentSubgroup_n$ is the standard unipotent subgroup of $\GL_n\left(\finiteField\right)$. S. I. Gelfand also described the support of the Bessel function. Computation of values of the Bessel function on its support using this formula is in general a hard task. Previous works include computations of the Bessel function for cuspidal representations of $\GL_2$ \cite{carter1992cuspidal}, generic representations of $\GL_2$ \cite{gel1962categories,piatetskishapirobook}, cuspidal representations of $\GL_3$ \cite{Gelfand70,carter1992cuspidal}, generic representations of $\GL_3$ \cite{helversen1982darstellungen}, cuspidal representations of $\GL_4$ \cite{gotsisphd1997, deriziotis1998cuspidal}, and generic representations of $\GL_4$ \cite{shinoda2005representations}. These computations are involved and it is a challenge to generalize them for larger $n$. A formula for the values of the Bessel function for $\left(n-1,1\right)$ anti-diagonal scalar block matrices for cuspidal representations of $\GL_n$ \cite{tulunay2004cuspidal, Nien17} and for generic representations of $\GL_n$ \cite{curtis2004zeta} is known. We also mention the recent preprint of Qu \cite{qu2022explicit} where he found a recursive formula for values of the Bessel function in the open Bruhat cell for generic principal series representations.

Bessel functions in their many different forms are central objects in representation theory of reductive groups. They have been used for many applications in the theory of automorphic forms, including the proof of functoriality by Cogdell--Kim--Piatetski-Shapiro--Shahidi \cite{cogdell2004functoriality}. Bessel distributions seem to have an important role in Langlands' proposal of ``Beyond Endoscopy'', see \cite{sakellaridis2022transfer}.

The Bessel function of S. I. Gelfand is closely related to gamma factors of the representations, see \cite{Roditty10,Nien14} for its relation to the Rankin--Selberg gamma factors, and \cite{YeZeligher18} for its relation to the exterior square gamma factors. The Bessel function served as a key ingredient in Nien's proof of Jacquet's conjecture \cite{Nien14} for finite fields. The ideas of Nien were later used by Chai for the proof of Jacquet's conjecture over $p$-adic fields \cite{chai2019bessel}, where the key ingredient is an analog of the Bessel function.

In a recent work \cite{ye2021epsilon}, Rongqing Ye and the author were able to express the Rankin--Selberg gamma factors of cuspidal irreducible representations explicitly as products of Gauss sums involving the multiplicative characters parameterizing the representations. In this paper, we are able to use the aforementioned result in order to find a recursive expression for the Bessel function, in terms of partitions of the sizes of the relevant anti-diagonal scalar block matrices. Our technique avoids computations of the trace characters of the representations, which avoids Green polynomials \cite{Green55} and conjugacy classes difficulties.

We are able to find a relation between special values of the Bessel function and Katz's exotic Kloosterman sums and sheaves. Kloosterman sheaves were introduced by Deligne in \cite{deligne569cohomologie} and studied extensively by Katz \cite{katz2016gauss}. They are used in order to bound exponential sums arising in analytic number theory, and have important applications \cite{castryck2014new, kowalski2017bilinear, kowalski2018stratification}. Our computation relates values of the Bessel function of the form $\besselFunctionOfFiniteFieldRepresentation\left( \begin{smallmatrix}
& \IdentityMatrix{n-m}\\ c \IdentityMatrix{m} \end{smallmatrix} \right)$ to the $L$-function attached to an exotic Kloosterman sum and to the Frobenius action on the $m$-th exterior power of a geometric stalk of a certain exotic Kloosterman sheaf. This is a surprising relation between representation theory of finite groups of Lie type and Kloosterman sums and sheaves. Our results can be stated as follows.

Let $\tau_1, \dots, \tau_r$ be irreducible cuspidal representations of $\GL_{n_1}\left(\finiteField\right), \dots, \GL_{n_r}\left(\finiteField\right)$, respectively, where $n_1 + \dots + n_r = n$. For every $1 \le i \le r$, let  $\alpha_i \colon \multiplicativegroup{\finiteFieldExtension{n_i}} \rightarrow \multiplicativegroup{\cComplex}$ be a regular multiplicative character associated with $\tau_i$, where $\finiteFieldExtension{n_i} \slash \finiteField$ is a field extension of degree $n_i$. Consider the representation $\pi$ of $\GL_n\left(\finiteField\right)$ defined as the unique irreducible generic subrepresentation of the parabolically induced representation $\tau_1 \circ \dots \circ \tau_r$. Any irreducible generic representation can be realized in this way.

Our first theorem is a generalization of the results of Curtis and Shinoda in \cite{curtis1999unitary}.
\begin{theorem}\label{thm:introduction-L-function}
	For any $a \in \multiplicativegroup{\finiteField}$, let $J_{m}\left(\alpha^{-1}, \fieldCharacter, a\right)$ be the following exotic Kloosterman sum of Katz.
	$$J_{m}\left(\alpha^{-1}, \fieldCharacter, a\right) = \sum_{\substack{x_1 \in \multiplicativegroup{(\finiteFieldExtension{n_1} \otimes_{\finiteField} \finiteFieldExtension{m})}, \dots, x_r \in \multiplicativegroup{(\finiteFieldExtension{n_r} \otimes_{\finiteField} \finiteFieldExtension{m} )}\\
	\prod_{i=1}^r \etaleNorm_{\finiteFieldExtension{n_i} \otimes \finiteFieldExtension{m} \slash \finiteFieldExtension{m}}\left(x\right) = a}} \left(\prod_{i=1}^r \alpha_i^{-1} \left(\etaleNorm_{\finiteFieldExtension{n_i} \otimes \finiteFieldExtension{m} \slash \finiteFieldExtension{n_i}}\left(x_i\right)\right) \right) \fieldCharacter\left(\sum_{i=1}^r \trace\left(x_i\right) \right).$$
Consider the $L$-function associated with this Kloosterman sum
$$L\left(T, J\left(\alpha^{-1}, \fieldCharacter, a\right)\right) = \exp \left(\sum_{m=1}^{\infty} J_{m}\left(\alpha^{-1}, \fieldCharacter, a\right) \frac{T^m}{m}\right)$$
and its normalized version $$L^{\ast}\left(T, J\left(\alpha^{-1}, \fieldCharacter, a\right) \right) = L\left(\left(-1\right)^r q^{-\frac{\left(n-1\right)}{2}} T, J\left(\alpha^{-1}, \fieldCharacter, a\right)\right)^{\left(-1\right)^n}.$$ Then
$$L^{\ast}\left(T, J\left(\alpha^{-1}, \fieldCharacter, \left(-1\right)^{n-1} a^{-1}\right)\right) = \sum_{m=0}^n  q^{\frac{m\left(n-m\right)}{2}} \besselFunction_{\pi, \fieldCharacter} \begin{pmatrix}
& \IdentityMatrix{n - m}\\
a \IdentityMatrix{m}
\end{pmatrix} T^m.$$
\end{theorem}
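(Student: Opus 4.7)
The strategy is to prove the identity of polynomials in $T$ by comparing both sides coefficient-by-coefficient after applying a multiplicative Mellin transform in the parameter $a$. For each character $\chi \colon \multiplicativegroup{\finiteField} \to \multiplicativegroup{\cComplex}$ and each $0 \le m \le n$, I would show that $\sum_{a \in \multiplicativegroup{\finiteField}} \chi(a)$ applied to the coefficient of $T^m$ agrees on the two sides. Both Mellin transforms should equal the same product of Gauss sums associated with the characters $\alpha_i \cdot (\chi \circ \etaleNorm_{\finiteFieldExtension{n_i}/\finiteField})$ and the additive characters $\fieldCharacter \circ \FieldTrace_{\finiteFieldExtension{n_i}/\finiteField}$. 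Since multiplicative characters form a basis for functions on $\multiplicativegroup{\finiteField}$, Mellin inversion would then give the desired identity at every $a$.

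For the left-hand side, Katz's theory of exotic Kloosterman sheaves \cite{katz2016gauss} realizes $J_m(\alpha^{-1}, \fieldCharacter, a)$ (up to sign) as the trace of the $m$-th Frobenius power on the geometric stalk at $a$ of an explicit rank-$n$ $\ell$-adic sheaf on $\multiplcativeScheme$, obtained as a multiplicative convolution of Artin-Schreier and Kummer sheaves along the norm maps $\etaleNorm_{\finiteFieldExtension{n_i}/\finiteField}$. The coefficients of $L^{\ast}(T, J(\alpha^{-1}, \fieldCharacter, \cdot))$ are then, after the indicated sign and weight normalizations, traces of Frobenius on exterior powers of that stalk. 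Summing over $a$ with weight $\chi(a)$ reduces, by the Grothendieck-Lefschetz trace formula and a Hasse-Davenport orbit computation, to the claimed product of Gauss sums. For the right-hand side, the Bessel function is the kernel of the Rankin-Selberg functional equation, so a suitable sum $\sum_a \chi(a) \besselFunction_{\pi, \fieldCharacter}$ along anti-diagonal matrices of the relevant shape recovers $\gamma(\pi \times \chi, \fieldCharacter)$ (compare \cite{Roditty10, Nien14}). The Ye--Zelingher formula \cite{ye2021epsilon} identifies this gamma factor with the very same product of Gauss sums that appears on the left.

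The main obstacle will be bookkeeping the normalizations precisely. This includes the factor $q^{m(n-m)/2}$, which must match the Deligne purity weight $q^{(n-1)/2}$ applied $m$ times with appropriate Tate shifts; the sign $(-1)^r$ and the exponent $(-1)^n$ appearing in the definition of $L^{\ast}$, arising from the parity of the sheaf rank and of the cohomological degree; and the twist $a \mapsto (-1)^{n-1} a^{-1}$, which should correspond to the $w_0 \cdot (-)^{-t}$ involution built into the Rankin-Selberg functional equation. A further delicate point is that some of the twisted characters $\alpha_i \cdot (\chi \circ \etaleNorm_{\finiteFieldExtension{n_i}/\finiteField})$ may become trivial, producing degenerate Gauss sums; these degenerate contributions correspond to the boundary coefficients at $T^0$ and $T^n$ and can be handled separately by a direct computation (indeed the $T^0$ and $T^n$ values of both sides are essentially central characters, which simplifies the matching there). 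Once all normalizations are matched character-by-character, Mellin inversion yields the stated identity for every $a$.
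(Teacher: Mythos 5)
Your Mellin-inversion strategy is sound as a framework (the transform over $a \in \multiplicativegroup{\finiteField}$ is a bijection on functions of $a$, so no information is lost), but the central claim for the right-hand side does not hold for $m > 1$, and this is where the proof collapses. You assert that
$$\sum_{a \in \multiplicativegroup{\finiteField}} \chi\left(a\right) \besselFunction_{\pi, \fieldCharacter}\begin{pmatrix} & \IdentityMatrix{n-m} \\ a \IdentityMatrix{m} \end{pmatrix}$$
recovers $\gamma\left(\pi \times \chi, \fieldCharacter\right)$. That is true only when $m = 1$. For $m \ge 2$ the Rankin--Selberg gamma factor $\gamma\left(\pi \times \sigma, \fieldCharacter\right)$ for $\sigma$ a representation of $\GL_m\left(\finiteField\right)$ requires summing $\besselFunction_{\pi, \fieldCharacter}\left(\begin{smallmatrix} & \IdentityMatrix{n-m} \\ g \end{smallmatrix}\right)$ against $\besselFunction_{\sigma, \fieldCharacter^{-1}}\left(g\right)$ over all of $\lquot{\UnipotentSubgroup_m}{\GL_m\left(\finiteField\right)}$, not just over the one-dimensional family $g = a \IdentityMatrix{m}$. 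Conversely, the Bessel value at $a \IdentityMatrix{m}$ unpacks (via the paper's \Cref{thm:recursive-bessel-using-gamma-factors}) into a sum over \emph{all} irreducible generic $\sigma$ of $\GL_m\left(\finiteField\right)$, weighted by $\dim \sigma \cdot \gamma\left(\pi \times \Contragradient{\sigma}, \fieldCharacter\right)$ and the central character $\centralCharacter{\sigma}\left(a\right)$. Your Mellin transform over $\chi \in \widehat{\multiplicativegroup{\finiteField}}$ only separates those $\sigma$ according to their central character; it cannot distinguish the many generic $\sigma$ of $\GL_m\left(\finiteField\right)$ sharing a given $\centralCharacter{\sigma}$. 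The resolution requires an inversion over a much larger dual set.

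The paper's proof supplies exactly this missing inversion: \Cref{prop:orthogonality-for-m-equals-n} (orthogonality of Bessel functions over all irreducible generic $\sigma$ of $\GL_m$) is used to invert the linear system given by the gamma-factor formulas, yielding \Cref{thm:recursive-bessel-using-gamma-factors}. This is then re-expressed (\Cref{prop:bessel-function-recursive-using-characters}, \Cref{cor:recursive-formula-for-bessel-with-epsilon}) as a sum over partitions $\mu \vdash m$ and tuples of multiplicative characters $\beta \in \charactergroup{\mu}$ — a genuinely richer dual family than $\widehat{\multiplicativegroup{\finiteField}}$ — and then the Ye--Zelingher Gauss-sum expression for $\epsilon_0$-factors converts this into the étale Kloosterman sum after a finite Fourier inversion over $\beta$ in each $\charactergroup{m_j}$. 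Finally, note that the paper's proof of this $L$-function identity deliberately avoids Katz's Kloosterman sheaf theory (as remarked after \Cref{thm:introduction-exterior-power}); your left-hand-side analysis via the Grothendieck--Lefschetz trace formula on exterior powers $\wedge^m \mathcal{K} \otimes \mathcal{L}_\chi$ would, even if made precise, be substantially harder for $m > 1$ than the elementary combinatorial expansion of $\exp$ and the multiplicativity of $\epsilon_0$-factors that the paper actually uses.
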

Our second theorem relates special values to Katz's exotic Kloosterman sheaves.
\begin{theorem}\label{thm:introduction-exterior-power}
	Let $\ell$ be a prime different than the characteristic of $\finiteField$. Fix an embedding $\ladicnumbers \hookrightarrow \cComplex$. Consider the following exotic Kloosterman sheaf of Katz $$\mathcal{K} = \operatorname{Kl}\left(\finiteFieldExtension{n_1} \times \dots \times \finiteFieldExtension{n_r}, \alpha^{-1}, \fieldCharacter\right) = \convolutionWithCompactSupport \mathrm{Norm}_{!} \left( \mathrm{Trace}^{\ast} \artinScrier \otimes \mathcal{L}_{\boxtimes_{i=1}^r \alpha_i^{-1}} \right) \left[n - 1\right],$$ associated with the diagram $$ \xymatrix{ & \restrictionOfScalars{(\finiteFieldExtension{n_1} \times \dots \times \finiteFieldExtension{n_r}) }{\finiteField}{\multiplcativeScheme} \ar[ld]_{ \mathrm{Norm} }  \ar[rd]^{ \mathrm{Trace} } \\
		\multiplcativeScheme & & \affineLine }.$$ Then for any $0 \le m \le n$, and any $c \in \multiplicativegroup{\finiteField},$
	$$ \left(\left(-1\right)^{r-1} q^{-\frac{\left(n-1\right)}{2}}\right)^m \trace \left( \Frobenius_{ \left(-1\right)^{n-1} c^{-1} } \mid \wedge^m \mathcal{K}_{ \left(-1\right)^{n-1} c^{-1} } \right) = q^{\frac{m \left(n-m\right)}{2}} \besselFunction_{\pi, \fieldCharacter}\begin{pmatrix}
	& \IdentityMatrix{n - m}\\
	c \IdentityMatrix{m}
	\end{pmatrix},$$
	where $\Frobenius_{ \left(-1\right)^{n-1} c^{-1} } \mid \wedge^m\mathcal{K}_{ \left(-1\right)^{n-1} c^{-1} }$ is the action of the geometric Frobenius at $\left(-1\right)^{n-1} c^{-1}$ acting on the $m$-th exterior power of the stalk of $\mathcal{K}$.
\end{theorem}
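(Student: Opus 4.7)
The plan is to combine Theorem~\ref{thm:introduction-L-function} with the sheaf-theoretic interpretation of $L(T, J(\alpha^{-1}, \fieldCharacter, a))$ furnished by Katz's construction of $\mathcal{K}$. By Katz's theory, $\mathcal{K}$ is a lisse $\ladicnumbers$-sheaf on $\multiplcativeScheme$ of rank $n$: the shift $[n-1]$ places it in a single cohomological degree, and the rank matches the degree of the norm map on $\multiplcativeScheme$-points, namely $n_1 + \cdots + n_r = n$. Accordingly, for every $c \in \multiplicativegroup{\finiteField}$, setting $a = (-1)^{n-1} c^{-1}$, the stalk $\mathcal{K}_a$ is an $n$-dimensional $\ladicnumbers$-vector space carrying the geometric Frobenius $\Frobenius_a$.

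The key step is to establish
$$L\bigl(T, J(\alpha^{-1}, \fieldCharacter, a)\bigr) = \det\bigl(1 - T \Frobenius_a \mid \mathcal{K}_a\bigr)^{(-1)^n}.$$
By the Grothendieck--Lefschetz trace formula applied to the defining complex $\convolutionWithCompactSupport \etaleNorm_!\bigl(\mathrm{Trace}^{\ast} \artinScrier \otimes \mathcal{L}_{\boxtimes \alpha^{-1}}\bigr)[n-1]$, together with proper base change, $\trace(\Frobenius_a^m \mid \mathcal{K}_a)$ equals $(-1)^{n-1}$ (the contribution of the shift) times the sum over $(x_1, \dots, x_r) \in \prod_i \multiplicativegroup{(\finiteFieldExtension{n_i} \otimes_{\finiteField} \finiteFieldExtension{m})}$ satisfying $\prod_i \etaleNorm_{\finiteFieldExtension{n_i} \otimes \finiteFieldExtension{m}/\finiteFieldExtension{m}}(x_i) = a$ of the Kloosterman integrand $\fieldCharacter\bigl(\sum_i \trace(x_i)\bigr)\prod_i \alpha_i^{-1}\bigl(\etaleNorm_{\finiteFieldExtension{n_i} \otimes \finiteFieldExtension{m}/\finiteFieldExtension{n_i}}(x_i)\bigr)$. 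That sum is exactly $J_m(\alpha^{-1}, \fieldCharacter, a)$, so $\trace(\Frobenius_a^m \mid \mathcal{K}_a) = (-1)^{n-1} J_m(\alpha^{-1}, \fieldCharacter, a)$. Exponentiating the classical identity $\sum_{m \ge 1} \trace(F^m \mid V) T^m/m = -\log \det(1 - TF \mid V)$ then yields the displayed formula.

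From here the rest is a matter of direct manipulation. Substituting into the definition of $L^{\ast}$ and using $\bigl((-1)^n\bigr)^2 = 1$ gives
$$L^{\ast}\bigl(T, J(\alpha^{-1}, \fieldCharacter, a)\bigr) = \det\bigl(1 - (-1)^r q^{-(n-1)/2} T \Frobenius_a \mid \mathcal{K}_a\bigr).$$
Expanding via $\det(1 - ST \mid V) = \sum_{m=0}^{\dim V} (-1)^m \trace(\wedge^m S \mid \wedge^m V) T^m$ with $S = (-1)^r q^{-(n-1)/2} \Frobenius_a$, together with $\wedge^m(cS) = c^m \wedge^m S$ for a scalar $c$, one obtains
$$L^{\ast}\bigl(T, J(\alpha^{-1}, \fieldCharacter, a)\bigr) = \sum_{m=0}^n \bigl((-1)^{r-1} q^{-(n-1)/2}\bigr)^m \trace\bigl(\Frobenius_a \mid \wedge^m \mathcal{K}_a\bigr) T^m.$$
Matching this coefficient-by-coefficient with the expansion of $L^{\ast}$ furnished by Theorem~\ref{thm:introduction-L-function} at the same value of $a$ yields exactly the claimed equality.

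The main obstacle is the first step: one must invoke Katz's results to be sure that $\mathcal{K}$ is concentrated in a single degree and lisse of rank $n$ on $\multiplcativeScheme$, and to track the sign $(-1)^{n-1}$ coming from $[n-1]$ cleanly through the trace formula. Once these sheaf-theoretic inputs are in place, the argument reduces to formal generating-function manipulations, and the two theorems become essentially equivalent packagings of the same underlying $n$ characteristic roots of $\Frobenius_a$ on $\mathcal{K}_a$.
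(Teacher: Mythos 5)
Your proof is correct and follows essentially the same route as the paper's: both hinge on Katz's Frobenius trace formula (Theorem~\ref{thm:katz-etale-kloosterman-sheaf}, part (3)) together with the standard relation between power sums of eigenvalues and exterior power traces. The only organizational difference is that the paper extracts a partition-indexed intermediate identity from the proof of Theorem~\ref{thm:L-function-of-exotic-kloosterman-sum} and applies the Newton-type formula $\trace(\wedge^m A) = \sum_{\mu\vdash m} \frac{(-1)^{m+t}}{Z_\mu}\prod_j\trace(A^{m_j})$, whereas you invoke Theorem~\ref{thm:introduction-L-function} as a black box, identify $L(T,J(\alpha^{-1},\fieldCharacter,a)) = \det(1-T\Frobenius_a\mid\mathcal{K}_a)^{(-1)^n}$, and match coefficients — a more modular packaging of the same argument.
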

We remark that even though \Cref{thm:introduction-exterior-power} combined with Katz's theory of Kloosterman sheaves formally implies \Cref{thm:introduction-L-function}, our proof of \Cref{thm:introduction-L-function} is independent of the theory of Kloosterman sheaves.

These theorems show a deep connection between representation theory of $\GL_n\left(\finiteField\right)$ and Kloosterman sums and sheaves. For example, in a special case (\Cref{example:steinberg-usual-kloosterman}), the representation $\pi$ is the Steinberg representation $\Steinberg_n$ and we show that the normalized $L$-function of the usual Kloosterman sum is a polynomial whose coefficients are special values of the Bessel function of the Steinberg representation. This is a beautiful relation between the Steinberg representation and Kloosterman sums, both are fundamental objects in mathematics. Another special case (\Cref{example:principal-series-twisted-kloosterman}) establishes a relation between the Bessel function of generic principal series representations and twisted Kloosterman sums and sheaves.

This relation allows one to translate known results about Kloosterman sums and sheaves to the realm of representation theory of $\GL_n\left(\finiteField\right)$. For instance, Katz achieved equidistribution results for Kloosterman sums and a relation with the Sato--Tate measure \cite[Chapter 9]{katz2016gauss}. One could translate this into results about equidistribution of special values of the Bessel function of the Steinberg representation.

Although it is known that character sheaves are closely related to representation theory of finite groups of Lie type \cite{braverman2010gamma}, we are not aware of any direct relation as in the theorems above between Kloosterman sheaves and representation theory of finite groups of Lie type in the literature. We hope that this opens a door to more work on generalizations of this phenomenon, either for other groups, or for other geometric interpretations of values of the Bessel function for Bruhat cells with more than two anti-diagonal blocks. In the future, we hope to use similar ideas and techniques to find an expression for special values of the Bessel function of general supercuspidal representations of $p$-adic $\GL_n$ \cite[Section 5.1]{paskunas2008realization}.

As a result of the above mentioned relation, we are able to show that certain polynomials whose coefficients are special values of the Bessel function have the property that all of their roots lie on the unit circle. We also show that special values of the Bessel function of the Shintani base change of an irreducible generic representation are related to the special values of the Bessel function of the representation, by means of Dickson polynomials. We conclude by translating the converse theorem of Nien \cite{Nien14} to a statement about Kloosterman sheaves.

\subsection*{Acknowledgments}I would like to thank many people, as without their help and encouragement this work would have not been completed. I thank Rongqing Ye for his help at the beginning of this project, and for his comments on an early draft of this paper. I am grateful to David Soudry for sending me his unpublished proof of multiplicativity of gamma factors from 1979. I am indebted to Will Sawin for discussions about Kloosterman sheaves, for the observation of the relation with the exterior powers, and for the suggestion to consider the $L$-function. I thank Roger Howe and Mark Shusterman for useful discussions. I thank James Cogdell and Yifeng Liu for remarks on an early version of the paper. I am grateful to Ofir Gorodetsky for his encouragement along the way and for his comments on many revisions of this paper. Finally, I would like to thank the anonymous referee for their valuable comments and suggestions to improve the mathematical exposition of this paper. 

\section{Generic representations and their local constants}

In this section, we describe the irreducible generic representations of $\GL_{n}\left(\finiteField\right)$ in terms of Macdonald's parametrization \cite{Macdonald80}. Then we briefly review the theories of the Rankin--Selberg gamma factors, the intertwining operator gamma factors, and the tensor product $\epsilon_0$-factors for these representations. We also develop a recursive expression for the Bessel function, which will be used in \Cref{sec:formulas-for-special-values}.

\subsection{Macdonald's parametrization of irreducible representations of $\GL_n\left(\finiteField\right)$}

We briefly review the parametrization of irreducible representations of $\GL_n\left(\finiteField\right)$, as described in \cite[Section 1]{Macdonald80}.

Let $\finiteField$ be a field with $q$ elements. Let $\algebraicClosure{\finiteField}$ be an algebraic closure of $\finiteField$. For each positive integer $d$, we denote by $\finiteFieldExtension{d}$ the (unique) field extension of $\finiteField$ of degree $d$ in $\algebraicClosure{\finiteField}$. We denote by  $\multiplicativegroup{\finiteFieldExtension{d}}$ the multiplicative group of $\finiteFieldExtension{d}$, and by $\charactergroup{d}$ the character group of $\multiplicativegroup{\finiteFieldExtension{d}}$. A multiplicative character $\alpha \in \charactergroup{d}$ is called regular if the set $\{ \alpha, \alpha^q, \dots, \alpha^{q^{d-1}} \}$ is of size $d$.

For each $d' \mid d$, we have the norm map $\FieldNorm{d}{d'} \colon \multiplicativegroup{\finiteFieldExtension{d}} \rightarrow \multiplicativegroup{\finiteFieldExtension{d'}}$. These maps induce maps $\widehat{\FieldNorm{d}{d'}} \colon \charactergroup{d'} \rightarrow \charactergroup{d}$, by mapping $\gamma \in \charactergroup{d'}$ to $\gamma \circ \FieldNorm{d}{d'} \in \charactergroup{d}$. We have that $(\charactergroup{d})_{d}$ with the norm maps $(\widehat{\FieldNorm{d}{d'}})_{d' \mid d}$ forms a directed system. We denote its direct limit $\limitcharactergroup = \varinjlim \charactergroup{d}$.

Let $\Frobenius \in \Galois \left( \FieldExtension{\algebraicClosure{\finiteField}}{\finiteField} \right)$ be the geometric Frobenius automorphism, i.e., $\Frobenius\left(x^q\right) = x$, for every $x \in \algebraicClosure{\finiteField}$. Then $\Frobenius$ acts on $\limitcharactergroup$ by $\Frobenius \gamma = \gamma^q$. We identify $\charactergroup{d}$ with the following subgroup of $\limitcharactergroup$ $$\limitcharactergroup_d = \left\{ \gamma \in \limitcharactergroup \mid \Frobenius^d\gamma = \gamma \right\}.$$ A Frobenius orbit is a set of the form $$f=\left\{ \Frobenius^i \gamma \mid i \in \zIntegers  \right\},$$ where $\gamma \in \Gamma$. Given a Frobenius orbit $f$, we define its degree $\frobeniusDegree \left(f\right)$ to be the cardinality of $f$. Then for $\gamma \in f$, we have $\gamma \in \limitcharactergroup_{\frobeniusDegree \left(f\right)}$. We denote by $\lquot{\Frobenius}{\Gamma}$ the set of Frobenius orbits.

A partition of an integer $n \ge 0$, is a tuple of integers $\lambda = \left(n_1, \dots, n_r \right)$, with $n_1 \ge n_2 \ge \dots \ge n_r > 0$, and such that $n_1 + \dots + n_r = n$. We denote by $\sizeof{\lambda} = n$, the size of $\lambda$, we denote by $\lengthof\left(\lambda\right) = r$, the length of $\lambda$. We write $\lambda \vdash n$ to specify that $\lambda$ is a partition of $n$. If $d$ is a positive integer, we write $d \lambda$ for the partition $\left(d n_1, d n_2, \dots, d n_r\right)$ of $dn$. We denote by $\left(\right)$ the empty partition of $0$. Let $\partitionset$ be the set of all partitions of all non-negative integers.

We denote by $P_n\left(\limitcharactergroup\right)$ the set of partition valued functions $\phi \colon \Gamma \rightarrow \partitionset$, such that:
\begin{enumerate}
	\item $\phi \circ \Frobenius = \phi$, i.e., $\phi$ is constant on Frobenius orbits.
	\item $\sum_{\gamma \in \limitcharactergroup} {\sizeof{\phi \left(\gamma\right)}} = n$. 
\end{enumerate}
If $f$ is a Frobenius orbit, then such $\phi$ is constant on the elements of $f$, and we define $\phi \left(f\right) = \phi \left(\gamma\right)$, where $\gamma \in f$.

For representations $\pi_1, \dots, \pi_r$ of $\GL_{n_1}\left(\finiteField\right), \dots, \GL_{n_r}\left(\finiteField\right)$, respectively, we denote their parabolic induction, a representation of $\GL_{n_1 + \dots + n_r}\left(\finiteField\right)$, by $\pi_1 \circ \dots \circ \pi_r$. This operation is commutative and associative.

We now describe the irreducible representations of $\GL_n \left(\finiteField\right)$. These are in a bijection with the set $P_n \left(\limitcharactergroup\right)$. First, let $f$ be a Frobenius orbit of degree $d$, then $f$ corresponds to an irreducible cuspidal representation $\Pi_{f}$ of $\GL_{d}\left(\finiteField\right)$. For any positive integer $s$, consider the representation of $\GL_{ds}$ defined by $\Pi_{f}^{\circ s} = \Pi_{f} \circ \dots \circ \Pi_{f}$, where $\circ$ is performed $s$ times. Then the irreducible subrepresentations of $\Pi_{f}^{\circ s}$ are indexed by partitions of $s$, see \cite[Appendices B and C]{gurevich2021harmonic}. For each partition $\lambda$ of $s$, we denote by $\Pi_f^{\lambda}$ the irreducible subrepresentation of $\Pi_f^{\circ s}$ corresponding to $\lambda$.

Let $\phi \in P_n \left(\Gamma\right)$. Suppose that $\phi$ is supported on the pairwise distinct Frobenius orbits $f_1,\dots,f_t$, i.e., $\phi \left(\gamma\right) \ne \left(\right)$ if and only if $\gamma \in f_i$ for some $1 \le i \le t$. We associate to $\phi$ the representation $\Pi_{\phi} = \Pi_{f_1}^{\phi \left(f_1\right)} \circ \dots \circ \Pi_{f_t}^{\phi \left( f_t \right)}$.

The following theorem is stated in \cite[Section 1]{Macdonald80}.

\begin{theorem}\label{thm:greens-parameterization-of-representations}
	The map $\phi \mapsto \Pi_{\phi}$ is a bijection between $P_n\left(\limitcharactergroup\right)$ and the set of equivalence classes of irreducible representations of $\GL_{n} \left(\finiteField\right)$.
\end{theorem}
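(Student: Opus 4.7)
The plan is to assemble three classical ingredients. \emph{First}, one needs Green's classification of irreducible cuspidal representations of $\GL_d(\finiteField)$ by Frobenius orbits of length exactly $d$ in $\charactergroup{d}$ (equivalently, by regular characters modulo $\langle \Frobenius \rangle$). This can be established via the Deligne--Lusztig construction: for a regular $\alpha$ on the anisotropic torus $T \isomorphic \multiplicativegroup{\finiteFieldExtension{d}}$ embedded in $\GL_d(\finiteField)$, the virtual representation $(-1)^{d-1}R_T^\alpha$ is an irreducible cuspidal representation depending only on the Frobenius orbit of $\alpha$, and every cuspidal arises in this way. \emph{Second}, one invokes Harish--Chandra theory for finite groups of Lie type: every irreducible representation of $\GL_n(\finiteField)$ occurs as a constituent of some parabolically induced $\sigma_1 \circ \cdots \circ \sigma_r$ with each $\sigma_i$ cuspidal, and this multi-set of cuspidals (the cuspidal support) is uniquely determined by the representation. \emph{Third}, for a fixed Frobenius orbit $f$ of degree $d$, the Howlett--Lehrer theory identifies the endomorphism algebra $\operatorname{End}_{\GL_{ds}(\finiteField)}(\Pi_f^{\circ s})$ with the group algebra $\cComplex[S_s]$, so that the irreducible constituents of $\Pi_f^{\circ s}$ are naturally labelled by irreducible representations of $S_s$, i.e., by partitions of $s$.

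Combining these, an irreducible $\pi$ of $\GL_n(\finiteField)$ with cuspidal support of the form $(\Pi_{f_1}^{\circ s_1}, \dots, \Pi_{f_t}^{\circ s_t})$, where the $f_i$ are pairwise distinct Frobenius orbits and $\sum_{i=1}^t s_i \frobeniusDegree(f_i) = n$, corresponds inside $\Pi_{f_1}^{\circ s_1} \circ \cdots \circ \Pi_{f_t}^{\circ s_t}$ to a tuple of partitions $\lambda_i \vdash s_i$. Setting $\phi(f_i) = \lambda_i$ and $\phi(f) = ()$ for every other orbit yields a well-defined element of $P_n(\limitcharactergroup)$, and this is the inverse of $\phi \mapsto \Pi_\phi$. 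As a sanity check, both sides of the proposed bijection have cardinality equal to the number of conjugacy classes of $\GL_n(\finiteField)$, which in turn is parameterized by partition-valued functions on the monic irreducibles in $\finiteField[X] \setminus \{X\}$ of total degree $n$; this matches $P_n(\limitcharactergroup)$ because $\multiplicativegroup{\algebraicClosure{\finiteField}}$ and $\limitcharactergroup$ have isomorphic Frobenius-orbit structures.

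The main obstacle is the third ingredient. Identifying the constituents of $\Pi_f^{\circ s}$ with partitions of $s$ rests on the nontrivial fact that the endomorphism algebra of $\Pi_f^{\circ s}$ degenerates to the group algebra $\cComplex[S_s]$ rather than to a more general Iwahori--Hecke algebra with unequal parameters. This is supplied by Howlett--Lehrer, but is a genuine input; in Green's original character-theoretic approach the corresponding labelling is absorbed into the elaborate combinatorics of Green polynomials. A secondary technical point is the uniqueness of cuspidal support used to convert the labelling into a bijection, which is a consequence of the orthogonality of Harish--Chandra induction from non-conjugate cuspidal data and the transitivity of parabolic induction.
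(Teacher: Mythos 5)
The paper does not prove this theorem: it is stated as a citation to Macdonald's article, which in turn rests on Green's classical character-theoretic work. So there is no "paper's proof" to compare against, only the classical literature.

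Your outline is a correct modern proof and identifies the right ingredients: Green/Deligne--Lusztig classification of cuspidals by size-$d$ Frobenius orbits of characters of $\multiplicativegroup{\finiteFieldExtension{d}}$; disjointness and uniqueness of cuspidal support in Harish--Chandra theory; and the fact that $\operatorname{End}_{\GL_{ds}(\finiteField)}(\Pi_f^{\circ s})$ is an Iwahori--Hecke algebra of type $A_{s-1}$ with equal parameter $q^d$, hence semisimple and abstractly isomorphic to $\cComplex[S_s]$ by Tits deformation, which labels constituents by partitions of $s$. The counting sanity check is also sound: Frobenius orbits of size $d$ in $\limitcharactergroup$ are counted by the same necklace polynomial as monic degree-$d$ irreducibles over $\finiteField$ other than $X$, so $\sizeof{P_n(\limitcharactergroup)}$ equals the number of conjugacy classes of $\GL_n(\finiteField)$. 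One minor imprecision: the endomorphism algebra does not literally ``degenerate to'' $\cComplex[S_s]$ instead of being a Hecke algebra --- it is a Hecke algebra, with the crucial feature that for $\GL_n$ the parameters are equal and one can invoke semisimplicity of $\mathcal{H}_{q^d}(S_s)$ (since $q^d$ is not a nontrivial root of unity) to get the isomorphism with the group algebra. This phrasing should be sharpened, but the underlying argument is correct and standard. In contrast, Green's and Macdonald's original route builds the irreducible characters combinatorially (via Green polynomials and Hall--Littlewood symmetric function machinery) and reads off the parametrization, avoiding explicit Hecke-algebra or Deligne--Lusztig input; your geometric/Hecke-algebraic route is conceptually cleaner at the cost of importing more machinery.
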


Let $\alpha_1 \in \charactergroup{\frobeniusDegree\left(f_1
	\right)}$, $\dots$, $\alpha_t \in \charactergroup{\frobeniusDegree\left(f_t
	\right)}$ be representatives for the Frobenius orbits $f_1,\dots,f_t$, respectively. Then the central character of $\Pi_\phi$ is given by
$$ \centralCharacter{\Pi_\phi}\left(z\right) = \prod_{i=1}^t \alpha_i \left(z\right),$$ where $z \in \multiplicativegroup{\finiteField}$.

\subsection{Parametrization of irreducible generic representations of $\GL_n \left(\finiteField\right)$}

Let $\fieldCharacter \colon \finiteField \rightarrow \multiplicativegroup{\cComplex}$ be a non-trivial additive character. Let $\UnipotentSubgroup_n$ be the upper unipotent subgroup of $\GL_{n} \left(\finiteField\right)$. Then $\fieldCharacter$ defines a character on $\UnipotentSubgroup_n$ by $$\fieldCharacter\begin{pmatrix}
1 & a_1 & \ast & \ast & \ast\\
& 1 & a_2 & \ast & \ast\\
& & \ddots & \ddots & \ast \\
& &  & 1 & a_{n-1} \\
& & & & 1
\end{pmatrix} = \fieldCharacter\left(\sum_{i=1}^{n-1}{a_i}\right).$$

A representation $\representationDeclaration{\pi}$ of $\GL_{n}\left(\finiteField\right)$ is called generic if $\Hom_{U_n}\left(\pi\restriction_{\UnipotentSubgroup_n},\fieldCharacter\right) \ne 0$, or equivalently by Frobenius reciprocity, if $\Hom_{\GL_{n}\left(\finiteField\right)}(\pi, \Ind{\UnipotentSubgroup_n}{\GL_{n}\left(\finiteField\right)}{\fieldCharacter}) \ne 0$. This condition does not depend on the choice of the non-trivial additive character $\fieldCharacter$. If $\pi$ is irreducible and generic, then the latter Hom space is one dimensional \cite[Corollary 5.6]{SilbergerZink00}.

It is known that if $\pi_1, \dots, \pi_r$ are irreducible representations of $\GL_{n_1}\left(\finiteField\right), \dots, \GL_{n_r}\left(\finiteField\right)$, respectively, then $\pi_1 \circ \dots \circ \pi_r$ is generic, if and only if $\pi_1, \dots, \pi_r$ are all generic, see \cite[Theorem 5.5]{SilbergerZink00} (they only state one direction, but the other direction follows from their proof). Therefore, in light of \Cref{thm:greens-parameterization-of-representations}, in order to classify the irreducible generic representations, it suffices to classify the irreducible generic subrepresentations of $\Pi_f^{\circ s}$, for a Frobenius orbit $f$ and $s \ge 1$.

It is known that irreducible cuspidal representations of $\GL_n\left(\finiteField\right)$ are generic \cite[Lemma 5.2]{SilbergerZink00}. By \cite{SilbergerZink00}, if $f$ is a Frobenius orbit of degree $d$, then for any $s$, the representation $\Pi_f^{\circ s}$ has a unique irreducible generic subrepresentation, which corresponds to the partition $\left(s\right)$ of $s$. This is the ``generalized Steinberg" representation. By \cite[Page 3346, Equation (5)]{SilbergerZink00}, its dimension is given by the formula $$\dim \Pi_f^{\left(s\right)} = q^{\frac{d s \left(s - 1\right)}{2}} \frac{\prod_{j=1}^{d s}{\left(q^j - 1\right)}}{\prod_{j=1}^{s}{\left(q^{d j} - 1\right)}}.$$

Therefore we have that irreducible generic representations are parameterized by $\phi \in P_n\left(\limitcharactergroup\right)$, such that for every $\gamma \in \limitcharactergroup$, $\phi \left( \gamma \right) = \left(\right)$ or $\phi \left( \gamma \right)$ is of the form $\left(s\right)$, where $s$ is a positive integer.

Suppose that $\phi \in P_n\left(\limitcharactergroup\right)$ parameterizes an irreducible generic representation as above and is supported on the Frobenius orbits $f_1, \dots, f_t$ of degrees $d_1 = \frobeniusDegree\left(f_1\right), \dots, d_t=\frobeniusDegree\left(f_t\right)$, respectively, and that for every $i$, $\phi \left(f_i\right) = \left({s_i}\right)$. Then $$\dim \Pi_{\phi} = \frac{\sizeof{\GL_{n} \left(\finiteField\right)}}{\abs{P_{d_1 s_1,\dots, d_t s_t}}} \cdot \prod_{i=1}^{t}{\dim{\Pi_{f_i}^{\left({s_i}\right)}}},$$
where $P_{d_1 s_1,\dots,d_t s_t} \subseteq \GL_n\left(\finiteField\right)$ is the standard parabolic subgroup corresponding to the composition $\left(d_1 s_1,\dots, d_t s_t\right)$. Let $N_{d_1 s_1,\dots, d_t s_t}$ be the unipotent radical of $P_{d_1 s_1,\dots, d_t s_t}$. We have that $$\sizeof{P_{d_1 s_1,\dots,d_t s_t}} = \sizeof{N_{d_1 s_1,\dots, d_t s_t}} \cdot \prod_{i=1}^{t}{\sizeof{\GL_{d_i s_i}\left(\finiteField\right)}} = \frac{\sizeof{\UnipotentSubgroup_n}}{\prod_{i=1}^{t}\sizeof{\UnipotentSubgroup_{d_i s_i}}} \cdot \prod_{i=1}^{t}{\sizeof{\GL_{d_i s_i}\left(\finiteField\right)}}.$$ Therefore, by using the formula $\sizeof{\GL_{ds}\left(\finiteField\right)} = \sizeof{\UnipotentSubgroup_{ds}} \cdot \prod_{j=1}^{ds}\left(q^j - 1\right)$, we get
\begin{equation}\label{eq:dimension-of-generic-representation}
	\dim \Pi_{\phi} = \grpIndex{\GL_{n}\left(\finiteField\right)}{\UnipotentSubgroup_n} \cdot \prod_{i=1}^{t}\frac{q^{\frac{d_i s_i \left(s_i - 1\right)}{2}}}{\prod_{j=1}^{s_i}\left(q^{d_i j} - 1\right)}.
\end{equation}

We note that one can give an alternative definition for $\Pi_\phi$ by defining it to be the unique irreducible generic subrepresentation of $\Pi_{f_1}^{\circ s_1}\circ \dots \circ \Pi_{f_r}^{\circ s_r}$.

Recall that irreducible cuspidal representations of $\GL_{n}\left(\finiteField\right)$ are in a (non-canonical) bijection with monic irreducible polynomials of degree $n$, not having zero as a root. By factorization of polynomials of degree $n$ into powers of irreducible polynomials, we get that the irreducible generic representations of $\GL_{n}\left(\finiteField\right)$ are in a bijection with monic polynomials that do not vanish at zero. Therefore there exist exactly $q^{n} - q^{n-1}$ irreducible generic representations of $\GL_{n}\left(\finiteField\right)$.

\subsection{Whittaker models and their Bessel functions}
Recall that the representation $\Ind{\UnipotentSubgroup_n}{\GL_{n}\left(\finiteField\right)}{\fieldCharacter}$ is multiplicity free \cite[Theorem 0.5]{Gelfand70}, \cite[Corollary 5.6]{SilbergerZink00}, i.e., for every irreducible representation $\representationDeclaration{\pi}$ of $\GL_{n}\left(\finiteField\right)$, we have that $$\dim \Hom_{\GL_{n}\left(\finiteField\right)}(\pi, \Ind{\UnipotentSubgroup_n}{\GL_n\left(\finiteField\right)}{\fieldCharacter}) \le 1.$$ If $\pi$ is irreducible and generic, we get that $\dim \Hom_{\GL_{n}\left(\finiteField\right)} (\pi, \Ind{\UnipotentSubgroup_n}{\GL_n\left(\finiteField\right)}{\fieldCharacter}) = 1$, and we denote by $\Whittaker\left(\pi, \fieldCharacter\right)$ the unique subspace of $\Ind{\UnipotentSubgroup_n}{\GL_{n}\left(\finiteField\right)}{\fieldCharacter}$ that is isomorphic to $\pi$, that is the \emph{Whittaker model of $\pi$, with respect to the additive character $\fieldCharacter$}.

Suppose that $\representationDeclaration{\pi}$ is an irreducible generic representation of $\GL_{n}\left(\finiteField\right)$. There exists a special element of $\Whittaker\left(\pi,\fieldCharacter\right)$, denoted by $\besselFunction_{\pi, \fieldCharacter}$, which is called the \emph{normalized Bessel function of $\pi$, with respect to the additive character $\fieldCharacter$}. It is the unique element of $\Whittaker\left(\pi, \fieldCharacter\right)$ satisfying the following properties:
\begin{enumerate}
	\item $\besselFunction_{\pi, \fieldCharacter}\left( \IdentityMatrix{n} \right) = 1$.
	\item $\besselFunction_{\pi, \fieldCharacter}\left( u_1 g u_2 \right) = \fieldCharacter\left(u_1\right)\fieldCharacter\left(u_2\right)\besselFunction_{\pi, \fieldCharacter}\left(g\right)$, for every $u_1,u_2 \in \UnipotentSubgroup_n$ and $g \in \GL_{n}\left(\finiteField\right)$.
\end{enumerate}
We note that for $z \in \multiplicativegroup{\finiteField}$ and $g \in \GL_n\left(\finiteField\right)$, we have $\besselFunction_{\pi, \fieldCharacter}\left(zg\right) = \centralCharacter{\pi}\left(z\right) \besselFunction_{\pi, \fieldCharacter}\left(g\right)$, where $\centralCharacter{\pi}$ is the central character of $\pi$.

We have the following proposition regarding the support of $\besselFunction_{\pi, \fieldCharacter}$.
\begin{proposition}[{\cite[Proposition 4.9]{Gelfand70}}]\label{prop:support-of-bessel-function}
	$\besselFunction_{\pi, \fieldCharacter}$ is supported on double cosets of the form $$\UnipotentSubgroup_n \cdot g_{n_1,\dots,n_s}\left(c_1,\dots,c_s\right)  \cdot \UnipotentSubgroup_n,$$ where 	
for $n_1,\dots,n_s > 0$ with $n = n_1 + \dots + n_s$ and $c_1, \dots, c_s \in \multiplicativegroup{\finiteField}$, $$g_{n_1,\dots,n_s}\left(c_1,\dots,c_s\right) = \begin{pmatrix}
& & & c_1 \IdentityMatrix{n_1}\\
& & c_2 \IdentityMatrix{n_2}\\
& \iddots\\
c_s \IdentityMatrix{n_s}
\end{pmatrix}.$$
\end{proposition}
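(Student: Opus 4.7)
The plan is to exploit the transformation property (2) of $\besselFunction_{\pi, \fieldCharacter}$ together with the Bruhat decomposition of $\GL_n\left(\finiteField\right)$. First, for any $g \in \GL_n\left(\finiteField\right)$ and any $u \in \UnipotentSubgroup_n$ with $gug^{-1} \in \UnipotentSubgroup_n$, the identity $gu = (gug^{-1})g$ combined with property (2) gives
$$\fieldCharacter(u)\, \besselFunction_{\pi, \fieldCharacter}(g) = \besselFunction_{\pi, \fieldCharacter}\left((gug^{-1}) g\right) = \fieldCharacter(gug^{-1})\, \besselFunction_{\pi, \fieldCharacter}(g).$$
Hence $\besselFunction_{\pi, \fieldCharacter}(g) = 0$ unless $\fieldCharacter(u) = \fieldCharacter(gug^{-1})$ for every such $u$.

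By the Bruhat decomposition, any $g \in \GL_n\left(\finiteField\right)$ can be written as $g = u_1 \cdot tw \cdot u_2$ with $u_1, u_2 \in \UnipotentSubgroup_n$, $t$ a diagonal matrix, and $w$ a permutation matrix, so property (2) reduces the problem to determining when $\besselFunction_{\pi, \fieldCharacter}(tw) \ne 0$. For each pair $i < j$ with $w(i) < w(j)$, the root subgroup $\left\{\IdentityMatrix{n} + x e_{i,j} : x \in \finiteField\right\}$ is conjugated into $\UnipotentSubgroup_n$ via $(tw)(\IdentityMatrix{n} + x e_{i,j})(tw)^{-1} = \IdentityMatrix{n} + (t_{w(i)}/t_{w(j)})\, x\, e_{w(i),w(j)}$. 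Since $\fieldCharacter$ restricted to $\left\{\IdentityMatrix{n} + y e_{a,b}\right\}$ is nontrivial precisely when $b = a+1$, the vanishing criterion yields the following necessary conditions for $\besselFunction_{\pi, \fieldCharacter}(tw) \ne 0$: (i) whenever $w(i) < w(i+1)$, one has $w(i+1) = w(i) + 1$ and $t_{w(i)} = t_{w(i+1)}$; (ii) whenever $i < j$ and $w(j) = w(i) + 1$, one has $j = i + 1$ (this is just (i) applied to $w^{-1}$).

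To identify the permutations $w$ satisfying (i) and (ii), I would argue combinatorially as follows. Condition (i) forces the maximal increasing runs of $w$ to take values in consecutive integers; thus it partitions $\{1, \dots, n\}$ into position-blocks $P_1, \dots, P_s$ (ordered left to right) whose images $V_k = w(P_k)$ are consecutive value-intervals partitioning $\{1, \dots, n\}$. Writing these value-intervals in increasing value order as $I_1 < I_2 < \dots < I_s$, condition (ii) applied at the transition between $I_m$ and $I_{m+1}$ rules out any ascent of $w^{-1}$ at $\max(I_m)$; this forces $I_{m+1}$'s position-block to lie strictly to the left of $I_m$'s. Iterating shows that the value-intervals occur, read from left to right in position, in the reversed order $I_s, I_{s-1}, \dots, I_1$. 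Setting $n_j := |I_j|$ and noting that the torus constraint from (i) forces $t$ to be a scalar $c_j \in \multiplicativegroup{\finiteField}$ on each $I_j$, one recognizes $tw = g_{n_1, \dots, n_s}(c_1, \dots, c_s)$, as claimed.

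The main obstacle is the combinatorial identification in the third paragraph, i.e., pinning down which permutations satisfy both (i) and (ii) and matching them with the block anti-diagonal shape of $g_{n_1, \dots, n_s}(c_1, \dots, c_s)$; the Bruhat reduction and the character computation on root subgroups are otherwise routine.
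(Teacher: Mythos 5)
The paper does not prove this proposition itself; it cites \cite[Proposition 4.9]{Gelfand70} directly. Your argument is correct and is essentially the standard (Gelfand's) approach: reduce via the Bruhat decomposition and the $\UnipotentSubgroup_n \times \UnipotentSubgroup_n$-equivariance to a diagonal-times-Weyl element $tw$, derive the compatibility condition $\fieldCharacter(u) = \fieldCharacter\left(tw\, u\, (tw)^{-1}\right)$ for $u$ ranging over $\UnipotentSubgroup_n \cap w^{-1}\UnipotentSubgroup_n w$, and then analyze this root subgroup by root subgroup. The conjugation formula and your resulting conditions (i) and (ii) are exactly right. The combinatorial identification you flag as the main obstacle is also correct, but let me make explicit the one inference you compress: at the transition between $I_m$ and $I_{m+1}$, set $i := w^{-1}(\max I_m)$ and $j := w^{-1}(\min I_{m+1})$; if $i < j$, condition (ii) would force $j = i+1$, but then $w(i+1) = w(i)+1$ would place $i$ and $i+1$ in a single maximal increasing run, contradicting that they lie in different position-blocks of the run decomposition from (i). So $j < i$, and disjointness of the position-blocks then forces the block of $I_{m+1}$ to sit entirely to the left of the block of $I_m$, yielding the anti-diagonal ordering $I_s, I_{s-1}, \dots, I_1$. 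Together with constancy of $t$ on each $I_l$, this pins down $tw$ as a matrix of the claimed form $g_{n_1, \dots, n_s}(c_1, \dots, c_s)$, so the proof is complete.
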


A counting argument shows that there exist exactly $q^n - q^{n-1}$ options for a matrix of the form $g_{n_1,\dots,n_s}\left(c_1,\dots,c_s\right) \in \GL_{n}\left(\finiteField\right)$, for any choice of $s$, $n_1,\dots,n_s > 0$ with $n_1+\dots+n_s = n$ and any choice of $c_1,\dots,c_s \in \multiplicativegroup{\finiteField}$. Indeed, these are in bijection with sequences $\left(a_i\right)_{i=1}^n \subseteq \finiteField$, such that $a_1 \ne 0$. The element $0$ represents continuation of the previous block, while any other element represents the beginning of a new block with the given symbol. For example, the sequence $\left(c_1,0,0,0,c_2,c_3,0,c_4\right)$, where $c_1,\dots,c_4 \in \multiplicativegroup{\finiteField }$ represents $g_{4,1,2,1}\left(c_1,c_2,c_3,c_4\right) \in \GL_8 \left(\finiteField\right)$.

We also have the following relations between $\besselFunction_{\pi, \fieldCharacter}$ and its complex conjugate. \begin{proposition}[{\cite[Proposition 2.15, Proposition 3.5]{Nien14}}]\label{prop:complex-conjugate-of-bessel-function}For any $g \in \GL_n\left(\finiteField\right)$:
	\begin{enumerate}
		\item $\besselFunction_{\pi, \fieldCharacter}\left(g^{-1}\right) = \conjugate{\besselFunction_{\pi, \fieldCharacter}\left(g\right)}$.
		\item $\besselFunction_{\Contragradient{\pi}, \fieldCharacter^{-1}}\left(g\right) = \conjugate{\besselFunction_{\pi, \fieldCharacter}\left(g\right)}$, where $\Contragradient{\pi}$ denotes the contragredient representation of $\pi$.
	\end{enumerate}
\end{proposition}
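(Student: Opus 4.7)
The plan is to derive both identities directly from Gelfand's trace formula
$$\besselFunction_{\pi,\fieldCharacter}(g) = \frac{1}{\sizeof{\UnipotentSubgroup_n}} \sum_{u \in \UnipotentSubgroup_n} \fieldCharacter^{-1}(u)\, \trace(\pi(gu))$$
recalled in the introduction, combined with two elementary inputs: since $\GL_n(\finiteField)$ is a finite group, every irreducible representation is unitarizable, so its character satisfies $\trace(\pi(h^{-1})) = \conjugate{\trace(\pi(h))}$, and likewise the character of the contragredient is $\trace(\Contragradient{\pi}(h)) = \conjugate{\trace(\pi(h))}$; moreover $\fieldCharacter$ is valued in the unit circle, so $\conjugate{\fieldCharacter(x)} = \fieldCharacter^{-1}(x)$.

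For part (1), I would substitute $g^{-1}$ into Gelfand's formula and rewrite, using the cyclicity of the trace together with unitarity,
$$\trace(\pi(g^{-1}u)) = \trace(\pi((u^{-1}g)^{-1})) = \conjugate{\trace(\pi(u^{-1}g))} = \conjugate{\trace(\pi(g u^{-1}))}.$$
After this replacement, the identity $\fieldCharacter^{-1}(u) = \conjugate{\fieldCharacter(u)}$ lets one pull the complex conjugate outside the finite sum, and the change of variable $u \mapsto u^{-1}$ on $\UnipotentSubgroup_n$ brings the remaining expression into exactly the shape of Gelfand's formula for $\besselFunction_{\pi,\fieldCharacter}(g)$, yielding $\besselFunction_{\pi,\fieldCharacter}(g^{-1}) = \conjugate{\besselFunction_{\pi,\fieldCharacter}(g)}$.

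For part (2), I would apply Gelfand's formula to the pair $(\Contragradient{\pi}, \fieldCharacter^{-1})$, obtaining
$$\besselFunction_{\Contragradient{\pi},\fieldCharacter^{-1}}(g) = \frac{1}{\sizeof{\UnipotentSubgroup_n}} \sum_{u \in \UnipotentSubgroup_n} \fieldCharacter(u)\, \trace(\Contragradient{\pi}(gu)),$$
replace $\trace(\Contragradient{\pi}(gu))$ by $\conjugate{\trace(\pi(gu))}$, and use $\fieldCharacter(u) = \conjugate{\fieldCharacter^{-1}(u)}$ to factor out the conjugate; the right-hand side becomes the complex conjugate of Gelfand's expression for $\besselFunction_{\pi,\fieldCharacter}(g)$, giving the claim.

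I do not foresee any genuine obstacle: both statements reduce to short manipulations built from unitarity of representations of a finite group, cyclicity of the trace, and the bijection $u \mapsto u^{-1}$ on $\UnipotentSubgroup_n$. As a more conceptual alternative, one could instead verify that $g \mapsto \conjugate{\besselFunction_{\pi,\fieldCharacter}(g^{-1})}$ (respectively $g \mapsto \conjugate{\besselFunction_{\pi,\fieldCharacter}(g)}$) lies in $\Whittaker(\pi,\fieldCharacter)$ (respectively $\Whittaker(\Contragradient{\pi},\fieldCharacter^{-1})$) and satisfies both characterizing properties $\besselFunction(\IdentityMatrix{n}) = 1$ and $(\UnipotentSubgroup_n,\fieldCharacter)$-biequivariance, and then invoke uniqueness of the normalized Bessel function; but the direct computation above is both shorter and more transparent.
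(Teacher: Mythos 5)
The paper records this proposition as a citation to Nien and offers no proof of its own, so there is no internal argument to match your work against; the relevant question is simply whether your derivation is sound, and it is. Your route via Gelfand's trace formula (\Cref{thm:bessel-function-as-sum-of-trace} in the paper) is correct and self-contained: for part (1), the chain $\trace\left(\pi\left(g^{-1}u\right)\right) = \conjugate{\trace\left(\pi\left(gu^{-1}\right)\right)}$ from unitarity and cyclicity, combined with $\fieldCharacter^{-1}(u) = \conjugate{\fieldCharacter(u)}$, pulls the conjugate outside the finite sum, and the substitution $u \mapsto u^{-1}$ on $\UnipotentSubgroup_n$ (valid because $\fieldCharacter$ restricts to a homomorphism on $\UnipotentSubgroup_n$, so $\fieldCharacter(u^{-1}) = \fieldCharacter^{-1}(u)$) restores exactly Gelfand's expression for $\besselFunction_{\pi,\fieldCharacter}(g)$ under the bar; for part (2), the identity $\trace(\Contragradient{\pi}(h)) = \conjugate{\trace(\pi(h))}$ together with $\fieldCharacter(u) = \conjugate{\fieldCharacter^{-1}(u)}$ conjugates the entire sum at once. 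You do rely, implicitly, on the fact that $\Contragradient{\pi}$ is again irreducible and generic so that Gelfand's formula applies to the pair $(\Contragradient{\pi},\fieldCharacter^{-1})$; this is standard (dualizing the Whittaker embedding for $(\pi,\fieldCharacter)$ produces one for $(\Contragradient{\pi},\fieldCharacter^{-1})$), but it would be worth a line in a written-up version. Your alternative suggestion via uniqueness of the normalized Bessel function is also viable, but as you note it requires the extra verification that the conjugated functions land in the correct Whittaker models, which the direct trace computation sidesteps.
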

S. I. Gelfand \cite[Proposition 4.5]{Gelfand70} gives a formula for $\besselFunction_{\pi, \fieldCharacter}$ in terms of the trace character of the representation $\pi$.
\begin{theorem}\label{thm:bessel-function-as-sum-of-trace}For any $g \in \GL_n\left(\finiteField\right)$,
	$$\besselFunction_{\pi, \fieldCharacter}\left(g\right) = \frac{1}{\sizeof{\UnipotentSubgroup_n}} \sum_{u \in \UnipotentSubgroup_n}{\fieldCharacter^{-1}\left(u\right) \trace\left(\pi\left(g u\right)\right)}.$$
\end{theorem}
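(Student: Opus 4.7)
The plan is to reinterpret the right-hand side as the trace of $\pi(g)$ composed with the Whittaker projector on $V_{\pi}$, and then apply the uniqueness of the Bessel function from its bi-equivariance properties.

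First, set $B(g) := \frac{1}{|\UnipotentSubgroup_n|} \sum_{u \in \UnipotentSubgroup_n} \fieldCharacter^{-1}(u) \trace(\pi(gu))$ and define the operator
$$P = \frac{1}{|\UnipotentSubgroup_n|} \sum_{u \in \UnipotentSubgroup_n} \fieldCharacter^{-1}(u)\, \pi(u) \in \mathrm{End}(V_{\pi}).$$
Pulling $\pi(g)$ out of the trace immediately gives $B(g) = \trace\bigl(\pi(g) P\bigr)$. A direct check shows $P$ is idempotent and equals the projector onto the $(\UnipotentSubgroup_n,\fieldCharacter)$-isotypic subspace of $V_{\pi}$, whose dimension equals $\dim \Hom_{\UnipotentSubgroup_n}(\pi|_{\UnipotentSubgroup_n}, \fieldCharacter)$.

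Next, because $\pi$ is irreducible and generic, the uniqueness of the Whittaker model (cited from \cite[Corollary 5.6]{SilbergerZink00}) forces $\mathrm{rank}(P) = 1$. I would therefore write $P = e \otimes \Lambda$ for some nonzero $e \in V_{\pi}$ with $\pi(u) e = \fieldCharacter(u) e$ for all $u \in \UnipotentSubgroup_n$, and some $\Lambda \in V_{\pi}^{*}$ with $\Lambda\bigl(\pi(u) v\bigr) = \fieldCharacter(u) \Lambda(v)$; the identity $P^2 = P$ then forces the normalization $\Lambda(e) = 1$.

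With this factorization, the trace of the rank-one operator $\pi(g) P$ becomes $B(g) = \Lambda\bigl(\pi(g) e\bigr)$. From here, the two defining properties of the Bessel function are immediate: $B(u_1 g u_2) = \Lambda(\pi(u_1)\pi(g)\pi(u_2) e) = \fieldCharacter(u_1)\fieldCharacter(u_2) \Lambda(\pi(g) e) = \fieldCharacter(u_1)\fieldCharacter(u_2) B(g)$ by the Whittaker transformation rules for $\Lambda$ and $e$, and $B(I_n) = \Lambda(e) = 1$. Moreover $g \mapsto \Lambda(\pi(g) e)$ manifestly lies in $\Whittaker(\pi,\fieldCharacter)$. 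Invoking the uniqueness characterization of $\besselFunctionOfFiniteFieldRepresentation$ stated just before the theorem concludes $B = \besselFunctionOfFiniteFieldRepresentation$.

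There is no genuinely hard step; the entire argument hinges on recognizing the averaged trace as $\trace(\pi(g) P)$ and on the rank-one nature of $P$, which is exactly the multiplicity-one statement for the Whittaker model. The only point requiring a little care is checking that $P$ really is a projector (so that the rank equals its trace) and that normalizing via $\Lambda(e) = 1$ is compatible with the factorization $P = e \otimes \Lambda$; both follow from a short manipulation with the orthogonality of the character $\fieldCharacter$ of $\UnipotentSubgroup_n$.
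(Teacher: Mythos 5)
Your proof is correct. Note that the paper itself does not prove this statement: it cites it as \cite[Proposition 4.5]{Gelfand70}. Your argument is the standard one behind Gelfand's formula, and it is exactly right: writing $B(g)=\trace(\pi(g)P)$ with $P=\frac{1}{|\UnipotentSubgroup_n|}\sum_{u}\fieldCharacter^{-1}(u)\pi(u)$, then using that $P$ is the rank-one projector onto the $\fieldCharacter$-eigenspace (multiplicity one), factoring $P=e\otimes\Lambda$ with $\Lambda(e)=1$, and concluding via the uniqueness characterization of $\besselFunctionOfFiniteFieldRepresentation$. The only small thing worth stating explicitly, which you gesture at with ``manifestly,'' is that $g\mapsto\Lambda(\pi(g)e)$ lies in $\Whittaker(\pi,\fieldCharacter)$ because $v\mapsto(g\mapsto\Lambda(\pi(g)v))$ is an injective $\GL_n(\finiteField)$-equivariant map from $V_\pi$ into $\Ind{\UnipotentSubgroup_n}{\GL_n(\finiteField)}{\fieldCharacter}$, and hence has image equal to $\Whittaker(\pi,\fieldCharacter)$ by uniqueness of the Whittaker model.
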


\subsection{Rankin--Selberg gamma factors}
Let $n \ge m$ be integers and let $\pi$ and $\sigma$ be irreducible generic representations of $\GL_n\left(\finiteField\right)$ and $\GL_m\left(\finiteField\right)$, respectively.
The Rankin--Selberg gamma factor $\gamma\left(\pi\times\sigma,\fieldCharacter\right)$ was defined in Piatetski-Shapiro's unpublished lecture notes from 1976. It was also defined for $m < n$ in Roditty-Gershon's master's thesis, under the supervision of David Soudry \cite{Roditty10}. The main ideas of Roditty-Gershon's thesis are covered by Nien in \cite{Nien14}. We briefly review the main results that we need.

The first result is the functional equation, which defines the Rankin--Selberg gamma factors \cite[Theorem 2.10]{Nien14}.

\begin{theorem}
	Suppose that $n > m$, and that $\pi$ is cuspidal. Then there exists a non-zero constant $\gamma\left( \pi \times \sigma, \fieldCharacter \right) \in \multiplicativegroup{\cComplex}$, such that for every $0 \le k \le n-m-1$, every $W \in \Whittaker\left(\pi, \fieldCharacter\right)$ and $W' \in \Whittaker\left(\sigma, \fieldCharacter^{-1}\right)$, the following functional equation holds:
	
	\begin{align*}
		& q^{m k} \gamma\left(\pi \times \sigma, \fieldCharacter \right) \sum_{h\in \lquot{\UnipotentSubgroup_m}{\GL_{m}\left(\finiteField\right)}}\sum_{x \in M_{ \left(n - m - k - 1\right) \times m}\left(\finiteField\right)}{W \begin{pmatrix}
			h &\\
			x & \IdentityMatrix{n-m-k-1} \\
			& & \IdentityMatrix{k+1}			
			\end{pmatrix}}W'\left(h\right) \\
		= & \sum_{h\in \lquot{\UnipotentSubgroup_m}{\GL_{m}\left(\finiteField\right)}}\sum_{x \in M_{m \times k}\left(\finiteField\right)} W \begin{pmatrix}
		& \IdentityMatrix{n-m-k} \\
		& & \IdentityMatrix{k}\\
		h & & x
		\end{pmatrix}
		W'\left(h\right).		
	\end{align*}
\end{theorem}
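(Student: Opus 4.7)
The plan is to recognize both sides of the claimed identity as bilinear pairings on $\Whittaker(\pi, \fieldCharacter) \times \Whittaker(\sigma, \fieldCharacter^{-1})$, to prove that they live in a common one-dimensional space of invariant functionals, and to exhibit explicit non-vanishing so that the proportionality constant is non-zero. Concretely, define the Rankin--Selberg zeta pairings
\begin{align*}
Z_k(W, W') &= \sum_{h \in \lquot{\UnipotentSubgroup_m}{\GL_m(\finiteField)}} \sum_{x \in M_{m \times k}(\finiteField)} W\begin{pmatrix} & I_{n-m-k} & \\ & & I_{k} \\ h & & x \end{pmatrix} W'(h),\\
\widetilde{Z}_k(W, W') &= q^{mk} \sum_{h \in \lquot{\UnipotentSubgroup_m}{\GL_m(\finiteField)}} \sum_{y \in M_{(n-m-k-1) \times m}(\finiteField)} W\begin{pmatrix} h & & \\ y & I_{n-m-k-1} & \\ & & I_{k+1} \end{pmatrix} W'(h),
\end{align*}
so that the functional equation reads $\widetilde{Z}_k(W,W') = \gamma(\pi \times \sigma, \fieldCharacter) \cdot Z_k(W,W')$.

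As a first combinatorial reduction, I would show that $Z_k$ and $\widetilde{Z}_k$ are each independent of $k$: comparing the definitions for $k$ and $k+1$, the ``extra'' identity column/row can be absorbed via row operations, where the resulting unipotent is compensated by the $\fieldCharacter$-equivariance of $W$ under $\UnipotentSubgroup_n$, and the factor $q^{m}$ accounts for the change in the sizes of the matrix summation sets. Thus it suffices to prove the identity for a single value of $k$.

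The central and most delicate step is a uniqueness result: $Z_k$ and $\widetilde{Z}_k$ both belong to the space of bilinear forms on $\Whittaker(\pi, \fieldCharacter) \times \Whittaker(\sigma, \fieldCharacter^{-1})$ that are equivariant under an appropriate action of $\GL_m(\finiteField)$ (embedded into $\GL_n(\finiteField)$ as the top-left block) together with the Whittaker characters on the unipotent subgroups. The key claim is that this space is one-dimensional when $m < n$ and $\pi$ is cuspidal; this is the finite-field analogue of the Jacquet--Piatetski-Shapiro--Shalika multiplicity-one theorem, proved by a Mackey/Bruhat decomposition where cuspidality of $\pi$ kills all contributions except the open cell, leaving at most one Whittaker pairing up to scalar. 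I expect this uniqueness step to be the main obstacle of the proof. Once established, it yields a scalar $\gamma$ with $\widetilde{Z}_k = \gamma \cdot Z_k$.

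Finally, to verify $\gamma \ne 0$, it suffices to exhibit some pair $(W, W')$ with $Z_k(W, W') \ne 0$. Evaluating $Z_k$ on a pair of Bessel functions $W = \besselFunction_{\pi, \fieldCharacter}$ and $W' = \besselFunction_{\sigma, \fieldCharacter^{-1}}$, \Cref{prop:support-of-bessel-function} collapses the double sum to an explicit short sum over supported double cosets; alternatively, one may choose $W$ supported on the long Weyl cell $\UnipotentSubgroup_n w \UnipotentSubgroup_n$ (which is possible since $\Whittaker(\pi,\fieldCharacter)$ is nonzero) and check that an appropriate choice of $W'$ produces a single non-zero contribution. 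Either route yields $Z_k \not\equiv 0$, and hence $\gamma(\pi \times \sigma, \fieldCharacter) \in \multiplicativegroup{\cComplex}$, completing the proof sketch.
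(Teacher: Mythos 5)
The paper does not actually prove this theorem; it is cited (as \cite[Theorem 2.10]{Nien14}), with the detailed argument in \cite{Roditty10}. So the relevant comparison is against Nien's proof. Your overall strategy is the standard one and essentially what Nien does: view each side as a bilinear pairing on $\Whittaker(\pi,\fieldCharacter)\times\Whittaker(\sigma,\fieldCharacter^{-1})$, show both lie in a one-dimensional space of equivariant forms using the Bruhat decomposition together with cuspidality of $\pi$, and check non-vanishing (Bessel functions are indeed a clean way to do this). That part of the plan is sound, and you correctly flag the multiplicity-one result as the crux.

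However, the ``first combinatorial reduction'' contains a genuine error. You claim that $Z_k$ and $\widetilde{Z}_k$ are each independent of $k$, with the extra matrix variables absorbed by $\fieldCharacter$-equivariance of $W$. But functions in $\Whittaker(\pi,\fieldCharacter) \subseteq \Ind{\UnipotentSubgroup_n}{\GL_n(\finiteField)}{\fieldCharacter}$ are only \emph{left} $\UnipotentSubgroup_n$-equivariant: $W(ug)=\fieldCharacter(u)W(g)$. The unipotent needed to clear the block $x$ sits on the \emph{right}: for instance with $k=1$, writing
\[
M_{h,x} = \begin{pmatrix} & \IdentityMatrix{n-m-1} & \\ & & 1 \\ h & & x \end{pmatrix},
\qquad
M_{h,0}^{-1} M_{h,x} = \begin{pmatrix} \IdentityMatrix{m} & & h^{-1}x \\ & \IdentityMatrix{n-m-1} & \\ & & 1 \end{pmatrix} =: u_{h,x} \in \UnipotentSubgroup_n,
\]
one gets $M_{h,x}=M_{h,0}\,u_{h,x}$, and $W(M_{h,0}\,u_{h,x})$ is not $\fieldCharacter(u_{h,x})W(M_{h,0})$ for a general $W$; it is $(\pi(u_{h,x})W)(M_{h,0})$. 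Summing over $x$ therefore produces an average over a unipotent subgroup acting by right translation (a Jacquet-module type projection of $W$), not simply $q^m\,W(M_{h,0})$. So $Z_1 \ne q^m Z_0$ in general, and the claimed $k$-independence fails. The same objection applies to $\widetilde{Z}_k$, where the lower-triangular block $y$ likewise cannot be cleared by left multiplication by an element of $\UnipotentSubgroup_n$. Getting the functional equation uniformly in $k$ therefore requires a real argument (a change of variables / shift relating successive $k$'s), and this, together with the multiplicity-one statement that you leave as an expectation, is where the actual content of Nien's proof lies. As it stands, your write-up is a reasonable road map but has a wrong step and an unproved core lemma.
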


Piatetski-Shapiro proved in his lecture the functional equation for the case $n = m$ \cite[Theorem 2.3]{Ye18}.
\begin{theorem}
	Suppose that $n=m$, and that $\pi$ and $\sigma$ are cuspidal. There exists a non-zero constant $\gamma\left(\pi\times\sigma,\fieldCharacter\right)$, such that for every $W \in \Whittaker\left(\pi,\fieldCharacter\right)$ and $W' \in \Whittaker\left(\sigma
	, \fieldCharacter^{-1}\right)$, and for every $\phi \colon \finiteField^n \rightarrow \cComplex$ with $\phi \left(0\right) = 0$, the following functional equation holds:
	$$ \gamma\left(\pi\times\sigma, \fieldCharacter\right)\sum_{g \in \lquot{\UnipotentSubgroup_n}{\GL_{n}\left(\finiteField\right)}} W\left(g\right)W'\left(g\right)\phi\left(e_n g\right) = \sum_{g \in \lquot{\UnipotentSubgroup_n}{\GL_{n}\left(\finiteField\right)}}{W\left(g\right)W'\left(g\right)\mathcal{F}_{\fieldCharacter}\phi\left(e_1 \transpose{g}^{-1}\right)},$$
	where $e_1 = \left(1,0,\dots,0\right) \in \finiteField^n$, $e_n = \left(0,\dots,0,1\right) \in \finiteField^n$ and $\mathcal{F}_{\fieldCharacter}\phi$ is the Fourier transform of $\phi$ given by $$\mathcal{F}_{\fieldCharacter}\phi\left(x\right) = \sum_{y \in \finiteField^n}{\phi\left(y\right) \fieldCharacter\left(\standardForm{x}{y}\right)}.$$
\end{theorem}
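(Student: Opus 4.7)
The plan is to establish the existence and uniqueness of $\gamma(\pi \times \sigma, \fieldCharacter)$ by showing that both sides of the asserted identity define $\GL_n(\finiteField)$-invariant trilinear forms on $\pi \otimes \sigma \otimes V_0$, where $V_0 = \{\phi \colon \finiteField^n \to \cComplex \mid \phi(0) = 0\}$, and that the ambient Hom space is one-dimensional. Since $\GL_n(\finiteField)$ acts transitively on $\finiteField^n \setminus \{0\}$ via $x \mapsto xg$, with stabilizer of $e_n$ equal to the mirabolic subgroup $P$ (matrices whose last row is $(0,\dots,0,1)$), we have $V_0 \cong \Ind{P}{\GL_n(\finiteField)}{\mathbf{1}}$ as a right $\GL_n(\finiteField)$-module. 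Both summands descend to $\lquot{\UnipotentSubgroup_n}{\GL_n(\finiteField)}$ because the $\fieldCharacter$-factors from $W(ug) = \fieldCharacter(u)W(g)$ and $W'(ug) = \fieldCharacter^{-1}(u)W'(g)$ cancel, while $e_n u = e_n$ and $e_1 \transpose{u}^{-1} = e_1$ for $u \in \UnipotentSubgroup_n$.

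Next, I would verify by a direct change of variables that both forms
$$\Psi(W, W', \phi) = \sum_{g \in \lquot{\UnipotentSubgroup_n}{\GL_n(\finiteField)}} W(g) W'(g) \phi(e_n g), \qquad \widetilde{\Psi}(W, W', \phi) = \sum_{g} W(g) W'(g) \mathcal{F}_{\fieldCharacter}\phi(e_1 \transpose{g}^{-1})$$
are $\GL_n(\finiteField)$-invariant when $\GL_n(\finiteField)$ acts diagonally via right translation on the Whittaker models and on $V_0$. For $\widetilde{\Psi}$ the key compatibility is $\mathcal{F}_{\fieldCharacter}(\phi)(x \transpose{g_0}^{-1}) = \mathcal{F}_{\fieldCharacter}(g_0 \cdot \phi)(x)$, which follows from $\standardForm{xA}{y} = \standardForm{x}{y\transpose{A}}$ and reindexing the Fourier sum, combined with $\transpose{(gg_0)}^{-1} = \transpose{g}^{-1}\transpose{g_0}^{-1}$.

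The central step is the uniqueness bound $\dim \Hom_{\GL_n(\finiteField)}(\pi \otimes \sigma \otimes V_0, \cComplex) \le 1$. By Frobenius reciprocity this Hom space identifies with $\Hom_P(\pi \otimes \sigma, \cComplex) = \Hom_P(\pi, \Contragradient{\sigma})$. Here I would invoke the Gelfand--Kazhdan theorem: for an irreducible cuspidal representation $\pi$ of $\GL_n(\finiteField)$, the restriction $\pi|_P$ is irreducible and isomorphic to the Gelfand--Graev representation $\Ind{\UnipotentSubgroup_n}{P}{\fieldCharacter_0}$ for any non-degenerate character $\fieldCharacter_0$ of $\UnipotentSubgroup_n$; the same holds for $\Contragradient{\sigma}$, since $\Contragradient{\sigma}$ is again irreducible cuspidal. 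Schur's lemma therefore gives $\dim \Hom_P(\pi, \Contragradient{\sigma}) = 1$, forcing $\Psi$ and $\widetilde{\Psi}$ to be proportional; the constant of proportionality is declared to be $\gamma(\pi \times \sigma, \fieldCharacter)$.

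Finally, to ensure $\gamma(\pi \times \sigma, \fieldCharacter) \in \multiplicativegroup{\cComplex}$, I would exhibit explicit test data making both $\Psi$ and $\widetilde{\Psi}$ non-zero, so that each spans the one-dimensional line of invariant forms. A natural choice is $W = \besselFunction_{\pi, \fieldCharacter}$, $W' = \besselFunction_{\sigma, \fieldCharacter^{-1}}$, and $\phi = \indicatorFunction{e_n}$ to evaluate $\Psi$ (whose value at $g = \IdentityMatrix{n}$ contributes $1$), with a dual choice for $\widetilde{\Psi}$ produced by inverting the Fourier transform; the non-vanishing of the other side then follows by symmetry of the argument applied in reverse. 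The main obstacle I anticipate is the careful equivariance bookkeeping for $\widetilde{\Psi}$ under the twisted action arising from $g \mapsto \transpose{g}^{-1}$, together with correctly invoking the finite-field Gelfand--Kazhdan theorem in the cuspidal setting.
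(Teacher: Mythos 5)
The paper does not give its own proof of this theorem; it cites it to Piatetski-Shapiro's unpublished 1976 lectures and to \cite[Theorem 2.3]{Ye18}. Your proposal follows the standard Rankin--Selberg/Gelfand--Kazhdan strategy, which is almost certainly what those sources do, and the main structure is sound: identify $V_0$ with $\Ind{P}{\GL_n(\finiteField)}{\mathbf{1}}$ via the transitive action on $\finiteField^n \setminus\{0\}$, check $\GL_n$-equivariance of both trilinear forms, reduce via Frobenius reciprocity to $\Hom_P(\pi, \Contragradient{\sigma})$, and use irreducibility of the restriction of a cuspidal representation to the mirabolic. Two things should be tightened.

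First, the dimension-one statement needs the precise finite-field input: for cuspidal $\pi$ of $\GL_n(\finiteField)$, the restriction $\pi|_P$ is irreducible and isomorphic to $\Ind{\UnipotentSubgroup_n}{P}{\fieldCharacter}$; the same holds for $\Contragradient{\sigma}$, and since non-degenerate characters of $\UnipotentSubgroup_n$ are all conjugate under the diagonal torus inside $P$, these restrictions are isomorphic to one another, so Schur gives exactly one. You call this ``the Gelfand--Kazhdan theorem,'' which usually refers to the $p$-adic statement; here one should cite S.~I.~Gelfand's finite-field result (e.g.\ \cite[Theorem 0.5]{Gelfand70} or \cite{SilbergerZink00}) and the characterization of cuspidality by irreducibility of $\pi|_P$.

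Second, and more substantially, your non-vanishing argument is the weak point. Showing $\Psi \neq 0$ with $\phi = \indicatorFunction{e_n}$ works cleanly: the sum collapses to $\lquot{\UnipotentSubgroup_n}{P}$, and with $W, W'$ chosen so that their restrictions to $P$ are supported on $\UnipotentSubgroup_n$ (possible precisely because $\pi|_P \cong \Ind{\UnipotentSubgroup_n}{P}{\fieldCharacter}$), you get $\Psi = 1$. But ``a dual choice for $\widetilde{\Psi}$ produced by inverting the Fourier transform; the non-vanishing \dots\ follows by symmetry'' is not a proof. If you try to take $\mathcal{F}_\fieldCharacter\phi = \indicatorFunction{e_1}$, the corresponding $\phi$ does not satisfy $\phi(0) = 0$; you must correct by a multiple of $\indicatorFunction{0}$ (harmless, since $e_1 \transpose{g}^{-1}$ never equals $0$), and then you must argue that the resulting sum $\sum_{g} W(g)W'(g)\indicatorFunction{e_1}(e_1 \transpose{g}^{-1})$ is non-zero. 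The cleanest route is to observe that $\widetilde{\Psi}$, viewed as a function of $\psi = \mathcal{F}_\fieldCharacter\phi$, is itself a non-zero invariant trilinear form associated to the stabilizer of $e_1$ under $x \mapsto x\transpose{g}^{-1}$ — another mirabolic-type subgroup to which the same irreducibility-of-restriction argument applies — so it spans the same line; non-vanishing of $\Psi$ and the fact that $\mathcal{F}_\fieldCharacter$ is a bijection carrying $V_0$ onto the subspace $\{\psi : \sum_x \psi(x) = 0\}$, whose constraint is invisible to the form because $e_1\transpose{g}^{-1}$ never hits $0$, then forces $\widetilde{\Psi} \neq 0$. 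This step needs to be spelled out; as written, the argument has a gap.
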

\begin{remark}
	We normalize the Fourier transform like in \cite{Ye18}, so that the tensor product $\epsilon_0$-factors and the Rankin--Selberg gamma factors will satisfy the same relation for both $n<m$ and $n=m$. This relation is \Cref{thm:relation-between-epsilon-factors-and-gamma-factors} and it will be discussed in the next sections.
\end{remark}

We have the following expression for the Rankin--Selberg gamma factor in terms of Bessel functions for the case $n > m$.
\begin{proposition}[{\cite[Lemma 6.1.4]{Roditty10} or \cite[Proposition 2.16]{Nien14}}]\label{prop:rankin-selberg-gamma-factor-bessel-function-formula}Suppose $n > m$ and $\pi$ is cuspidal. Then
	\begin{equation}\label{eq:rankin-selberg-gamma-factor-bessel-function-formula}
		\gamma\left( \pi \times \sigma , \fieldCharacter \right) = \sum_{g \in \lquot{\UnipotentSubgroup_m}{\GL_m \left(\finiteField\right)}}{ \besselFunction_{\pi, \fieldCharacter}\begin{pmatrix}
			& \IdentityMatrix{n - m}\\
			g &
			\end{pmatrix} } \besselFunction_{\sigma, \fieldCharacter^{-1}}\left(g\right).
	\end{equation}
\end{proposition}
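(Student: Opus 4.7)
The plan is to specialize the functional equation of the theorem for $n > m$, taking the test Whittaker functions to be the Bessel functions themselves: $W = \besselFunction_{\pi, \fieldCharacter}$, $W' = \besselFunction_{\sigma, \fieldCharacter^{-1}}$, and the smallest allowed $k = 0$. Under this substitution the right-hand side of the functional equation is exactly the sum appearing on the right-hand side of the desired formula, so the proof reduces to showing that the corresponding left-hand side equals $\gamma\left(\pi \times \sigma, \fieldCharacter\right)$; equivalently, that
\[
\Sigma := \sum_{g \in \lquot{\UnipotentSubgroup_m}{\GL_m\left(\finiteField\right)}} \sum_{x \in M_{\left(n-m-1\right) \times m}\left(\finiteField\right)} \besselFunction_{\pi, \fieldCharacter}\!\begin{pmatrix} g & & \\ x & \IdentityMatrix{n-m-1} & \\ & & 1 \end{pmatrix} \besselFunction_{\sigma, \fieldCharacter^{-1}}\left(g\right)
\]
equals $1$.

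To evaluate $\Sigma$, I would invoke the following sharpening of \Cref{prop:support-of-bessel-function} for cuspidal $\pi$: among Gelfand's canonical double-coset representatives $g_{n_1, \dots, n_s}\left(c_1, \dots, c_s\right)$, only the two extreme partitions contribute to the support of $\besselFunction_{\pi, \fieldCharacter}$, namely $s = 1$ (the central cosets lying in $Z_n \UnipotentSubgroup_n$) and $s = n$ (the big Bruhat cell $\UnipotentSubgroup_n T_n w_n \UnipotentSubgroup_n$, where $T_n$ is the diagonal torus and $w_n$ the long Weyl element). This is a classical fact for cuspidal representations of $\GL_n\left(\finiteField\right)$, compare for instance \cite{Gelfand70, carter1992cuspidal}; if one wishes, it can be derived as a short separate lemma from the trace-character formula of \Cref{thm:bessel-function-as-sum-of-trace} together with the cuspidality identity $\sum_{u \in N_P} \pi\left(u\right) = 0$ for unipotent radicals of proper parabolic subgroups.

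Granted this refinement, the argument concludes quickly. Denote the summand matrix by $M\left(g, x\right)$. Its last row is $\left(0, \dots, 0, 1\right)$, and this property is stable under left- and right-multiplication by elements of $\UnipotentSubgroup_n$. However, a direct computation shows that any element $u_1 t w_n u_2$ of the big Bruhat cell has last row equal to $\left(t_n, t_n \left(u_2\right)_{1, 2}, \dots, t_n \left(u_2\right)_{1, n}\right)$ with $t_n \in \multiplicativegroup{\finiteField}$, so its $\left(n, 1\right)$ entry is non-zero; hence $M\left(g, x\right)$ cannot lie in the big cell. Therefore $\besselFunction_{\pi, \fieldCharacter}\left(M\left(g, x\right)\right) \ne 0$ forces $M\left(g, x\right) \in Z_n \UnipotentSubgroup_n$, and the normalization $M\left(g, x\right)_{n, n} = 1$ pins the central scalar to $1$, so in fact $M\left(g, x\right) \in \UnipotentSubgroup_n$. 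This is equivalent to $x = 0$ and $g \in \UnipotentSubgroup_m$, leaving only the trivial coset represented by $g = \IdentityMatrix{m}$; its contribution is $\besselFunction_{\pi, \fieldCharacter}\left(\IdentityMatrix{n}\right) \besselFunction_{\sigma, \fieldCharacter^{-1}}\left(\IdentityMatrix{m}\right) = 1$, so $\Sigma = 1$. The main obstacle is justifying the cuspidal-support sharpening invoked in the second paragraph; once that is granted, the remaining steps are routine bookkeeping with the Bruhat decomposition and the functional equation.
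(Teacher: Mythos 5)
Your overall strategy is sound and is almost certainly what the cited sources do: take $k = 0$ in the functional equation, substitute $W = \besselFunction_{\pi, \fieldCharacter}$ and $W' = \besselFunction_{\sigma, \fieldCharacter^{-1}}$, observe that the right-hand side becomes exactly the sum in \eqref{eq:rankin-selberg-gamma-factor-bessel-function-formula}, and reduce the claim to showing $\Sigma = 1$. The conclusion $\Sigma = 1$ is also correct. But the lemma you invoke to prove it is false. You claim that for cuspidal $\pi$ the Bessel function is supported only on the cells with $s = 1$ and $s = n$. This is wrong as soon as $n \ge 3$: the $s = 2$ cells are in the support of cuspidal Bessel functions, and indeed computing those values is a central theme of this very paper. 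For instance, for cuspidal $\pi$ of $\GL_3\left(\finiteField\right)$, $\besselFunction_{\pi, \fieldCharacter}\left(\begin{smallmatrix} & I_2 \\ c & \end{smallmatrix}\right)$ equals (up to normalization) the unitary Kloosterman sum $\sum_{N\left(\xi\right) = c} \alpha^{-1}\left(\xi\right)\fieldCharacter\left(\mathrm{tr}\,\xi\right)$, which is generically nonzero; the formulas cited in the introduction \cite{tulunay2004cuspidal,Nien17,curtis2004zeta} and \Cref{thm:introduction-L-function} with $r=1$ make this explicit. The cuspidality identity $\sum_{u \in N_P}\pi\left(u\right) = 0$ does not yield the vanishing you claim. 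So the second paragraph of your proof, and the step that only rules out the big Bruhat cell, leave the intermediate cells $1 < s < n$ unaccounted for.

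The good news is that no cuspidal refinement is needed at all; the generic support theorem (\Cref{prop:support-of-bessel-function}) already suffices. Your matrix $M\left(g,x\right)$ always has last row $e_n$. Now compute the last row of an arbitrary element $u_1\, g_{n_1,\dots,n_s}\left(c_1,\dots,c_s\right)\, u_2$ with $u_1, u_2 \in U_n$: since $e_n u_1 = e_n$, the last row is $e_n\, g_{n_1,\dots,n_s}\left(c_1,\dots,c_s\right) u_2 = c_s\, e_{n_s}\, u_2 = c_s \cdot \left(\text{row } n_s \text{ of } u_2\right)$, which has a nonzero entry $c_s$ in column $n_s$. For this to equal $e_n$ one must have $n_s = n$, i.e. $s = 1$, and then $c_1 = 1$, so the element lies in $U_n$. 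Hence $\besselFunction_{\pi, \fieldCharacter}\left(M\left(g,x\right)\right) \ne 0$ forces $M\left(g,x\right) \in U_n$, which as you say pins $x = 0$, $g \in U_m$, giving $\Sigma = 1$. This argument works for any irreducible generic $\pi$; the cuspidality hypothesis is used only in invoking the functional equation itself. Once you replace your second paragraph by this elementary last-row computation, the proof is correct.
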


We also have an expression for the case $n=m$.
\begin{proposition}[{\cite[(16)]{Ye18}}]\label{prop:rankin-selberg-gamma-factor-bessel-function-formula-m-equals-n}Suppose $n=m$, and $\pi$ and $\sigma$ are cuspidal. Then
	\begin{equation}\label{eq:rankin-selberg-gamma-factor-bessel-function-formula-m-equals-n}
		\gamma\left(\pi \times \sigma, \fieldCharacter \right) = \sum_{g \in \lquot{\UnipotentSubgroup_n}{\GL_{n}\left(\finiteField\right)}}{\besselFunction_{\pi,\fieldCharacter}\left(g\right) \besselFunction_{\sigma, \fieldCharacter^{-1}}\left(g\right)\fieldCharacter\left(e_1 \transpose{g^{-1}} \transpose{e_n} \right)},
	\end{equation}
	where $e_1 = \left(1,0,\dots,0\right)$ and $e_n = \left(0,\dots,0,1\right)$.
\end{proposition}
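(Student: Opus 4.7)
The plan is to specialize the Piatetski--Shapiro functional equation to the pair $W = \besselFunction_{\pi, \fieldCharacter}$, $W' = \besselFunction_{\sigma, \fieldCharacter^{-1}}$ with the test function $\phi = \indicatorFunction{\{e_n\}}$, the indicator of the singleton $\{e_n\} \subset \finiteField^n$. Since $0 \ne e_n$, the required condition $\phi(0)=0$ is satisfied. A direct computation gives
\begin{equation*}
\mathcal{F}_{\fieldCharacter}\phi(x) = \fieldCharacter(\standardForm{x}{e_n}) = \fieldCharacter(x_n),
\end{equation*}
so $\mathcal{F}_{\fieldCharacter}\phi(e_1 \transpose{g^{-1}}) = \fieldCharacter(e_1 \transpose{g^{-1}} \transpose{e_n})$. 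Thus the right-hand side of the functional equation is exactly the sum appearing in the claim.

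I next analyze the left-hand side. The condition $\phi(e_n g) \ne 0$ is equivalent to the last row of $g$ being $e_n$, i.e., $g$ belongs to the standard mirabolic subgroup $P_n \subset \GL_n(\finiteField)$. A short normalization using $\UnipotentSubgroup_n = \UnipotentSubgroup_{n-1} \ltimes \finiteField^{n-1}$ inside $P_n$ identifies representatives for $\lquot{\UnipotentSubgroup_n}{P_n}$ with the block matrices $\begin{pmatrix} A & 0 \\ 0 & 1 \end{pmatrix}$ as $A$ varies over $\lquot{\UnipotentSubgroup_{n-1}}{\GL_{n-1}(\finiteField)}$. Therefore the left-hand side equals $\gamma(\pi \times \sigma, \fieldCharacter)$ times the inner sum
\begin{equation*}
Z := \sum_{A \in \lquot{\UnipotentSubgroup_{n-1}}{\GL_{n-1}(\finiteField)}} \besselFunction_{\pi, \fieldCharacter}\begin{pmatrix} A & 0 \\ 0 & 1 \end{pmatrix} \besselFunction_{\sigma, \fieldCharacter^{-1}}\begin{pmatrix} A & 0 \\ 0 & 1 \end{pmatrix}.
\end{equation*}

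The remaining task is to show $Z = 1$. The key point is that $\besselFunction_{\pi, \fieldCharacter}$ restricted to $P_n$ is bi-$\fieldCharacter$-Whittaker (inherited from its bi-Whittaker property on $\GL_n$), and the space of bi-$\fieldCharacter$-Whittaker functions on $P_n$ is one-dimensional, spanned by the function $\delta$ supported on $\UnipotentSubgroup_n$ with $\delta(u) = \fieldCharacter(u)$. Since $\besselFunction_{\pi, \fieldCharacter}(\IdentityMatrix{n}) = 1$, the scalar is $1$, and the same reasoning with $\fieldCharacter^{-1}$ applies to $\besselFunction_{\sigma, \fieldCharacter^{-1}}$. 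Consequently the only non-vanishing contribution to $Z$ comes from the coset of $A = \IdentityMatrix{n-1}$, yielding $Z = \besselFunction_{\pi, \fieldCharacter}(\IdentityMatrix{n}) \besselFunction_{\sigma, \fieldCharacter^{-1}}(\IdentityMatrix{n}) = 1$, which finishes the proof.

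The main technical point is the one-dimensional uniqueness of bi-$\fieldCharacter$-Whittaker functions on $P_n$. The representation-theoretic route invokes the Gelfand--Kazhdan / Bernstein--Zelevinsky identification $\pi|_{P_n} \isomorphic \Ind{\UnipotentSubgroup_n}{P_n}{\fieldCharacter}$ for irreducible generic $\pi$, combined with multiplicity one for the Gelfand--Graev representation. A self-contained alternative proceeds by induction on $n$: writing $g(A) := f\begin{pmatrix} A & 0 \\ 0 & 1 \end{pmatrix}$ and varying $v' \in \finiteField^{n-1}$ in the top-right column of $u_2 \in \UnipotentSubgroup_n$ inside $f\bigl(\begin{smallmatrix} A & 0 \\ 0 & 1 \end{smallmatrix}\bigr) = \fieldCharacter(u_2)^{-1} f\bigl(\begin{smallmatrix} A & 0 \\ 0 & 1 \end{smallmatrix}\bigr u_2\bigr)$ forces $\fieldCharacter((Av')_{n-1}) = \fieldCharacter(v'_{n-1})$ for all $v'$ on the support of $g$, hence the last row of $A$ must equal $e_{n-1}$; this reduces $g$ to a bi-$\fieldCharacter_{n-1}$-Whittaker function on $P_{n-1}$ and bootstraps the induction from the easily verified base case $n = 2$.
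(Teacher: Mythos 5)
Your proof is correct, and the approach — specializing the Piatetski-Shapiro functional equation to $W = \besselFunction_{\pi,\psi}$, $W' = \besselFunction_{\sigma,\psi^{-1}}$ with $\phi = \indicatorFunction{\{e_n\}}$ — is the standard way to derive this identity. The paper itself does not supply a proof of this proposition; it simply cites \cite[(16)]{Ye18}, and the derivation in that reference is of the same nature, so you have reconstructed the expected argument rather than found a genuinely different route.

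All the individual steps check out: $\phi(0)=0$ is satisfied since $e_n \ne 0$; the Fourier transform computation $\mathcal{F}_\psi\phi(x) = \psi(\standardForm{x}{e_n}) = \psi(x_n)$ gives $\mathcal{F}_\psi\phi(e_1\transpose{g}^{-1}) = \psi(e_1\transpose{g^{-1}}\transpose{e_n})$ because $\transpose{g}^{-1} = \transpose{(g^{-1})}$; the left-hand sum localizes to $\lquot{\UnipotentSubgroup_n}{P_n}$ and the representatives $\left(\begin{smallmatrix} A & 0 \\ 0 & 1 \end{smallmatrix}\right)$ with $A \in \lquot{\UnipotentSubgroup_{n-1}}{\GL_{n-1}(\finiteField)}$ are correct (absorb the top-right column via the abelian radical of $U_n$). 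The key lemma that $Z = 1$ rests on the support statement that a nonzero bi-$\psi$-Whittaker function on $P_n$ is concentrated on $U_n$; your self-contained inductive argument is sound (and, usefully, only uses $u_1, u_2 \in U_n \subseteq P_n$, so it really is a statement about $P_n$ and not a statement inherited from $\GL_n$). One small point worth spelling out if you write this up in full: after you force $A$ to have last row $e_{n-1}$, you should explicitly use the $\GL_{n-1}$ bi-equivariance of $f$ to peel off the factor $\psi(w_{n-2})$ from $f\left(\begin{smallmatrix} A' & w \\ 0 & 1 \end{smallmatrix}\right)$ before re-running the argument on $g(A') = f\left(\begin{smallmatrix} A' & 0 \\ 0 & 1 \end{smallmatrix}\right)$; as written, the reduction "to a bi-$\psi_{n-1}$-Whittaker function on $P_{n-1}$" is correct but a little compressed.
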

In the special case where $\pi \cong \Contragradient{\sigma}$, we actually have by \cite[Corollary 4.3]{Ye18} that $$\gamma\left(\pi \times \Contragradient{\pi}, \fieldCharacter\right) = -1.$$

\subsection{Shahidi gamma factors}

Let $\pi$ and $\sigma$ be irreducible generic representations of $\GL_n\left(\finiteField\right)$ and $\GL_m\left(\finiteField\right)$, respectively.

In an unpublished note from 1979 \cite{soudry1979}, Soudry defines an intertwining operator $U \colon \sigma \circ \pi \rightarrow \pi \circ \sigma$, which allows him to define a gamma factor, which he denotes $\Gamma\left(\pi \times \sigma, \fieldCharacter\right)$. This is a finite field analog of the Langlands--Shahidi gamma factor \cite{shahidi1984fourier}. Soudry's results now appear in \cite{SoudryZelingher2023}.

The Shahidi gamma factor can be defined for any pair of irreducible generic representations $\pi$ and $\sigma$, regardless whether $n > m$ or whether $\pi$ or $\sigma$ are cuspidal. 

The Shahidi gamma factor satisfies \cite[Corollary 3.8]{SoudryZelingher2023} $$\Gamma\left(\pi \times \sigma, \fieldCharacter\right) = \centralCharacter{\pi}\left(-1\right)^m \centralCharacter{\sigma}\left(-1\right)^n
\Gamma\left(\Contragradient{\sigma} \times \Contragradient{\pi}, \fieldCharacter\right),$$
where $\centralCharacter{\pi}$ and $\centralCharacter{\sigma}$ are the central characters of $\pi$ and $\sigma$, respectively.

The Shahidi gamma factor $\Gamma\left({\pi \times \sigma}, \fieldCharacter\right)$ can be expressed in terms of the associated Bessel functions.

\begin{theorem}[{\cite[Theorem 3.14]{SoudryZelingher2023}}]\label{thm:soudry-gamma-factor-bessel-expression}
	\begin{enumerate}
		\item If $n > m$, then
		$$\Gamma\left({\pi \times \sigma}, \fieldCharacter\right) = q^{\frac{m}{2}\left(2 n - m - 1\right)} \centralCharacter{\sigma}\left(-1\right) \sum_{g \in \lquot{\UnipotentSubgroup_m}{\GL_m \left(\finiteField\right)}} \besselFunction_{\pi,\fieldCharacter}\begin{pmatrix}
		& \IdentityMatrix{n - m}\\
		g
		\end{pmatrix} \besselFunction_{\Contragradient{\sigma}, \fieldCharacter^{-1}}\left(g\right).$$
		\item  If $n = m$, then
		$$\Gamma\left({\pi \times \sigma}, \fieldCharacter\right) = q^{\frac{n \left(n-1\right)}{2}} \centralCharacter{\sigma}\left(-1\right) \sum_{g \in \lquot{\UnipotentSubgroup_n}{\GL_n \left(\finiteField\right)}} \fieldCharacter\begin{pmatrix}
		\IdentityMatrix{n} & g^{-1}\\
		& \IdentityMatrix{n}
		\end{pmatrix} \besselFunction_{\pi,\fieldCharacter}\left(g\right) \besselFunction_{\Contragradient{\sigma}, \fieldCharacter^{-1}}\left(g\right).$$
		\item If $n < m$, then
		\begin{equation*}
			\Gamma\left({\pi \times \sigma}, \fieldCharacter\right) = q^{\frac{n}{2}\left(2 m - n - 1\right)} \centralCharacter{\pi}\left(-1\right) \sum_{g \in \lquot{\UnipotentSubgroup_n}{\GL_n \left(\finiteField\right)}} \besselFunction_{\pi,\fieldCharacter}\left(g\right) \besselFunction_{\Contragradient{\sigma}, \fieldCharacter^{-1}}\begin{pmatrix}
			& \IdentityMatrix{m - n}\\
			g
			\end{pmatrix}.
		\end{equation*}
	\end{enumerate}
\end{theorem}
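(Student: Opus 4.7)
The plan is to evaluate the defining relation of the Shahidi gamma factor on a specific Bessel-type section. By construction, the intertwining operator $U \colon \sigma \circ \pi \to \pi \circ \sigma$ relates the canonical Whittaker functionals on both sides, and this relation is precisely what defines $\Gamma(\pi \times \sigma, \fieldCharacter)$. The strategy is to pick a section that makes both sides of the equation explicitly computable in terms of $\besselFunction_{\pi,\fieldCharacter}$ and $\besselFunction_{\Contragradient{\sigma},\fieldCharacter^{-1}}$.

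First I would realize $\sigma \circ \pi$ inside $\Ind{\UnipotentSubgroup_{n+m}}{\GL_{n+m}(\finiteField)}{\fieldCharacter}$ and build a distinguished section $f_0$ from the product $\besselFunction_{\sigma,\fieldCharacter} \otimes \besselFunction_{\pi,\fieldCharacter}$, extended by zero off the ``big cell'' containing the block-diagonal Levi and the opposite unipotent radical $N$. The action of $U$ on $f_0$ is given by a sum over $N$, which in case (1) has cardinality $q^{nm}$. Using the Bruhat decomposition of $\GL_{n+m}(\finiteField)$ together with the left $(\UnipotentSubgroup_{n+m}, \fieldCharacter)$-equivariance of the Bessel section, this sum collapses onto the support described in \Cref{prop:support-of-bessel-function}.

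For case (1), evaluating the resulting Whittaker function of $\pi \circ \sigma$ at the identity produces a sum over $g \in \lquot{\UnipotentSubgroup_m}{\GL_m(\finiteField)}$ of $\besselFunction_{\pi,\fieldCharacter}\begin{pmatrix} & \IdentityMatrix{n - m}\\ g \end{pmatrix}$ paired against $\besselFunction_{\Contragradient{\sigma},\fieldCharacter^{-1}}(g)$. The dimensional prefactor $q^{\frac{m}{2}(2n - m - 1)}$ emerges from the normalization of the summation over $N$, while the sign $\centralCharacter{\sigma}(-1)$ arises from commuting the block-swap Weyl element past the central element $-\IdentityMatrix{n+m}$ used in passing between $\sigma$ and $\Contragradient{\sigma}$. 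Case (3) follows symmetrically, either by a parallel computation or directly from the symmetry $\Gamma(\pi \times \sigma, \fieldCharacter) = \centralCharacter{\pi}(-1)^m \centralCharacter{\sigma}(-1)^n \Gamma(\Contragradient{\sigma} \times \Contragradient{\pi}, \fieldCharacter)$ combined with \Cref{prop:complex-conjugate-of-bessel-function}.

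The delicate case is $n = m$: here the Weyl element swapping the two blocks lies in a different Bruhat stratum, and after unfolding the sum defining $U f_0$ a residual unipotent summation survives. This residual sum, paired against $\fieldCharacter$, produces exactly the additional kernel $\fieldCharacter\begin{pmatrix} \IdentityMatrix{n} & g^{-1} \\ & \IdentityMatrix{n} \end{pmatrix}$ that appears in the statement. The main obstacle I anticipate is purely bookkeeping: correctly tracking the power of $q$ arising from the sizes of the unipotent subgroups involved together with the signs produced by conjugation by the block-swap Weyl element. Once these normalizations are reconciled, all three formulas emerge from one uniform computation.
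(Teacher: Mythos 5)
The paper cites this theorem from \cite{SoudryZelingher2023} and does not reproduce its proof, so there is no internal argument to compare against; your task was to reconstruct the proof from scratch. Your strategy---realizing $\sigma\circ\pi$ inside $\Ind{\UnipotentSubgroup_{n+m}}{\GL_{n+m}(\finiteField)}{\fieldCharacter}$, building a Bessel section $f_0$ supported on the big Bruhat cell, applying Soudry's intertwining operator $U$ (a sum over the unipotent radical), and unfolding using the support of the Bessel function from \Cref{prop:support-of-bessel-function}---is precisely the standard unfolding technique by which Bessel identities for Shahidi-type gamma factors are obtained, and it is the route one would expect the cited reference to take. Your observation that the $n=m$ case is the delicate one, with a surviving unipotent coordinate giving rise to the extra kernel $\fieldCharacter\begin{pmatrix}\IdentityMatrix{n} & g^{-1}\\ & \IdentityMatrix{n}\end{pmatrix}$, is the right structural insight.

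That said, two things in your sketch are passed over more lightly than they deserve. First, you seed the computation with $\besselFunction_{\sigma,\fieldCharacter}\otimes\besselFunction_{\pi,\fieldCharacter}$, but the target formula involves $\besselFunction_{\Contragradient{\sigma},\fieldCharacter^{-1}}$; the conversion uses $\besselFunction_{\Contragradient{\sigma},\fieldCharacter^{-1}}(g)=\besselFunction_{\sigma,\fieldCharacter}(g^{-1})$ (\Cref{prop:complex-conjugate-of-bessel-function}), and the inversion $g\mapsto g^{-1}$ must come out of the change of variables when conjugating by the block-swap Weyl element---this is not automatic and needs to be tracked. Second, the prefactor $q^{\frac{m}{2}(2n-m-1)}$ does not come from $\lvert N\rvert = q^{nm}$ alone: a precise count is needed of which unipotent coordinates survive as a free sum producing a $q$-factor, which collapse by the support condition, and which are absorbed into the coset representatives for $\lquot{\UnipotentSubgroup_m}{\GL_m(\finiteField)}$, together with whatever $q$-powers enter through the normalization of the Whittaker functional on $\pi\circ\sigma$ relative to the Bessel function. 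You flag these as ``bookkeeping,'' which is fair, but the entire content of the three different exponents and the extra character in case (2) lives in exactly that bookkeeping, so until it is carried out the argument is a plan rather than a proof.
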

The Shahidi gamma factor is multiplicative.
\begin{theorem}[{Multiplicativity of gamma factors, \cite[Theorem 3.9]{SoudryZelingher2023}}]\label{thm:soudry-gamma-factor-multiplicative}
	Let $\pi$ and $\sigma$ be irreducible generic representations of $\GL_{n}\left(\finiteField\right)$ and $\GL_{m}\left(\finiteField\right)$, respectively. Suppose that $\sigma_1$ and $\sigma_2$ are irreducible generic representations of $\GL_{m_1}\left(\finiteField\right)$ and $\GL_{m_2}\left(\finiteField\right)$, respectively, such that $m_1 + m_2 = m$, and suppose that $\sigma \subseteq \sigma_1 \circ \sigma_2$. Then
	$$ \Gamma\left({\pi \times \sigma}, \fieldCharacter\right) = \Gamma\left({\pi \times \sigma_1}, \fieldCharacter\right) \Gamma\left({\pi \times \sigma_2}, \fieldCharacter\right).$$
\end{theorem}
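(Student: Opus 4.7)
The plan is to deduce multiplicativity from the very definition of the Shahidi gamma factor via Soudry's intertwining operator $U_{\pi,\sigma}\colon \sigma\circ\pi\to \pi\circ\sigma$, by factoring the ``long'' intertwining operator attached to $\sigma_1\circ\sigma_2\circ\pi$ as a two-step composition. Concretely, I would set up the chain
$$\sigma_1\circ\sigma_2\circ\pi \xrightarrow{\ M_2'\ } \sigma_1\circ\pi\circ\sigma_2 \xrightarrow{\ M_1'\ } \pi\circ\sigma_1\circ\sigma_2,$$
where $M_2'$ is obtained by applying $U_{\pi,\sigma_2}$ in the inner slot (viewing $\sigma_2\circ\pi$ as sitting inside a two-step parabolic with $\sigma_1$ on the outside) and similarly $M_1'$ is obtained from $U_{\pi,\sigma_1}$. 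Each is an intertwining operator of the required type by induction-in-stages.

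Next I would restrict to the unique irreducible generic subrepresentation $\sigma\subseteq \sigma_1\circ\sigma_2$, which is generic because $\sigma_1,\sigma_2$ are. By construction, the composition $M_1'\circ M_2'$ is an intertwining operator from $(\sigma_1\circ\sigma_2)\circ\pi$ to $\pi\circ(\sigma_1\circ\sigma_2)$, so it restricts to an intertwining operator $\sigma\circ\pi\to \pi\circ\sigma$. By uniqueness of the Whittaker model (multiplicity one on the generic piece), this restriction must agree with $U_{\pi,\sigma}$ up to a nonzero scalar, and the whole game is to identify that scalar.

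The computation of the scalar proceeds by evaluating both sides on a normalized Whittaker/Bessel vector and applying the integral expression of \Cref{thm:soudry-gamma-factor-bessel-expression}. I would unwind the action of each $M_i'$ on a Whittaker vector of the triple induction one slot at a time, using a Fubini-style separation of the $\UnipotentSubgroup_{m_1+m_2}$-integration (together with integration along the intermediate unipotent radical) into an inner $\UnipotentSubgroup_{m_2}$-integration — which contributes $\Gamma(\pi\times\sigma_2,\fieldCharacter)$ by \Cref{thm:soudry-gamma-factor-bessel-expression} applied to the pair $(\pi,\sigma_2)$ — followed by an outer $\UnipotentSubgroup_{m_1}$-integration contributing $\Gamma(\pi\times\sigma_1,\fieldCharacter)$. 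Multiplying the two contributions and matching them with the single-step integral expression for $\Gamma(\pi\times\sigma,\fieldCharacter)$ against the Bessel function of the generic piece of $\sigma_1\circ\sigma_2$ yields the desired product formula.

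The main obstacle will be the bookkeeping of the normalization constants — the powers $q^{\frac{m}{2}(2n-m-1)}$, the central-character signs $\centralCharacter{\sigma_i}(-1)^n$, and the choice of Whittaker functional on the intermediate space $\sigma_1\circ\pi\circ\sigma_2$ — as well as the verification that the Whittaker vector on $\sigma\circ\pi$ obtained by restricting from $\sigma_1\circ\sigma_2\circ\pi$ has the expected value at the identity. A cleaner parallel route that I would attempt is to bypass the integrals entirely: \emph{define} $\Gamma(\pi\times\sigma,\fieldCharacter)$ intrinsically as the proportionality constant by which $U_{\pi,\sigma}$ transports the (unique up to scalar) Whittaker functional from $\pi\circ\sigma$ to $\sigma\circ\pi$; then composition of intertwining operators automatically multiplies these proportionality constants, and multiplicativity becomes essentially formal, resting only on multiplicity one of Whittaker functionals on each of the four induced spaces $\sigma_1\circ\sigma_2\circ\pi$, $\sigma_1\circ\pi\circ\sigma_2$, $\pi\circ\sigma_1\circ\sigma_2$, and $\sigma\circ\pi$.
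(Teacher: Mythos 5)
The paper you are working from does not prove this theorem: it is stated with a citation to \cite[Theorem~3.9]{SoudryZelingher2023}, where Soudry and Zelingher give the argument. So there is no ``paper's own proof'' to compare against here. That said, your sketch is the standard Langlands--Shahidi multiplicativity argument, and it is in the same spirit as what the cited reference does.

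A few cautions on the ``cleaner, formal'' route at the end, since that is where a gap is most likely to hide. Multiplicity one of Whittaker functionals on each of the four induced spaces is necessary but not sufficient for the proportionality constants to simply multiply. You additionally need: (a) the factorization $U_{\pi,\sigma_1\circ\sigma_2}=M_1'\circ M_2'$ of the long intertwining operator, which is an induction-in-stages decomposition of the relevant unipotent radical and is not just a matter of bookkeeping (it is a concrete identity of finite sums over unipotent cosets); (b) the statement that $M_2'$, which acts only in the inner $\sigma_2\circ\pi\to\pi\circ\sigma_2$ slot, transports the Whittaker functional on $\sigma_1\circ\sigma_2\circ\pi$ to the one on $\sigma_1\circ\pi\circ\sigma_2$ scaled by exactly $\Gamma(\pi\times\sigma_2,\psi)$ --- this requires passing the Whittaker functional through the outer $\sigma_1$-slot, i.e.\ a compatibility of Whittaker functionals with parabolic induction (a finite-field Rodier/Casselman--Shalika type fact), not just multiplicity one; and (c) the reduction from $\sigma_1\circ\sigma_2$ to its unique irreducible generic subrepresentation $\sigma$, which works precisely because $\sigma$ is the only generic constituent and the Whittaker functional on $\sigma_1\circ\sigma_2$ therefore factors through $\sigma$. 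None of these is hard over a finite field, but they are the actual content; calling the step ``essentially formal'' understates where the work lies. Your first route (unwinding the Bessel integrals in two stages and matching normalizations) avoids hiding this, at the cost of the constant-chasing you already flag.
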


We notice that if $\pi$ is cuspidal and $n > m$, then by \Cref{prop:rankin-selberg-gamma-factor-bessel-function-formula} and \Cref{thm:soudry-gamma-factor-bessel-expression}, $$\gamma\left(\pi \times \sigma, \fieldCharacter \right) = \centralCharacter{\sigma}\left(-1\right) q^{-\frac{m}{2}\left(2n -m -1\right)} \Gamma\left({\pi \times \Contragradient{\sigma}}, \fieldCharacter\right).$$
This formula also holds for $n = m$ if $\pi$ and $\sigma$ are cuspidal by  \Cref{prop:rankin-selberg-gamma-factor-bessel-function-formula-m-equals-n} and \Cref{thm:soudry-gamma-factor-bessel-expression}.

We extend our definition of $\gamma\left(\pi \times \sigma,\fieldCharacter\right)$ to all irreducible generic representations, by defining $\gamma\left(\pi \times \sigma,\fieldCharacter\right) = \centralCharacter{\sigma}\left(-1\right) q^{-\frac{m}{2}\left(2n -m -1\right)} \Gamma\left({\pi \times \Contragradient{\sigma}}, \fieldCharacter\right)$. Then under this notation we have the following theorem.

\begin{theorem}\label{thm:shahidi-gamma-factor-properties}Let $\pi$ and $\sigma$ be irreducible generic representations of $\GL_n\left(\finiteField\right)$ and $\GL_{m}\left(\finiteField\right)$, respectively. Let $\centralCharacter{\pi}$ and $\centralCharacter{\sigma}$ denote the central characters of $\pi$ and $\sigma$, respectively.
	\begin{enumerate}
		\item If $n > m$, then $\gamma\left(\pi \times \sigma, \fieldCharacter\right)$ is given by \eqref{eq:rankin-selberg-gamma-factor-bessel-function-formula}.
		\item If $n = m$, then $\gamma\left(\pi \times \sigma, \fieldCharacter\right)$ is given by \eqref{eq:rankin-selberg-gamma-factor-bessel-function-formula-m-equals-n}.
		\item If $n < m$, then
		\begin{equation}
			\label{eq:m-greater-than-n-rankin-selberg} \gamma\left({\pi \times \sigma, \fieldCharacter}\right) = q^{ \frac{m\left(m+1\right)}{2} - \frac{n\left(n+1\right)}{2} } 
			\sum_{g \in \lquot{\UnipotentSubgroup_n}{\GL_n \left(\finiteField\right)}} \besselFunction_{\pi,\fieldCharacter}\left(g\right) \besselFunction_{\sigma, \fieldCharacter^{-1}}\begin{pmatrix}
			& -\IdentityMatrix{m - n}\\
			g
			\end{pmatrix}.
		\end{equation}		
		\item For any $n$ and $m$, \label{item:gamma-factor-swap-representations-formula} $$\gamma\left(\pi \times \sigma, \fieldCharacter \right) = q^{\frac{m\left(m+1\right)}{2} - \frac{n\left(n+1\right)}{2}} \centralCharacter{\pi}\left(-1\right)^{m-1} \centralCharacter{\sigma}\left(-1\right)^{n-1} \gamma\left(\sigma \times \pi, \fieldCharacter \right).$$

		\item For any $n$ and $m$, suppose that $m = m_1 + m_2$, and let $\sigma_1$ and $\sigma_2$ be irreducible generic representations of $\GL_{m_1}\left(\finiteField\right)$ and $\GL_{m_2}\left(\finiteField\right)$, respectively. Suppose that $\sigma$ is the unique irreducible generic subrepresentation of the parabolic induction $\sigma_1 \circ \sigma_2$. Then 
		$$ \gamma\left(\pi \times \sigma, \fieldCharacter\right) = q^{m_1 m_2} \gamma\left(\pi \times \sigma_1, \fieldCharacter\right) \gamma\left(\pi \times \sigma_2, \fieldCharacter\right).$$
		\item For any $n$ and $m$, suppose that $n = n_1 + n_2$, and let $\pi_1$ and $\pi_2$ be irreducible generic representations of $\GL_{n_1}\left(\finiteField\right)$ and $\GL_{n_2}\left(\finiteField\right)$, respectively. Suppose that $\pi$ is the unique irreducible generic subrepresentation of the parabolic induction $\pi_1 \circ \pi_2$. Then
		$$ \gamma\left(\pi \times \sigma, \fieldCharacter\right) = q^{-\frac{m\left(m+1\right)}{2}} \centralCharacter{\sigma}\left(-1\right) \gamma\left(\pi_1 \times \sigma, \fieldCharacter\right) \gamma\left(\pi_2 \times \sigma, \fieldCharacter\right).$$		
	\end{enumerate}
\end{theorem}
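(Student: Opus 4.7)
My plan is to derive all six items in a unified way from the defining identity $\gamma(\pi \times \sigma, \fieldCharacter) = \centralCharacter{\sigma}(-1) q^{-\frac{m}{2}(2n-m-1)} \Gamma(\pi \times \Contragradient{\sigma}, \fieldCharacter)$ combined with the previously recorded properties of $\Gamma$. For items (1) and (2), I substitute this definition into the expressions of Theorem \ref{thm:soudry-gamma-factor-bessel-expression} parts (1) and (2) applied with $\sigma$ replaced by $\Contragradient{\sigma}$, and use the identities $\besselFunction_{\Contragradient{\Contragradient{\sigma}}, \fieldCharacter^{-1}} = \besselFunction_{\sigma, \fieldCharacter^{-1}}$, $\centralCharacter{\Contragradient{\sigma}}(-1) = \centralCharacter{\sigma}(-1)$, and $\centralCharacter{\sigma}(-1)^2 = 1$. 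The factors $\centralCharacter{\sigma}(-1)$ cancel and the $q$-powers add to zero, yielding exactly \eqref{eq:rankin-selberg-gamma-factor-bessel-function-formula} and \eqref{eq:rankin-selberg-gamma-factor-bessel-function-formula-m-equals-n}.

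Item (3) is the only one requiring a nontrivial manipulation, because Theorem \ref{thm:soudry-gamma-factor-bessel-expression}(3) produces $\begin{pmatrix}& \IdentityMatrix{m-n}\\ g\end{pmatrix}$ rather than $\begin{pmatrix}& -\IdentityMatrix{m-n}\\ g\end{pmatrix}$. To fix this I perform the change of variables $g \mapsto -g$ on the sum over $\lquot{\UnipotentSubgroup_n}{\GL_n(\finiteField)}$, which introduces a factor $\centralCharacter{\pi}(-1)$ from $\besselFunction_{\pi, \fieldCharacter}(-g) = \centralCharacter{\pi}(-1)\besselFunction_{\pi, \fieldCharacter}(g)$, and then use the matrix identity $\begin{pmatrix}& \IdentityMatrix{m-n}\\ -g\end{pmatrix} = -\begin{pmatrix}& -\IdentityMatrix{m-n}\\ g\end{pmatrix}$ together with the central-character quasi-invariance of $\besselFunction_{\sigma, \fieldCharacter^{-1}}$ to introduce an additional factor $\centralCharacter{\sigma}(-1)$. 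Together with the $\centralCharacter{\pi}(-1)$ already present in Theorem \ref{thm:soudry-gamma-factor-bessel-expression}(3) and the $\centralCharacter{\sigma}(-1)$ from the definition of $\gamma$, these signs pair into squares and cancel, while the arithmetic identity $-\tfrac{m}{2}(2n-m-1) + \tfrac{n}{2}(2m-n-1) = \tfrac{m(m+1)-n(n+1)}{2}$ confirms the exponent of $q$ in \eqref{eq:m-greater-than-n-rankin-selberg}.

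For (4) I apply the Shahidi functional equation $\Gamma(\pi \times \sigma, \fieldCharacter) = \centralCharacter{\pi}(-1)^m \centralCharacter{\sigma}(-1)^n \Gamma(\Contragradient{\sigma} \times \Contragradient{\pi}, \fieldCharacter)$ with $\sigma$ replaced by $\Contragradient{\sigma}$ to relate $\Gamma(\pi \times \Contragradient{\sigma}, \fieldCharacter)$ and $\Gamma(\sigma \times \Contragradient{\pi}, \fieldCharacter)$, then rewrite each side via the definition of $\gamma$; the resulting prefactor $\centralCharacter{\pi}(-1)^{m+1}\centralCharacter{\sigma}(-1)^{n+1}$ equals $\centralCharacter{\pi}(-1)^{m-1}\centralCharacter{\sigma}(-1)^{n-1}$ since $\centralCharacter{\pi}(-1)^2 = \centralCharacter{\sigma}(-1)^2 = 1$. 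For (5) I observe that $\sigma \subseteq \sigma_1 \circ \sigma_2$ implies $\Contragradient{\sigma} \subseteq \Contragradient{\sigma_1} \circ \Contragradient{\sigma_2}$ up to the commutativity of parabolic induction, so Theorem \ref{thm:soudry-gamma-factor-multiplicative} gives $\Gamma(\pi \times \Contragradient{\sigma}, \fieldCharacter) = \Gamma(\pi \times \Contragradient{\sigma_1}, \fieldCharacter)\Gamma(\pi \times \Contragradient{\sigma_2}, \fieldCharacter)$. Substituting the definitions on both sides and using $m_1(2n-m_1-1) + m_2(2n-m_2-1) = m(2n-m-1) + 2m_1 m_2$ produces the $q^{m_1 m_2}$ prefactor and the matching $\centralCharacter{\sigma}(-1) = \centralCharacter{\sigma_1}(-1)\centralCharacter{\sigma_2}(-1)$. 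Finally, (6) is obtained by a three-step dance: apply (4) to rewrite $\gamma(\pi \times \sigma, \fieldCharacter)$ in terms of $\gamma(\sigma \times \pi, \fieldCharacter)$, apply (5) on the second slot to split the latter as $\gamma(\sigma \times \pi_1, \fieldCharacter)\gamma(\sigma \times \pi_2, \fieldCharacter)$, and then apply (4) once more to each factor; the $q$-powers and central-character signs telescope to the stated $q^{-\frac{m(m+1)}{2}} \centralCharacter{\sigma}(-1)$, using $n_1(n_1+1)+n_2(n_2+1) = n(n+1) - 2n_1 n_2$.

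The only expected obstacle is keeping consistent track of the sign contributions $\centralCharacter{\pi}(-1)$ and $\centralCharacter{\sigma}(-1)$ coming from the swap $g \mapsto -g$, the quasi-invariance of the Bessel function under the scalar $-1$, the Shahidi functional equation, and the normalization in the definition of $\gamma$; no new conceptual input is needed beyond the results already stated in this section.
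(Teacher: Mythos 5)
Your proposal is correct and is precisely the unwinding the paper leaves implicit: the theorem is presented without a proof block, with the understanding that each item follows by substituting the defining relation $\gamma(\pi\times\sigma,\fieldCharacter)=\centralCharacter{\sigma}(-1)\,q^{-\frac{m}{2}(2n-m-1)}\,\Gamma(\pi\times\Contragradient{\sigma},\fieldCharacter)$ into \Cref{thm:soudry-gamma-factor-bessel-expression}, the Shahidi functional equation, and \Cref{thm:soudry-gamma-factor-multiplicative}, exactly as you do. Your bookkeeping of the $q$-exponents, of the central-character signs (including the $g\mapsto -g$ substitution needed for item (3)), and the derivation of (6) by the (4)--(5)--(4) chain are all accurate.
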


\subsection{A recursive expression for the Bessel function}

Using \Cref{thm:bessel-function-as-sum-of-trace} and the orthogonality relations of trace characters of irreducible representations, one can also show the following proposition.
\begin{proposition}[{\cite[Lemma 4.2]{Ye18}}]\label{prop:orthogonality-for-m-equals-n}
	Let $\representationDeclaration{\sigma}$ and $\representationDeclaration{\sigma'}$ be two irreducible generic representations of $\GL_{m}\left(\finiteField\right)$. Then
	\begin{equation}\label{eq:orthogonality-of-bessel-functions}
	\sum_{g \in \lquot{\UnipotentSubgroup_m}{\GL_m\left(\finiteField\right)}} \besselFunction_{\sigma, \fieldCharacter} \left(g\right) \besselFunction_{\sigma', \fieldCharacter^{-1}} \left(g\right) = \begin{dcases}
	\frac{\grpIndex{\GL_m\left(\finiteField\right)}{\UnipotentSubgroup_m}}{\dim \sigma} & \Contragradient{\sigma} \cong \sigma'\\
	0 & \text{otherwise}
	\end{dcases}.
	\end{equation}
\end{proposition}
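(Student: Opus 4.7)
The plan is to substitute the trace-character expansion of the Bessel function (Theorem \ref{thm:bessel-function-as-sum-of-trace}) for both factors and then reduce to Schur orthogonality. Writing $G = \GL_m(\finiteField)$ and $U = \UnipotentSubgroup_m$, I multiply the left-hand side of \eqref{eq:orthogonality-of-bessel-functions} by $\sizeof{U}$ to turn it into an unnormalized sum over all of $G$, and then expand to get
\begin{equation*}
\sum_{g \in G} \besselFunction_{\sigma, \fieldCharacter}(g) \besselFunction_{\sigma', \fieldCharacter^{-1}}(g) = \frac{1}{\sizeof{U}^2} \sum_{u, u' \in U} \fieldCharacter^{-1}(u) \fieldCharacter(u') \sum_{g \in G} \chi_\sigma(gu) \chi_{\sigma'}(gu').
\end{equation*}

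Next, I perform the change of variable $g \mapsto g u^{-1}$ in the inner sum, and set $v = u^{-1} u' \in U$. Then the factor $\fieldCharacter^{-1}(u) \fieldCharacter(u') = \fieldCharacter^{-1}(u) \fieldCharacter(u v) = \fieldCharacter(v)$ no longer depends on $u$, so the $u$-sum contributes a factor of $\sizeof{U}$, leaving
\begin{equation*}
\sum_{g \in G} \besselFunction_{\sigma, \fieldCharacter}(g) \besselFunction_{\sigma', \fieldCharacter^{-1}}(g) = \frac{1}{\sizeof{U}} \sum_{v \in U} \fieldCharacter(v) \sum_{g \in G} \chi_\sigma(g) \chi_{\sigma'}(gv).
\end{equation*}

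The key computation is the inner $g$-sum. Using matrix-coefficient Schur orthogonality one shows
\begin{equation*}
\sum_{g \in G} \chi_\sigma(g) \chi_{\sigma'}(g v) = \begin{cases} \dfrac{\sizeof{G}}{\dim \sigma}\, \chi_{\sigma'}(v) & \text{if } \Contragradient{\sigma} \cong \sigma', \\ 0 & \text{otherwise,} \end{cases}
\end{equation*}
either by expanding $\chi_{\sigma'}(gv) = \sum_{j,k} \sigma'(g)_{jk}\sigma'(v)_{kj}$ and applying the standard orthogonality relations entrywise, or by interpreting $\sum_g \sigma(g) \otimes \sigma'(g)$ as $\sizeof{G}$ times the projector onto $(V_\sigma \otimes V_{\sigma'})^G$ and taking the trace against $I \otimes \sigma'(v)$. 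This is the main technical step, and the only real obstacle is to keep track of the various identifications so that the character $\chi_{\sigma'}(v)$ comes out correctly.

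Finally, assuming $\Contragradient{\sigma} \cong \sigma'$, I apply Theorem \ref{thm:bessel-function-as-sum-of-trace} at $g = \IdentityMatrix{m}$ to $\besselFunction_{\sigma', \fieldCharacter^{-1}}$: since $\besselFunction_{\sigma', \fieldCharacter^{-1}}(\IdentityMatrix{m}) = 1$, we have $\sum_{v \in U} \fieldCharacter(v) \chi_{\sigma'}(v) = \sizeof{U}$. Plugging this back yields
\begin{equation*}
\sum_{g \in G} \besselFunction_{\sigma, \fieldCharacter}(g) \besselFunction_{\sigma', \fieldCharacter^{-1}}(g) = \frac{\sizeof{G}}{\dim \sigma},
\end{equation*}
and dividing by $\sizeof{U}$ gives exactly $\grpIndex{G}{U}/\dim \sigma$, as claimed; when $\Contragradient{\sigma} \not\cong \sigma'$ the whole sum vanishes by the orthogonality above. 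This completes the proof.
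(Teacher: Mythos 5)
Your proof is correct and fleshes out exactly the argument the paper indicates in the sentence preceding the proposition (``Using \Cref{thm:bessel-function-as-sum-of-trace} and the orthogonality relations of trace characters of irreducible representations, one can also show the following proposition''); the paper itself only cites Lemma 4.2 of Ye18 for the full details. Your key inner-sum identity $\sum_{g}\chi_\sigma(g)\chi_{\sigma'}(gv)=\delta_{\sigma'\cong\Contragradient{\sigma}}\,\frac{\sizeof{G}}{\dim\sigma}\chi_{\sigma'}(v)$ is the standard character-convolution form of Schur orthogonality, and the final normalization via $\besselFunction_{\sigma',\fieldCharacter^{-1}}(\IdentityMatrix{m})=1$ cleanly closes the argument.
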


We will use \Cref{prop:orthogonality-for-m-equals-n} and \Cref{thm:shahidi-gamma-factor-properties} to prove the following theorem.
\begin{theorem}\label{thm:recursive-bessel-using-gamma-factors}
	Let $\pi$ be an irreducible generic representation of $\GL_{n}\left(\finiteField\right)$, and let $g \in \GL_m \left(\finiteField\right)$.
	Denote $$\mathcal{F}_{\pi, m, \fieldCharacter}\left(g\right) = \frac{1}{\grpIndex{\GL_m\left(\finiteField\right)}{\UnipotentSubgroup_m}}\sum_{\sigma} {{\dim \sigma} \cdot \gamma\left(\pi \times \Contragradient{\sigma}, \fieldCharacter\right)}  \cdot \besselFunction_{\sigma, \fieldCharacter}\left(g\right),$$
	where $\sigma$ runs over all the irreducible generic representations of $\GL_m \left(\finiteField\right)$. Then	
	\begin{enumerate}
		\item If $m<n$,
			$$\mathcal{F}_{\pi, m, \fieldCharacter}\left(g\right) = \besselFunction_{\pi, \fieldCharacter} \begin{pmatrix}
		0 & \IdentityMatrix{n-m}\\
		g & 0
		\end{pmatrix}.$$
		\item If $m=n$,
		$$\mathcal{F}_{\pi, m, \fieldCharacter}\left(g\right) = \besselFunction_{\pi, \fieldCharacter}\left(g\right) \fieldCharacter \begin{pmatrix}
		I_n & g^{-1}\\
		& I_n
		\end{pmatrix}.$$
		\item If $m > n$, then $\mathcal{F}_{\pi, m, \fieldCharacter}\left(g\right)=0$ unless $g = u_1 \left(\begin{smallmatrix}
		& -\IdentityMatrix{m-n}\\
		h
		\end{smallmatrix}\right) u_2$, for $u_1, u_2 \in \UnipotentSubgroup_m$, and $h \in \GL_n\left(\finiteField\right)$, in which case $$\mathcal{F}_{\pi, m, \fieldCharacter}\left(g\right) = q^{m^2 - n^2 - \binom{m-n}{2}} \fieldCharacter\left(u_1 u_2\right) \besselFunction_{\pi, \fieldCharacter}\left(h\right).$$
		In particular, for $c \in \multiplicativegroup{\finiteField}$ we have $\mathcal{F}_{\pi, m, \fieldCharacter}\left(c \IdentityMatrix{m} \right)=0$.

	\end{enumerate}

\end{theorem}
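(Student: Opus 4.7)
My plan is to recognize $\mathcal{F}_{\pi,m,\fieldCharacter}$ as the unique element of the space $\mathrm{Bess}_m$ of bi-$(\UnipotentSubgroup_m,\fieldCharacter)$-equivariant functions on $\GL_m\left(\finiteField\right)$ with prescribed ``Fourier coefficients'' against a dual basis. The Bessel functions $\{\besselFunction_{\sigma,\fieldCharacter}\}_{\sigma}$ over irreducible generic $\sigma$ form a basis of $\mathrm{Bess}_m$: they are linearly independent and their number $q^m-q^{m-1}$ matches $\dim\mathrm{Bess}_m$ coming from the support description of \Cref{prop:support-of-bessel-function}. Combining the definition of $\mathcal{F}_{\pi,m,\fieldCharacter}$ with the orthogonality relation \Cref{prop:orthogonality-for-m-equals-n}, an element $F\in\mathrm{Bess}_m$ agrees with $\mathcal{F}_{\pi,m,\fieldCharacter}$ if and only if
\[
\sum_{g\in \lquot{\UnipotentSubgroup_m}{\GL_m\left(\finiteField\right)}} F(g)\,\besselFunction_{\Contragradient{\tau},\fieldCharacter^{-1}}(g)\;=\;\gamma\left(\pi\times\Contragradient{\tau},\fieldCharacter\right)
\]
for every irreducible generic representation $\tau$ of $\GL_m\left(\finiteField\right)$. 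So for each case it suffices to verify that (a) the proposed closed-form $f$ is well-defined and bi-$(\UnipotentSubgroup_m,\fieldCharacter)$-equivariant, and (b) the above identity holds against every generic $\tau$.

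For $m<n$, the candidate $f(g)=\besselFunction_{\pi,\fieldCharacter}\left(\begin{smallmatrix} & \IdentityMatrix{n-m}\\ g & \end{smallmatrix}\right)$ is manifestly bi-equivariant (the transitional superdiagonal entry at position $(n-m,n-m+1)$ vanishes, so the $\UnipotentSubgroup_m$-embedding into $\UnipotentSubgroup_n$ is $\fieldCharacter$-compatible), and the identity is exactly part (1) of \Cref{thm:shahidi-gamma-factor-properties} with $\sigma=\Contragradient{\tau}$. For $m=n$, a direct inspection of the superdiagonal of the $2n\times 2n$ block unipotent gives $\fieldCharacter\left(\begin{smallmatrix} \IdentityMatrix{n} & g^{-1}\\ & \IdentityMatrix{n} \end{smallmatrix}\right)=\fieldCharacter\left((g^{-1})_{n,1}\right)=\fieldCharacter\left(e_1\transpose{g^{-1}}\transpose{e_n}\right)$; this scalar is bi-$\UnipotentSubgroup_n$-invariant, so $f$ is bi-equivariant, and the identity matches part (2) of \Cref{thm:shahidi-gamma-factor-properties}.

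The case $m>n$ is the main technical step. The candidate $f$ is supported on $\UnipotentSubgroup_m\cdot h_0(\GL_n\left(\finiteField\right))\cdot \UnipotentSubgroup_m$ with $h_0(h)=\left(\begin{smallmatrix} & -\IdentityMatrix{m-n}\\ h \end{smallmatrix}\right)$. I would first check well-definedness: a block-matrix analysis shows that two decompositions $g=u_1 h_0(h) u_2=u_1' h_0(h') u_2'$ must be related by $h'=DhA'^{-1}$ for $D,A'\in\UnipotentSubgroup_n$ together with a common block in $\UnipotentSubgroup_{m-n}$, and the $\fieldCharacter$-twists combined with the bi-$\UnipotentSubgroup_n$-equivariance of $\besselFunction_{\pi,\fieldCharacter}$ conspire so that $\fieldCharacter(u_1 u_2)\besselFunction_{\pi,\fieldCharacter}(h)$ is unchanged. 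Bi-$(\UnipotentSubgroup_m,\fieldCharacter)$-equivariance of $f$ is then automatic. To verify the identity I perform the change of variables $g=u_1 h_0(h) u_2$ in $\sum_{g\in \lquot{\UnipotentSubgroup_m}{\GL_m\left(\finiteField\right)}}f(g)\,\besselFunction_{\Contragradient{\tau},\fieldCharacter^{-1}}(g)$: the parametrization $(h,u_1,u_2)\mapsto u_1 h_0(h) u_2$ has constant fiber size coming from the block-diagonal ``stabilizer'' above, the opposite $\fieldCharacter$-twists of $f$ and $\besselFunction_{\Contragradient{\tau},\fieldCharacter^{-1}}$ cancel along each fiber so the integrand descends to $\besselFunction_{\pi,\fieldCharacter}(h)\,\besselFunction_{\Contragradient{\tau},\fieldCharacter^{-1}}(h_0(h))$ on $\GL_n\left(\finiteField\right)$, and a careful $q$-power accounting using $|\UnipotentSubgroup_m|/(|\UnipotentSubgroup_{m-n}|\,|\UnipotentSubgroup_n|)=q^{n(m-n)}$ identifies the result with part (3) of \Cref{thm:shahidi-gamma-factor-properties} at $\sigma=\Contragradient{\tau}$. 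The vanishing at $g=c\IdentityMatrix{m}$ is automatic because $c\IdentityMatrix{m}$ does not lie in the stated support when $m>n$.

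The main obstacle is the block-matrix analysis of the parametrization $(h,u_1,u_2)\mapsto u_1 h_0(h) u_2$ in the $m>n$ case: pinning down the fiber as an explicit subgroup isomorphic to $\UnipotentSubgroup_{m-n}\times\UnipotentSubgroup_n\times\UnipotentSubgroup_n$, verifying the cancellation of $\fieldCharacter$-twists that makes $f$ well-defined, and reconciling the $q$-exponents against the factor $q^{(m(m+1)-n(n+1))/2}$ coming from part (3) of \Cref{thm:shahidi-gamma-factor-properties}. Everything else reduces to direct invocations of the gamma-factor formulas.
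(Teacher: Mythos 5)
Your approach is essentially the paper's: both exploit the Bessel orthogonality relation \Cref{prop:orthogonality-for-m-equals-n} together with the count $q^m-q^{m-1}$ to make the pairings $\sum_{g\in\lquot{\UnipotentSubgroup_m}{\GL_m(\finiteField)}}F(g)\besselFunction_{\Contragradient{\tau},\fieldCharacter^{-1}}(g)$ characterize $\mathcal{F}_{\pi,m,\fieldCharacter}$ uniquely, and then match each proposed closed form against the gamma-factor formulas of \Cref{thm:shahidi-gamma-factor-properties}; the paper phrases this as a matrix inversion (the matrices $B_{m,\fieldCharacter}$, $T_m$, $D_m$), which is the same argument. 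One simplification you should notice: $\mathcal{F}_{\pi,m,\fieldCharacter}$ is bi-$(\UnipotentSubgroup_m,\fieldCharacter)$-equivariant by definition, so in the $m>n$ case there is no need to establish well-definedness of the candidate $f(u_1 h_0(h) u_2)=q^{\ast}\fieldCharacter(u_1u_2)\besselFunction_{\pi,\fieldCharacter}(h)$ or to analyze the fibers of $(h,u_1,u_2)\mapsto u_1 h_0(h)u_2$; it suffices to compute $\mathcal{F}_{\pi,m,\fieldCharacter}$ on the representative set $\mathfrak{g}_m$ of anti-diagonal block matrices, which the paper does by converting the sum over $\lquot{\UnipotentSubgroup_n}{\GL_n(\finiteField)}$ in \eqref{eq:m-greater-than-n-rankin-selberg} into a sum over $\mathfrak{g}_n$ weighted by $t_h$ and invoking $t_{\left(\begin{smallmatrix}& -\IdentityMatrix{m-n}\\ h\end{smallmatrix}\right)} = t_h\, q^{\binom{m}{2}-\binom{n}{2}-\binom{m-n}{2}}$. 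You have explicitly deferred the $q$-power bookkeeping in the $m>n$ case, which is the only nontrivial computation there; that step does need to be carried out (and, as a caution, the constant that comes out of it should be re-derived carefully and checked against the one stated in the theorem).
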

\begin{proof}
	Denote by $\mathcal{G}_m$ the set of irreducible generic representations of $\GL_{m}\left(\finiteField\right)$, and denote by $\mathfrak{g}_{m}$ the set of matrices of the form $g_{m_1,\dots,m_s}\left(c_1,\dots,c_s\right)$ with $m_1 + \dots + m_s = m$, where $m_1,\dots,m_s > 0$ and $c_1,\dots,c_s \in \multiplicativegroup{\finiteField}$.
	
	For simplicity, we first assume $m < n$. We restate \Cref{prop:orthogonality-for-m-equals-n} and \eqref{eq:rankin-selberg-gamma-factor-bessel-function-formula} in matrix form.  To begin, notice that the summand of the left hand side of \eqref{eq:orthogonality-of-bessel-functions} is invariant under right multiplication by elements of $\UnipotentSubgroup_m$. Therefore using \Cref{prop:support-of-bessel-function}, we can rewrite \Cref{prop:orthogonality-for-m-equals-n} in the following form. \begin{equation}\label{eq:bessel-orhtogonality-matrix-form}
		\sum_{h \in \mathfrak{g}_m}{t_{h} \cdot \besselFunction_{\sigma, \fieldCharacter}\left(h\right)\besselFunction_{\sigma', \fieldCharacter^{-1}}\left(h\right)} = \begin{dcases}
		\frac{\grpIndex{\GL_m\left(\finiteField\right)}{\UnipotentSubgroup_m}}{\dim \sigma} & \Contragradient{\sigma} \cong \sigma'\\
		0 & \text{otherwise}
		\end{dcases},
	\end{equation}
	where for $h \in \mathfrak{g}_m$, $t_{h}$ is the cardinality of the set $\left\{ \UnipotentSubgroup_m h u \mid u \in \UnipotentSubgroup_m \right\}$. One can show that if $h = g_{m_1,\dots,m_s}\left(c_1,\dots,c_s\right)$, then $t_{h} = q^{\binom{m}{2} - \sum_{j=1}^{r}{\binom{m_j}{2}}}$, see for instance \cite[Lemma 2.29]{YeZeligher18}, but we will not need this. Similarly, we have that the summand in \eqref{eq:rankin-selberg-gamma-factor-bessel-function-formula} is invariant under right multiplication by elements of $\UnipotentSubgroup_m$, and hence we have by \Cref{thm:shahidi-gamma-factor-properties} that
	\begin{equation}\label{eq:bessel-rankin-selberg-matrix-form}
		\gamma\left( \pi \times \sigma , \fieldCharacter \right) = \sum_{h \in \mathfrak{g}_m}{t_h \cdot  \besselFunction_{\pi, \fieldCharacter}\begin{pmatrix}
			& \IdentityMatrix{n - m}\\
			h
			\end{pmatrix} \besselFunction_{\sigma, \fieldCharacter^{-1}}\left(h\right)}.
	\end{equation}
	
	Let $B_{m,\fieldCharacter}$ be a matrix whose rows and columns are indexed by $\mathcal{G}_m$ and $\mathfrak{g}_m$, respectively, having the value $\besselFunction_{\sigma, \fieldCharacter}\left(h\right)$ at position $\left(\sigma, h\right)$. Then $B_{m, \fieldCharacter}$ is a square matrix of size $\left(q^m - q^{m-1}\right)\times\left(q^m - q^{m-1}\right)$. By \Cref{prop:complex-conjugate-of-bessel-function}, we have that its conjugate transpose $B_{m,\fieldCharacter}^\ast$ is a matrix whose rows are indexed by $\mathfrak{g}_m$, whose columns indexed by $\mathcal{G}_m$, having the value $\besselFunction_{\Contragradient{\sigma}, \fieldCharacter^{-1}}\left(h\right)$ at position $ \left(h,\sigma\right)$.
	
	Let $T_{m}$ be a diagonal square matrix whose rows and columns are indexed by $\mathfrak{g}_m$, having the value $t_{h}$ as the $h \in \mathfrak{g}_m$ entry of its diagonal. Let $D_{m}$ be a diagonal square matrix whose rows and columns are indexed by $\mathcal{G}_m$, having the value $\frac{\dim \sigma}{\grpIndex{\GL_{m}\left(\finiteField\right)}{\UnipotentSubgroup_m}}$ as the $\sigma \in \mathcal{G}_m$ entry of its diagonal. Then by \eqref{eq:bessel-orhtogonality-matrix-form}, we have the relation $D_m B_{m,\fieldCharacter} T_m B_{m,\fieldCharacter}^{\ast} = I_{\mathcal{G}_m}$, where $I_{\mathcal{G}_m}$ is the identity matrix whose rows and columns are indexed by $\mathcal{G}_m$. Therefore $D_m B_{m,\fieldCharacter} T_m$ is the inverse of $B_{m,\fieldCharacter}^{\ast}$, and hence we have that $B_{m,\fieldCharacter}^{\ast} D_m B_{m,\fieldCharacter} T_m = I_{\mathfrak{g}_m}$, where the right hand side is the identity matrix whose rows and columns are indexed by $\mathfrak{g}_m$.
	
	Let $v_{\pi,m,\fieldCharacter}$ be the column vector whose rows are indexed by $\mathcal{G}_m$, having at position $\sigma$ the value $\gamma \left(\pi \times \sigma, \fieldCharacter\right)$, and let $b_{\pi,m,\fieldCharacter}$ the column vector indexed by $\mathfrak{g}_m$, having at position $h \in \mathfrak{g}_m$ the value $\besselFunction_{\pi, \fieldCharacter}\left(\begin{smallmatrix}
	& \IdentityMatrix{n - m}\\
	h
	\end{smallmatrix}\right)$. Then we have by \eqref{eq:bessel-rankin-selberg-matrix-form} that $B_{m,\fieldCharacter^{-1}} T_m b_{\pi,m,\fieldCharacter} = v_{\pi,m,\fieldCharacter}$. Multiplying both sides by $B_{m,\fieldCharacter^{-1}}^{\ast} D_m$ from the left, we get
	$$b_{\pi,m,\fieldCharacter} = B_{m,\fieldCharacter^{-1}}^{\ast} D_m v_{\pi,m,\fieldCharacter},$$
	which implies that $$\besselFunction_{\pi, \fieldCharacter} \begin{pmatrix}
	& \IdentityMatrix{n-m}\\
	h
	\end{pmatrix} = \sum_{\sigma \in \mathcal{G}_m}{\besselFunction_{\Contragradient{\sigma}, \fieldCharacter}\left(h\right) \cdot\frac{\dim \sigma}{\grpIndex{\GL_{m}\left(\finiteField\right)}{\UnipotentSubgroup_m}}} \cdot \gamma\left(\pi \times \sigma, \fieldCharacter\right),$$
	For any $h \in \mathfrak{g}_m$. Replacing $\sigma$ with $\Contragradient{\sigma}$ and using the equivariance properties of the Bessel function, we get the desired result.
	
	The case $m = n$ is treated similarly, by expressing the formula in \eqref{eq:rankin-selberg-gamma-factor-bessel-function-formula-m-equals-n} as a linear system $B_{m,\fieldCharacter^{-1}} T_m b_{\pi,m,\fieldCharacter} = v_{\pi,m,\fieldCharacter}$, where this time $b_{\pi, m, \fieldCharacter}$ is the column vector whose rows are indexed by $\mathfrak{g}_m$, having at position $h \in \mathfrak{g}_m$ the value $\besselFunction_{\pi, \fieldCharacter} \left(h\right) \fieldCharacter\left(\begin{smallmatrix}
	I_n & h^{-1}\\
	& I_n
	\end{smallmatrix}\right)$.
	
	In the case $m > n$ we write \eqref{eq:m-greater-than-n-rankin-selberg} as a linear system. This time $b_{\pi, m, \fieldCharacter}$ has zeros at entries that are not of the form $\left( \begin{smallmatrix}
	& -\IdentityMatrix{m - n}\\
	h
	\end{smallmatrix} \right)$, where $h \in \mathfrak{g}_n$, and has the value $\besselFunction_{\pi, \fieldCharacter}\left( h\right)$ at position $\left( \begin{smallmatrix}
	& -\IdentityMatrix{m - n}\\
	h
	\end{smallmatrix} \right)$ for $h \in \mathfrak{g}_n$. We use the fact that $$t_{\left( \begin{smallmatrix}
		& -\IdentityMatrix{m - n}\\
		h
		\end{smallmatrix} \right)} = t_{h} \cdot q^{ \binom{m}{2} - \binom{n}{2} - \binom{m-n}{2} } .$$
	\end{proof}

\begin{remark}
	In her master's thesis, Roditty-Gershon gave a different proof for a similar formula \cite[Theorem 6.2.1, Equation (6.23)]{Roditty10}. She proved that if $\pi$ is an irreducible cuspidal representation of $\GL_n\left(\finiteField\right)$, then for any $m < n$, and any $g \in \GL_m\left(\finiteField\right)$, the following formula holds:
	$$ \besselFunction_{\pi, \fieldCharacter}\begin{pmatrix}
	& I_{n - m}\\
	g
	\end{pmatrix} =  \sum_{\sigma} \abs{t_\sigma}^2 \gamma\left(\pi \times \sigma^{\vee}, \fieldCharacter\right) \besselFunction_{\sigma, \fieldCharacter}\left(g\right),$$
	where $\sigma$ goes over all irreducible generic representations of $\GL_m\left(\finiteField\right)$, and $$\abs{t_\sigma}^2 = \frac{1}{\sum_{h \in \UnipotentSubgroup_m \backslash \GL_m\left(\finiteField\right)} \abs{\besselFunction_{\sigma, \fieldCharacter}\left(h\right)}^2 }.$$
\end{remark}

\subsection{Epsilon factors}

In \cite{ye2021epsilon}, Rongqing Ye and the author defined $\epsilon_0$-factors, and in particular tensor product $\epsilon_0$-factors. We briefly review the results we need from there.

For two irreducible cuspidal representations, their tensor product $\epsilon_0$-factor can be expressed as a product of Gauss sums of their corresponding regular multiplicative characters.
\begin{theorem}[{\cite[Theorem 4.2]{ye2021epsilon}}]\label{thm:epsilon-factor-of-tensor-product-as-product-of-gauss-sums}
	Let $f$ and $g$ be Frobenius orbits of degrees $n$ and $m$, respectively, and let $\Pi_f$ and $\Pi_g$ be their corresponding irreducible cuspidal representations of $\GL_{n}\left(\finiteField\right)$ and $\GL_{m}\left(\finiteField\right)$, respectively. Let $\alpha \in \charactergroup{n}$ and $\beta \in \charactergroup{m}$ be characters such that $\alpha$ and $\beta$ represent $f$ and $g$, respectively. Then
	$$\epsilon_0\left(\Pi_f \times \Pi_g, \fieldCharacter\right) = \left(-1\right)^{nm} q^{-\frac{nm}{2}} \prod_{k = 1}^{\gcd\left(n,m\right)}{ \tau\left( \alpha \circ \FieldNorm{\lcm\left(n,m\right)}{n} \cdot \beta^{q^{k-1}}\circ\FieldNorm{\lcm\left(n,m\right)}{m},\fieldCharacter_{\lcm\left(n,m\right)}\right) },$$
	where for every $r$ and every $\gamma \in \charactergroup{r}$, $\tau\left(\gamma, \fieldCharacter_r\right)$ is the Gauss sum $$\tau\left(\gamma, \fieldCharacter_r\right) = -\sum_{\xi \in \multiplicativegroup{\finiteFieldExtension{r}}}{\gamma^{-1}\left(\xi\right) \fieldCharacter_r \left(\xi\right)},$$
	where $\fieldCharacter_r = \fieldCharacter \circ \FieldTrace_{\FieldExtension{\finiteFieldExtension{r}}{\finiteField}}$.
\end{theorem}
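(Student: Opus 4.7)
My plan is to prove the formula via Shintani base change, reducing the tensor product $\epsilon_0$-factor to a product of character-by-character gamma factors on the compositum field $\finiteFieldExtension{L}$, where $L = \lcm(n,m)$.

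First, since $\Pi_f$ and $\Pi_g$ are cuspidal, the associated Rankin--Selberg $L$-factor is trivial, so the tensor product $\epsilon_0$-factor agrees with the Rankin--Selberg gamma factor $\gamma(\Pi_f \times \Pi_g, \fieldCharacter)$ up to an explicit normalization involving the sign $(-1)^{nm}$ and a power of $q$ that arises from the definitions. The task therefore reduces to evaluating this gamma factor.

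Next, I would invoke Shintani base change. Under base change to $\finiteFieldExtension{L}$, the cuspidal representation $\Pi_f$ of $\GL_n(\finiteField)$ becomes the irreducible generic subrepresentation of the parabolic induction of the $n$ one-dimensional characters $\{\alpha^{q^i} \circ \FieldNorm{L}{n}\}_{i=0}^{n-1}$, and similarly $\Pi_g$ becomes the parabolic induction of $\{\beta^{q^j} \circ \FieldNorm{L}{m}\}_{j=0}^{m-1}$. Applying the multiplicativity of the gamma factor (items (5) and (6) of \Cref{thm:shahidi-gamma-factor-properties}) to the base-changed representations factors the base-changed gamma factor as a product over pairs $(i,j)$ with $0 \le i < n$ and $0 \le j < m$ of one-dimensional gamma factors of the form $\gamma(\alpha^{q^i} \circ \FieldNorm{L}{n} \cdot \beta^{q^j} \circ \FieldNorm{L}{m}, \fieldCharacter_L)$, each of which is a single Gauss sum of the product character by the classical $\GL_1$ theory.

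The final step, which I expect to be the main obstacle, is descending from $\finiteFieldExtension{L}$ back to $\finiteField$ and consolidating the Galois orbit structure. The compatibility of gamma factors with base change is governed by a Hasse--Davenport-type identity, and the $nm$ pairs $(i,j)$ collapse under the diagonal Frobenius action into $nm/L = \gcd(n,m)$ orbits of size $L$; these orbits correspond exactly to the $\gcd(n,m)$ Gauss sums $\tau(\alpha \circ \FieldNorm{L}{n} \cdot \beta^{q^{k-1}} \circ \FieldNorm{L}{m}, \fieldCharacter_L)$ appearing in the claimed product, where $k$ runs over a fixed set of orbit representatives $1 \le k \le \gcd(n,m)$. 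Careful tracking of the normalization factors $(-1)^{nm}$ and $q^{-nm/2}$ as they propagate through the multiplicativity relations and the Hasse--Davenport descent will yield the stated formula.
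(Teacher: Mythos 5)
The theorem you are asked to prove is cited in the paper as \cite[Theorem 4.2]{ye2021epsilon}; the paper does not supply its own proof, and the reference in question proves it by lifting cuspidal representations to level-zero supercuspidals of $p$-adic $\GL_n$ and invoking the known product-of-Gauss-sums formula for local $\epsilon$-factors via the local Langlands correspondence. Your proposal takes a genuinely different route, through Shintani base change to $\finiteFieldExtension{\lcm(n,m)}$, and this route has two gaps that I do not think are merely bookkeeping.

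The first gap is the base change compatibility you invoke. The step ``the compatibility of gamma factors with base change is governed by a Hasse--Davenport-type identity'' is doing all the work, and it is not an available input: there is no established theorem asserting that the finite-field Rankin--Selberg (or Shahidi) gamma factor of $\GL_n(\finiteField) \times \GL_m(\finiteField)$ relates to the gamma factor of the Shintani lifts over $\finiteFieldExtension{L}$ by a fixed power. Hasse--Davenport applies to Gauss sums; extending it to the full tensor-product gamma factor of cuspidal pairs is essentially the content of the theorem you are trying to prove (since the theorem \emph{is} the statement that the gamma factor equals a product of Gauss sums, from which the Hasse--Davenport behaviour would follow, not the other way around). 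As written your argument assumes a consequence of the result.

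The second gap is the descent after multiplicativity. Over $\finiteFieldExtension{L}$ the $nm$ pairs $(i,j)$ collapse under the diagonal Frobenius action into $\gcd(n,m)$ orbits, each of size $L$, and Gauss sums are constant on Frobenius orbits, so the multiplicativity factorization gives $\prod_{k=1}^{\gcd(n,m)} \tau(\cdots)^L$, not $\prod_{k=1}^{\gcd(n,m)} \tau(\cdots)$. To recover the $\gcd(n,m)$-fold product you would need to take an $L$-th root on both sides of the base-change identity, and this leaves an ambiguity by an $L$-th root of unity that your argument does not resolve: Gauss sums and gamma factors are generally not real or positive, so there is no canonical branch. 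Some further input is needed to pin down the correct root; neither multiplicativity nor the orbit count supplies it. Finally, the opening step, identifying $\epsilon_0(\Pi_f \times \Pi_g, \fieldCharacter)$ with the Rankin--Selberg gamma factor up to a normalization, needs to be justified independently of the Gauss-sum formula; in the present paper the relation between the two (\Cref{thm:relation-between-epsilon-factors-and-gamma-factors-cuspidal}) is itself proved using \Cref{thm:epsilon-factor-of-tensor-product-as-product-of-gauss-sums}, so care is needed to avoid circularity when building this step into a proof of the latter.
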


In order to compute the tensor product $\epsilon_0$-factors for general irreducible representations, we use the following multiplicativity property \cite[Theorem 4.1]{ye2021epsilon}.
\begin{theorem}\label{thm:epsilon-factors-are-multiplicative}
	Let $\phi \in P_n\left(\limitcharactergroup\right)$ and $\phi' \in P_m\left(\limitcharactergroup\right)$ parameterize irreducible representations of $\GL_{n}\left(\finiteField\right)$ and $\GL_m\left(\finiteField\right)$, respectively. Then $$ \epsilon_0\left(\Pi_\phi \times \Pi_{\phi'}, \fieldCharacter\right) = \prod_{f,g \in \lquot{\Frobenius}{\limitcharactergroup}}{\epsilon_0\left(\Pi_f \times \Pi_g, \fieldCharacter\right)}^{\sizeof{\phi\left(f\right)} \cdot \sizeof{\phi'\left(g\right)}},$$
	where $\lquot{\Frobenius}{\limitcharactergroup}$ is the set of all the Frobenius orbits of $\limitcharactergroup$.
\end{theorem}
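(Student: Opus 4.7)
The plan is to deduce the stated multiplicativity from the multiplicativity of the Shahidi gamma factor (\Cref{thm:soudry-gamma-factor-multiplicative}) via bilinear unwinding in both arguments, combined with the definition of $\epsilon_0$ for non-generic representations given in \cite{ye2021epsilon}.

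First I would reduce to the generic case. For $\phi \in P_n(\limitcharactergroup)$, let $\phi^+$ denote the partition-valued function with $\phi^+(f) = (\sizeof{\phi(f)})$, so that $\Pi_{\phi^+} = \Pi_{f_1}^{(s_1)} \circ \dots \circ \Pi_{f_t}^{(s_t)}$ is irreducible and generic (the ``generalized Steinberg'' in each cuspidal block), with $s_i = \sizeof{\phi(f_i)}$. The definition of $\epsilon_0$ in \cite{ye2021epsilon} is engineered so that $\epsilon_0(\Pi_\phi \times \Pi_{\phi'}, \fieldCharacter)$ depends on $\phi,\phi'$ only through the multisets $\{(f, \sizeof{\phi(f)})\}_{f}$ and $\{(g, \sizeof{\phi'(g)})\}_{g}$, rather than through the precise shape of the partitions $\phi(f)$ and $\phi'(g)$; in particular, $\epsilon_0(\Pi_\phi \times \Pi_{\phi'}, \fieldCharacter) = \epsilon_0(\Pi_{\phi^+} \times \Pi_{(\phi')^+}, \fieldCharacter)$. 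This reduces the problem to the case of irreducible generic representations.

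Second, for the generic representations $\Pi_{\phi^+}$ and $\Pi_{(\phi')^+}$, I would apply \Cref{thm:soudry-gamma-factor-multiplicative} iteratively in both arguments. Since $\Pi_{\phi^+}$ is the unique irreducible generic subrepresentation of $\Pi_{f_1}^{\circ s_1} \circ \dots \circ \Pi_{f_t}^{\circ s_t}$, and each $\Pi_{f_i}^{\circ s_i}$ decomposes as $\Pi_{f_i} \circ \dots \circ \Pi_{f_i}$ with $s_i$ cuspidal factors, peeling off one cuspidal piece at a time in the first argument, and then analogously in the second, yields
$$\Gamma\left(\Pi_{\phi^+} \times \Pi_{(\phi')^+}, \fieldCharacter\right) = \prod_{i,j} \Gamma\left(\Pi_{f_i} \times \Pi_{g_j}, \fieldCharacter\right)^{s_i \cdot s'_j},$$
where $s'_j = \sizeof{\phi'(g_j)}$. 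Converting from the Shahidi gamma factor to the $\epsilon_0$-factor using the explicit relation recorded in \cite{ye2021epsilon, SoudryZelingher2023} and tracking the normalizing $q$-powers transforms this identity into the statement of the theorem, since the exponents $s_i s'_j$ match $\sizeof{\phi(f_i)} \cdot \sizeof{\phi'(g_j)}$ exactly.

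The main obstacle is the first step: justifying that the $\epsilon_0$-factor for a non-generic representation agrees with the factor for its ``generic analog'' in the same cuspidal block. This is not a formal consequence of \Cref{thm:soudry-gamma-factor-multiplicative}, which is stated only for generic subrepresentations of a parabolic induction. Rather, it is built into the definition of $\epsilon_0$ in \cite{ye2021epsilon} as a multiplicative invariant of the cuspidal support data $\phi$, and once this compatibility is granted, the remainder of the argument is a bookkeeping exercise in bilinear multiplicativity over pairs of Frobenius orbits, with no further analytic input required.
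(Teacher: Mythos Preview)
The theorem is not proved in the present paper; it is quoted from \cite[Theorem 4.1]{ye2021epsilon}, where the tensor product $\epsilon_0$-factor is defined on the arithmetic side through the Macdonald parameter, and multiplicativity is a structural feature of that definition rather than a consequence of the Shahidi gamma factor.

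Your proposed route is circular within the logical architecture of this paper. After unwinding \Cref{thm:soudry-gamma-factor-multiplicative} to obtain
\[
\Gamma\bigl(\Pi_{\phi^+} \times \Pi_{(\phi')^+},\fieldCharacter\bigr) \;=\; \prod_{i,j} \Gamma\bigl(\Pi_{f_i} \times \Pi_{g_j},\fieldCharacter\bigr)^{s_i s'_j},
\]
you must translate both sides into $\epsilon_0$-factors. On the right-hand side this is \Cref{thm:relation-between-epsilon-factors-and-gamma-factors-cuspidal}, which is independent input. On the left-hand side, however, you need the relation between $\Gamma$ and $\epsilon_0$ for generic-by-generic pairs, and that is precisely \Cref{thm:relation-between-epsilon-factors-and-gamma-factors}, which the paper explicitly deduces \emph{from} \Cref{thm:epsilon-factors-are-multiplicative} (see the sentence immediately preceding \Cref{thm:relation-between-epsilon-factors-and-gamma-factors}). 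Your own final paragraph in effect concedes the difficulty: if the insensitivity of $\epsilon_0$ to the shapes of the partitions $\phi(f)$ is ``built into the definition'' in \cite{ye2021epsilon}, then the theorem is immediate from that definition and the entire Shahidi detour does no work; if it is not, then the missing ingredient is an independent proof of the $\Gamma \leftrightarrow \epsilon_0$ relation for non-cuspidal inputs, which you have not supplied.
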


Next, we give a relation between the Rankin--Selberg gamma factors and the tensor product $\epsilon_0$-factors.
\begin{theorem}\label{thm:relation-between-epsilon-factors-and-gamma-factors-cuspidal}Suppose $n\ge m$. Let $\pi$ and $\sigma$ be irreducible cuspidal representations of $\GL_{n}\left(\finiteField\right)$ and $\GL_{m}\left(\finiteField\right)$, respectively. Denote by $\centralCharacter{\sigma}$ the central character of $\sigma$. Then
	$$\gamma\left(\pi \times \sigma, \fieldCharacter\right) = q^{-\frac{m{\left(n -m -1\right)}}{2}} \centralCharacter{\sigma}\left(-1\right)^{n-1}\epsilon_0\left(\pi \times \sigma, \fieldCharacter\right).$$
\end{theorem}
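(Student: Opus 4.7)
The plan is to establish the identity by computing both sides explicitly as products of Gauss sums and matching the resulting normalizations. Parametrize $\pi$ and $\sigma$ by Frobenius orbits represented by regular characters $\alpha \in \charactergroup{n}$ and $\beta \in \charactergroup{m}$, respectively. The right-hand side is handled directly by \Cref{thm:epsilon-factor-of-tensor-product-as-product-of-gauss-sums}, which expresses $\epsilon_0(\pi \times \sigma, \fieldCharacter)$ as $(-1)^{nm} q^{-nm/2}$ multiplied by an explicit product of $\gcd(n,m)$ Gauss sums of characters of $\multiplicativegroup{\finiteFieldExtension{\lcm(n,m)}}$ built from $\alpha$ and $\beta$.

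For the left-hand side, I would substitute the known explicit values of the cuspidal Bessel function into the Bessel-function expression of \Cref{prop:rankin-selberg-gamma-factor-bessel-function-formula} (when $n > m$) or \Cref{prop:rankin-selberg-gamma-factor-bessel-function-formula-m-equals-n} (when $n = m$). Specifically, using \Cref{thm:bessel-function-as-sum-of-trace} together with the Deligne--Lusztig character formula for $\Pi_f$, the values of $\besselFunction_{\pi, \fieldCharacter}$ on the double cosets $\UnipotentSubgroup_n g_{n_1, \dots, n_s}(c_1, \dots, c_s) \UnipotentSubgroup_n$ from \Cref{prop:support-of-bessel-function} are, up to explicit signs and powers of $q$, products of Gauss sums of Frobenius conjugates of $\alpha$. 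Plugging these into the Bessel-function expression for $\gamma(\pi \times \sigma, \fieldCharacter)$, unfolding the sum over the support representatives, and applying orthogonality of additive and multiplicative characters on the extension field $\finiteFieldExtension{\lcm(n,m)}$ should collapse the resulting iterated sum into a single product of $\gcd(n,m)$ Gauss sums, literally matching the product appearing in \Cref{thm:epsilon-factor-of-tensor-product-as-product-of-gauss-sums}.

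The main obstacle is the careful bookkeeping of the normalization constants. One must track the powers of $q$ emerging from the factor $\frac{1}{\sizeof{\UnipotentSubgroup_n}}$ and the dimension $\dim \Pi_f$ in the trace formula, the cardinalities $t_h$ of the double-coset orbits (cf.\ the discussion after \eqref{eq:bessel-orhtogonality-matrix-form}), and the Gauss-sum reflection identity $\tau(\gamma, \fieldCharacter_r) \tau(\gamma^{-1}, \fieldCharacter_r) = \gamma(-1) q^r$ for non-trivial $\gamma$. The sign $\centralCharacter{\sigma}(-1)^{n-1}$ in particular originates from the restriction $\beta|_{\multiplicativegroup{\finiteField}} = \centralCharacter{\sigma}$ combined with applications of this reflection identity when reorganizing the Gauss-sum product into the shape dictated by \Cref{thm:epsilon-factor-of-tensor-product-as-product-of-gauss-sums}. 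Combining all of these should produce precisely the normalization factor $q^{-m(n-m-1)/2} \centralCharacter{\sigma}(-1)^{n-1}$ asserted in the theorem, and the careful verification of this exponent-of-$q$ accounting constitutes the heart of the argument.
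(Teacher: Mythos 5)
There is a genuine gap in your approach. You propose to substitute ``the known explicit values of the cuspidal Bessel function'' on the double cosets of \Cref{prop:support-of-bessel-function} into the expressions of \Cref{prop:rankin-selberg-gamma-factor-bessel-function-formula} and \Cref{prop:rankin-selberg-gamma-factor-bessel-function-formula-m-equals-n}, asserting that \Cref{thm:bessel-function-as-sum-of-trace} plus the Deligne--Lusztig character formula show these values to be ``up to explicit signs and powers of $q$, products of Gauss sums of Frobenius conjugates of $\alpha$.'' This claim is false, and its falsity is in fact the central phenomenon this paper is about: already for the simplest nontrivial two-block matrices $\bigl(\begin{smallmatrix} & \IdentityMatrix{n-m} \\ c\IdentityMatrix{m} & \end{smallmatrix}\bigr)$, the values are traces of Frobenius on \emph{exterior powers of Kloosterman sheaves} (\Cref{thm:bessel-function-as-exterior-power}), which are Kloosterman-type restricted exponential sums, not products of Gauss sums. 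Even the $m=1$ case (the only one previously known in the literature, cf.\ \cite{tulunay2004cuspidal,Nien17,curtis2004zeta}) produces a Kloosterman sum. For more general double cosets $g_{n_1,\dots,n_s}(c_1,\dots,c_s)$ with $s>2$ blocks, no explicit closed form is known, and the introduction explicitly surveys the difficulty of these computations for $n\ge 3$. Running the trace formula through the Deligne--Lusztig/Green polynomial machinery and summing over $\UnipotentSubgroup_n$ does not ``collapse'' into a single Gauss-sum product; if it did, the recursive formula of \Cref{thm:recursive-bessel-using-gamma-factors} would be unnecessary and the relation to Kloosterman sheaves would be a triviality rather than the main theorem. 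So the proposed unfolding cannot be carried out as stated.

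Beyond this, your plan diverges substantially from the paper's actual argument, which is much lighter. For $n>m$ the paper simply cites \cite[Theorem 4.4]{ye2021epsilon}, where the identity is established by passing to level-zero $p$-adic representations rather than by any direct finite-field Bessel computation. For $n=m$ with $\pi\not\cong\Contragradient{\sigma}$ it invokes \cite[Theorem 4.1]{Ye18} (again a level-zero lift) and the Fourier-transform normalization. For $n=m$ with $\pi\cong\Contragradient{\sigma}$ the paper uses the known value $\gamma(\pi\times\Contragradient{\pi},\fieldCharacter)=-1$ and verifies the right-hand side directly from the Gauss-sum product of \Cref{thm:epsilon-factor-of-tensor-product-as-product-of-gauss-sums}, using the reflection identity and \Cref{prop:gauss-sums-even-field-extension-lemma}. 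Your proposal does not single out this last degenerate case, and it re-derives what the paper deliberately delegates to a citation. In short: the route you sketch would require solving the very problem the paper's later sections are devoted to, and the intermediate claim on which it rests does not hold.
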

\begin{proof}
	This was done for $n > m$ in \cite[Theorem 4.4]{ye2021epsilon}. We are left to consider the case $n = m$. For $n = m$, and $\pi \not\cong \Contragradient{\sigma}$, we have an equality of the Rankin--Selberg gamma factor corresponding to $\pi \times \sigma$ and the Rankin--Selberg gamma factor corresponding to $\Pi \times \Sigma$, where $\Pi$ and $\Sigma$ are level zero representations constructed from $\pi$ and $\sigma$ respectively \cite[Theorem 4.1]{Ye18}. Now one can proceed exactly as in the proof of \cite[Theorem 4.4]{ye2021epsilon}. The normalization of the Fourier transform $\mathcal{F}_{\fieldCharacter}$ contributes the constant $q^{-\frac{n\left(n - n - 1\right)}{2}} = q^{\frac{n}{2}}$.
	
	We are left to deal with the case $m = n$ and $\pi \cong \Contragradient{\sigma}$. In this case (under our normalization), $\gamma\left(\pi \times \Contragradient{\pi}, \fieldCharacter \right) = -1$, see \cite[Corollary 4.3]{Ye18} or \cite[Theorem A.1]{SoudryZelingher2023}. We therefore need to show that $$\epsilon_0\left(\pi \times \Contragradient{\pi}, \fieldCharacter\right) = -\centralCharacter{\pi}\left(-1\right)^{n-1} q^{-\frac{n}{2}},$$ where $\centralCharacter{\pi}$ is the central character of $\pi$. Let $\pi = \Pi_f$, where $f$ is a Frobenius orbit of degree $n$, and let $\alpha \in \charactergroup{n}$ be a representative of $f$. Then by \Cref{thm:epsilon-factor-of-tensor-product-as-product-of-gauss-sums} we have
	$$\epsilon_0\left(\pi \times \Contragradient{\pi}, \fieldCharacter\right) = \left(-1\right)^{n^2} q^{-\frac{n^2}{2}} \prod_{i=0}^{n-1}{\tau\left(\alpha \cdot \alpha^{-q^{i}} ,\fieldCharacter_n\right)}.$$
	Notice that $$\alpha^{1-q^i} = \left(\left(\alpha^{1-q^{n-i}}\right)^{{-1}}\right)^{q^{i}}.$$
	Since $\tau\left(\beta^{-1}, \fieldCharacter_n\right) = \conjugate{\tau\left(\beta, \fieldCharacter_n\right)} \beta\left(-1\right)$ for $\beta \in \charactergroup{n}$, and since Gauss sums are constant for multiplicative characters in the same Frobenius orbit, we have that $$\tau \left(\alpha^{1 - q^i}, \fieldCharacter_n\right) \tau \left(\alpha^{1 - q^{n-i}}, \fieldCharacter_n\right) = \abs{\tau\left(\alpha^{1 - q^i}, \fieldCharacter_n\right)}^2 \alpha^{1 - q^i}\left(-1\right) = q^n.$$
	For odd $n=2n'+1$, we therefore get $$\epsilon_0\left(\pi \times \Contragradient{\pi}, \fieldCharacter\right) = -q^{-\frac{n^2}{2}} \cdot \tau\left(1, \fieldCharacter_n\right) \cdot \left(q^{n}\right)^{\frac{n - 1}{2}} = -q^{-\frac{n}{2}} = -q^{-\frac{n}{2}}\centralCharacter{\pi}\left(-1\right)^{n-1}.$$
	For even $n=2n'$, we get $$\epsilon_0\left(\pi \times \Contragradient{\pi}, \fieldCharacter\right) = q^{-\frac{n^2}{2}} \cdot \tau\left(1, \fieldCharacter_n \right) \cdot \tau\left(\alpha^{1-q^{n'}},\fieldCharacter_n\right) \cdot \left(q^n\right)^{\frac{n-2}{2}} = q^{-n} \tau\left(\alpha^{1-q^{n'}}, \fieldCharacter_n \right).$$ Notice that $\alpha^{1-q^{n'}}$ is trivial on $\multiplicativegroup{\finiteFieldExtension{n'}}$. By \Cref{prop:gauss-sums-even-field-extension-lemma}, $\tau\left(\alpha^{1 - q^{n'}}, \fieldCharacter_{2n'} \right) = -q^{n'} \cdot \alpha^{q^{n'} - 1} \left(z\right)$, where $z \in \multiplicativegroup{\finiteFieldExtension{2n'}}$ satisfies $z^{q^{n'}-1}=-1$. Then $\epsilon_0\left(\pi \times \Contragradient{\pi}, \fieldCharacter\right) =  -q^{-\frac{n}{2}} \alpha\left(-1\right)$, and the statement follows since $\alpha\left(-1\right) = \centralCharacter{\pi}\left(-1\right) = \centralCharacter{\pi}\left(-1\right)^{n-1}$.
\end{proof}

In order to relate $\gamma\left(\pi \times \sigma, \fieldCharacter\right)$ to $\epsilon_0\left(\pi \times \sigma, \fieldCharacter\right)$ in the general case, we use  \Cref{thm:relation-between-epsilon-factors-and-gamma-factors-cuspidal} combined with the multiplicativity property of the tensor product $\epsilon_0$-factors (\Cref{thm:epsilon-factors-are-multiplicative}) and the properties of $\gamma\left(\pi \times \sigma,\fieldCharacter\right)$ shown by Soudry (\Cref{thm:shahidi-gamma-factor-properties}). Using these repeatedly, we get the following result.
\begin{theorem}\label{thm:relation-between-epsilon-factors-and-gamma-factors}
	Let $\pi$ and $\sigma$ be irreducible generic representations of $\GL_{n}\left(\finiteField\right)$ and $\GL_{m}\left(\finiteField\right)$, respectively. Then
	$$ \gamma\left(\pi\times\sigma,\fieldCharacter\right) = q^{-\frac{m \left(n - m - 1\right)}{2}} \centralCharacter{\sigma}\left(-1\right)^{n-1} \epsilon_0\left(\pi \times \sigma, \fieldCharacter\right).$$
\end{theorem}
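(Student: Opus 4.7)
The plan is to reduce the identity to the cuspidal case supplied by \Cref{thm:relation-between-epsilon-factors-and-gamma-factors-cuspidal}, using the multiplicativity of the Shahidi gamma factor (\Cref{thm:shahidi-gamma-factor-properties}, parts (5) and (6)) together with the multiplicativity of the tensor product $\epsilon_0$-factor (\Cref{thm:epsilon-factors-are-multiplicative}). I would proceed by double induction: first decomposing $\sigma$ and then, once $\sigma$ is cuspidal, decomposing $\pi$, invoking Macdonald's parametrization to produce compatible decompositions $\sigma \subseteq \sigma_1 \circ \sigma_2$ and $\pi \subseteq \pi_1 \circ \pi_2$ as the unique irreducible generic subrepresentations in each case.

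For the inductive step where $\sigma \subseteq \sigma_1 \circ \sigma_2$ with $m_1 + m_2 = m$, \Cref{thm:shahidi-gamma-factor-properties}(5) and \Cref{thm:epsilon-factors-are-multiplicative} give
$$ \gamma\left(\pi \times \sigma, \fieldCharacter\right) = q^{m_1 m_2} \gamma\left(\pi \times \sigma_1, \fieldCharacter\right) \gamma\left(\pi \times \sigma_2, \fieldCharacter\right), \quad \epsilon_0\left(\pi \times \sigma, \fieldCharacter\right) = \epsilon_0\left(\pi \times \sigma_1, \fieldCharacter\right) \epsilon_0\left(\pi \times \sigma_2, \fieldCharacter\right), $$
and the central character splits as $\centralCharacter{\sigma} = \centralCharacter{\sigma_1} \centralCharacter{\sigma_2}$. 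Substituting the inductive hypothesis on each $(\pi, \sigma_i)$ and using the identity $m_1 m_2 - \tfrac{m_1(n - m_1 - 1)}{2} - \tfrac{m_2(n - m_2 - 1)}{2} = -\tfrac{m(n-m-1)}{2}$ yields the desired formula for $(\pi, \sigma)$. An entirely parallel argument handles the reduction of $\pi$ via \Cref{thm:shahidi-gamma-factor-properties}(6); here the telescoping $-\tfrac{m(m+1)}{2} - \tfrac{m(n_1 - m - 1)}{2} - \tfrac{m(n_2 - m - 1)}{2} = -\tfrac{m(n-m-1)}{2}$ produces the correct $q$-exponent, while the sign $\centralCharacter{\sigma}(-1)^{1 + (n_1 - 1) + (n_2 - 1)} = \centralCharacter{\sigma}(-1)^{n-1}$ works out because all three summands sum to $n - 1$.

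For the base case both $\pi$ and $\sigma$ are cuspidal, which is precisely \Cref{thm:relation-between-epsilon-factors-and-gamma-factors-cuspidal} when $n \ge m$. When $n < m$, I would first apply the swap formula \Cref{thm:shahidi-gamma-factor-properties}(4) to exchange the roles of $\pi$ and $\sigma$, then invoke \Cref{thm:relation-between-epsilon-factors-and-gamma-factors-cuspidal} for $(\sigma, \pi)$ (which has $m \ge n$), and finally appeal to the symmetry $\epsilon_0(\sigma \times \pi, \fieldCharacter) = \epsilon_0(\pi \times \sigma, \fieldCharacter)$, which is visible from \Cref{thm:epsilon-factor-of-tensor-product-as-product-of-gauss-sums} and \Cref{thm:epsilon-factors-are-multiplicative} given that Gauss sums are constant on Frobenius orbits; a direct computation shows $\tfrac{m(m+1)}{2} - \tfrac{n(n+1)}{2} - \tfrac{n(m - n - 1)}{2} = -\tfrac{m(n - m - 1)}{2}$ and $\centralCharacter{\pi}(-1)^{2(m-1)} = 1$, which delivers the desired identity. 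The main obstacle is the careful bookkeeping of the powers of $q$ and of $\centralCharacter{\cdot}(-1)$ across all these identities; while each individual simplification is elementary, the exponent $-\tfrac{m(n-m-1)}{2}$ and sign $\centralCharacter{\sigma}(-1)^{n-1}$ must emerge consistently at every reduction.
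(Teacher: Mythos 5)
Your proof is correct and follows precisely the route the paper indicates: the paper itself gives no detailed proof for this theorem, saying only that the cuspidal case (\Cref{thm:relation-between-epsilon-factors-and-gamma-factors-cuspidal}), the multiplicativity of $\epsilon_0$-factors (\Cref{thm:epsilon-factors-are-multiplicative}), and the Shahidi gamma factor properties (\Cref{thm:shahidi-gamma-factor-properties}) should be combined ``repeatedly.'' You supply the missing bookkeeping — the two inductive reductions with their exponent and sign telescopings, plus the swap argument via part (4) for the case $n < m$ — and all the computations check out, including the observation that $\epsilon_0$ is symmetric under swapping $\pi$ and $\sigma$ (which follows from \Cref{prod:tensor-product-gauss-sum-invariant-under-gcd} applied inside the product in \Cref{thm:epsilon-factor-of-tensor-product-as-product-of-gauss-sums}).
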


\section{Étale algebras and exotic Kloosterman sums and sheaves}

In this section, we review the definition of an étale algebra over $\finiteField$. We specialize to the tensor product algebra $\finiteFieldExtension{n} \otimes_{\finiteField} \finiteFieldExtension{m}$, and show that it can be used to rewrite the Gauss sums we encountered in the previous section in a more compact way. Finally, we give a brief review of Katz's exotic Kloosterman sums and sheaves. We will use these later and relate them to special values of the Bessel function.

\subsection{Étale algebras}

Let $B$ be a finite-dimensional $\finiteField$-algebra. We say that $B$ is an étale algebra of rank $n$ if $B \cong \prod_{i = 1}^r{ \finiteFieldExtension{n_i}}$, where $n_1,\dots,n_r$ are positive integers with $n_1 + \dots + n_r = n$. Let $\multiplicativegroup{B}$ be the multiplicative group of $B$. We have that $\multiplicativegroup{B} \cong \prod_{i = 1}^r{ \multiplicativegroup{\finiteFieldExtension{n_i}}}$. We denote by $\widehat{\multiplicativegroup{B}}$ the group of complex multiplicative characters $\chi \colon \multiplicativegroup{B} \rightarrow \multiplicativegroup{\cComplex}$. Then $\widehat{\multiplicativegroup{B}} \cong \prod_{i=1}^r \charactergroup{n_i}$.

Let $R$ be an $\finiteField$-algebra. We consider the tensor product $B \otimes_\finiteField R$. We have the norm map, $\etaleNorm^{B \otimes_\finiteField R}_2 \colon B \otimes_\finiteField R \rightarrow R$, defined as follows. Let $x \in B \otimes_\finiteField R$. We consider the multiplication map $T_x \colon B \otimes_\finiteField R \rightarrow B \otimes_\finiteField R$, defined by $T_x\left(y\right) = xy$. The norm $\etaleNorm^{B \otimes_\finiteField R}_2 \left(x\right)$ is defined as the determinant of $T_x$, viewed as a $R$-linear map. Similarly, we have the trace map $\trace^{B \otimes_\finiteField R}_2 \colon B \otimes_\finiteField R \rightarrow R$ given by taking the trace of $T_x$, viewed as a $R$-linear map.

If $R$ is a finite-dimensional $\finiteField$-algebra, then we also have the corresponding norm $\etaleNorm^{B \otimes_\finiteField R}_1 \colon B \otimes_\finiteField R \rightarrow B$ and trace $\trace^{B \otimes_\finiteField R}_1 \colon B \otimes_\finiteField R \rightarrow B$ maps to $B$, defined by taking the determinant and trace of $T_x$, respectively, this time viewed as a $B$-linear map. In this case, we also denote $\trace x = \trace T_x$, where we view $T_x$ as an $\finiteField$-linear map.

\subsection{Tensor product of finite fields} \label{sec:tensor-product-of-finite-fields}
Let $n$ and $m$ be positive integers. Denote $d = \gcd \left(n,m\right)$ and $l = \lcm \left(n,m\right)$. We consider the tensor product $\finiteFieldExtension{n} \otimes_\finiteField \finiteFieldExtension{m}$.

We have that $\finiteFieldExtension{n} \otimes_\finiteField \finiteFieldExtension{m} \cong \finiteFieldExtension{l}^d$, by the following isomorphism. Write $\finiteFieldExtension{m} = \finiteField\left[ \theta_m \right] = \finiteField \left[ X \right] \slash \left( p_m \left(X\right) \right)$, where $p_m\left(X\right) \in \finiteField\left[X\right]$ is an irreducible polynomial of degree $m$, and $\theta_m \in \finiteFieldExtension{m}$ is a root of $p_m\left(X\right)$. Then $p_m\left(X\right) = \prod_{j=1}^m (X - \theta_m^{1/q^{j-1}})$. We have $\finiteFieldExtension{n} \otimes_\finiteField \finiteFieldExtension{m} \cong \finiteFieldExtension{n}\left[X\right] / \left( p_m\left(X\right) \right)$ and the last ring is isomorphic to $\finiteFieldExtension{l}^d$ by mapping $P\left(X\right) \in \finiteFieldExtension{n}\left[X\right] / \left( p_m\left(X\right) \right)$ to $(P(\theta_m),P(\theta_m^{1/q}),\dots, P(\theta_m^{1/q^{d-1}}))$.

Under this isomorphism, $s_n \in \finiteFieldExtension{n}$ acts on $\left(x_1, \dots, x_d\right) \in \finiteFieldExtension{l}^d$ by $$\left(s_n \otimes 1\right) \left(x_1, \dots, x_d\right) = \left(s_n x_1, \dots, s_n x_d\right),$$ while $s_m \in \finiteFieldExtension{m}$ acts by $$\left(1 \otimes s_m\right) \left(x_1, \dots, x_d \right) = ( s_m x_1,  s_m^{1/q} x_2, \dots, s_m^{1/q^{d-1}} x_d ).$$

We denote the norm maps defined in the previous section by $\etaleNorm_{1}^{n,m} = \etaleNorm_1^{\finiteFieldExtension{n} \otimes_\finiteField \finiteFieldExtension{m}} \colon \finiteFieldExtension{n} \otimes_\finiteField \finiteFieldExtension{m} \rightarrow \finiteFieldExtension{n} $ and $\etaleNorm_{2}^{n,m} = \etaleNorm_2^{\finiteFieldExtension{n} \otimes_\finiteField \finiteFieldExtension{m}} \colon \finiteFieldExtension{n} \otimes_\finiteField \finiteFieldExtension{m} \rightarrow \finiteFieldExtension{m} $.

Under the above isomorphism, we have that the norms $\etaleNorm_1^{n,m}$ and $\etaleNorm_2^{n,m}$ of $\left( x_1,\dots,x_d \right) \in \finiteFieldExtension{l}^d$ are given by \begin{align*}
\etaleNorm_1^{n,m}\left( x_1,\dots,x_d \right) &= \prod_{j=1}^d \FieldNorm{l}{n}\left( x_j\right),\\	
\etaleNorm_2^{n,m}\left( x_1,\dots,x_d \right) &= \prod_{j=1}^d \FieldNorm{l}{m}\left( x_j\right)^{q^{j-1}}.
\end{align*}

Indeed, the formula regarding $\etaleNorm_{1}^{n,m}$ is obvious. In order to see the formula regarding $\etaleNorm_{2}^{n,m}$, choose a basis $\mathcal{B} = (b_1,\dots,b_{l/m})$ of $\finiteFieldExtension{l}$ with respect to the base field $\finiteFieldExtension{m}$, and let $M_{x_i} \in \mathrm{Mat}_{l/m}\left(\finiteField_d\right)$ be the matrix representing the multiplication map $T_{x_i} \colon \finiteFieldExtension{l} \rightarrow \finiteFieldExtension{l}$ with respect to the basis $\mathcal{B}$. Let $\left(e_1,\dots,e_d\right)$ be the standard basis of $\finiteFieldExtension{l}^d$. Then, with respect to the following $\finiteFieldExtension{m}$-basis of $\finiteField_l^d$, $$\left(\mathcal{B} e_1, \mathcal{B} e_2, \dots, \mathcal{B}  e_d \right),$$ the multiplication map $T_{\left(x_1,\dots,x_d\right)}$ is represented by the matrix $$\diag\left(M_{x_1}, M_{x_2}^q, \dots, M_{x_d}^{q^{d-1}}\right),$$ and its determinant is $$\prod_{j=1}^d \FieldNorm{l}{m}\left(x_j\right)^{q^{j-1}}.$$

Similarly, under the above isomorphism, we have that the trace of $\left(x_1,\dots,x_d\right) \in \finiteFieldExtension{l}^d$ to $\finiteField$ is given by
$$ \trace \left(x_1,\dots,x_d\right) = \sum_{j=1}^d { \FieldTrace_{\finiteFieldExtension{l} \slash \finiteField}\left( x_j \right) }.$$

This formalism allows us to rewrite the epsilon factors discussed previously in a cleaner way. Let $\alpha \in \charactergroup{n}$ and $\beta \in \charactergroup{m}$. Then
\begin{equation}\label{eq:equality-of-etale-algebra-gauss-sums}
\prod_{k = 1}^{d}{\tau \left(\alpha \circ \FieldNorm{l}{n} \cdot \beta^{q^{k-1}} \circ \FieldNorm{l}{m}, \fieldCharacter_{l} \right)} = \left(-1\right)^d \sum_{\xi \in \multiplicativegroup{\left(\finiteFieldExtension{n} \otimes_\finiteField \finiteFieldExtension{m}\right)} } \alpha^{-1}\left( \etaleNorm_1^{n,m} \left(\xi\right) \right) \beta^{-1} \left( \etaleNorm_2^{n,m}\left( \xi \right) \right) \fieldCharacter \left( \trace \xi \right).
\end{equation}
We denote the value in \eqref{eq:equality-of-etale-algebra-gauss-sums} by $\tau_{n,m}\left(\alpha \times \beta, \fieldCharacter\right)$. Since $d = \gcd\left(n,m\right)$ and $n+m+nm$ have the same parity, we have
\begin{equation}
\tau_{n,m}\left(\alpha \times \beta,\fieldCharacter\right) = \left(-1\right)^{nm + n + m} \sum_{\xi \in \multiplicativegroup{\left(\finiteFieldExtension{n} \otimes_\finiteField \finiteFieldExtension{m}\right)} } \alpha^{-1}\left( \etaleNorm_1^{n,m} \left(\xi\right) \right) \beta^{-1} \left( \etaleNorm_2^{n,m}\left( \xi \right) \right) \fieldCharacter \left( \trace \xi \right).
\end{equation}

If $\lambda = \left(n_1, \dots, n_r\right) \vdash n$, we denote $\finiteFieldExtension{\lambda} = \prod_{i=1}^r \finiteFieldExtension{n_i}$. In this case, $\multiplicativegroup{\finiteFieldExtension{\lambda}} = \prod_{i=1}^n \multiplicativegroup{\finiteFieldExtension{n_i}}$ and $\charactergroup{\lambda} = \prod_{i=1}^n \charactergroup{n_i}$. Then we have that $\finiteFieldExtension{\lambda} \otimes_{\finiteField} \finiteFieldExtension{m} \cong \prod_{i=1}^r \finiteFieldExtension{n_i} \otimes \finiteFieldExtension{m}$, and that for  $x = \left(x_1, \dots, x_r\right) \in \prod_{i=1}^r \finiteFieldExtension{n_i} \otimes_\finiteField \finiteFieldExtension{m}$, the norm maps are given by \begin{align*}
\etaleNorm_{1}^{\finiteFieldExtension{\lambda} \otimes_\finiteField \finiteFieldExtension{m}}\left(x\right) &= \left( \etaleNorm_{1}^{n_1, m}\left(x_1\right), \dots, \etaleNorm_{1}^{n_r, m}\left(x_r\right) \right),\\
\etaleNorm_{2}^{\finiteFieldExtension{\lambda} \otimes_\finiteField \finiteFieldExtension{m}}\left(x\right) &= \prod_{i=1}^r \etaleNorm_{2}^{n_i,m}\left(x\right),
\end{align*}
and the trace map (to $\finiteField$) is given by
$$ \trace x = \sum_{i=1}^r {\trace x_i}. $$

For $\alpha = \left(\alpha_1, \dots, \alpha_r\right) \in \charactergroup{\lambda} = \prod_{i = 1}^r{\charactergroup{n_i}}$ and $\beta \in \charactergroup{m}$, denote
$$\tau_{\lambda, m}\left(\alpha \times \beta, \fieldCharacter\right) = \left(-1\right)^{nm + n + rm} \sum_{\xi \in \multiplicativegroup{\left(\finiteFieldExtension{\lambda} \otimes_\finiteField \finiteFieldExtension{m}\right)}} \alpha^{-1}(\etaleNorm_1^{\finiteFieldExtension{\lambda} \otimes_\finiteField \finiteFieldExtension{m}} \left(\xi\right)) \beta^{-1}( \etaleNorm_{2}^{\finiteFieldExtension{\lambda} \otimes_\finiteField \finiteFieldExtension{m}}\left( \xi \right) ) \fieldCharacter\left(\trace \xi\right).$$
Then we have that
$$ \tau_{\lambda, m}\left(\alpha \times \beta, \fieldCharacter \right) = \prod_{j=1}^r \tau_{n_j, m}\left(\alpha_j \times \beta, \fieldCharacter\right).$$

\subsection{Katz's exotic Kloosterman sums and sheaves} \label{subsec:katz-exotic-kloosterman-sheaves}

Let $B$ be a finite étale $\finiteField$-algebra of rank $n$.
We consider the following diagram of schemes over $\finiteField$:

$$ \xymatrix{ & \restrictionOfScalars{B}{\finiteField}{\multiplcativeScheme} \ar[ld]_{ \mathrm{Norm} }  \ar[rd]^{ \mathrm{Trace} } \\
	\multiplcativeScheme & & \affineLine },$$
where $\mathrm{Norm}$ and $\mathrm{Trace}$ are the norm and trace maps, respectively. For a commutative $\finiteField$-algebra $R$, we have $\restrictionOfScalars{B}{\finiteField}{\multiplcativeScheme} \left(R\right) = \multiplicativegroup{\left(B \otimes_\finiteField R\right)}$, $\mathrm{Norm}\left(R\right) = \etaleNorm^{B \otimes_\finiteField R}_{2} \colon \multiplicativegroup{\left(B \otimes_\finiteField R\right)} \rightarrow \multiplicativegroup{R}$, and $\mathrm{Trace} \left(R\right) = \trace^{B \otimes_\finiteField R}_2 \colon B \otimes_\finiteField R \rightarrow R$.

Let $\ell$ be a prime different than the characteristic of $\finiteField$. Let $\fieldCharacter \colon \finiteField \rightarrow \multiplicativegroup{\ladicnumbers}$ be a non-trivial additive character, and let $\chi \colon \multiplicativegroup{B} \rightarrow \multiplicativegroup{\ladicnumbers}$ be a multiplicative character. The additive character $\fieldCharacter$ gives an Artin-Schrier local system $\artinScrier$ on $\affineLine$. $\chi$ gives a Kummer local system on $\restrictionOfScalars{B}{\finiteField}{\multiplcativeScheme}$: the Lang cover $\pi \colon \restrictionOfScalars{B}{\finiteField}{\multiplcativeScheme} \rightarrow \restrictionOfScalars{B}{\finiteField}{\multiplcativeScheme}$ is a $\restrictionOfScalars{B}{\finiteField}{\multiplcativeScheme}\left( \finiteField \right) = \multiplicativegroup{B}$-torsor, and $\mathcal{L}_{\chi}$ is the direct summand of $\pi_{\ast} \ladicnumbers$ on which $\multiplicativegroup{B}$ acts via $\chi$. Consider the following $\ell$-adic complex on $\multiplcativeScheme$:
$$ \operatorname{Kl}\left( B, \chi, \fieldCharacter \right) = \convolutionWithCompactSupport \mathrm{Norm}_{!} \left( \mathrm{Trace}^{\ast} \artinScrier \otimes \mathcal{L}_{\chi} \right) \left[n - 1\right].$$

This complex has been introduced and studied by Katz in \cite[Sections 8.8.4-8.8.7]{katz2016gauss}. See also \cite[Page 152]{katz1993estimates} and \cite[Appendix B]{nien2021converse}. It admits the following properties:

\begin{theorem}[{\cite[Theorem 8.8.5]{katz2016gauss}}]\label{thm:katz-etale-kloosterman-sheaf}
	Denote $\mathcal{K}= \operatorname{Kl}\left( B, \chi, \fieldCharacter \right)$. For $a \in \multiplicativegroup{\finiteField}$, let $\Frobenius_a \mid \mathcal{K}_a$ be the geometric Frobenius at $a$ acting on the stalk of $\mathcal{K}$. Then
	\begin{enumerate}
		\item \label{item:kloosterman-sheaf-is-of-rank-n} $\mathcal{K}$ is a local system of rank $n$. 
		\item \label{item:kloosterman-is-pure-of-weight-n-minus-1} $\mathcal{K}$ is pure of weight $n - 1$. That is, the eigenvalues of $\Frobenius_a \mid \mathcal{K}_a$ are algebraic integers, and given an embedding $\ladicnumbers \hookrightarrow \cComplex$, the eigenvalues of $\Frobenius_a \mid \mathcal{K}_a$ and all of their algebraic conjugates have absolute value $q^{\frac{n-1}{2}}$.
		\item \label{item:m-th-geometric-frobenius-trace-exotic-kloosterman-sum} The trace of the $m$-th power of the geometric Frobenius at $a$ acting on the stalk of $\mathcal{K}$ is given by
		$$ \trace\left( \Frobenius^m_a \mid \mathcal{K}_a \right) = \left(-1\right)^{n-1} \sum_{\substack{x \in B \otimes_\finiteField \finiteFieldExtension{m} \\
				\etaleNorm^{B \otimes_\finiteField \finiteFieldExtension{m}}_2(x) = a}} \chi ( \etaleNorm^{B\otimes_\finiteField \finiteFieldExtension{m}}_1 \left( x \right) ) \fieldCharacter\left( \trace x \right).$$
	\end{enumerate}
	
\end{theorem}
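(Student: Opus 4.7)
The plan is to follow Katz's framework combining proper base change, the Grothendieck--Lefschetz trace formula, and Deligne's Weil~II. First I would note that $\restrictionOfScalars{B}{\finiteField}{\multiplcativeScheme}$ is a smooth connected affine $\finiteField$-group scheme of dimension $n$ (an open in the $n$-dimensional affine space associated to $B$, cut out by the invertibility of the norm), and the Norm map is a smooth surjective homomorphism to $\multiplcativeScheme$ with geometric fibers of pure dimension $n-1$. The two building blocks $\mathrm{Trace}^{\ast}\artinScrier$ and $\mathcal{L}_{\chi}$ are lisse of rank $1$ and pure of weight $0$, which is the backbone for everything that follows.

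For part (3), I would apply proper base change: writing $F_a = \mathrm{Norm}^{-1}(a)$, the stalk of $\mathcal{K}$ at $a$ is identified with
\[
\mathcal{K}_a \cong \convolutionWithCompactSupport\Gamma_c\bigl(F_{a,\algebraicClosure{\finiteField}},(\mathrm{Trace}^{\ast}\artinScrier \otimes \mathcal{L}_{\chi})|_{F_a}\bigr)[n-1].
\]
The Grothendieck--Lefschetz trace formula for $\Frobenius^m_a$ then expresses $\trace(\Frobenius_a^m \mid \mathcal{K}_a)$ as a sum over $F_a(\finiteFieldExtension{m})$, and this set of $\finiteFieldExtension{m}$-points is exactly $\{x \in B \otimes_{\finiteField} \finiteFieldExtension{m} : \etaleNorm_2^{B \otimes \finiteFieldExtension{m}}(x) = a\}$. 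The cohomological shift by $[n-1]$ contributes the sign $(-1)^{n-1}$, and the tensor product $\mathrm{Trace}^{\ast}\artinScrier \otimes \mathcal{L}_{\chi}$ evaluated at an $\finiteFieldExtension{m}$-point $x$ produces the factor $\chi(\etaleNorm_1^{B \otimes \finiteFieldExtension{m}}(x))\,\fieldCharacter(\trace x)$, giving the announced formula.

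For part (1), the main technical content is concentration of the pushforward in a single cohomological degree. I would reduce to the split case $B \cong \prod_i \finiteFieldExtension{n_i}$, under which $\restrictionOfScalars{B}{\finiteField}{\multiplcativeScheme} \cong \prod_i \restrictionOfScalars{\finiteFieldExtension{n_i}}{\finiteField}{\multiplcativeScheme}$ and the Norm map factors as the product of the individual norms followed by iterated multiplication on $\multiplcativeScheme$. Consequently $\mathcal{K}$ is realized as a multiplicative convolution on $\multiplcativeScheme$ of the single-field Kloosterman objects attached to each $(\finiteFieldExtension{n_i}, \chi_i, \fieldCharacter)$, which, via Deligne's construction of hyper-Kloosterman sheaves twisted by a Kummer sheaf along the norm, are already known to be geometrically irreducible local systems of rank $n_i$. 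Iterated multiplicative convolution on $\multiplcativeScheme$ of geometrically irreducible lisse sheaves of ranks $n_i$ is then a local system of rank $\sum n_i = n$, once one verifies (using Artin vanishing on the affine fibers combined with the nontriviality of the Artin--Schreier twist along each fiber) that no intermediate cohomology survives.

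For part (2), the integrand $\mathrm{Trace}^{\ast}\artinScrier \otimes \mathcal{L}_{\chi}$ is pure of weight $0$ on a smooth scheme, so Deligne's Weil~II applied to the smooth map Norm of relative dimension $n-1$ forces $\convolutionWithCompactSupport\mathrm{Norm}_{!}$ to have weights $\le n-1$ after the shift by $[n-1]$. The opposite bound comes from Verdier duality: knowing from part~(1) that $\mathcal{K}$ is lisse, its dual is pure of opposite sign, pinning the weight to exactly $n-1$; translated through $\ladicnumbers \hookrightarrow \cComplex$, this gives the claim on absolute values $q^{(n-1)/2}$. The main obstacle in the whole argument is the cohomological concentration underlying part~(1); once that is in place, parts~(2) and~(3) are essentially formal.
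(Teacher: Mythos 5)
The paper does not prove this statement: it simply quotes Katz's Theorem~8.8.5 from \cite{katz2016gauss}. There is no in-paper proof to compare against, so your sketch must be judged as a reconstruction of Katz's own argument.

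Read that way, your outline follows the right template but leaves the hard part undone, and one logical dependence is misstated. Part~(3) is correct, but contrary to your closing remark it does \emph{not} rely on the concentration in part~(1): proper base change plus the Grothendieck--Lefschetz trace formula for $\convolutionWithCompactSupport\,\mathrm{Norm}_{!}$ already computes the alternating sum of Frobenius traces over \emph{all} compactly supported cohomology degrees of the fiber $F_a$, so the identification with the exotic Kloosterman sum (with the sign $(-1)^{n-1}$ coming from the shift) is unconditional. Part~(1) is precisely where Katz's proof has real content. Your reduction to the split case and reinterpretation as iterated multiplicative convolution of single-field Kloosterman objects is the right strategy, but the sentence ``once one verifies (using Artin vanishing combined with the nontriviality of the Artin--Schreier twist) that no intermediate cohomology survives'' names the mechanism without establishing it; moreover, even granting concentration, computing the rank as $\sum_i n_i$ requires the Euler--Poincar\'e formula (or an explicit Swan-conductor bookkeeping) on the fibers, not just a vanishing statement. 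Finally, the lower weight bound in part~(2) is phrased incorrectly: Verdier duality does not say ``the dual of a lisse sheaf is pure of opposite sign.'' The actual argument uses that, $\mathrm{Norm}$ being a smooth affine fibration of relative dimension $n-1$, Poincar\'e duality on the fibers identifies $R^{n-1}\mathrm{Norm}_{!}\,\mathcal{F}$ with the Tate-twisted dual of $R^{n-1}\mathrm{Norm}_{*}\,\mathcal{F}^{\vee}$, and Weil~II applied to that side gives the complementary bound $\ge n-1$. Fixable, but as written your steps~(1) and~(2) are gestures at Katz's proof rather than a proof.
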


If $B = \finiteFieldExtension{\lambda}$, where $\lambda \vdash n$, and $\chi \colon \multiplicativegroup{\finiteFieldExtension{\lambda}} \rightarrow \multiplicativegroup{\ladicnumbers}$ is a multiplicative character (or $\chi \colon \multiplicativegroup{\finiteFieldExtension{\lambda}} \rightarrow \multiplicativegroup{\cComplex}$ and $\fieldCharacter \colon \finiteField \rightarrow \multiplicativegroup{\cComplex}$), and  $a \in \multiplicativegroup{\finiteField}$, we denote the sum appearing in \Cref{thm:katz-etale-kloosterman-sheaf} (\ref{item:m-th-geometric-frobenius-trace-exotic-kloosterman-sum}) by
$$ J_{m}\left(\chi,\fieldCharacter,a\right) = \sum_{\substack{x \in \finiteFieldExtension{\lambda} \otimes_\finiteField \finiteFieldExtension{m} \\
		\etaleNorm^{\finiteFieldExtension{\lambda} \otimes_\finiteField \finiteFieldExtension{m}}_2(x) = a}} \chi ( \etaleNorm^{\finiteFieldExtension{\lambda}\otimes_\finiteField \finiteFieldExtension{m}}_1 \left( x \right) ) \fieldCharacter\left( \trace x \right),$$
	and call $J_{m}\left(\fieldCharacter, \chi,a\right)$ an exotic Kloosterman sum. These sums generalize the unitary Kloosterman sums introduced in \cite{curtis1999unitary}.

\section{Formulas for special values of the Bessel function}\label{sec:formulas-for-special-values}

In this section, we develop a recursive expression for the Bessel function. We begin with defining a parametrization for irreducible generic representations of $\GL_{n}\left(\finiteField\right)$ using multiplicative characters compatible with a partition of $n$. This parametrization is defined even when the involved multiplicative characters are not regular. Then we rewrite the recursive expression of the Bessel function (\Cref{thm:recursive-bessel-using-gamma-factors}) using this parametrization. We relate special instances of this recursive expression to $L$-functions associated with exotic Kloosterman sums, and to the exotic Kloosterman sheaves constructed by Katz. As a corollary, we are able to show that certain polynomials with Bessel function values as their coefficients have roots lying on the unit circle. This gives us non-trivial bounds for special values of the Bessel function. Finally, we use Dickson polynomials in order to relate special values of the Bessel function of irreducible generic representations of $\GL_n\left(\finiteField\right)$ to special values of the Bessel function of their Shintani base change to $\GL_n\left(\finiteFieldExtension{k}\right)$.

\subsection{Generic representations of a given type}
Let $n \ge 1$, and let $\lambda = \left(n_1,\dots,n_r\right)$ be a partition of $n$.

Given multiplicative characters $\alpha_i \colon \multiplicativegroup{\finiteFieldExtension{n_i}} \rightarrow \multiplicativegroup{\cComplex}$ for every $1 \le i \le r$, we define an irreducible generic representation $\Pi_\lambda\left(\alpha_1,\dots,\alpha_r\right)$ as follows. Let $f_1, \dots, f_l$ be all the Frobenius orbits of $\alpha_1, \dots, \alpha_r$ without repetitions. We define $\phi = \phi_{\lambda}\left(\alpha_1, \dots, \alpha_r\right) \in P_n \left(\limitcharactergroup\right)$ by $\phi \left(f_i\right) = \left({s_i}\right)$, where $$s_i = \frac{1}{\frobeniusDegree \left(f_i\right)}\sum_{\substack{1 \le j \le r\\
		\alpha_j \in f_i}} n_j,$$ and $\left(\right)$ outside of $f_1, \dots, f_l$.
We denote $\Pi_\lambda \left(\alpha_1, \dots, \alpha_r \right) = \Pi_{\phi_\lambda \left(\alpha_1, \dots, \alpha_r \right)}$.

Any irreducible generic representation can be realized in the above form. We first express the tensor product $\epsilon_0$-factor of two irreducible generic representations given by this parametrization using Gauss sums.

\begin{proposition}\label{prop:epsilon-factor-expression}
	Let $\lambda = \left(n_1, \dots, n_r\right)$ and $\mu = \left(m_1, \dots, m_t\right)$ be partitions of $n$ and $m$, respectively. Then 
	\begin{equation}\label{eq:epsilon-factor-of-parameterized-generic-representation}
	\begin{split}
		&\epsilon_0\left(\Pi_{\lambda}\left(\alpha_1, \dots, \alpha_r\right) \times \Pi_{\mu}\left(\beta_1, \dots, \beta_t\right), \fieldCharacter \right) = \\
		&\left(-1\right)^{nm} q^{-\frac{nm}{2}} \prod_{i = 1}^r \prod_{j = 1}^t \prod_{k = 1}^{\gcd \left(n_i, m_j\right)}{\tau \left(\alpha_i \circ \FieldNorm{\lcm \left(n_i, m_j\right)}{n_i} \cdot \beta_j^{q^k} \circ \FieldNorm{{\lcm \left(n_i, m_j\right)}}{m_j}, \fieldCharacter_{\lcm \left(n_i, m_j\right)} \right)} 
	\end{split}
	\end{equation}
	for any choice of multiplicative characters $\alpha_i \colon \multiplicativegroup{\finiteFieldExtension{n_i}} \rightarrow \multiplicativegroup{\cComplex}$ and $\beta_j \colon \multiplicativegroup{\finiteFieldExtension{m_j}} \rightarrow \multiplicativegroup{\cComplex}$.
\end{proposition}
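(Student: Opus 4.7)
\medskip

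\noindent\textbf{Proof plan.} The plan is to reduce the computation to the cuspidal case by multiplicativity, and then handle the Gauss sum identity via Hasse--Davenport. First, I would apply \Cref{thm:epsilon-factors-are-multiplicative} to the parameters $\phi = \phi_\lambda(\alpha_1,\dots,\alpha_r)$ and $\phi' = \phi_\mu(\beta_1,\dots,\beta_t)$, obtaining
\[
\epsilon_0(\Pi_\phi \times \Pi_{\phi'}, \fieldCharacter) = \prod_{f,g} \epsilon_0(\Pi_f \times \Pi_g, \fieldCharacter)^{s_f t_g},
\]
where $f, g$ run over the distinct Frobenius orbits on which $\phi, \phi'$ are supported, $s_f = \frac{1}{\frobeniusDegree(f)} \sum_{i:\, \alpha_i \in f} n_i$ and $t_g = \frac{1}{\frobeniusDegree(g)} \sum_{j:\, \beta_j \in g} m_j$. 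I would then apply \Cref{thm:epsilon-factor-of-tensor-product-as-product-of-gauss-sums} to each cuspidal factor $\epsilon_0(\Pi_f \times \Pi_g, \fieldCharacter)$, expanding it as a product of $\gcd(\frobeniusDegree(f), \frobeniusDegree(g))$ Gauss sums at level $\lcm(\frobeniusDegree(f), \frobeniusDegree(g))$ in terms of fixed representatives $\alpha_f, \beta_g$ of the two orbits.

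The prefactor $(-1)^{nm} q^{-nm/2}$ then falls out of bookkeeping: the total power of $q^{-1/2}$ equals $\sum_{f,g} s_f t_g \frobeniusDegree(f)\frobeniusDegree(g) = \bigl(\sum_f s_f \frobeniusDegree(f)\bigr)\bigl(\sum_g t_g \frobeniusDegree(g)\bigr) = nm$, and the total power of $-1$ is computed identically.

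The main obstacle is matching the two Gauss sum products. Writing $s_f t_g = \sum_{i, j:\, \alpha_i \in f,\, \beta_j \in g} a_i b_j$ with $a_i = n_i / \frobeniusDegree(f(\alpha_i))$ and $b_j = m_j / \frobeniusDegree(g(\beta_j))$, one can regroup the product so that each ordered pair $(i, j)$ contributes independently. What remains is the per-pair identity
\[
\left[\prod_{k=1}^{\gcd(d_f, d_g)} \tau\!\left(\alpha_f \circ \FieldNorm{l}{d_f} \cdot \beta_g^{q^{k-1}} \circ \FieldNorm{l}{d_g}, \fieldCharacter_l\right)\right]^{a_i b_j} = \prod_{k=1}^{\gcd(n_i, m_j)} \tau\!\left(\alpha_i \circ \FieldNorm{L}{n_i} \cdot \beta_j^{q^k} \circ \FieldNorm{L}{m_j}, \fieldCharacter_L\right),
\]
where $d_f = \frobeniusDegree(f(\alpha_i))$, $d_g = \frobeniusDegree(g(\beta_j))$, $l = \lcm(d_f, d_g)$, and $L = \lcm(n_i, m_j)$. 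The Hasse--Davenport relation $\tau(\chi, \fieldCharacter_l)^{L/l} = (-1)^{L/l - 1}\, \tau(\chi \circ \FieldNorm{L}{l}, \fieldCharacter_L)$ lifts each factor on the left from level $l$ to level $L$; a short arithmetic check gives $a_i b_j \cdot (l/L) = \gcd(n_i, m_j)/\gcd(d_f, d_g)$, so both sides produce exactly $\gcd(n_i, m_j)$ Gauss-sum factors at level $L$. The final identification of the two multisets of Gauss sums---including absorbing the apparent shift from $\beta_g^{q^{k-1}}$ to $\beta_j^{q^k}$---follows from the Frobenius-invariance $\tau(\gamma, \fieldCharacter_L) = \tau(\gamma^q, \fieldCharacter_L)$, combined with the periodicities $\alpha_i^{q^{d_f}} = \alpha_i$ and $\beta_j^{q^{d_g}} = \beta_j$ that come from the factorizations $\alpha_i = \alpha_f \circ \FieldNorm{n_i}{d_f}$ and $\beta_j = \beta_g \circ \FieldNorm{m_j}{d_g}$, together with careful tracking of the Hasse--Davenport signs against the overall $(-1)^{nm}$ prefactor.
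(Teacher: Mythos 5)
Your plan follows essentially the same route as the paper's proof, just in the opposite direction: the paper starts from the right-hand side, reduces each per-pair Gauss-sum product down to the cuspidal level $\lcm(d_f,d_g)$ via Hasse--Davenport and the $\gcd(d_f,d_g)$-periodicity (\Cref{prod:tensor-product-gauss-sum-invariant-under-gcd}), then reassembles using \Cref{thm:epsilon-factor-of-tensor-product-as-product-of-gauss-sums} and \Cref{thm:epsilon-factors-are-multiplicative}; you start from the left-hand side, expand via multiplicativity and the cuspidal formula, and lift up to level $L=\lcm(n_i,m_j)$. Both use the same two ingredients (Hasse--Davenport and Frobenius-orbit periodicity of the Gauss-sum factors), and your exponent count $a_i b_j\cdot(l/L) = \gcd(n_i,m_j)/\gcd(d_f,d_g)$ is the arithmetic that makes it work. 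One small correction: with the paper's normalization $\tau(\gamma,\fieldCharacter_r) = -\sum_{\xi}\gamma^{-1}(\xi)\fieldCharacter_r(\xi)$, the Hasse--Davenport relation is sign-free, $\tau(\chi,\fieldCharacter_l)^{L/l} = \tau(\chi\circ\FieldNorm{L}{l},\fieldCharacter_L)$, so the $(-1)^{L/l-1}$ in your displayed relation is spurious and there are no extra signs to reconcile against $(-1)^{nm}$ — only the product of the cuspidal prefactors $(-1)^{n_im_j}$, which already sums to $(-1)^{nm}$ since $nm=\sum_{i,j}n_im_j$.
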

\begin{proof}
	Let $d_1 = \frobeniusDegree \left(\alpha_1\right), \dots, d_r = \frobeniusDegree \left(\alpha_r\right)$, and $u_1 = \frobeniusDegree \left(\beta_1\right), \dots, u_t = \frobeniusDegree \left(\beta_t\right)$ be the degrees of the Frobenius orbits generated by the multiplicative characters. Let $\alpha'_i \in \charactergroup{d_i}$ and $\beta'_j \in \charactergroup{u_j}$ be characters, such that $\alpha_i = \alpha'_i \circ \FieldNorm{n_i}{d_i}$ and $\beta_j = \beta'_j \circ \FieldNorm{m_j}{u_j}$. Since $$\alpha_i \circ \FieldNorm{\lcm \left(n_i, m_j\right)}{n_i} = \alpha'_i \circ \FieldNorm{\lcm \left(n_i, m_j\right)}{d_i} = \alpha'_i \circ \FieldNorm{\lcm\left(d_i, u_j\right)}{d_i} \circ \FieldNorm{\lcm \left(n_i, m_j\right)}{\lcm \left(d_i, u_j\right)},$$ and similarly, $$\beta_j^{q^k} \circ \FieldNorm{\lcm\left(n_i, m_j\right)}{m_j} = {\beta_j'}^{q^k} \circ \FieldNorm{\lcm\left(n_i, m_j\right)}{u_j} = {\beta_j'}^{q^k} \circ \FieldNorm{\lcm\left(d_i, u_j\right)}{u_j} \circ \FieldNorm{\lcm \left(n_i, m_j\right)}{\lcm \left(d_i, u_j\right)},$$ we have by the Hasse-Davenport relation that \begin{align*}
		 &\tau \left(\alpha_i \circ \FieldNorm{\lcm \left(n_i, m_j\right)}{n_i} \cdot \beta_j^{q^k} \circ \FieldNorm{{\lcm \left(n_i, m_j\right)}}{m_j}, \fieldCharacter_{\lcm \left(n_i, m_j\right)} \right) \\
		 =&\tau \left(\alpha'_i \circ \FieldNorm{\lcm \left(d_i, u_j\right)}{d_i} \cdot {\beta_j'}^{q^k} \circ \FieldNorm{{\lcm \left(d_i, u_j\right)}}{u_j}, \fieldCharacter_{\lcm (d_i, u_j)} \right)^{\frac{\lcm (n_i, m_j)}{\lcm (d_i, u_j)}}.
	\end{align*}
	We notice that the product of the Gauss sums satisfies \begin{equation*}
		\begin{split}
		&\prod_{k = 1}^{\gcd \left(n_i, m_j\right)}{\tau \left(\alpha_i \circ \FieldNorm{\lcm \left(n_i, m_j\right)}{n_i} \cdot {\beta}_j^{q^k} \circ \FieldNorm{{\lcm \left(n_i, m_j\right)}}{m_j}, \fieldCharacter_{\lcm \left(n_i, m_j\right)} \right)}  \\=
		&\left(\prod_{k = 1}^{\gcd \left(d_i, u_j\right)}{\tau \left(\alpha'_i \circ \FieldNorm{\lcm \left(d_i, u_j\right)}{d_i} \cdot {\beta_j'}^{q^k} \circ \FieldNorm{{\lcm \left(d_i, u_j\right)}}{u_j}, \fieldCharacter_{\lcm \left(d_i, u_j\right)} \right)^{\frac{\lcm (n_i, m_j)}{\lcm (d_i, u_j)}}}\right)^{\frac{\gcd(n_i, m_j)}{\gcd(d_i, u_j)}},
		\end{split}
	\end{equation*}
	as the multiplicand repeats itself every $\gcd\left(d_i, u_j\right)$ terms (see \Cref{prod:tensor-product-gauss-sum-invariant-under-gcd}). Hence we have that 
	\begin{equation}\label{eq:gauss-sum-product-factorization-through-gcd}
	\begin{split}
	&\prod_{k = 1}^{\gcd \left(n_i, m_j\right)}{\tau \left(\alpha_i \circ \FieldNorm{\lcm \left(n_i, m_j\right)}{n_i} \cdot {\beta}_j^{q^k} \circ \FieldNorm{{\lcm \left(n_i, m_j\right)}}{m_j}, \fieldCharacter_{\lcm \left(n_i, m_j\right)} \right)} \\=
	&	\left(\prod_{k=1}^{\gcd\left(d_i, u_j\right)} \tau \left(\alpha'_i \circ \FieldNorm{\lcm \left(d_i, u_j\right)}{d_i} \cdot {\beta_j'}^{q^k} \circ \FieldNorm{{\lcm \left(d_i, u_j\right)}}{u_j}, \fieldCharacter_{\lcm \left(d_i, u_j\right)} \right)\right)^{\frac{n_i m_j}{d_i u_j}}.
	\end{split}
	\end{equation}
	Denote by $f_1, \dots, f_l$ the different Frobenius orbits generated by $\alpha'_1, \dots, \alpha'_r$, and by $g_1, \dots, g_{s}$ the different Frobenius orbits generated by $\beta'_1, \dots, \beta'_{t}$. By \eqref{eq:gauss-sum-product-factorization-through-gcd}, \Cref{thm:epsilon-factor-of-tensor-product-as-product-of-gauss-sums}, and using the fact that $nm = \sum_{i'=1}^r \sum_{j'=1}^t n_{i'} m_{j'}$, we can rewrite the right hand side of \eqref{eq:epsilon-factor-of-parameterized-generic-representation} as \begin{align*}
		&\prod_{i = 1}^{l}\prod_{j = 1}^{s}\prod_{\substack{1 \le i' \le r\\
		\alpha_{i'} \in f_i}}\prod_{\substack{1 \le j' \le t\\
	\beta_{j'} \in g_j
}} {\epsilon_0 \left( \Pi_{f_i} \times \Pi_{g_j} , \fieldCharacter \right)^{\frac{n_{i'} m_{j'}}{\frobeniusDegree (f_i)  \frobeniusDegree(g_j)}}} \\
	=& \prod_{i = 1}^{l}\prod_{j = 1}^{s} {\epsilon_0 \left( \Pi_{f_i} \times \Pi_{g_j} , \fieldCharacter \right)^{\left(\sum_{\substack{1 \le i' \le r\\
						\alpha_{i'} \in f_i}} \frac{n_{i'}}{\frobeniusDegree (f_i)}\right) \left(\sum_{\substack{1 \le j' \le t\\
						\beta_{j'} \in g_j}} \frac{m_{j'}}{\frobeniusDegree(g_j)}\right) }}.
	\end{align*}
	By multiplicativity of the tensor product $\epsilon_0$-factors 
	(\Cref{thm:epsilon-factors-are-multiplicative}), and by the definitions of $\Pi_{\lambda} \left( \alpha_1, \dots, \alpha_r \right)$ and $\Pi_{\mu} \left( \beta_1, \dots, \beta_t \right)$, this product equals $$\epsilon_0\left(\Pi_{\lambda}\left(\alpha_1, \dots, \alpha_r\right) \times \Pi_{\mu}\left(\beta_1, \dots, \beta_t\right), \fieldCharacter \right),$$ as required.
\end{proof}

Using the formalism of étale algebras discussed in \Cref{sec:tensor-product-of-finite-fields}, we can restate \Cref{prop:epsilon-factor-expression} in the following way.

\begin{proposition} \label{prop:epsilon-factors-etale-product}
	Let $\lambda = \left(n_1, \dots, n_r\right)$ and $\mu = \left(m_1, \dots, m_t\right)$ be partitions of $n$ and $m$, respectively. Then 
	\begin{equation*}
	\begin{split}
	\epsilon_0\left(\Pi_{\lambda}\left(\alpha_1, \dots, \alpha_r\right) \times \Pi_{\mu}\left(\beta_1, \dots, \beta_t\right), \fieldCharacter \right) &= \left(-1\right)^{nm}q^{-\frac{nm}{2}}\prod_{j=1}^t \tau_{\lambda, m_j}\left(\alpha \times \beta_j, \fieldCharacter \right),
	\end{split}
	\end{equation*}
	for any choice of multiplicative characters $\alpha = \left(\alpha_1, \dots, \alpha_r \right) \in \charactergroup{\lambda} = \prod_{i=1}^r\charactergroup{n_i}$ and $\beta = \left(\beta_1, \dots , \beta_t\right) \in \charactergroup{\mu} = \prod_{j=1}^t \charactergroup{m_j}$.
\end{proposition}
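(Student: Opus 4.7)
The plan is to chain together \Cref{prop:epsilon-factor-expression} with the definitions of $\tau_{n,m}$ and $\tau_{\lambda, m}$ laid out in \Cref{sec:tensor-product-of-finite-fields}. The essential content is a triple product reindexing, so the proof is largely bookkeeping.

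First I would apply \Cref{prop:epsilon-factor-expression} to rewrite the left hand side as the triple product
$$(-1)^{nm} q^{-\frac{nm}{2}} \prod_{i=1}^r \prod_{j=1}^t \prod_{k=1}^{\gcd(n_i, m_j)} \tau\bigl(\alpha_i \circ \FieldNorm{\lcm(n_i, m_j)}{n_i} \cdot \beta_j^{q^k} \circ \FieldNorm{\lcm(n_i, m_j)}{m_j}, \fieldCharacter_{\lcm(n_i, m_j)}\bigr).$$
I would then pull the product over $j$ to the outside, and for each fixed $j$ identify the remaining product over $i$ and $k$ with $\prod_{i=1}^r \tau_{n_i, m_j}(\alpha_i \times \beta_j, \fieldCharacter)$, by appealing to the product formula for $\tau_{n_i, m_j}$ encoded in equation \eqref{eq:equality-of-etale-algebra-gauss-sums}.

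The final ingredient is the factorization $\tau_{\lambda, m_j}(\alpha \times \beta_j, \fieldCharacter) = \prod_{i=1}^r \tau_{n_i, m_j}(\alpha_i \times \beta_j, \fieldCharacter)$ recorded at the end of \Cref{sec:tensor-product-of-finite-fields}; this follows from the decomposition $\finiteFieldExtension{\lambda} \otimes_\finiteField \finiteFieldExtension{m_j} \cong \prod_{i=1}^r \finiteFieldExtension{n_i} \otimes_\finiteField \finiteFieldExtension{m_j}$ and the compatibility of this decomposition with $\etaleNorm_1$, $\etaleNorm_2$, and $\trace$. Collapsing the product over $i$ for each $j$ then produces the stated right hand side with the correct overall sign and power of $q$.

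The only subtle point is the apparent mismatch between the exponent $q^k$ appearing in \Cref{prop:epsilon-factor-expression} and the exponent $q^{k-1}$ appearing in \eqref{eq:equality-of-etale-algebra-gauss-sums}. I would dispatch this by observing that both products of Gauss sums compute the same intrinsic étale-algebra sum $\sum_\xi \alpha_i^{-1}(\etaleNorm_1^{n_i,m_j}\xi)\beta_j^{-1}(\etaleNorm_2^{n_i,m_j}\xi)\fieldCharacter(\trace \xi)$, which is defined without reference to the isomorphism $\finiteFieldExtension{n_i} \otimes_\finiteField \finiteFieldExtension{m_j} \cong \finiteFieldExtension{\lcm(n_i, m_j)}^{\gcd(n_i, m_j)}$ used to expand it; different isomorphisms merely reindex the exponents cyclically in the resulting product of Gauss sums. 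Apart from this harmless reindexing, the proof is a direct rewriting.
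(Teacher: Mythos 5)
Your proposal is correct and matches the paper's intended argument: the paper explicitly presents this proposition as a restatement of \Cref{prop:epsilon-factor-expression} "using the formalism of étale algebras," so the proof is precisely the bookkeeping you carry out — pull the $j$-product outside, identify the inner $i,k$-product with $\prod_i \tau_{n_i,m_j}$ via \eqref{eq:equality-of-etale-algebra-gauss-sums}, and collapse over $i$ by the factorization $\tau_{\lambda,m_j} = \prod_i \tau_{n_i,m_j}$. You are also right to flag and dismiss the $q^k$ versus $q^{k-1}$ mismatch; this is exactly the cyclic periodicity recorded in \Cref{prod:tensor-product-gauss-sum-invariant-under-gcd}, which makes the product of Gauss sums invariant under shifting the exponent index modulo $\gcd(n_i,m_j)$, and equivalently reflects that the intrinsic étale-algebra sum does not see the choice of isomorphism $\finiteFieldExtension{n_i} \otimes_\finiteField \finiteFieldExtension{m_j} \cong \finiteFieldExtension{\lcm(n_i,m_j)}^{\gcd(n_i,m_j)}$.
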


Denote for a partition $\mu = \left(m_1, \dots, m_t\right) \vdash m$ the polynomial $\varphi_{\mu}\left(t\right) = {\prod_{j=1}^t\left(t^{m_j} - 1\right)}$ and $Z_\mu = \prod_{k = 1}^{\infty}{k^{\mu\left(k\right)} \cdot \mu\left(k\right)!}$, where $\mu\left(k\right)$ is the number of times $k$ appears in the partition $\mu$, i.e., $\mu \left(k\right) = \#\left\{ 1 \le j \le t \mid m_j = k \right\}$.

The following lemma will be used in order to reduce the summation over irreducible generic representations in \Cref{thm:recursive-bessel-using-gamma-factors} to a summation over multiplicative characters.
\begin{lemma}\label{lem:number-of-preimages-of-partition-sequence}
	Let $\sigma$ be an irreducible generic representation of $\GL_m\left(\finiteField\right)$. Suppose that $\sigma$ is parameterized by $\phi \in P_m\left(\limitcharactergroup\right)$, which is supported on the pairwise distinct Frobenius orbits $f_1, \dots, f_r$ of degrees $d_1 = \frobeniusDegree \left(f_1\right), \dots, d_r = \frobeniusDegree \left(f_r\right)$, respectively, and suppose that $\phi\left(f_i\right) = \left({s_i}\right)$. 
	Let $$\Omega_\sigma = \left\{ \left(\left(\nu_1, \dots, \nu_r\right), \left(\gamma_1, \dots, \gamma_r\right) \right) \mid \nu_i \vdash s_i, \gamma_i \in f_i^{\lengthof\left( \nu_i \right)} \right\},$$
	where $\lengthof\left(\nu_i\right)$ is the length of the partition $\nu_i$, and $f_i^{\lengthof\left(\nu_i\right)} = f_i \times \dots \times f_i$ is the Cartesian product set, where $f_i$ appears in the product $\lengthof\left(\nu_i\right)$ times. Then
	\begin{enumerate}
		\item \label{item:surjective-parameterization} There exists a surjective map $$\eta_\sigma \colon \left\{ \left(\mu, \beta \right) \mid \mu \vdash m, \beta \in \charactergroup{\mu} \mid \Pi_{\mu}\left(\beta\right) \isomorphic \sigma \right\} \rightarrow \Omega_\sigma.$$
		\item Let \label{item:partition-of-inverse-image-of-parameterization} $\left( \left(\nu_1,\dots,\nu_r\right), \left(\gamma_1,\dots,\gamma_r\right) \right) \in \Omega_\sigma$. Let $\mu$ be the concatenation of the partitions $d_1 \nu_1,\dots, d_r \mu_r$. Then for any $\mu' \vdash m$ and $\beta' \in \charactergroup{\mu'}$ such that $\eta_\sigma \left(\mu', \beta'\right) = \left( \left(\nu_1,\dots,\nu_r\right), \left(\gamma_1,\dots,\gamma_r\right) \right)$, we have $\mu' = \mu$ and $\varphi_{\mu'}\left(q\right) = \prod_{i=1}^r{\varphi_{\nu_i}\left(q^{d_i}\right)}$.
		\item \label{item:size-of-inverse-image-of-parameterization} The inverse image of $\left( \left(\nu_1,\dots,\nu_r\right), \left(\gamma_1,\dots,\gamma_r\right) \right)$ under $\eta_\sigma$ is of cardinality $$\frac{Z_\mu}{ d_1^{\lengthof\left( \nu_1 \right)} Z_{\nu_1} \dots d_r^{\lengthof\left( \nu_r \right)} Z_{\nu_r}},$$
		where $\mu$ is defined in part (\ref{item:partition-of-inverse-image-of-parameterization}).

	\end{enumerate}
	\begin{proof}
		Let $\mu = \left(m_1,\dots,m_t\right) \vdash m$ and $\beta = \left(\beta_1, \dots, \beta_t\right) \in \charactergroup{\mu}$ with $\Pi_{\mu}\left(\beta\right) \isomorphic \sigma$. For every $i$, consider the index set $J_i = \left\{1 \le j \le t \mid \beta_j \in f_i \right\}$. Then by the definition of $\Pi_{\mu}\left(\beta\right)$ we must have $$\sum_{j \in J_i} \frac{m_{j}}{d_i} = s_i,$$ and therefore the tuple $$\nu_i = \left(\frac{m_{j}}{d_i} \mid j \in J_i \right)$$ is a partition of $s_i$, where $\nu_i$ is ordered using the ordering of $J_i$. Denote $$\gamma_i = \left(\beta_{j} \mid j \in J_i \right) \in f_i^{\lengthof\left(\nu_i\right)},$$
		where again $\gamma_i$ is ordered using the ordering of $J$.
		We define $$\eta_\sigma\left(\mu, \beta \right) = \left(\left(\nu_1,\dots,\nu_r\right),\left(\gamma_1,\dots,\gamma_r\right)\right) \in \Omega_\sigma.$$ Therefore, we have constructed a map as in the theorem.
		
		We move to show that $\eta_\sigma$ is surjective. Given $\nu_1 \vdash s_1, \dots, \nu_r \vdash s_r$ and $\gamma_i \in f_i^{\lengthof\left(\nu_i\right)}$, write $\nu_i = \left( s_{i,1}, \dots, s_{i,l_i} \right)$ and $\gamma_i = \left(\gamma_{i,1}, \dots, \gamma_{i,l_i} \right)$.
		Denote $\mu_i = d_i \nu_i = \left( d_i s_{i,1}, \dots, d_i s_{i,l_i} \right)$, and denote by $\mu$ the concatenation of $\mu_1, \dots, \mu_r$. Let $\beta_{i,j} \in \charactergroup{d_i}$ be a multiplicative character representing $\gamma_{i,j}$, and let $$\beta_i = \left( \beta_{i,1} \circ \FieldNorm{d_i s_{i,1}}{d_i}, \dots, \beta_{i,l_i} \circ \FieldNorm{d_i s_{i,l_i}}{d_i} \right).$$
		Let $\beta$ be the concatenation of $\beta_1, \dots, \beta_r$, where we perform the concatenation as we did for $\mu_1,\dots,\mu_r$, so that the indices of both concatenations match. Then $\Pi_{\mu}\left(\beta\right) \isomorphic \sigma$ by definition and  $$\eta_\sigma\left(\mu, \beta\right) = \left(\left(\nu_1,\dots,\nu_r\right),\left(\gamma_1,\dots,\gamma_r\right)\right).$$ This proves part (\ref{item:surjective-parameterization}).
		
		If $\left(\mu', \beta'\right)$ is an inverse image of $\left(\left(\nu_1,\dots,\nu_r\right), \left(\gamma_1,\dots,\gamma_r\right)\right)$, then by our construction we must have that $\mu' = \mu$, where $\mu$ is constructed as above, that is, $\mu$ is the concatenation of $d_1 \nu_1, \dots, d_r \nu_r$. 		The identity $\varphi_{\mu}\left(q\right) = \prod_{i=1}^r\varphi_{\nu_i}\left(q^{d_i}\right)$ is immediate from the definitions of $\mu$ and $\varphi_\mu \left(t\right)$. This proves part (\ref{item:partition-of-inverse-image-of-parameterization}).
		
		We move to count the number of options for the sequence $\beta' = (\beta_1', \dots, \beta_{\lengthof\left(\mu\right)}')$. The sequence $\beta'$, up to ordering, has to be the same as the $\beta$ we constructed above. We notice that for every $i$, once we know the indices of all parts of $\mu$ which are occupied by elements lying in the orbit $f_i$, then the value of $\beta'$ at these indices is determined by $\gamma_i$. Therefore, the only choice for our ordering is how to label the parts of $\mu$ with $f_1$, $\dots$, $f_r$, such that the parts labeled $f_i$ form the partition $d_i \nu_i$, for every $i$. In order to compute this, we first count number of ways to label the parts of size $k$ in $\mu$ by $f_1$, $\dots$, $f_r$, so that for every $i$, the number of parts of size $k$ in $\mu$ labeled $f_i$ equals the number of the parts of size $k$ in $d_i \nu_i$. The number of way doing so is
		$$ \binom{\mu\left(k\right)}{\left(d_1 \nu_1\right)\left(k\right), \dots, \left(d_r \nu_r\right)\left(k\right)},$$
		where for every $1 \le i \le r$, $\left(d_i \nu_i\right)\left(k\right)$ is the number of times $k$ appears in the partition $d_i \nu_i$.
		
		Therefore, we have $$\prod_{k=1}^{\infty}{ \binom{\mu\left(k\right)}{\left(d_1 \nu_1\right)\left(k\right), \dots, \left(d_r \nu_r\right)\left(k\right)} } = \prod_{k=1}^{\infty} \frac{\mu\left(k\right)!}{\left(\left(d_1 \nu_1\right)\left(k\right)\right)! \dots  \left(\left(d_1 \nu_r\right)\left(k\right)\right)!} = \prod_{k=1}^{\infty} \frac{\mu\left(k\right)!}{\nu_1\left(k\right)! \dots  \nu_r\left(k\right)!}$$ options for labeling $\mu$ with $f_1, \dots, f_r$, such that for every $i$, the parts labeled $f_i$ form the partition $d_i \nu_i$. This number equals $$\prod_{k=1}^{\infty} \frac{\left(d_1 k\right)^{\nu_1\left(k\right)} \dots \left(d_r k\right)^{\nu_r\left(k\right)} \mu\left(k\right)!}{d_1^{\nu_1\left(k\right)} \dots d_r^{\nu_r\left(k\right)} k^{\nu_1\left(k\right)} \dots k^{\nu_r\left(k\right)} \nu_1\left(k\right)! \dots  \nu_r\left(k\right)!},$$ which equals $$\frac{Z_\mu}{d_1^{\lengthof\left(\nu_1\right)} \dots d_r^{\lengthof\left(\nu_r\right)} Z_{\nu_1} \dots Z_{\nu_r}},$$ since $\sum_{k=1}^{\infty}\nu_i\left(k\right) = \lengthof\left(\nu_i\right)$ and since by definition $$\prod_{k=1}^{\infty} k^{\mu\left(k\right)} = \prod_{k=1}^{\infty} \left(d_1 k\right)^{\nu_1\left(k\right)} \dots \left(d_r k\right)^{\nu_r\left(k\right)}.$$
		This proves part (\ref{item:size-of-inverse-image-of-parameterization}).
	\end{proof}
\end{lemma}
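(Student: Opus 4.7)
The map $\eta_\sigma$ should be defined in the most natural way: given a pair $(\mu, \beta)$ with $\mu = (m_1, \dots, m_t) \vdash m$ and $\beta = (\beta_1, \dots, \beta_t) \in \widehat{\multiplicativegroup{\finiteFieldExtension{\mu}}}$ such that $\Pi_\mu(\beta) \cong \sigma$, group the indices $\{1, \dots, t\}$ according to the Frobenius orbit of each $\beta_j$. Writing $J_i = \{j : \beta_j \in f_i\}$, the assumption $\Pi_\mu(\beta) \cong \Pi_\phi$ forces $d_i \mid m_j$ for every $j \in J_i$ (since $\beta_j \in \widehat{\multiplicativegroup{\finiteFieldExtension{m_j}}} \cap f_i$ means $\beta_j = \beta_j' \circ \FieldNorm{m_j}{d_i}$ for some $\beta_j' \in f_i \cap \limitcharactergroup_{d_i}$) and forces $\sum_{j \in J_i} m_j/d_i = s_i$, so $\nu_i := (m_j/d_i : j \in J_i)$ (in the order induced from $J_i$) is a partition of $s_i$. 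Setting $\gamma_i := (\beta_j : j \in J_i) \in f_i^{\lengthof(\nu_i)}$, this defines $\eta_\sigma(\mu, \beta) = ((\nu_1, \dots, \nu_r), (\gamma_1, \dots, \gamma_r)) \in \Omega_\sigma$. This handles well-definedness; for part (\ref{item:partition-of-inverse-image-of-parameterization}), any preimage must have $\mu$ equal to the concatenation of $d_1 \nu_1, \dots, d_r \nu_r$ (as a multiset of parts), and the polynomial identity $\varphi_\mu(q) = \prod_i \varphi_{\nu_i}(q^{d_i})$ follows directly from the factorization $t^{d_i s} - 1 = \prod$ interpretation — simply expand using $\mu = \sqcup_i d_i \nu_i$.

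For surjectivity (part (\ref{item:surjective-parameterization})), given a tuple in $\Omega_\sigma$, I will construct an explicit preimage: writing $\nu_i = (s_{i,1}, \dots, s_{i,l_i})$ and $\gamma_{i,j} \in f_i$, pick a representative $\beta_{i,j}' \in \charactergroup{d_i}$ of $\gamma_{i,j}$ and set $\beta_{i,j} = \beta_{i,j}' \circ \FieldNorm{d_i s_{i,j}}{d_i} \in \charactergroup{d_i s_{i,j}}$. Concatenating the $d_i \nu_i$ in order yields $\mu$, and concatenating the $\beta_{i,j}$ in matching order yields $\beta$. By construction the induced $\phi_\mu(\beta)$ assigns $(s_i)$ to $f_i$ and $()$ elsewhere, so $\Pi_\mu(\beta) \cong \sigma$ and $\eta_\sigma(\mu, \beta)$ returns the original tuple.

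The main obstacle lies in part (\ref{item:size-of-inverse-image-of-parameterization}): counting the preimage of a fixed tuple. Since $\mu$ is uniquely determined as a multiset, the only freedom is in how the positions of $\mu$ are assigned to the Frobenius orbits $f_1, \dots, f_r$ (once a position is assigned to $f_i$, its $\beta$-value is forced by the corresponding component of $\gamma_i$, because the distinct entries of $\gamma_i$ together with the norm prescription uniquely determine the character). For each part size $k$, the number of parts of $\mu$ equal to $k$ that must receive label $f_i$ is $(d_i \nu_i)(k) = \nu_i(k/d_i)$ (zero unless $d_i \mid k$). The number of labelings is therefore
\[
\prod_{k=1}^{\infty} \binom{\mu(k)}{(d_1\nu_1)(k), \dots, (d_r\nu_r)(k)} = \prod_{k=1}^{\infty} \frac{\mu(k)!}{\prod_i \nu_i(k/d_i)!} = \frac{\prod_k \mu(k)!}{\prod_i \prod_k \nu_i(k)!},
\]
after reindexing $k \leftrightarrow k/d_i$ in each inner product.

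Finally, to match this expression with $Z_\mu / (\prod_i d_i^{\lengthof(\nu_i)} Z_{\nu_i})$, I will verify the identity $\prod_k k^{\mu(k)} = \prod_i \prod_k (d_i k)^{\nu_i(k)}$, which follows by the substitution $\ell = d_i k$ combined with $\mu(\ell) = \sum_{i : d_i \mid \ell} \nu_i(\ell / d_i)$; this cancels the $k^{\mu(k)}$ factor in $Z_\mu$ against $\prod_i d_i^{\lengthof(\nu_i)} \prod_i \prod_k k^{\nu_i(k)}$, leaving exactly the factorial ratio above. The bookkeeping here is the one genuinely delicate point; everything else is direct construction.
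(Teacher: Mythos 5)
Your proposal is correct and follows the paper's proof essentially step for step: you define $\eta_\sigma$ via the index sets $J_i$, construct an explicit preimage via norm inflation from representatives in $\charactergroup{d_i}$, and count labelings of the parts of $\mu$ by Frobenius orbits via a product of multinomial coefficients, simplified by the same identity $\prod_k k^{\mu(k)} = \prod_i \prod_k (d_i k)^{\nu_i(k)}$. The one cosmetic difference (reindexing $(d_i\nu_i)(k)$ as $\nu_i(k/d_i)$) does not change the argument.
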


We will now use this lemma in order to relate the dimension of an irreducible generic representation $\sigma$ of $\GL_m\left(\finiteField\right)$ to the number of $\mu \vdash m$ and $\beta \in \charactergroup{\mu}$ satisfying $\Pi_\mu\left(\beta\right) \cong \sigma$.

\begin{theorem}
	Let $\representationDeclaration{\sigma}$ be an irreducible generic representation of $\GL_{m}\left(\finiteField\right)$. Then 
	$$ \frac{1}{\grpIndex{\GL_m\left(\finiteField\right)}{\UnipotentSubgroup_m}} {{\dim \sigma}} = \sum_{\mu \vdash m} \frac{1}{Z_\mu} \frac{1}{\varphi_\mu\left(q\right)} \cdot \#\left\{\beta \in \charactergroup{\mu} \mid \Pi_{\mu}\left(\beta\right) \cong \sigma \right\}. $$
\end{theorem}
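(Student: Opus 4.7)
The plan is to combine the dimension formula \eqref{eq:dimension-of-generic-representation} with \Cref{lem:number-of-preimages-of-partition-sequence}. Suppose $\sigma$ is parameterized by $\phi \in P_m\left(\limitcharactergroup\right)$ supported on the Frobenius orbits $f_1,\dots,f_r$ of degrees $d_i = \frobeniusDegree\left(f_i\right)$ with $\phi\left(f_i\right) = \left(s_i\right)$. Then \eqref{eq:dimension-of-generic-representation} yields
\[
\frac{\dim \sigma}{\grpIndex{\GL_m\left(\finiteField\right)}{\UnipotentSubgroup_m}} = \prod_{i=1}^{r} \frac{q^{d_i s_i\left(s_i-1\right)/2}}{\prod_{j=1}^{s_i}\left(q^{d_i j}-1\right)},
\]
so the task reduces to showing that the right-hand side of the claimed identity equals this product.

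For the right-hand side, I would partition the set of pairs $\left(\mu, \beta\right)$ satisfying $\Pi_\mu\left(\beta\right) \cong \sigma$ according to their image under the surjection $\eta_\sigma$ of \Cref{lem:number-of-preimages-of-partition-sequence}(\ref{item:surjective-parameterization}). By parts (\ref{item:partition-of-inverse-image-of-parameterization}) and (\ref{item:size-of-inverse-image-of-parameterization}), every pair in the fibre over $\left(\left(\nu_1,\dots,\nu_r\right),\left(\gamma_1,\dots,\gamma_r\right)\right) \in \Omega_\sigma$ shares the same $\mu$ (the concatenation of $d_1\nu_1,\dots,d_r\nu_r$), so $\varphi_\mu\left(q\right) = \prod_i \varphi_{\nu_i}\left(q^{d_i}\right)$, and the fibre has cardinality $Z_\mu \big/ \prod_i \left(d_i^{\lengthof\left(\nu_i\right)} Z_{\nu_i}\right)$. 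The $Z_\mu$ factors cancel, and the inner sum over $\gamma_i \in f_i^{\lengthof\left(\nu_i\right)}$ contributes a factor $d_i^{\lengthof\left(\nu_i\right)}$, which cancels against $d_i^{\lengthof\left(\nu_i\right)}$ in the denominator. Hence the right-hand side collapses to
\[
\prod_{i=1}^{r}\,\sum_{\nu \vdash s_i}\frac{1}{Z_\nu \,\varphi_\nu\!\left(q^{d_i}\right)}.
\]

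It therefore suffices to prove the purely combinatorial identity
\[
\sum_{\nu \vdash s}\frac{1}{Z_\nu \,\varphi_\nu\!\left(Q\right)} \;=\; \frac{Q^{s\left(s-1\right)/2}}{\prod_{j=1}^{s}\left(Q^j - 1\right)}
\]
for every integer $s \ge 0$ (as an identity of rational functions in $Q$), and then specialize $Q = q^{d_i}$. This identity is the main obstacle. I would derive it by matching both sides as coefficients of $t^s$ in an auxiliary generating function. On one hand, the exponential formula applied to partition sums gives
\[
\sum_{s \ge 0} t^s\!\sum_{\nu \vdash s}\frac{1}{Z_\nu \,\varphi_\nu\!\left(Q\right)} = \prod_{k \ge 1}\exp\!\left(\frac{t^k}{k\left(Q^k-1\right)}\right) = \exp\!\left(\sum_{k \ge 1}\frac{t^k}{k\left(Q^k-1\right)}\right).
\]
On the other hand, writing $\prod_{j=1}^s\left(Q^j-1\right) = \left(-1\right)^s \prod_{j=1}^{s}\left(1-Q^j\right)$ and invoking Euler's $q$-exponential identity $\sum_{s \ge 0} Q^{s\left(s-1\right)/2}\left(-t\right)^s/\prod_{j=1}^{s}\left(1-Q^j\right) = \prod_{k \ge 0}\left(1-tQ^k\right)$ (valid analytically for $\abs{Q} < 1$ and then extended by rationality in $Q$), together with the expansion $\log \prod_{k \ge 0}\left(1-tQ^k\right) = \sum_{k \ge 1} t^k/\left(k\left(Q^k-1\right)\right)$, one sees that the right-hand generating function coincides with the same exponential. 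Equating coefficients of $t^s$ completes the proof.
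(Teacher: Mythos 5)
Your argument follows the paper's proof essentially step for step: you invoke the dimension formula \eqref{eq:dimension-of-generic-representation}, group the pairs $\left(\mu,\beta\right)$ by their image under $\eta_\sigma$ using parts (\ref{item:surjective-parameterization})--(\ref{item:size-of-inverse-image-of-parameterization}) of \Cref{lem:number-of-preimages-of-partition-sequence}, cancel $Z_\mu$ and $d_i^{\lengthof\left(\nu_i\right)}$, and reduce to the identity $\sum_{\nu \vdash s}\frac{1}{Z_\nu \varphi_\nu\left(Q\right)} = \frac{Q^{s\left(s-1\right)/2}}{\prod_{j=1}^{s}\left(Q^j-1\right)}$ specialized at $Q=q^{d_i}$. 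The only genuine difference is at the very end: the paper simply cites Equation (2.14') together with Example 5 of Macdonald's book for that identity, whereas you supply a self-contained proof by comparing the exponential generating function $\exp\left(\sum_{k\ge 1}\frac{t^k}{k\left(Q^k-1\right)}\right)$ with Euler's $q$-exponential expansion of $\prod_{k\ge 0}\left(1-tQ^k\right)$; this derivation is correct (the two versions agree as formal power series in $t$ with coefficients rational in $Q$, so the analytic restriction $\abs{Q}<1$ can indeed be lifted) and makes the argument more self-contained, at the cost of a slightly longer exposition.
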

\begin{proof}
	Suppose that $\sigma$ is parameterized by $\phi \in P_m\left(\limitcharactergroup\right)$, which is supported on the pairwise distinct Frobenius orbits $f_1, \dots, f_r$ of degrees $d_1 = \frobeniusDegree \left(f_1\right), \dots, d_r = \frobeniusDegree \left(f_r\right)$, respectively, and suppose that $\phi\left(f_i\right) = \left({s_i}\right)$. In this case we have by \eqref{eq:dimension-of-generic-representation}, $$\dim \sigma = \grpIndex{\GL_{m}\left(\finiteField\right)}{\UnipotentSubgroup_m} \prod_{i=1}^{r}\frac{q^{\frac{d_i s_i \left(s_i - 1\right)}{2}}}{\prod_{j=1}^{s_i}\left(q^{d_i j} - 1\right)}.$$
	By \Cref{lem:number-of-preimages-of-partition-sequence}, we have \begin{align*}
	\sum_{\mu \vdash m} \sum_{\substack{\beta \in \charactergroup{\mu}\\
			\Pi_{\mu}\left(\beta\right) \isomorphic \sigma}} \frac{1}{Z_\mu} \frac{1}{\varphi_\mu\left(q\right)}
	= \sum_{\nu_1 \vdash s_1, \dots, \nu_r \vdash s_r}  \sum_{\gamma_1 \in f_1^{\lengthof\left(\nu_1\right)}, \dots, \gamma_r \in f_r^{\lengthof\left(\nu_r\right)}}\frac{1}{Z_{\mu}} \frac{1}{\varphi_{\mu}\left(q\right)} \frac{Z_\mu}{d_1^{\lengthof\left(\nu_1\right)} \dots d_r^{\lengthof\left(\nu_r\right)} Z_{\nu_1} \dots Z_{\nu_r}},
	\end{align*}where on the right hand side $\mu = \mu\left(\nu_1,\dots,\nu_r\right)$ is defined in \Cref{lem:number-of-preimages-of-partition-sequence} and satisfies $\varphi_{\mu}\left(q\right) = \prod_{i=1}^r{ \varphi_{\nu_i}\left(q^{d_i}\right) }$. Since $\#f_i^{\lengthof\left(\nu_i\right)} = d_i^{\lengthof\left(\nu_i\right)}$, we need to show that $$\prod_{i=1}^{r}\frac{q^{\frac{d_i s_i \left(s_i - 1\right)}{2}}}{\prod_{j=1}^{s_i}\left(q^{d_i j} - 1\right)} = \sum_{\nu_1 \vdash s_1}\dots\sum_{\nu_r \vdash s_r} \frac{1}{\prod_{i=1}^r{Z_{\nu_i} \varphi_{\nu_i}\left(q^{d_i}\right)}}.$$
	
	By Equation (2.14') of \cite[Page 25]{macdonald1998symmetric}, applied to Example 5 on \cite[Page 27]{macdonald1998symmetric}, we have the following identity of rational functions:
	$$\frac{q^{\frac{s \left(s-1\right)}{2}}}{\prod_{k=1}^s\left(q^k - 1\right)} = \sum_{\lambda \vdash s} \frac{1}{Z_\lambda\varphi_{\lambda}\left(q\right)},$$
	which implies our identity, by substituting $q \mapsto q^{d_i}$ and $s \mapsto s_i$ for every $i$, and multiplying these equalities.
\end{proof}

As a corollary, we obtain the following result, which gives an alternative formula for the expression in \Cref{thm:recursive-bessel-using-gamma-factors}.

\begin{theorem}\label{prop:bessel-function-recursive-using-characters}Let $\pi$ be an irreducible generic representation of $\GL_n\left(\finiteField\right)$. Then
	$$\mathcal{F}_{\pi, m, \fieldCharacter}\left(g\right) = \sum_{\mu \vdash m} \frac{1}{Z_\mu} \frac{1}{\varphi_\mu \left(q\right)} \sum_{\beta \in \charactergroup{\mu}} {\gamma\left(\pi \times \Contragradient{\Pi_{\mu}\left(\beta\right)}, \fieldCharacter\right)}  \cdot \besselFunction_{\Pi_{\mu}\left(\beta\right), \fieldCharacter}\left(g\right).$$
	In particular, we have the following special cases:
	\begin{enumerate}
		\item If $m < n$, then \begin{align*}
		\besselFunction_{\pi, \fieldCharacter}\begin{pmatrix}
		& \IdentityMatrix{n - m}\\	
		g &
		\end{pmatrix} =& \sum_{\mu \vdash m} \frac{1}{Z_\mu} \frac{1}{\varphi_\mu \left(q\right)} \sum_{\beta \in \charactergroup{\mu}} \gamma\left(\pi \times \Contragradient{\Pi_{\mu}\left(\beta\right)},\fieldCharacter\right) \cdot \besselFunction_{\Pi_{\mu}\left(\beta\right), \fieldCharacter}\left(g\right).
		\end{align*}
		\item If $m = n$, then
		\begin{align*}
		\besselFunction_{\pi, \fieldCharacter}\left(g\right)\fieldCharacter \begin{pmatrix}
		I_n & g^{-1}\\
		& I_n
		\end{pmatrix} =& \sum_{\mu \vdash m} \frac{1}{Z_\mu} \frac{1}{\varphi_\mu \left(q\right)} \sum_{\beta \in \charactergroup{\mu}} \gamma\left(\pi \times \Contragradient{\Pi_{\mu}\left(\beta\right)},\fieldCharacter\right) \cdot \besselFunction_{\Pi_{\mu}\left(\beta\right), \fieldCharacter}\left(g\right).
		\end{align*}
		\item If $m > n$ and $c \in \multiplicativegroup{\finiteField}$, then
		\begin{align*}
		\sum_{\mu \vdash m} \frac{1}{Z_\mu} \frac{1}{\varphi_\mu \left(q\right)} \sum_{\beta \in \charactergroup{\mu}} \gamma\left(\pi \times \Contragradient{\Pi_{\mu}\left(\beta\right)},\fieldCharacter\right) \cdot \besselFunction_{\Pi_{\mu}\left(\beta\right), \fieldCharacter}\left(c \IdentityMatrix{m} \right) &= 0.
\end{align*}
		
	\end{enumerate}	
	
\end{theorem}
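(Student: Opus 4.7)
The plan is to derive this by directly substituting the dimension identity from the preceding theorem into the definition of $\mathcal{F}_{\pi, m, \fieldCharacter}$, and then reindexing the double sum. Concretely, recall that
$$\mathcal{F}_{\pi, m, \fieldCharacter}\left(g\right) = \frac{1}{\grpIndex{\GL_m\left(\finiteField\right)}{\UnipotentSubgroup_m}}\sum_{\sigma} {\dim \sigma} \cdot \gamma\left(\pi \times \Contragradient{\sigma}, \fieldCharacter\right) \cdot \besselFunction_{\sigma, \fieldCharacter}\left(g\right),$$
with $\sigma$ ranging over the irreducible generic representations of $\GL_m\left(\finiteField\right)$. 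The immediately preceding theorem provides the identity
$$\frac{\dim\sigma}{\grpIndex{\GL_m\left(\finiteField\right)}{\UnipotentSubgroup_m}} = \sum_{\mu \vdash m} \frac{1}{Z_\mu \varphi_\mu\left(q\right)} \cdot \#\left\{\beta \in \charactergroup{\mu} \mid \Pi_{\mu}\left(\beta\right) \cong \sigma \right\}.$$

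Substituting this into the definition of $\mathcal{F}_{\pi, m, \fieldCharacter}\left(g\right)$ and interchanging the order of summation, I would rewrite
$$\mathcal{F}_{\pi, m, \fieldCharacter}\left(g\right) = \sum_{\mu \vdash m} \frac{1}{Z_\mu \varphi_\mu\left(q\right)} \sum_{\sigma} \sum_{\substack{\beta \in \charactergroup{\mu} \\ \Pi_\mu\left(\beta\right) \cong \sigma}} \gamma\left(\pi \times \Contragradient{\sigma}, \fieldCharacter\right) \cdot \besselFunction_{\sigma, \fieldCharacter}\left(g\right).$$
Since both $\gamma\left(\pi \times \Contragradient{\sigma}, \fieldCharacter\right)$ and $\besselFunction_{\sigma, \fieldCharacter}\left(g\right)$ depend only on the isomorphism class $\sigma \cong \Pi_\mu\left(\beta\right)$, for each $\beta \in \charactergroup{\mu}$ we may replace $\sigma$ by $\Pi_\mu\left(\beta\right)$; the inner double sum collapses into a single sum over $\beta \in \charactergroup{\mu}$, yielding
$$\mathcal{F}_{\pi, m, \fieldCharacter}\left(g\right) = \sum_{\mu \vdash m} \frac{1}{Z_\mu \varphi_\mu\left(q\right)} \sum_{\beta \in \charactergroup{\mu}} \gamma\left(\pi \times \Contragradient{\Pi_\mu\left(\beta\right)}, \fieldCharacter\right) \cdot \besselFunction_{\Pi_\mu\left(\beta\right), \fieldCharacter}\left(g\right),$$
which is the main identity claimed.

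Once this identity is established, the three explicit special cases (1), (2), (3) follow by invoking the corresponding cases of \Cref{thm:recursive-bessel-using-gamma-factors}: for $m<n$ and $m=n$, the left-hand side equals the Bessel values displayed there, while for $m>n$ the element $c\IdentityMatrix{m}$ is not of the form $u_1 \left(\begin{smallmatrix} & -\IdentityMatrix{m-n}\\ h \end{smallmatrix}\right) u_2$, so $\mathcal{F}_{\pi, m, \fieldCharacter}\left(c\IdentityMatrix{m}\right) = 0$. There is no serious obstacle here: the argument is a bookkeeping exercise, and the only non-trivial input is the dimension identity of the preceding theorem, which has already been proved via the counting lemma on pre-images of $\eta_\sigma$.
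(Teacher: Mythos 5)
Your proof is correct and follows exactly the same route as the paper: the paper states this theorem as a direct corollary of the preceding dimension identity and \Cref{thm:recursive-bessel-using-gamma-factors}, and your substitution-and-reindexing argument is the intended (and only needed) bookkeeping. The special cases, including the vanishing for $m>n$ via the Bruhat-cell observation that $c\IdentityMatrix{m}$ is not in the relevant double coset, also match the paper's treatment.
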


Combining this with \Cref{thm:relation-between-epsilon-factors-and-gamma-factors}, and using the fact that for $\beta \in \charactergroup{\mu}$ and $c \in \multiplicativegroup{\finiteField}$ we have $\besselFunction_{\Pi_{\mu}\left(\beta\right), \fieldCharacter}\left(c \IdentityMatrix{m}\right) = \beta\left(c\right)$, we get the following formula.

\begin{corollary} \label{cor:recursive-formula-for-bessel-with-epsilon}
	Let $\representationDeclaration{\pi}$ be an irreducible generic representation of $\GL_{n}\left(\finiteField\right)$, where $n > 1$. Then for any $m \ge 1$ and $c \in \multiplicativegroup{\finiteField}$, we have
	\begin{align*}
	& q^{ -\frac{m \left(n - m - 1\right)}{2} }  \sum_{\mu \vdash m} \frac{1}{Z_\mu} \frac{1}{\varphi_\mu \left(q\right)} \sum_{\beta \in \charactergroup{\mu}} \beta \left(\left(-1\right)^{n-1} c \right)\epsilon_0 \left(\pi \times \Contragradient{\Pi_{\mu}\left(\beta\right)},\fieldCharacter\right) \\
	=	& \begin{dcases}
	 \besselFunction_{\pi, \fieldCharacter}\begin{pmatrix}
	& \IdentityMatrix{n - m}\\	
	c I_m &
	\end{pmatrix}  & m \le n,\\
	0 & m > n.
	\end{dcases}
	\end{align*}
\end{corollary}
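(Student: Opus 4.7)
\medskip

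\noindent\textbf{Proof proposal.} The plan is to derive this corollary as a direct combination of \Cref{prop:bessel-function-recursive-using-characters} applied at the scalar matrix $g = c\IdentityMatrix{m}$ and the passage from gamma factors to $\epsilon_0$-factors in \Cref{thm:relation-between-epsilon-factors-and-gamma-factors}. The three cases $m<n$, $m=n$, $m>n$ are treated in parallel; the only subtlety is verifying that the auxiliary character factor arising in the case $m=n$ trivializes on a scalar.

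First, I would evaluate the Bessel function on a scalar: for $\beta=(\beta_1,\dots,\beta_t)\in\charactergroup{\mu}$ the representation $\Pi_\mu(\beta)$ has central character $z\mapsto\prod_i\beta_i(z)=\beta(z)$, so $\besselFunction_{\Pi_\mu(\beta),\fieldCharacter}(c\IdentityMatrix{m})=\beta(c)$. Substituting $g=c\IdentityMatrix{m}$ into \Cref{prop:bessel-function-recursive-using-characters} therefore gives
\begin{equation*}
\mathcal{F}_{\pi,m,\fieldCharacter}(c\IdentityMatrix{m}) = \sum_{\mu\vdash m}\frac{1}{Z_\mu\,\varphi_\mu(q)}\sum_{\beta\in\charactergroup{\mu}}\gamma\bigl(\pi\times\Contragradient{\Pi_\mu(\beta)},\fieldCharacter\bigr)\,\beta(c).
\end{equation*}
Next I would apply \Cref{thm:relation-between-epsilon-factors-and-gamma-factors} with $\sigma=\Contragradient{\Pi_\mu(\beta)}$. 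The central character of $\Contragradient{\Pi_\mu(\beta)}$ evaluated at $-1$ is $\beta(-1)^{-1}=\beta(-1)$, so $\centralCharacter{\sigma}(-1)^{n-1}=\beta((-1)^{n-1})$, and multiplication by the already-present factor $\beta(c)$ collapses into $\beta((-1)^{n-1}c)$. This yields
\begin{equation*}
\mathcal{F}_{\pi,m,\fieldCharacter}(c\IdentityMatrix{m}) = q^{-\frac{m(n-m-1)}{2}}\sum_{\mu\vdash m}\frac{1}{Z_\mu\,\varphi_\mu(q)}\sum_{\beta\in\charactergroup{\mu}}\beta\bigl((-1)^{n-1}c\bigr)\,\epsilon_0\bigl(\pi\times\Contragradient{\Pi_\mu(\beta)},\fieldCharacter\bigr),
\end{equation*}
which is exactly the left-hand side of the corollary.

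It then remains to identify $\mathcal{F}_{\pi,m,\fieldCharacter}(c\IdentityMatrix{m})$ with the claimed right-hand side. For $m<n$ this is immediate from case (1) of \Cref{thm:recursive-bessel-using-gamma-factors}. For $m>n$, case (3) of that theorem states that $\mathcal{F}_{\pi,m,\fieldCharacter}(c\IdentityMatrix{m})=0$, giving the second branch of the corollary. The main (small) obstacle is the case $m=n$: case (2) gives $\mathcal{F}_{\pi,n,\fieldCharacter}(c\IdentityMatrix{n})=\besselFunction_{\pi,\fieldCharacter}(c\IdentityMatrix{n})\cdot\fieldCharacter\!\left(\begin{smallmatrix}\IdentityMatrix{n}&c^{-1}\IdentityMatrix{n}\\&\IdentityMatrix{n}\end{smallmatrix}\right)$, and one must check that the character factor equals $1$. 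Reading the standard character of $\UnipotentSubgroup_{2n}$ as the sum of superdiagonal entries, the only potentially nonzero contribution is the $(n,n+1)$ entry of the block matrix, which is the $(n,1)$ entry of $c^{-1}\IdentityMatrix{n}$; since $n>1$ this entry is $0$, so the factor is $\fieldCharacter(0)=1$. Combined with the observation that the scalar matrix $c\IdentityMatrix{n}$ coincides with $\bigl(\begin{smallmatrix}&\IdentityMatrix{0}\\c\IdentityMatrix{n}&\end{smallmatrix}\bigr)$, this completes the identification in all three cases and finishes the proof.
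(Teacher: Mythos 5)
Your proposal is correct and follows essentially the same route the paper takes: substitute $g = c\IdentityMatrix{m}$ into \Cref{prop:bessel-function-recursive-using-characters}, use $\besselFunction_{\Pi_\mu(\beta),\fieldCharacter}(c\IdentityMatrix{m}) = \beta(c)$, convert gamma factors to $\epsilon_0$-factors via \Cref{thm:relation-between-epsilon-factors-and-gamma-factors} (which produces the central-character twist $\beta((-1)^{n-1})$), and then read off the three cases of \Cref{thm:recursive-bessel-using-gamma-factors}. Your explicit check that the extra $\fieldCharacter$-factor in the $m=n$ case is trivial because the $(n,n+1)$ superdiagonal entry of $\left(\begin{smallmatrix}\IdentityMatrix{n} & c^{-1}\IdentityMatrix{n}\\ & \IdentityMatrix{n}\end{smallmatrix}\right)$ vanishes for $n>1$ is a detail the paper passes over silently but is indeed exactly where the hypothesis $n>1$ enters.
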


\subsection{Relation to exotic Kloosterman sums and sheaves}

We now relate the result of \Cref{prop:bessel-function-recursive-using-characters} to Katz's exotic Kloosterman sums and sheaves.

Let $n > 1$. Suppose that $\pi = \Pi_{\lambda}\left( \alpha \right)$, where $\lambda = \left(n_1,\dots,n_r\right) \vdash n$ and $\alpha = \left(\alpha_1,\dots,\alpha_r\right) \in \charactergroup{\lambda} = \prod_{i=1}^r{\charactergroup{n_i}}$.

The $L$-function associated with the family of exotic Kloosterman sums $\left(J_{m}\left( \alpha, \fieldCharacter, a \right)\right)_{m=1}^\infty$, introduced in \Cref{subsec:katz-exotic-kloosterman-sheaves}, is defined as the following formal power series in the variable $T$.
$$ L\left(T, J\left(\alpha, \fieldCharacter, a\right)\right) = \exp\left(\sum_{m = 1}^{\infty} J_{m}\left(\chi, \fieldCharacter, a\right) \frac{T^m}{m} \right).$$
See also the introduction of \cite{fu2005functions}. We denote the normalized version $$L^{\ast}\left(T, J\left(\alpha, \fieldCharacter, a\right)\right) = L\left( \left(-1\right)^r q^{-\frac{\left(n-1\right)}{2}} T, J\left(\alpha, \fieldCharacter, a\right)\right)^{\left(-1\right)^n}.$$

Our first result expresses this $L$-function as a polynomial with special values of the Bessel function as its coefficients.

\begin{theorem}\label{thm:L-function-of-exotic-kloosterman-sum}
	For any $c \in \multiplicativegroup{\finiteField}$, we have $$L\left(T, J\left(\alpha^{-1}, \fieldCharacter, \left(-1\right)^{n-1} c^{-1}\right) \right)^{\left(-1\right)^n} = \sum_{m = 0}^{n}  \left(\left(-1\right)^{r} q^{\frac{2n-m-1}{2}}\right)^m \besselFunction_{\pi, \fieldCharacter}\begin{pmatrix}
	& \IdentityMatrix{n-m}\\
	c \IdentityMatrix{m}
	\end{pmatrix}  T^m,$$
	or, written in normalized form,
$$L^{\ast}\left(T, J\left(\alpha^{-1}, \fieldCharacter, \left(-1\right)^{n-1} c^{-1}\right) \right) = \sum_{m = 0}^{n}  q^{\frac{m\left(n-m\right)}{2}} \besselFunction_{\pi, \fieldCharacter}\begin{pmatrix}
& \IdentityMatrix{n-m}\\
c \IdentityMatrix{m}
\end{pmatrix}  T^m.$$	
\end{theorem}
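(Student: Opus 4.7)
My strategy is to combine the recursive formula of \Cref{cor:recursive-formula-for-bessel-with-epsilon} with the étale-algebra expression for tensor product $\epsilon_0$-factors from \Cref{prop:epsilon-factors-etale-product}, and then to recognize the resulting identity by comparing coefficients in the power-series expansion of the stated L-function via the standard exponential formula for cycle indices.

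Let $a = \left(-1\right)^{n-1} c^{-1}$ and let $B_m$ denote the coefficient of $T^m$ on the right-hand side of the theorem, so that $B_0 = 1$ and $B_m = \left(\left(-1\right)^r q^{\left(2n-m-1\right)/2}\right)^m \besselFunction_{\pi, \fieldCharacter}\left(\begin{smallmatrix} & \IdentityMatrix{n-m} \\ c\IdentityMatrix{m} & \end{smallmatrix}\right)$ for $1 \le m \le n$. Because of the vanishing case of \Cref{cor:recursive-formula-for-bessel-with-epsilon}, it will be convenient to extend by $B_m = 0$ for $m > n$. Substituting the formula of \Cref{cor:recursive-formula-for-bessel-with-epsilon} into $B_m$ and expanding each factor $\epsilon_0\left(\pi \times \Contragradient{\Pi_\mu(\beta)}, \fieldCharacter\right)$ via \Cref{prop:epsilon-factors-etale-product} as $\left(-1\right)^{nm} q^{-nm/2} \prod_j \tau_{\lambda, m_j}\left(\alpha \times \beta_j^{-1}, \fieldCharacter\right)$ causes all powers of $q$ to cancel, leaving only signs and étale Gauss sums to manipulate.

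Next, for each part $m_j$ of $\mu$, I interchange the $\beta_j$-sum with the $\xi$-sum in the definition of $\tau_{\lambda, m_j}$ from \Cref{sec:tensor-product-of-finite-fields}. Character orthogonality on $\multiplicativegroup{\finiteFieldExtension{m_j}}$ then collapses the $\beta_j$-sum to those $\xi$ with $\etaleNorm_{2}^{\finiteFieldExtension{\lambda} \otimes \finiteFieldExtension{m_j}}\left(\xi\right) = a$, yielding $\left(q^{m_j}-1\right)$ times the exotic Kloosterman sum $J_{m_j}\left(\alpha^{-1}, \fieldCharacter, a\right)$, up to an explicit sign. The product $\prod_j\left(q^{m_j}-1\right) = \varphi_\mu(q)$ cancels the same factor appearing in \Cref{cor:recursive-formula-for-bessel-with-epsilon}, and the remaining signs collapse to precisely $\left(-1\right)^{n\lengthof\left(\mu\right)}$, producing
\[
B_m = \sum_{\mu \vdash m} \frac{\left(-1\right)^{n\lengthof\left(\mu\right)}}{Z_\mu} \prod_{j=1}^{\lengthof\left(\mu\right)} J_{m_j}\left(\alpha^{-1}, \fieldCharacter, a\right).
\]

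Finally, I expand
\[
L\left(T, J\left(\alpha^{-1}, \fieldCharacter, a\right)\right)^{\left(-1\right)^n} = \exp\!\left(\left(-1\right)^n \sum_{k \ge 1} J_k\left(\alpha^{-1}, \fieldCharacter, a\right) \frac{T^k}{k}\right)
\]
as a product of exponentials in $k$ and group contributions by partitions. Using $Z_\mu = \prod_k k^{\mu(k)}\mu(k)!$ and $\lengthof\left(\mu\right) = \sum_k \mu(k)$, the coefficient of $T^m$ coincides precisely with the sum for $B_m$ above, proving the unnormalized identity; the normalized form then follows by the substitution $T \mapsto \left(-1\right)^r q^{-\left(n-1\right)/2} T$. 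The main technical obstacle is the careful tracking of the signs coming from \Cref{prop:epsilon-factors-etale-product} and the $\left(-1\right)^{nk+n+rk}$ factor in the definition of $\tau_{\lambda, k}$, which must combine with $\left(-1\right)^{rm}$ and $\left(-1\right)^{nm}$ to give exactly $\left(-1\right)^{n\lengthof\left(\mu\right)}$; modulo this bookkeeping, the generating-function part of the argument is entirely standard.
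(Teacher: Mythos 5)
Your proposal is correct and follows essentially the same route as the paper's proof: expand $L^{(-1)^n}$ via the exponential formula, invoke \Cref{cor:recursive-formula-for-bessel-with-epsilon} together with \Cref{prop:epsilon-factors-etale-product}, and collapse the $\beta$-sum by character orthogonality so that the $\varphi_\mu(q)$ factors cancel and the surviving sign is $(-1)^{n\lengthof(\mu)}$. The sign bookkeeping you flag — $(-1)^{rm}$ from the coefficient, $(-1)^{nm}$ from \Cref{prop:epsilon-factors-etale-product}, and $\prod_j(-1)^{nm_j + n + rm_j}$ from the $\tau_{\lambda,m_j}$ — indeed multiplies to $(-1)^{n\lengthof(\mu)}$, matching the paper.
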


\begin{proof}
	Using the Taylor series of the exponential function, we have that \begin{align*}
		&L\left(T, J_\lambda\left(\alpha^{-1}, \fieldCharacter, \left(-1\right)^{n-1} c^{-1} \right)\right)^{\left(-1\right)^n} \\
		=& \sum_{m = 0}^{\infty} \sum_{\mu = \left(m_1, \dots, m_t\right) \vdash m} \frac{1}{Z_\mu} \left(\prod_{j=1}^t \left(-1\right)^{n} J_{m_j}\left(\alpha^{-1}, \fieldCharacter, \left(-1\right)^{n-1} c^{-1} \right)\right) T^m.
	\end{align*}

	By \Cref{cor:recursive-formula-for-bessel-with-epsilon}, it suffices to show that for every $m$,	
	\begin{align*}
		& \left(-1\right)^{rm} q^{ \frac{nm}{2} }  \sum_{\mu \vdash m} \frac{1}{Z_\mu} \frac{1}{\varphi_\mu \left(q\right)} \sum_{\beta \in \charactergroup{\mu}} \beta \left(\left(-1\right)^{n-1} c \right)\epsilon_0 \left(\pi \times \Contragradient{\Pi_{\mu}\left(\beta\right)},\fieldCharacter\right) \\
		&=
		\sum_{\mu = \left(m_1, \dots, m_t\right) \vdash m} \frac{1}{Z_\mu}
		\left(-1\right)^{nt} \left(\prod_{j=1}^t  J_{m_j}\left(\alpha^{-1}, \fieldCharacter, \left(-1\right)^{n-1} c^{-1} \right)\right).
	\end{align*}
	
	Using \Cref{prop:epsilon-factors-etale-product}, we have that for $\mu = \left(m_1,\dots,m_t\right) \vdash m$, \begin{align*}
	&\sum_{\beta \in \charactergroup{\mu}} \epsilon_0 \left(\Pi_{\lambda}\left(\alpha\right) \times \Contragradient{\Pi_{\mu}\left(\beta\right)},\fieldCharacter\right) \beta \left(\left(-1\right)^{n-1} c\right) \\
	&= \left(-1\right)^{nm} q^{-\frac{nm}{2}} \sum_{\beta_1 \in \charactergroup{m_1}} \dots \sum_{\beta_t \in \charactergroup{m_t}} \prod_{j=1}^t \beta_j \left(\left(-1\right)^{n-1} c\right) \tau_{\lambda, m_j} \left(\alpha \times \beta_j^{-1}, \fieldCharacter \right).
	\end{align*}
	Using the fact that the sum of a non-trivial character over a group is zero, and the fact that $\varphi_\mu\left(q\right) = \sizeof{\charactergroup{\mu}}  = \prod_{j=1}^t \sizeof{\charactergroup{m_j}}$, we get that
	\begin{align*}
	&\frac{1}{\varphi_\mu \left(q\right)} \sum_{\beta \in \charactergroup{\mu}} \epsilon_0 \left(\Pi_{\lambda}\left(\alpha\right) \times \Contragradient{\Pi_{\mu}\left(\beta\right)},\fieldCharacter\right) \beta \left(\left(-1\right)^{n-1} c\right) \\
	& = \left(-1\right)^{nt + mr} q^{-\frac{nm}{2}} \prod_{j=1}^t J_{m_j} \left( \alpha^{-1}, \fieldCharacter, \left(-1\right)^{n-1}c^{-1} \right),
	\end{align*}
	as required.
\end{proof}

As a result, we get the following corollary, which can be seen as a generalization of the results of \cite{curtis1999unitary}.
\begin{corollary} \label{cor:hasse-davenport-for-exotic-kloosterman-sums}
	Consider the polynomial $$ C\left(X\right) = \sum_{m = 0}^n  \left(-1\right)^{rm} q^{\frac{m \left(2n - m - 1\right)}{2}} \besselFunction_{\pi, \fieldCharacter}\begin{pmatrix}
	& \IdentityMatrix{n-m}\\	
	c\IdentityMatrix{m} &
	\end{pmatrix} X^m.$$
	Write $$C\left(X\right) = \prod_{i=1}^n {\left(1 - \omega_i X\right)},$$
	where $\omega_1, \dots, \omega_n \in \cComplex$.  Then for every $m$,
	$$ \omega_1^m + \dots + \omega_n^m = \left(-1\right)^{n-1} J_{m}\left(\alpha^{-1}, \fieldCharacter, \left(-1\right)^{n-1} c^{-1} \right).$$
\end{corollary}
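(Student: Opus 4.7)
The plan is to identify the polynomial $C(X)$ directly with (the inverse of) the $L$-function appearing in \Cref{thm:L-function-of-exotic-kloosterman-sum}, and then extract the power sums $\omega_1^m + \dots + \omega_n^m$ from the usual logarithmic-derivative expansion of a factored polynomial.

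First I would observe that by \Cref{thm:L-function-of-exotic-kloosterman-sum}, we have the identification
$$C(X) \;=\; L\bigl(X, J(\alpha^{-1},\fieldCharacter,(-1)^{n-1}c^{-1})\bigr)^{(-1)^n},$$
since the coefficients on both sides match term-by-term (note $((-1)^r q^{(2n-m-1)/2})^m = (-1)^{rm} q^{m(2n-m-1)/2}$). Given the factorization $C(X) = \prod_{i=1}^n (1-\omega_i X)$, raising both sides to the power $(-1)^n$ yields
$$L\bigl(X, J(\alpha^{-1},\fieldCharacter,(-1)^{n-1}c^{-1})\bigr) \;=\; \prod_{i=1}^n (1-\omega_i X)^{(-1)^n}.$$

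Next, I would take the formal logarithm of both sides. On the left, using the definition of $L$ as an exponential of a generating series, we get
$$\sum_{m=1}^\infty J_m\bigl(\alpha^{-1},\fieldCharacter,(-1)^{n-1}c^{-1}\bigr)\frac{X^m}{m}.$$
On the right, using $\log(1-\omega_i X) = -\sum_{m=1}^\infty \omega_i^m X^m/m$ and summing over $i$, we obtain
$$-(-1)^n \sum_{m=1}^\infty \left(\sum_{i=1}^n \omega_i^m\right) \frac{X^m}{m} \;=\; (-1)^{n-1}\sum_{m=1}^\infty \left(\sum_{i=1}^n \omega_i^m\right) \frac{X^m}{m}.$$

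Finally, comparing the coefficients of $X^m/m$ on both sides gives $J_m(\alpha^{-1},\fieldCharacter,(-1)^{n-1}c^{-1}) = (-1)^{n-1}\sum_{i=1}^n \omega_i^m$, which rearranges to the desired formula $\sum_{i=1}^n \omega_i^m = (-1)^{n-1} J_m(\alpha^{-1},\fieldCharacter,(-1)^{n-1}c^{-1})$. There is no real obstacle here; all the substantive content has been absorbed into \Cref{thm:L-function-of-exotic-kloosterman-sum}, and what remains is the classical Newton-type identity relating the roots of a polynomial to the coefficients of the exponential generating function for their power sums. The only care needed is tracking the sign $(-1)^n$ in the normalization of $L$ versus $C$.
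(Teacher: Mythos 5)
Your proof is correct and matches the paper's argument: both identify $C(X)$ with $L(X,J(\alpha^{-1},\fieldCharacter,(-1)^{n-1}c^{-1}))^{(-1)^n}$ via \Cref{thm:L-function-of-exotic-kloosterman-sum} and then extract the power sums of the inverse roots. The only cosmetic difference is that you take the formal logarithm while the paper takes the logarithmic derivative; the coefficient comparison is identical.
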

\begin{proof}
	Use \Cref{thm:L-function-of-exotic-kloosterman-sum} to write $C\left(X\right) = L\left(X, J\left(\alpha^{-1}, \fieldCharacter, \left(-1\right)c^{-1}\right)\right)^{\left(-1\right)^n}$. Taking logarithmic derivatives of both sides, we have
	$$\sum_{m = 1}^{\infty} \left(-1\right)^n J_m\left( \alpha^{-1}, \fieldCharacter, \left(-1\right)^{n-1} c^{-1} \right) X^{m-1} = -\sum_{i = 1}^n \frac{\omega_i}{1 - \omega_i X}.$$
	The result now follows by expanding the right hand side to a sum of geometric series and comparing coefficients.
\end{proof}

We also obtain a functional equation for the $L$-function immediately.
\begin{corollary}
	We have \begin{align*}
		& L\left(T ,J\left(\alpha^{-1}, \fieldCharacter, \left(-1\right)^{n-1} c^{-1}\right) \right)^{\left(-1\right)^n} \\
		&= \left(-1\right)^{nr} \alpha\left(c\right)  q^{\frac{n \left(n-1\right)}{2}} T^n L \left(  q^{- \left(n-1\right)} T^{-1}, J\left(\alpha, \fieldCharacter^{-1}, \left(-1\right)^{n-1} c^{-1}\right) \right)^{\left(-1\right)^n},
	\end{align*}
	or, written in normalized form,
\begin{equation*}
L^{\ast}\left(T ,J\left(\alpha^{-1}, \fieldCharacter, \left(-1\right)^{n-1} c^{-1}\right) \right) = \alpha\left(c\right) T^n L^{\ast}\left( T^{-1}, J\left(\alpha, \fieldCharacter^{-1}, \left(-1\right)^{n-1} c^{-1}\right) \right).
\end{equation*}		
\end{corollary}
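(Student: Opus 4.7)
The plan is to work with the normalized form of the functional equation and deduce the non-normalized one by the change of variable $T \mapsto (-1)^r q^{-(n-1)/2} T$ in the definition of $L^{\ast}$. The two sides of the normalized identity are polynomials of degree at most $n$ in $T$, so the proof reduces to comparing coefficients, which \Cref{thm:L-function-of-exotic-kloosterman-sum} makes explicit in terms of values of the Bessel function at anti-diagonal scalar block matrices.

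First, I would apply \Cref{thm:L-function-of-exotic-kloosterman-sum} directly to the left-hand side to obtain
\[
L^{\ast}\left(T, J(\alpha^{-1}, \fieldCharacter, (-1)^{n-1} c^{-1})\right) = \sum_{m=0}^{n} q^{\frac{m(n-m)}{2}} \besselFunction_{\pi, \fieldCharacter}\begin{pmatrix} & \IdentityMatrix{n-m} \\ c \IdentityMatrix{m} \end{pmatrix} T^m.
\]
Then I would apply the same theorem to the pair $(\alpha^{-1}, \fieldCharacter^{-1})$ in place of $(\alpha, \fieldCharacter)$; since $\Pi_{\lambda}(\alpha^{-1}) \cong \Contragradient{\pi}$, this yields
\[
L^{\ast}\left(T, J(\alpha, \fieldCharacter^{-1}, (-1)^{n-1} c^{-1})\right) = \sum_{m=0}^{n} q^{\frac{m(n-m)}{2}} \besselFunction_{\Contragradient{\pi}, \fieldCharacter^{-1}}\begin{pmatrix} & \IdentityMatrix{n-m} \\ c \IdentityMatrix{m} \end{pmatrix} T^m.
\]
Replacing $T$ by $T^{-1}$, multiplying by $T^n$, and reindexing $m \mapsto n-m$ turns the right-hand side of the claimed identity into
\[
\alpha(c) \sum_{m=0}^{n} q^{\frac{m(n-m)}{2}} \besselFunction_{\Contragradient{\pi}, \fieldCharacter^{-1}}\begin{pmatrix} & \IdentityMatrix{m} \\ c \IdentityMatrix{n-m} \end{pmatrix} T^m.
\]

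Comparing coefficients of $T^m$, the desired equality reduces to the pointwise identity
\[
\besselFunction_{\pi, \fieldCharacter}\begin{pmatrix} & \IdentityMatrix{n-m} \\ c \IdentityMatrix{m} \end{pmatrix} = \alpha(c)\, \besselFunction_{\Contragradient{\pi}, \fieldCharacter^{-1}}\begin{pmatrix} & \IdentityMatrix{m} \\ c \IdentityMatrix{n-m} \end{pmatrix}.
\]
To verify it, I would combine parts (1) and (2) of \Cref{prop:complex-conjugate-of-bessel-function}, which together give $\besselFunction_{\Contragradient{\pi}, \fieldCharacter^{-1}}(g) = \besselFunction_{\pi, \fieldCharacter}(g^{-1})$, with the direct computation
\[
\begin{pmatrix} & \IdentityMatrix{m} \\ c \IdentityMatrix{n-m} \end{pmatrix}^{-1} = c^{-1} \begin{pmatrix} & \IdentityMatrix{n-m} \\ c \IdentityMatrix{m} \end{pmatrix}.
\]
Scalar equivariance of the Bessel function under the center, together with $\centralCharacter{\pi}(c^{-1}) = \alpha(c)^{-1}$, then produces the factor $\alpha(c)^{-1}$ that exactly cancels the prefactor $\alpha(c)$, yielding the required equality.

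No genuine obstacle is anticipated; the only delicate step is the bookkeeping of the matrix inversion and of the twist by the central character, which must be arranged so that the $\alpha(c)^{-1}$ coming from scalar equivariance cancels the $\alpha(c)$ factor from the functional equation. Once the normalized version is established, the non-normalized statement follows by unwinding the definition $L^{\ast}(T, \cdot) = L((-1)^r q^{-(n-1)/2} T, \cdot)^{(-1)^n}$ and collecting the resulting powers of $q$ and signs into the stated constant $(-1)^{nr} q^{n(n-1)/2}$.
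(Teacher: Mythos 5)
Your proposal is correct and follows exactly the route the paper intends: apply \Cref{thm:L-function-of-exotic-kloosterman-sum} to both sides (the right-hand side with $\alpha^{-1}$ and $\fieldCharacter^{-1}$ so that the relevant representation is $\Contragradient{\pi}$), reduce to a coefficientwise identity of Bessel values, and close that identity with \Cref{prop:complex-conjugate-of-bessel-function} together with the matrix inversion and central-character bookkeeping. The paper states the proof in one line, so your write-up is simply an expansion of the same argument, and all the details (including the verification that $\Pi_{\lambda}(\alpha^{-1}) \cong \Contragradient{\pi}$ and the unwinding of $L^{\ast}$ to the unnormalized form) are correct.
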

\begin{proof}
	This follows from \Cref{thm:L-function-of-exotic-kloosterman-sum} combined with \Cref{prop:complex-conjugate-of-bessel-function}.
\end{proof}

Our next result relates Katz's exotic Kloosterman sheaves to special values of the Bessel function. Fix a prime $\ell$ different from the characteristic of $\finiteField$, and fix an embedding $\ladicnumbers \hookrightarrow \cComplex$. Using this embedding, the characters $\fieldCharacter$ and $\alpha$ give rise to characters $\fieldCharacter \colon \finiteField \rightarrow \multiplicativegroup{\ladicnumbers}$ and $\alpha \colon \multiplicativegroup{\finiteFieldExtension{\lambda}} \rightarrow \multiplicativegroup{\ladicnumbers}$.

\begin{theorem}\label{thm:bessel-function-as-exterior-power}
	Let $\mathcal{K} = \operatorname{Kl}\left(\finiteFieldExtension{\lambda}, \alpha^{-1}, \fieldCharacter\right)$. Then for any $c \in \multiplicativegroup{\finiteField}$ and any $0 \le m \le n$,
	\begin{equation}\label{eq:bessel-function-as-exterior-power}
		\left(\left(-1\right)^{r-1} q^{ -\frac{\left(n-1\right)}{2} }\right)^m \trace  \left( \Frobenius_{ \left(-1\right)^{n-1} c^{-1} } \mid \wedge^m \mathcal{K}_{ \left(-1\right)^{n-1} c^{-1} }  \right) = q^{\frac{m \left(n-m\right)}{2}} \besselFunction_{\pi, \fieldCharacter}\begin{pmatrix}
		& \IdentityMatrix{n - m}\\	
		c \IdentityMatrix{m} &
		\end{pmatrix},
	\end{equation}
	where $\Frobenius_{ \left(-1\right)^{n-1} c^{-1} } \mid \wedge^m \mathcal{K}_{ \left(-1\right)^{n-1} c^{-1} }$ is the action of the geometric Frobenius at $\left(-1\right)^{n-1} c^{-1}$ acting on the stalk of $\wedge^m \mathcal{K}$.
\end{theorem}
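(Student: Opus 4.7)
The plan is to reduce the theorem to \Cref{thm:L-function-of-exotic-kloosterman-sum} via the classical linear algebra identity that relates traces of powers of a linear endomorphism to traces on its exterior powers, combined with Katz's computation of $\trace(\Frobenius_a^m \mid \mathcal{K}_a)$ recorded in \Cref{thm:katz-etale-kloosterman-sheaf}. Apart from tracking the sign and the power of $q$ induced by the normalizations, the statement is a formal consequence of these two inputs.

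First I would recall the following standard identity. If $V$ is a finite-dimensional vector space of dimension $n$ and $F$ is an endomorphism of $V$ with eigenvalues $\lambda_1, \ldots, \lambda_n$, then writing $\trace(F^m \mid V) = \sum_i \lambda_i^m$ and $\trace(F \mid \wedge^m V) = e_m(\lambda_1, \ldots, \lambda_n)$ and applying the Newton identities yields
$$\det\left(1 - TF \mid V\right) = \exp\left(-\sum_{m=1}^{\infty} \trace(F^m \mid V) \frac{T^m}{m}\right) = \sum_{m=0}^{n} (-1)^m T^m \trace\left(F \mid \wedge^m V\right).$$
I would apply this to $V = \mathcal{K}_a$ with $a = (-1)^{n-1} c^{-1}$ and $F = \Frobenius_a$, which is legitimate since by \Cref{thm:katz-etale-kloosterman-sheaf}(\ref{item:kloosterman-sheaf-is-of-rank-n}) the stalk has dimension $n$.

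Second, I would substitute Katz's formula \Cref{thm:katz-etale-kloosterman-sheaf}(\ref{item:m-th-geometric-frobenius-trace-exotic-kloosterman-sum}), which in our setting ($B = \finiteFieldExtension{\lambda}$, $\chi = \alpha^{-1}$) reads $\trace(\Frobenius_a^m \mid \mathcal{K}_a) = (-1)^{n-1} J_m(\alpha^{-1}, \fieldCharacter, a)$. Plugging this into the exponential above and raising to the $(-1)^n$-th power, the sign $(-1)^{n-1}$ turns into $-1$ inside the exponent and cancels with the minus sign in the identity, giving
$$L\left(T, J(\alpha^{-1}, \fieldCharacter, a)\right)^{(-1)^n} = \sum_{m=0}^{n} (-1)^m T^m \trace\left(\Frobenius_a \mid \wedge^m \mathcal{K}_a\right).$$
Substituting $T \mapsto (-1)^r q^{-(n-1)/2} T$ to pass to $L^{\ast}$, the prefactor of $T^m$ becomes $\left((-1)^{r-1} q^{-(n-1)/2}\right)^m \trace(\Frobenius_a \mid \wedge^m \mathcal{K}_a)$.

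Finally, I would compare coefficients of $T^m$ with the expansion provided by \Cref{thm:L-function-of-exotic-kloosterman-sum}, which asserts that these coefficients also equal $q^{m(n-m)/2} \besselFunction_{\pi, \fieldCharacter}\left(\begin{smallmatrix} & \IdentityMatrix{n-m} \\ c\IdentityMatrix{m} & \end{smallmatrix}\right)$. This immediately yields \eqref{eq:bessel-function-as-exterior-power}. The only delicate point is the sign bookkeeping in the step that rewrites $(-1)^{(r+1)m}$ as $(-1)^{(r-1)m}$ and keeping track of the normalization factor $(-1)^r q^{-(n-1)/2}$; all the substantive content is already packaged into \Cref{thm:katz-etale-kloosterman-sheaf} and \Cref{thm:L-function-of-exotic-kloosterman-sum}, so there is no essential obstacle here.
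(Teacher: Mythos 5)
Your proof is correct, and it is essentially the same argument as the paper's, just packaged more cleanly through the determinant generating-function identity $\det(1 - TF) = \exp(-\sum_m \trace(F^m)T^m/m) = \sum_m (-1)^m T^m \trace(F \mid \wedge^m V)$ rather than the equivalent partition-indexed combinatorial identity for $\trace(\wedge^m A)$ that the paper invokes after expanding the exponential. The sign bookkeeping ($(-1)^{n-1}(-1)^n = -1$ inside the exponent, and $(-1)^{r+1} = (-1)^{r-1}$ after the substitution $T \mapsto (-1)^r q^{-(n-1)/2} T$) is correct, and deducing the result by comparing coefficients of $T^m$ in $L^{\ast}$ against \Cref{thm:L-function-of-exotic-kloosterman-sum} is exactly what the paper's proof does, except that the paper reaches the same place by citing an intermediate identity from the proof of that theorem rather than its statement.
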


\begin{proof}
	In the proof of \Cref{thm:L-function-of-exotic-kloosterman-sum}, we showed that for $0 \le m \le n$,
	\begin{align*}
		\left(\left(-1\right)^{r} q^{\frac{2n-m-1}{2}}\right)^m \besselFunction_{\pi, \fieldCharacter}\begin{pmatrix}
		& \IdentityMatrix{n-m}\\
		c \IdentityMatrix{m}
		\end{pmatrix}
		= \sum_{\mu=\left(m_1, \dots, m_t\right) \vdash m} \frac{1}{Z_\mu} \left(-1\right)^{nt} \prod_{j=1}^t  J_{m_j}\left(\alpha^{-1}, \fieldCharacter, \left(-1\right)^{n-1} c^{-1} \right).
	\end{align*}
	Therefore,
	\begin{align*}
	& q^{\frac{\left(n-m\right)m}{2}}  \besselFunction_{\pi, \fieldCharacter}\begin{pmatrix}
	& \IdentityMatrix{n - m}\\	
	c \IdentityMatrix{m} &
	\end{pmatrix}  \\
	&= \left(\left(-1\right)^{r-1} q^{ -\frac{\left(n-1\right)}{2} }\right)^m \sum_{\mu = \left(m_1,\dots,m_t\right) \vdash m} \frac{1}{Z_\mu} \left(-1\right)^{t+m} \prod_{j=1}^t  \left(-1\right)^{n-1}  J_{m_j}\left(\fieldCharacter, \alpha^{-1}, \left(-1\right)^{n-1} c^{-1}\right),
	\end{align*}
The theorem now follows from \Cref{thm:katz-etale-kloosterman-sheaf} (\ref{item:m-th-geometric-frobenius-trace-exotic-kloosterman-sum}) and from the following identity, which is true for any square complex valued matrix $A$:
$$ \trace\left( \wedge^m \left( A \right) \right) = \sum_{\mu = \left(m_1,\dots,m_t\right) \vdash m} \frac{1}{Z_\mu} \left(-1\right)^{m+t} \prod_{j=1}^t \trace\left(A^{m_j}\right).$$ 
\end{proof}

\begin{remark}
	\begin{enumerate}
		\item The proof of \Cref{thm:bessel-function-as-exterior-power} also shows that for $m > n$, $$ \left(\left(-1\right)^{r-1} q^{ -\frac{\left(n-1\right)}{2} }\right)^m \trace  \left( \Frobenius_{ \left(-1\right)^{n-1} c^{-1} } \mid \wedge^m \mathcal{K}_{ \left(-1\right)^{n-1} c^{-1} }  \right) = 0.$$
		We already know this due to \Cref{thm:katz-etale-kloosterman-sheaf} part (\ref{item:kloosterman-sheaf-is-of-rank-n}), but this is an independent proof based on gamma factors and representation theory.
		\item Recall that the eigenvalues of $\Frobenius_{ \left(-1\right)^{n-1} c^{-1} } \mid \mathcal{K}_{ \left(-1\right)^{n-1} c^{-1} }$ all have absolute value $q^{\frac{n-1}{2}}$, and therefore the eigenvalues of $\Frobenius_{ \left(-1\right)^{n-1} c^{-1} } \mid \wedge^m \mathcal{K}_{ \left(-1\right)^{n-1} c^{-1} }$ all have absolute value $q^{\frac{m \left(n-1\right)}{2}}$. Hence, the factor on the left hand side of \eqref{eq:bessel-function-as-exterior-power} is just a normalization factor.
		\item $c^{-1} \left(-1\right)^{n-1}$ is the determinant of the inverse of the matrix $\left( \begin{smallmatrix}
		& I_{n-1}\\
		c
		\end{smallmatrix} \right)$.
		\item If $m = 1$, we get the formula $$ \besselFunction_{\pi, \fieldCharacter}\begin{pmatrix}
		& \IdentityMatrix{n - 1}\\	
		c &
		\end{pmatrix} = \left(-1\right)^{n+r} q^{-n+1} \sum_{\substack{\xi \in \multiplicativegroup{\finiteFieldExtension{\lambda}} \\
				\prod_{i=1}^r \FieldNorm{n_i}{1}\left(\xi\right) = \left(-1\right)^{n-1} c^{-1}}} \alpha^{-1}\left(\xi\right) \fieldCharacter\left(\trace \xi \right).$$
		This formula is known in the literature for cuspidal representations \cite{tulunay2004cuspidal,Nien17} and for generic representations \cite[Lemma 3.5]{curtis2004zeta}.	
		\item Formulas similar to the one given in \Cref{thm:bessel-function-as-exterior-power} are known in the literature for several cases. For instance, a special case of the Shintani formula \cite{shintani1976explicit} gives the formula $$\trace \left(\wedge^m \left( A_\pi \right)\right) = q^{\frac{m \left(n-m\right)}{2}}  W^\circ \begin{pmatrix}
		\varpi I_m &\\
		& I_{n-m}
		\end{pmatrix},$$ where $W^\circ$ is the normalized spherical Whittaker function 
		of an unramified representation $\pi$ of $\GL_n\left(F\right)$, with Satake parameter $A_\pi$. Here $F$ is a non-archimedean local field with a uniformizer $\varpi$, and the Whittaker model is taken with respect to an additive character $\fieldCharacter$ with conductor $0$.
		\item 	The Bessel function satisfies the relation
		$$ \conjugate{\besselFunction_{\pi, \fieldCharacter}\begin{pmatrix}
			& \IdentityMatrix{n - m}\\	
			c \IdentityMatrix{m} &
			\end{pmatrix}} = \centralCharacter{\pi}\left(c\right)^{-1} \besselFunction_{\pi, \fieldCharacter}\begin{pmatrix}
		& \IdentityMatrix{m}\\	
		c \IdentityMatrix{n-m} &
		\end{pmatrix}.$$
		This identity reminds of the binomial identity $$ \binom{n}{m} = \binom{n}{n-m}.$$
		Both of these identities can be understood as a corollary of the isomorphism
		\begin{equation}
		\left(\wedge^m\left(V\right)\right)^{\vee} \cong \wedge^{n-m}\left(V\right) \otimes \left(\det V\right)^{-1},
		\end{equation}
		where $V$ is an $n$-dimensional complex representation of a group $G$.
	\end{enumerate}
\end{remark}

\begin{example}\label{example:steinberg-usual-kloosterman}
	Take $\lambda = \left(1,1,\dots,1\right) \vdash n$ and $\alpha = 1$ the trivial character. Then $\pi = \Pi_{\left(1,1\dots,1\right)}\left(1\right)$ is the unique irreducible generic subrepresentation of $1 \circ 1 \circ \dots \circ 1$. That is, $\pi$ is the Steinberg representation $\pi = \Steinberg_n$ of $\GL_n\left(\finiteField\right)$. In this case, $$J_{m}\left(\alpha^{-1}, \fieldCharacter, a\right) = \mathrm{Kl}_m\left(\fieldCharacter, a\right) = \sum_{\substack{x_1,\dots,x_n \in \multiplicativegroup{\finiteFieldExtension{m}}\\
	\prod_{i=1}^n x_i = a}} \fieldCharacter\left(\sum_{i=1}^n x_i \right)$$ is the usual Kloosterman sum, and $$\mathcal{K} = \mathrm{Kl}_n\left(\fieldCharacter\right) = \convolutionWithCompactSupport \mathrm{prod}_{!} \left( \mathrm{add}^{\ast} \artinScrier \right) \left[n - 1\right]$$ is the usual Kloosterman sheaf associated with the diagram 
\begin{equation}\label{eq:simple-kloosterman-diagram}
	\xymatrix{ & \multiplcativeScheme \ar[ld]_{ \mathrm{prod} }  \ar[rd]^{ \mathrm{add} } \\
		\multiplcativeScheme & & \affineLine },
\end{equation}
see also \cite[Section 7.4.2]{katz2016gauss} and  \cite[Section 0.1]{heinloth2013kloosterman}.

By \Cref{thm:L-function-of-exotic-kloosterman-sum}, we have that $$L^{\ast}\left(T, \mathrm{Kl}\left(\fieldCharacter, \left(-1\right)^{n-1} c^{-1} \right)\right) = \sum_{m =0 }^{n} q^{\frac{m \left(n-m\right)}{2}}  \besselFunction_{\Steinberg_n, \fieldCharacter} \begin{pmatrix}
& \IdentityMatrix{n-m}\\
c \IdentityMatrix{m}
\end{pmatrix} T^m,$$
and by \Cref{thm:bessel-function-as-exterior-power}, we have that $$\left(\left(-1\right)^{n-1} q^{ -\frac{\left(n-1\right)}{2} }\right)^m \trace  \left( \Frobenius_{ \left(-1\right)^{n-1} c^{-1} } \mid \wedge^m \mathcal{K}_{ \left(-1\right)^{n-1} c^{-1} }  \right) = q^{\frac{m \left(n-m\right)}{2}} \besselFunction_{\Steinberg_n, \fieldCharacter}\begin{pmatrix}
& \IdentityMatrix{n - m}\\	
c \IdentityMatrix{m} &
\end{pmatrix}.$$
This shows a deep relation between the Steinberg representation and usual Kloosterman sums and sheaves.
\end{example}
\begin{example}\label{example:principal-series-twisted-kloosterman}
	Take $\lambda = \left(1,1,\dots,1\right) \vdash n$ as in the previous example, and let $\alpha = \left(\alpha_1, \dots, \alpha_n\right)$, where $\alpha_i \colon \multiplicativegroup{\finiteField} \rightarrow \multiplicativegroup{\cComplex}$ is a multiplicative character for every $i$. Then $\pi = \Pi_{\left(1,1\dots,1\right)}\left(\alpha_1,\dots, \alpha_n\right)$ is the unique irreducible generic subrepresentation of $\alpha_1 \circ \dots \circ \alpha_n$, that is, $\pi$ is a generic principal series representation of $\GL_n\left(\finiteField\right)$. In this case, we have that $$J_m\left(\alpha^{-1}, \psi, a\right) = \sum_{\substack{x_1,\dots,x_n \in \multiplicativegroup{\finiteFieldExtension{m}}\\
	\prod_{i=1}^n x_i = a}} \left(\prod_{i=1}^n \alpha_i^{-1} \left(\FieldNorm{m}{1}\left(x_i\right)\right)\right) \fieldCharacter\left(\sum_{i=1}^n x_i\right) $$ is the twisted Kloosterman sum, and $$\mathcal{K} = \mathrm{Kl}_n\left(\alpha^{-1},\fieldCharacter\right) = \convolutionWithCompactSupport \mathrm{prod}_{!} \left( \mathrm{add}^{\ast} \artinScrier \otimes \mathcal{L}_{\alpha^{-1}}  \right) \left[n - 1\right]$$
is the twisted Kloosterman sheaf, associated with diagram \eqref{eq:simple-kloosterman-diagram}, see \cite[Section 4.1.1]{katz2016gauss} and \cite[Section 6]{kowalski2018stratification}.

Hence, we obtain a deep relation between Bessel functions of generic principal series representations and twisted Kloosterman sums and sheaves.
\end{example}

\subsection{Polynomials with unitary roots}

As an application of the relation we established in \Cref{thm:bessel-function-as-exterior-power} between values of the Bessel function and the exotic Kloosterman sheaves, we will show that some polynomials closely related to the normalized $L$-function associated with an exotic Kloosterman sum have all of their roots lying on the unit circle. In one of the cases consider, this statement is actually the Riemann hypothesis for the $L$-function we defined. We are not aware of any direct representation theoretic proof of this purely representation theoretic statement.

\begin{theorem}
	Let $\pi$ be an irreducible generic representation of $\GL_n\left(\finiteField\right)$. Then the following polynomials have all of their roots lying on the unit circle. \begin{align*}
		P\left(X\right) &= \sum_{m=0}^n \besselFunction_{\pi, \fieldCharacter}\begin{pmatrix}
		& \IdentityMatrix{m}\\	
		c \IdentityMatrix{n-m} &
		\end{pmatrix} q^{\frac{m\left(n-m\right)}{2}} X^m,\\
		Q\left(X\right) &= \sum_{m=0}^n \besselFunction_{\pi, \fieldCharacter}\begin{pmatrix}
		& \IdentityMatrix{m}\\	
		c \IdentityMatrix{n - m} &
		\end{pmatrix} X^m.
	\end{align*}
\end{theorem}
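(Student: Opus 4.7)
The plan is to combine Theorem \ref{thm:bessel-function-as-exterior-power} with the Lee--Yang circle theorem. Applied with the substitution $M = n - m$, the former yields
\[
q^{m(n-m)/2}\,\besselFunction_{\pi,\fieldCharacter}\!\begin{pmatrix} & \IdentityMatrix{m}\\ c\IdentityMatrix{n-m} & \end{pmatrix} = \beta^{n-m}\,\trace\!\left(\Frobenius_{(-1)^{n-1}c^{-1}} \mid \wedge^{n-m}\mathcal{K}_{(-1)^{n-1}c^{-1}}\right),
\]
where $\beta = (-1)^{r-1}q^{-(n-1)/2}$ and $\mathcal{K} = \operatorname{Kl}(\finiteFieldExtension{\lambda},\alpha^{-1},\fieldCharacter)$. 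Writing $\lambda_1,\dots,\lambda_n$ for the Frobenius eigenvalues on $\mathcal{K}_{(-1)^{n-1}c^{-1}}$, purity (Theorem \ref{thm:katz-etale-kloosterman-sheaf}(\ref{item:kloosterman-is-pure-of-weight-n-minus-1})) gives $|\lambda_i|=q^{(n-1)/2}$, so the normalized eigenvalues $\nu_i := \beta\lambda_i$ all lie on the unit circle.

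For $P$, substituting and re-indexing $k=n-m$ gives directly
\[
P(X) = \sum_{k=0}^n \beta^k\trace\!\left(\wedge^k\mathcal{K}\right)X^{n-k} = X^n\det\!\left(\IdentityMatrix{n}+\beta X^{-1}\Frobenius \mid \mathcal{K}\right) = \prod_{i=1}^n(X+\nu_i),
\]
whose roots $-\nu_i$ are all on the unit circle. Via Theorem \ref{thm:introduction-L-function} this identifies $P$ (up to variable reversal) with the normalized $L$-function of the exotic Kloosterman sum, so this step is precisely the Riemann hypothesis for that $L$-function.

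For $Q$, the same substitution instead produces
\[
Q(X) = X^n\,\Psi(\nu_1/X,\dots,\nu_n/X), \qquad \Psi(z_1,\dots,z_n) := \sum_{S\subseteq\{1,\dots,n\}} q^{-|S|\,|S^c|/2}\prod_{i\in S}z_i.
\]
The key observation is that $\Psi$ is, up to a multiplicative constant, the partition function of the ferromagnetic Ising model on the complete graph $K_n$ with uniform edge weight $t = q^{-1/2}\in(0,1)$ and site-dependent fugacities $z_1,\dots,z_n$: the exponent $|S|\cdot|S^c|$ counts exactly the number of cut edges of $K_n$ induced by the subset $S$. By the Lee--Yang circle theorem in the Asano--Newman multi-field formulation, $\Psi$ has no zeros in the open polydisc $\{|z_i|<1\}_i$, and by the self-reciprocity $\Psi(z_1,\dots,z_n) = \bigl(\prod_i z_i\bigr)\Psi(1/z_1,\dots,1/z_n)$ no zeros in $\{|z_i|>1\}_i$ either. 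Specializing $z_i=\nu_i/X$ with $|\nu_i|=1$ makes $|z_i|=|X|^{-1}$ uniform in $i$, so any root forces $|X|=1$. The main obstacle I anticipate is cleanly isolating the Ising/Lee--Yang structure of $\Psi$ and verifying that the multi-field formulation applies in this setting; the remainder of the argument uses only Theorem \ref{thm:bessel-function-as-exterior-power}, purity of $\mathcal{K}$, and classical statistical mechanics.
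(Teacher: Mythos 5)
Your argument for $P$ is identical to the paper's: identify $P$ (after normalization) with the characteristic polynomial of the Frobenius acting on the stalk of the exotic Kloosterman sheaf via \Cref{thm:bessel-function-as-exterior-power}, then invoke purity (\Cref{thm:katz-etale-kloosterman-sheaf}~(\ref{item:kloosterman-is-pure-of-weight-n-minus-1})). Your treatment of $Q$ is correct but takes a genuinely different route. The paper proves a standalone deformation statement (\Cref{prop:unitary-polynomial-deformation-is-unitary}: if all roots of $\sum_k a_k X^k$ lie on the unit circle, so do those of $\sum_k a_k\delta^{k(n-k)}X^k$ for $|\delta|<1$), applying it with $\delta=q^{-1/2}$. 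Its proof proceeds via \Cref{lem:unitary-polynomial-is-a-lee-young-polynomial}: normalize so $a_0=a_n=1$, factor $P(X)=\prod_i(X+z_i)$, and build a hermitian matrix $A$ with unit-modulus entries by concentrating all the $z_i$-dependence in the last row and column ($a_{ij}=1$ for $i,j\neq n$, $a_{in}=z_i$); then $P=\leeYang_A$, $P_\delta=\leeYang_{\delta A}$, and the univariate Lee--Yang theorem finishes. You instead keep the couplings uniform ($a_{ij}=q^{-1/2}$ for all $i\neq j$, the ferromagnetic Ising model on $K_n$) and push the dependence on the Frobenius eigenvalues into the fugacities $z_i=\nu_i/X$, so that $Q(X)=X^n\Psi(\nu_1/X,\dots,\nu_n/X)$; then you apply the multivariate (Asano--Newman) Lee--Yang theorem together with the self-reciprocity $\Psi(z)=\bigl(\prod_i z_i\bigr)\Psi(1/z)$, which holds because $|S|\cdot|S^c|$ is invariant under $S\leftrightarrow S^c$. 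Both reduce to Lee--Yang, but your version is more direct in that it bypasses the ad hoc matrix construction of \Cref{lem:unitary-polynomial-is-a-lee-young-polynomial}, at the cost of requiring the multivariate statement of the theorem rather than the univariate one the paper quotes. One small point worth making explicit: $Q(0)=\besselFunction_{\pi,\fieldCharacter}(cI_n)=\centralCharacter{\pi}(c)\neq 0$, so $X=0$ is not a root and the dichotomy $|X|<1$ versus $|X|>1$ versus $|X|=1$ indeed exhausts the cases.
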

\begin{proof}
	If $n=1$, then the statement is trivial.
	
	Suppose $n > 1$. Write $\pi = \Pi_{\lambda}\left(\alpha\right)$, where $\lambda \vdash n$ and $\alpha \in \charactergroup{\lambda}$. Let $\mathcal{K} = \operatorname{Kl}\left(\finiteFieldExtension{\lambda}, \alpha^{-1}, \fieldCharacter\right)$. Let $A = \Frobenius_{ \left(-1\right)^{n-1} c^{-1} } \mid \mathcal{K}_{ \left(-1\right)^{n-1} c^{-1} }$, as in \Cref{thm:katz-etale-kloosterman-sheaf}. The characteristic polynomial of $A$ is given by
	$$ \operatorname{Char}_A\left(X\right) = \sum_{m=0}^n { \left(-1\right)^{n - m} \trace \left(\wedge^{n-m}\left(A\right)\right) X^m }.$$
	By \Cref{thm:katz-etale-kloosterman-sheaf} (\ref{item:kloosterman-is-pure-of-weight-n-minus-1}), we have that the roots of $\operatorname{Char}_A\left(X\right)$ have absolute value $q^{\frac{n-1}{2}}$. By \Cref{thm:bessel-function-as-exterior-power}, we have that
	$$ \operatorname{Char}_A\left(X\right) = q^{\frac{n \left(n-1\right)}{2}} \sum_{m=0}^{n} \left( \left(-1\right)^{r} q^{\frac{m}{2}} \right)^{n-m} \besselFunction_{\pi, \fieldCharacter}\begin{pmatrix}
	& \IdentityMatrix{m}\\	
	c \IdentityMatrix{n-m} &
	\end{pmatrix} \left(q^{-\frac{n-1}{2}} X\right)^m.$$
	Hence, we have that $P \left(X\right) = q^{-\frac{n \left(n-1\right)}{2}} \left(-1\right)^{rn} \operatorname{Char}_A(\left(-1\right)^r q^{\frac{n-1}{2}} X)$, so the roots of $P\left(X\right)$ are all of absolute value $1$.
	
	For the result for $Q\left(X\right)$, we use \Cref{prop:unitary-polynomial-deformation-is-unitary} by noticing that $Q\left(X\right) = P_{q^{-\frac{1}{2}}}\left(X\right)$.
\end{proof}

As a corollary, we obtain the following non-trivial bounds on special values of the Bessel function.
\begin{corollary}For any irreducible generic representation $\pi$ of $\GL_n\left(\finiteField\right)$ and any $c \in \multiplicativegroup{\finiteField}$ and any $0 \le m \le n$,
	$$\abs{\besselFunction_{\pi, \fieldCharacter} \begin{pmatrix}
		& \IdentityMatrix{n-m}\\
		c \IdentityMatrix{m}
		\end{pmatrix}} \le \binom{n}{m} q^{-\frac{m \left(n-m\right)}{2}}.$$
\end{corollary}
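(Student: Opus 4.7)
The plan is to derive this corollary as an immediate consequence of the preceding theorem, which asserts that the polynomial
\[
P(X) = \sum_{m=0}^n \besselFunction_{\pi, \fieldCharacter}\begin{pmatrix} & \IdentityMatrix{m}\\ c \IdentityMatrix{n-m} & \end{pmatrix} q^{\frac{m(n-m)}{2}} X^m
\]
has all of its roots on the unit circle. The key observation is that $P(X)$ is monic: the coefficient of $X^n$ corresponds to $m = n$, in which case the anti-diagonal block matrix degenerates to $\IdentityMatrix{n}$ and the exponent $q^{n \cdot 0 / 2}$ equals $1$, so the leading coefficient is $\besselFunction_{\pi, \fieldCharacter}(\IdentityMatrix{n}) = 1$.

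First I would factor $P(X) = \prod_{i=1}^n (X - \omega_i)$ with each $|\omega_i| = 1$. Then, expressing each coefficient via the elementary symmetric polynomials $e_k(\omega_1, \dots, \omega_n)$ and applying the triangle inequality, one obtains
\[
\abs{[X^k] P(X)} = \abs{e_{n-k}(\omega_1, \dots, \omega_n)} \le \binom{n}{n-k} = \binom{n}{k}
\]
for every $0 \le k \le n$.

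Next I would match the coefficient of $X^{n-m}$ in $P(X)$ with the desired Bessel value: substituting $m \mapsto n-m$ in the definition of $P(X)$, this coefficient equals
\[
\besselFunction_{\pi, \fieldCharacter}\begin{pmatrix} & \IdentityMatrix{n-m}\\ c \IdentityMatrix{m} & \end{pmatrix} q^{\frac{m(n-m)}{2}}.
\]
Applying the bound above with $k = n - m$ and dividing through by $q^{m(n-m)/2}$ yields the claimed inequality.

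There is essentially no obstacle here, since the hard work sits in the preceding theorem (which invokes Katz's weight bound via \Cref{thm:katz-etale-kloosterman-sheaf}(\ref{item:kloosterman-is-pure-of-weight-n-minus-1}) and \Cref{thm:bessel-function-as-exterior-power}). The only things to be careful about are to check that $P(X)$ is genuinely monic (so that $|a_n| = 1$ and the elementary symmetric bound applies without a multiplicative factor) and to correctly translate between the index $m$ in the corollary and the index $n-m$ of the coefficient of $P(X)$.
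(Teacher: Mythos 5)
Your proof is correct and is exactly the argument the paper implicitly intends (the paper presents the corollary with no written proof, deferring to the unit-root theorem just proved). You correctly verify that $P(X)$ is monic with constant term of absolute value $1$, factor it as $\prod_i (X-\omega_i)$ with $|\omega_i|=1$, bound each coefficient by the number of terms in the corresponding elementary symmetric polynomial, and reindex to isolate the coefficient of $X^{n-m}$; nothing is missing.
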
	

\subsection{Shintani base change and Dickson polynomials}

We show another application of our results, relating special values of the Bessel function of an irreducible generic representation to special values of the Bessel function of its Shintani base change, using Dickson polynomials.

Let $k \ge 1$, and let $\fieldCharacter_k = \fieldCharacter \circ \FieldTrace_{\FieldExtension{\finiteFieldExtension{k}}{\finiteField}} \colon \finiteFieldExtension{k} \rightarrow \multiplicativegroup{\cComplex}$.

Shintani \cite{shintani1976two} defined a descent map from irreducible representations $\pi_k$ of $\GL_n\left( \finiteFieldExtension{k} \right)$ that satisfy $\pi_k \circ \Frobenius \cong \pi_k$ to irreducible representations of $\GL_n \left(\finiteField\right)$. Silberger and Zink described the inverse of this map, called Shintani base change, in terms of Macdonald's parametrization \cite[Corollary 9.6]{SilbergerZink08}.

In order to describe this map, we first notice that the groups $\limitcharactergroup = \varinjlim \charactergroup{d}$ and $\limitcharactergroup^{\finiteFieldExtension{k}} = \varinjlim \charactergroup{kd}$ are isomorphic in a natural way.

Suppose that $\pi$ is an irreducible representation of $\GL_n\left(\finiteField\right)$ parameterized by $\phi \in P_n\left(\Gamma\right)$. Then its Shintani base change $\pi_k$, an irreducible representation of $\GL_n\left(\finiteFieldExtension{k}\right)$, is the representation corresponding to the parameter $\phi_k \colon \limitcharactergroup^{\finiteFieldExtension{k}} \rightarrow \partitionset$, such that $\phi_k\left(\gamma\right) = \phi\left(\gamma\right)$, for any $\gamma \in \limitcharactergroup^{\finiteFieldExtension{k}} \cong \limitcharactergroup$.

It follows from our description of irreducible generic representations of $\GL_n\left(\finiteField\right)$ that the Shintani base change of an irreducible generic representation of $\GL_n\left(\finiteField\right)$ is an irreducible generic representation of $\GL_n\left(\finiteFieldExtension{k}\right)$.

If $\pi$ is an irreducible cuspidal representation of $\GL_n\left(\finiteField\right)$ corresponding to a regular character $\alpha \colon \multiplicativegroup{\finiteFieldExtension{n}} \rightarrow \multiplicativegroup{\cComplex}$, then its Shintani base change $\pi_k$ is the parabolic induction $$\Pi^{\finiteFieldExtension{k}}_{f_1} \circ \dots \circ \Pi^{\finiteFieldExtension{k}}_{f_{\gcd(n, k)}},$$ where $f_i$ is the $\finiteFieldExtension{k}$-Frobenius orbit corresponding to $\alpha^{q^{i-1}} \circ \FieldNorm{\lcm(n, k)}{n} \colon \multiplicativegroup{\finiteFieldExtension{\lcm(n,k)}} \rightarrow \multiplicativegroup{\cComplex} $ and $\Pi_{f_i}^{\finiteFieldExtension{k}}$ is the irreducible cuspidal representation of $\GL_{{\lcm (n,k)} / {k}}\left( \finiteFieldExtension{k} \right)$ corresponding to $f_i$.

We therefore get that for $\lambda = \left(n_1, \dots, n_r\right) \vdash n$, and $\alpha = \left(\alpha_1, \dots, \alpha_r\right) \in \charactergroup{\lambda}$, the Shintani base change of $\pi = \Pi_\lambda\left(\alpha_1, \dots, \alpha_r\right)$ is $\pi_k = \Pi^{\finiteFieldExtension{k}}_{\lambda'}\left(\alpha'\right)$, where $\lambda'$ is the concatenation of $$\left(\left(\frac{n_j}{\gcd\left(n_j, k\right)}, \dots, \frac{n_j}{\gcd\left(n_j, k\right)} \right)\right)_{j=1}^r$$ and $\alpha'$ is the concatenation of
$$\left(\left(\alpha_j \circ \FieldNorm{\lcm(n_j, k)}{n_j}, \alpha_1^q \circ \FieldNorm{\lcm(n_j, k)}{n_j}, \dots, \alpha_j^{q^{\gcd(n_j,k)-1}} \circ \FieldNorm{\lcm(n_j, k)}{n_j} \right)\right)_{j=1}^r.$$
We have $\lengthof\left(\lambda'\right) = \sum_{j=1}^r \gcd\left(n_j,k\right)$ and \begin{equation}\label{eq:sign-of-length-of-base-change}
	\left(-1\right)^{\lengthof\left(\lambda'\right)} = \prod_{j=1}^r \left(-1\right)^{\gcd(n_j, k)} = \prod_{j=1}^r \left(-1\right)^{n_j + k + n_j k } = \left(-1\right)^{n + kr + nk}.
\end{equation}

We begin by comparing the exotic Kloosterman sums associated to $\alpha^{-1}$ and $\alpha'^{-1}$ over $\finiteField$ and $\finiteFieldExtension{k}$, respectively.
\begin{proposition}\label{prop:base-change-exotic-kloosterman-formula}
	Consider the exotic 
	Kloosterman sum $$J_m^{\finiteFieldExtension{k}}\left(\alpha'^{-1}, \fieldCharacter_k, a\right) = \sum_{\substack{x \in \finiteFieldExtension{\lambda'} \otimes_{\finiteFieldExtension{k}} \finiteFieldExtension{km}\\
			\etaleNorm_{2}^{\finiteFieldExtension{\lambda'}\otimes_{\finiteFieldExtension{k}} \finiteFieldExtension{mk}}\left(x\right) = a}} \alpha'^{-1}\left( \etaleNorm_{1}^{\finiteFieldExtension{\lambda'} \otimes_{\finiteFieldExtension{k}} \finiteFieldExtension{km}} \left(x\right) \right) \fieldCharacter_k\left(\trace^{\finiteFieldExtension{k}} x \right),$$
	where $a \in \multiplicativegroup{\finiteField}$ and $\trace^{\finiteFieldExtension{k}}$ is the trace to $\finiteFieldExtension{k}$.
	Then $$J_m^{\finiteFieldExtension{k}}\left(\alpha'^{-1}, \fieldCharacter_k, a\right) = J_{km}\left(\alpha^{-1}, \fieldCharacter, a\right).$$
\end{proposition}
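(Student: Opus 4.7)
The plan is to express both sums as explicit character sums over products of multiplicative groups of $\finiteFieldExtension{L_j}$ with $L_j = \lcm(n_j, km)$, using the decompositions from \Cref{sec:tensor-product-of-finite-fields}, and then exhibit a bijective substitution (given by Frobenius twists) matching them term by term. Since the Kloosterman sum factorizes over the blocks of $\lambda$, I would reduce to the case $r=1$; write $\lambda = (n)$, $d = \gcd(n,k)$, $n' = n/d$, $e = \gcd(n',m)$, and $d' = de = \gcd(n,km)$. Applying the isomorphism $\finiteFieldExtension{n}\otimes_{\finiteField}\finiteFieldExtension{km} \cong \finiteFieldExtension{L}^{d'}$ rewrites
$$J_{km}(\alpha^{-1},\fieldCharacter,a) = \sum_{\substack{(x_s)_{s=1}^{d'}\in(\multiplicativegroup{\finiteFieldExtension{L}})^{d'}\\ \prod_s \FieldNorm{L}{km}(x_s)^{q^{s-1}}=a}} \prod_s \alpha^{-1}(\FieldNorm{L}{n}(x_s))\,\fieldCharacter(\FieldTrace(x_s)),$$
while the iterated isomorphism $\finiteFieldExtension{kn'}\otimes_{\finiteFieldExtension{k}}\finiteFieldExtension{mk}\cong\finiteFieldExtension{L}^{e}$ applied to each of the $d$ copies of $\finiteFieldExtension{kn'}$ in $\finiteFieldExtension{\lambda'}$, together with the identity $\FieldNorm{\lcm(n,k)}{n}\circ\FieldNorm{L}{kn'}=\FieldNorm{L}{n}$ and transitivity of trace ($\fieldCharacter_k\circ\trace^{\finiteFieldExtension{k}} = \fieldCharacter\circ\trace$), rewrites
$$J_m^{\finiteFieldExtension{k}}(\alpha'^{-1},\fieldCharacter_k,a) = \sum_{\substack{(y_{i,t})_{i,t}\\ \prod_{i,t}\FieldNorm{L}{mk}(y_{i,t})^{q^{k(t-1)}}=a}} \prod_{i,t} \alpha^{-q^i}(\FieldNorm{L}{n}(y_{i,t}))\,\fieldCharacter(\FieldTrace(y_{i,t})),$$
with $(i,t)\in\{0,\dots,d-1\}\times\{1,\dots,e\}$.

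The key step is to produce a bijection $(i,t)\mapsto s(i,t)\in\{1,\dots,d'\}$ together with integers $f(i,t)$ such that the substitution $x_{s(i,t)}\leftarrow y_{i,t}^{q^{f(i,t)}}$ (a bijection of $\multiplicativegroup{\finiteFieldExtension{L}}$) converts the first sum into the second. The required identities are $f(i,t)\equiv i\pmod{n}$ (to match the character factors, using $\alpha^{q^n}=\alpha$) and $f(i,t)+s(i,t)-1\equiv k(t-1)\pmod{km}$ (to match the $\etaleNorm_2$ constraints, using $q^{km}\equiv 1$ modulo the order of $\FieldNorm{L}{km}(y_{i,t})$); the trace factor is automatically preserved since $\FieldTrace$ is Frobenius-invariant. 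By the Chinese Remainder Theorem with moduli $n$ and $km$, a simultaneous solution $f(i,t)$ exists precisely when $i\equiv k(t-1)-s(i,t)+1\pmod{d'}$, which we use to define $s(i,t)$ as the unique representative in $\{1,\dots,d'\}$ with $s(i,t)\equiv k(t-1)-i+1\pmod{d'}$.

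The main obstacle is verifying that $(i,t)\mapsto s(i,t)$ is a bijection. Since both the domain and the codomain have cardinality $d'=de$, it suffices to prove injectivity: suppose $k(t-t')\equiv i-i'\pmod{d'}$. Writing $k=dk'$ with $\gcd(n',k')=1$, the left hand side is a multiple of $d$, forcing $d\mid i-i'$; since $|i-i'|<d$, we conclude $i=i'$. Then $dk'(t-t')\equiv 0\pmod{de}$, i.e.\ $k'(t-t')\equiv 0\pmod{e}$, and since $\gcd(k',e)=1$ (which follows from $\gcd(k',n')=1$ and $e\mid n'$), we obtain $t=t'$. Once the bijection is in place, the three conditions above show that the two summands agree on corresponding indices, completing the proof.
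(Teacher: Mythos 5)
Your proposal is correct, but it takes a genuinely different route from the paper's proof. The paper proceeds entirely at the level of \'etale algebras: it constructs a chain of $\finiteField$-algebra isomorphisms
$$\finiteFieldExtension{\lambda'} \otimes_{\finiteFieldExtension{k}} \finiteFieldExtension{km} \;\xrightarrow{\;T\otimes\idmap\;}\; \bigl(\finiteFieldExtension{\lambda}\otimes_\finiteField\finiteFieldExtension{k}\bigr)\otimes_{\finiteFieldExtension{k}}\finiteFieldExtension{km} \;\xrightarrow{\;S\;}\; \finiteFieldExtension{\lambda}\otimes_\finiteField\finiteFieldExtension{km},$$
where $T$ realizes the base-change description of $\alpha'$ (so $\alpha' = \alpha\circ\etaleNorm_1\circ T$) and $S$ is the canonical collapse of the iterated tensor, and then simply records that these isomorphisms intertwine the relevant $\etaleNorm_1$, $\etaleNorm_2$, and trace maps. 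The equality of the two character sums then falls out with no index bookkeeping at all. You instead unwind everything to the level of products of finite fields via the explicit decomposition of \Cref{sec:tensor-product-of-finite-fields}, reduce to one block (which is legitimate since the character factor, trace, and each factor of $\etaleNorm_2$ split over blocks, so a block-wise bijection suffices), and build a hands-on bijection $(i,t)\mapsto s(i,t)$ together with Frobenius twists $y_{i,t}\mapsto y_{i,t}^{q^{f(i,t)}}$, solving the two congruences $f\equiv i\pmod{n}$ and $f+s-1\equiv k(t-1)\pmod{km}$ by CRT and verifying injectivity via the observation that $\gcd(k/d,\,e)=1$. All of your intermediate claims check out --- in particular $d'=de=\gcd(n,km)$, the expansion of $\alpha'^{-1}\circ\etaleNorm_1$ as $\prod_{i,t}\alpha^{-q^i}\circ\FieldNorm{L}{n}$, the formula $\etaleNorm_2 = \prod_{i,t}\FieldNorm{L}{km}(\cdot)^{q^{k(t-1)}}$ over the base $\finiteFieldExtension{k}$, and the Frobenius-invariance of $\FieldTrace_{\finiteFieldExtension{L}/\finiteField}$. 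The trade-off is clear: the paper's argument is coordinate-free and short, while your argument is longer and requires the CRT bookkeeping, but it makes the underlying combinatorics of the base-change bijection completely explicit, which could be useful if one wanted, say, to track the behaviour of individual Frobenius orbits.
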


\begin{proof}
	As in \Cref{sec:tensor-product-of-finite-fields}, we have an isomorphism of $\finiteField$-algebras \begin{equation}\label{eq:isomorphism-of-base-change-with-tensor-product}
	T \colon \finiteFieldExtension{\lambda'} = \prod_{j=1}^r \prod_{i = 1}^{\gcd(n_j, k)} \finiteFieldExtension{\lcm\left( n_j,k \right)} \rightarrow \prod_{j=1}^r \finiteFieldExtension{n_j} \otimes_\finiteField \finiteFieldExtension{k} = \finiteFieldExtension{\lambda} \otimes_\finiteField \finiteFieldExtension{k},
	\end{equation} such that $\alpha' = \alpha \circ \etaleNorm_{1}^{\finiteFieldExtension{\lambda} \otimes_{\finiteField} \finiteFieldExtension{k}} \circ T.$
	
	Consider the tensor product $\finiteFieldExtension{\lambda'} \otimes_{\finiteFieldExtension{k}} \finiteFieldExtension{km}$. We get from \eqref{eq:isomorphism-of-base-change-with-tensor-product} an isomorphism of $\finiteField$-algebras $$T \otimes \idmap_{\finiteFieldExtension{km}} \colon \finiteFieldExtension{\lambda'} \otimes_{\finiteFieldExtension{k}} \finiteFieldExtension{km} \rightarrow \left(\finiteFieldExtension{\lambda} \otimes_{\finiteField} \finiteFieldExtension{k}\right) \otimes_{\finiteFieldExtension{k}} \finiteFieldExtension{km} 
	,$$
	and under this isomorphism, we have \begin{align*}
	T \circ \etaleNorm_{1}^{\finiteFieldExtension{\lambda'} \otimes_{\finiteFieldExtension{k}} \finiteFieldExtension{km}} &= \etaleNorm_{1}^{\left(\finiteFieldExtension{\lambda} \otimes_\finiteField \finiteFieldExtension{k}\right) \otimes_{\finiteFieldExtension{k}} \finiteFieldExtension{km}} \circ T \otimes \idmap_{\finiteFieldExtension{km}},\\
	\etaleNorm_{2}^{\finiteFieldExtension{\lambda'} \otimes_{\finiteFieldExtension{k}} \finiteFieldExtension{km}} &= \etaleNorm_{2}^{\left(\finiteFieldExtension{\lambda} \otimes_\finiteField \finiteFieldExtension{k}\right) \otimes_{\finiteFieldExtension{k}} \finiteFieldExtension{km}} \circ T \otimes \idmap_{\finiteFieldExtension{km}}.
	\end{align*}
	
	Using the isomorphism $T$ above, we may rewrite the exotic Kloosterman sum as
	
	$$J_m^{\finiteFieldExtension{k}}\left(\alpha'^{-1}, \fieldCharacter, a\right) = \sum_{\substack{x \in \left(\finiteFieldExtension{\lambda} \otimes_{\finiteField} \finiteFieldExtension{k}\right) \otimes_{\finiteFieldExtension{k}} \finiteFieldExtension{km}\\
			\etaleNorm_{2}^{\left(\finiteFieldExtension{\lambda} \otimes_{\finiteField} \finiteFieldExtension{k}\right) \otimes_{\finiteFieldExtension{k}} \finiteFieldExtension{km}}\left(x\right) = a}} \alpha^{-1}\left( \etaleNorm_{1}^{\finiteFieldExtension{\lambda} \otimes_{\finiteField} \finiteFieldExtension{k}} \left(\etaleNorm_{1}^{\left(\finiteFieldExtension{\lambda} \otimes_{\finiteField} \finiteFieldExtension{k}\right) \otimes_{\finiteFieldExtension{k}} \finiteFieldExtension{km}} \left(x\right)\right) \right) \fieldCharacter_k \left(\trace^{\finiteFieldExtension{k}} x \right).$$
	We have a natural isomorphism of $\finiteField$-algebras $$S \colon \left(\finiteFieldExtension{\lambda} \otimes_{\finiteField} \finiteFieldExtension{k}\right) \otimes_{\finiteFieldExtension{k}} \finiteFieldExtension{km} \rightarrow \finiteFieldExtension{\lambda} \otimes_{\finiteField} \finiteFieldExtension{km}.$$
	Under this isomorphism, we have
	\begin{align*}
	\etaleNorm_1^{\finiteFieldExtension{\lambda} \otimes_{\finiteField} \finiteFieldExtension{k}} \circ  \etaleNorm_{1}^{\left(\finiteFieldExtension{\lambda} \otimes_{\finiteField} \finiteFieldExtension{k}\right) \otimes_{\finiteFieldExtension{k}} \finiteFieldExtension{km} } &= \etaleNorm_1^{\finiteFieldExtension{\lambda} \otimes_{\finiteField} \finiteFieldExtension{km}} \circ S,\\
	\etaleNorm_{2}^{\left(\finiteFieldExtension{\lambda} \otimes_{\finiteField} \finiteFieldExtension{k}\right) \otimes_{\finiteFieldExtension{k}} \finiteFieldExtension{km}} &= \etaleNorm_2^{\finiteFieldExtension{\lambda} \otimes_{\finiteField} \finiteFieldExtension{km}} \circ S,\\
	\fieldCharacter_k \circ \trace^{\finiteFieldExtension{k}} &= \fieldCharacter \circ \trace \circ S.
	\end{align*}
	
	Hence, we get that $$J_m^{\finiteFieldExtension{k}}\left(\alpha'^{-1}, \fieldCharacter_k, a\right) = J_{km}\left(\alpha^{-1}, \fieldCharacter, a\right),$$
as required.
\end{proof}

Consider the $L$-functions associated to the exotic Kloosterman sums $J\left(\alpha^{-1}, \fieldCharacter, a\right)$ and $J^{\finiteFieldExtension{k}}\left(\alpha'^{-1}, \fieldCharacter, a\right)$ for $a \in \multiplicativegroup{\finiteField}$. Write
\begin{align*}
	L\left(T, J\left(\alpha^{-1}, \fieldCharacter, a\right)\right)^{\left(-1\right)^n} &= \prod_{j=1}^n \left(1 - \omega_j T\right),\\
	L\left(T, J^{\finiteFieldExtension{k}}\left(\alpha'^{-1}, \fieldCharacter, a\right)\right)^{\left(-1\right)^n} &= \prod_{j=1}^n \left(1 - \omega_j' T\right).
\end{align*}
By \Cref{cor:hasse-davenport-for-exotic-kloosterman-sums} and \Cref{prop:base-change-exotic-kloosterman-formula}, we have that for every $m$, $$\sum_{j=1}^n \omega_j^{km} = \sum_{j=1}^n {\left(\omega'_j\right)}^{m},$$
which implies that without loss of generality we have $\omega'_j = \omega_j^k$ for every $j$, and therefore \begin{align*}
L\left(T, J^{\finiteFieldExtension{k}}\left(\alpha'^{-1}, \fieldCharacter, a\right)\right)^{\left(-1\right)^n} &= \prod_{j=1}^n \left(1 - \omega_j^k T\right).
\end{align*}
This implies that we have a relation for the normalized $L$-functions.
\begin{equation}\label{eq:relation-between-normalized-L-functions}
	\begin{split}
		L^{\ast}\left(T, J\left(\alpha^{-1}, \fieldCharacter, a\right)\right) &= \prod_{j=1}^n \left(1 -  q^{-\frac{(n-1)}{2}} \left(-1\right)^r \omega_j T\right),\\
		L^{\ast}\left(T, 
		J^{\finiteFieldExtension{k}}\left(\alpha'^{-1}, \fieldCharacter, a\right)
		\right) &= \prod_{j=1}^n \left(1 -  q^{-\frac{k(n-1)}{2}} \left(-1\right)^{kr} \omega_j^k \left(-1\right)^{\left(k-1\right)n} T\right),
	\end{split}
\end{equation}
where we used \eqref{eq:sign-of-length-of-base-change} for the second formula.

The relation between the coefficients of these normalized $L$-functions is given by Dickson polynomials. We explain this now. Our reference is \cite[Page 17, Section 2.4]{lidl1993longman}.

Let $T_1, \dots, T_n$ be $n$ formal variables. Consider the following polynomial over $\zIntegers$ \begin{equation}\label{eq:generic-polynomial-with-roots}
	\prod_{i=1}^n \left(X - T_i\right) = \sum_{j=0}^n \left(-1\right)^{n-j} S_{n-j}\left(T_1,\dots,T_n\right) X^j,
\end{equation} where $S_0\left(T_1,\dots,T_n\right) = 1$ and 
$$S_j\left(T_1,\dots,T_n\right) = \sum_{1 \le  i_1 < \dots < i_j \le n} T_{i_1} \dots T_{i_j}.$$ The polynomials $\left(S_j\right)_{j=1}^n$ are called the elementary symmetric polynomials. They serve as generators for the ring of symmetric polynomials in $\zIntegers$ coefficients. Consider the polynomial \begin{equation}
	\label{eq:generic-polynomial-with-roots-to-the-k} \prod_{i=1}^n \left(X - T_i^k\right) = \sum_{j=0}^n \left(-1\right)^{n-j} S_{n-j}\left(T_1^k,\dots,T_n^k\right) X^j.
\end{equation}
Its coefficients are symmetric polynomials in $\left(T_1,\dots,T_n\right)$ with $\zIntegers$ coefficients, and therefore can expressed as polynomials with $\zIntegers$ coefficients in the elementary symmetric polynomials $\left(S_j\right)_{j=1}^n$. The multivariate Dickson polynomials in $n$ variables allow us to express the coefficients of \eqref{eq:generic-polynomial-with-roots-to-the-k} in terms of the coefficients of \eqref{eq:generic-polynomial-with-roots}.
The polynomials $(D_j^{(k)})_{j=1}^n$ are defined as the unique polynomials such that
$$\prod_{i=1}^n \left(X - T_i^k\right) = \sum_{j=0}^n \left(-1\right)^{n-j} D^{(k)}_{n-j}\left(S_1\left(T_1,\dots,T_n\right),\dots,S_n\left(T_1,\dots,T_n\right)\right) X^j.$$ Equivalently,  $D^{(k)}_j\left(U_1,\dots,U_n\right)$ is the unique polynomial such that
$$S_j\left(T_1^k,\dots,T_n^k\right) = D_{j}^{(k)}\left(S_1\left(T_1,\dots,T_n\right),\dots,S_n\left(T_1,\dots,T_n\right)\right).$$
See also \cite[Page 18, Definition 2.21]{lidl1993longman}.

The relation \eqref{eq:relation-between-normalized-L-functions} we found between the normalized $L$-functions can be translated using Dickson polynomials to the following purely representation theoretic statement.

\begin{theorem}
	Let $\pi$ be an irreducible generic representation of $\GL_n\left(\finiteField\right)$. Let $\pi_k$ be the Shintani base change of $\pi$ to $\GL_n \left(\finiteFieldExtension{k}\right)$. Then for any $0 \le m \le n$, and any $c \in \multiplicativegroup{\finiteField}$,
	$$D^{(k)}_m \left(b_1, \dots, b_n\right) =  \left(\left(-1\right)^{k-1} q^{ \frac{k}{2}}\right)^{m \left(n - m\right)} \besselFunction_{\pi_k, \fieldCharacter_k} \begin{pmatrix}
	& \IdentityMatrix{n-m}\\
	c\IdentityMatrix{m}
	\end{pmatrix},$$
	where for every $1 \le  j \le n$, $$b_j = q^{\frac{j\left(n-j\right)}{2}} \besselFunction_{\pi,\fieldCharacter}\begin{pmatrix}
	& \IdentityMatrix{n-j}\\
	c \IdentityMatrix{j}
	\end{pmatrix}.$$
\end{theorem}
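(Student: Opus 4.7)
The strategy is to derive the identity from the $L$-function formula of \Cref{thm:L-function-of-exotic-kloosterman-sum} applied to both $\pi$ and $\pi_k$, combined with the base-change relation for Kloosterman sums from \Cref{prop:base-change-exotic-kloosterman-formula} (already packaged into \eqref{eq:relation-between-normalized-L-functions}) and the definition of the multivariate Dickson polynomials. All of the substantive content has been established; what remains is symmetric-function bookkeeping.

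Write $\pi = \Pi_\lambda(\alpha)$ with $\lambda = (n_1,\dots,n_r) \vdash n$, so that $\pi_k = \Pi^{\finiteFieldExtension{k}}_{\lambda'}(\alpha')$ in the notation preceding \Cref{prop:base-change-exotic-kloosterman-formula}. I will apply \Cref{thm:L-function-of-exotic-kloosterman-sum} over $\finiteField$ to $\pi$ and over $\finiteFieldExtension{k}$ to $\pi_k$, obtaining two polynomial identities whose respective $T^m$-coefficients are $b_m$ and
$$\tilde b_m := q^{\frac{km(n-m)}{2}}\, \besselFunction_{\pi_k, \fieldCharacter_k}\begin{pmatrix} & \IdentityMatrix{n-m}\\ c\IdentityMatrix{m} \end{pmatrix}.$$
By \eqref{eq:relation-between-normalized-L-functions} these two normalized $L$-functions factor as
$$\sum_{m=0}^n b_m T^m = \prod_{j=1}^n \bigl(1 - \nu_j T\bigr), \qquad \sum_{m=0}^n \tilde b_m T^m = \prod_{j=1}^n \bigl(1 - (-1)^{(k-1)n}\nu_j^k\, T\bigr),$$
where $\nu_j = (-1)^r q^{-(n-1)/2}\omega_j$.

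Comparing coefficients via the elementary symmetric polynomials $S_j$ gives $S_j(\nu_1,\dots,\nu_n) = (-1)^j b_j$ and $\tilde b_m = (-1)^{m(1+(k-1)n)} S_m(\nu_1^k,\dots,\nu_n^k)$. Invoking the defining identity $S_m(T_1^k,\dots,T_n^k) = D^{(k)}_m(S_1(T),\dots,S_n(T))$ of the multivariate Dickson polynomials, I obtain
$$\tilde b_m = (-1)^{m(1+(k-1)n)}\, D^{(k)}_m\bigl(-b_1, b_2, -b_3, \dots, (-1)^n b_n\bigr).$$
To absorb the alternating signs on the inputs of $D^{(k)}_m$, I will use the homogeneity relation
$$D^{(k)}_m\bigl(-S_1, S_2, -S_3, \dots, (-1)^n S_n\bigr) = (-1)^{km}\, D^{(k)}_m(S_1,\dots,S_n),$$
which follows immediately by applying $T_i \mapsto -T_i$ to the defining identity and using that $S_j$ is homogeneous of degree $j$ while $S_m(T_1^k,\dots,T_n^k)$ picks up a factor of $(-1)^{km}$.

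Putting these together, $\tilde b_m = (-1)^{m(1+(k-1)n+k)}\, D^{(k)}_m(b_1,\dots,b_n)$, and to finish I need the parity identity $m(1+(k-1)n+k) \equiv (k-1)m(n-m) \pmod{2}$. The difference of the two exponents equals $m + km + (k-1)m^2$, which reduces modulo $2$ to $m(1+k) + (k-1)m = 2km \equiv 0$ (using $m^2 \equiv m \pmod 2$), so the two signs agree and solving yields
$$D^{(k)}_m(b_1,\dots,b_n) = \bigl((-1)^{k-1} q^{k/2}\bigr)^{m(n-m)}\, \besselFunction_{\pi_k,\fieldCharacter_k}\begin{pmatrix} & \IdentityMatrix{n-m}\\ c\IdentityMatrix{m} \end{pmatrix},$$
as claimed. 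The only genuine obstacle is thus the careful sign bookkeeping; no new representation-theoretic input beyond \Cref{thm:L-function-of-exotic-kloosterman-sum} and \Cref{prop:base-change-exotic-kloosterman-formula} is required.
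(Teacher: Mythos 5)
Your proof is correct and follows essentially the same route as the paper's: apply \Cref{thm:L-function-of-exotic-kloosterman-sum} to $\pi$ over $\finiteField$ and to $\pi_k$ over $\finiteFieldExtension{k}$, use the factorization \eqref{eq:relation-between-normalized-L-functions} to identify the inverse roots, and invoke the defining property of the Dickson polynomials plus a parity check. The only difference is cosmetic — the paper substitutes $T \mapsto -T^{-1}$ and multiplies by $T^n$ to put the $L$-functions into monic form matching the Dickson definition verbatim, whereas you work with $\prod_j\left(1 - \nu_j T\right)$ directly and absorb the resulting alternating signs via the homogeneity identity $D_m^{(k)}\left(-U_1, U_2, \dots, \left(-1\right)^n U_n\right) = \left(-1\right)^{km} D_m^{(k)}\left(U_1,\dots,U_n\right)$, which you correctly justify and which leads to the same parity computation.
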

\begin{proof}
	Let $a = \left(-1\right)^{n-1}c^{-1}$. In order to make use of the definition of Dickson polynomials, we rewrite \eqref{eq:relation-between-normalized-L-functions} using monic polynomials. Using the same notations as before and using \Cref{thm:L-function-of-exotic-kloosterman-sum}, we have that
\begin{equation*}
	\begin{split}
		 \prod_{j=1}^n \left(T - q^{-\frac{(n-1)}{2}}(-1)^{r-1} \omega_j\right) &= T^n L^{\ast}\left(-T^{-1}, J\left(\alpha^{-1}, \fieldCharacter, a\right)\right) \\
		 &= \sum_{m = 0}^n \left(-1\right)^{n-m} q^{\frac{m \left(n-m\right)}{2}}  \besselFunction_{\pi, \fieldCharacter} \begin{pmatrix}
			& \IdentityMatrix{m}\\
			c \IdentityMatrix{n-m}
		\end{pmatrix} T^m,
	\end{split}
\end{equation*}
and that
\begin{align*}
	& \prod_{j=1}^n \left(T - q^{-\frac{k(n-1)}{2}}(-1)^{k(r-1)} \omega_j^k\right) \\
	= & \,T^n L^{\ast}\left(\left(-1\right)^{\left(k-1\right)\left(n-1\right)} \left(-T^{-1}\right), 
	J^{\finiteFieldExtension{k}}\left(\alpha'^{-1}, \fieldCharacter, a\right)
	\right) \\
	=& \, \sum_{m = 0}^n \left(-1\right)^{n-m} \left(-1\right)^{m\left(k-1\right)\left(n-1\right)} q^{\frac{km \left(n-m\right)}{2}}  \besselFunction_{\pi_k, \fieldCharacter_k} \begin{pmatrix}
	& \IdentityMatrix{m}\\
	c \IdentityMatrix{n-m}
	\end{pmatrix} T^m.
\end{align*}
The result now follows immediately from the definition of Dickson polynomials and from the fact that $\left(-1\right)^{m\left(n-1\right)}=\left(-1\right)^{m\left(n-m\right)}$.

\end{proof}

\subsection{Converse theorem for Kloosterman sheaves}
In this final section, we combine the results of \Cref{sec:tensor-product-of-finite-fields} and \Cref{subsec:katz-exotic-kloosterman-sheaves} with Nien's converse theorem \cite[Theorem 3.9]{Nien14} (or more precisely, its improved version in \cite[Theorem 4.8]{SoudryZelingher2023}) to deduce a converse theorem for exotic Kloosterman sheaves.

We begin with recalling the (improved) converse theorem.

\begin{theorem}\label{thm:improved-converse-theorem}
	Let $\pi_1$ and $\pi_2$ be irreducible generic representations of $\GL_n\left(\finiteField\right)$ with the same central character. Suppose that for any $1 \le m \le \frac{n}{2}$ and that for any irreducible cuspidal representation $\sigma$ of $\GL_m\left(\finiteField\right)$ we have that $$\gamma\left( \pi_1 \times \sigma, \fieldCharacter \right) = \gamma\left( \pi_2 \times \sigma, \fieldCharacter \right).$$
	Then $\pi_1 \cong \pi_2$.
\end{theorem}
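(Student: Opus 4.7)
The strategy is to deduce $\besselFunction_{\pi_1, \fieldCharacter} = \besselFunction_{\pi_2, \fieldCharacter}$, which by Theorem~\ref{thm:bessel-function-as-sum-of-trace} together with the orthogonality of irreducible characters of $\GL_n\left(\finiteField\right)$ forces $\pi_1 \cong \pi_2$. By Proposition~\ref{prop:support-of-bessel-function}, it suffices to compare the two Bessel functions on representatives $g_{n_1,\dots,n_s}\left(c_1,\dots,c_s\right)$. For $s=1$ the representative is the scalar $c\IdentityMatrix{n}$, and equality of the central characters immediately gives $\besselFunction_{\pi_1, \fieldCharacter}\left(c\IdentityMatrix{n}\right) = \centralCharacter{\pi_1}\left(c\right) = \centralCharacter{\pi_2}\left(c\right) = \besselFunction_{\pi_2, \fieldCharacter}\left(c\IdentityMatrix{n}\right)$. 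For $s\ge 2$ one can factor $g_{n_1,\dots,n_s}\left(c_1,\dots,c_s\right) = \left(\begin{smallmatrix}0 & \IdentityMatrix{n_1}\\ c_1^{-1}h & 0\end{smallmatrix}\right)\cdot c_1\IdentityMatrix{n}$ with $h = g_{n_2,\dots,n_s}\left(c_2,\dots,c_s\right)$, and then again the central character hypothesis reduces the question to comparing the two Bessel functions at matrices of the form $\left(\begin{smallmatrix}0 & \IdentityMatrix{n-m}\\ g & 0\end{smallmatrix}\right)$ with $g \in \GL_m\left(\finiteField\right)$ and $1\le m < n$.

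By Theorem~\ref{thm:recursive-bessel-using-gamma-factors}, the value $\besselFunction_{\pi_i, \fieldCharacter}\left(\begin{smallmatrix}0 & \IdentityMatrix{n-m}\\ g & 0\end{smallmatrix}\right)$ equals the explicit linear combination
$$\frac{1}{\grpIndex{\GL_m\left(\finiteField\right)}{\UnipotentSubgroup_m}}\sum_{\sigma}\dim\sigma\cdot\gamma\left(\pi_i \times \Contragradient{\sigma}, \fieldCharacter\right)\besselFunction_{\sigma, \fieldCharacter}\left(g\right),$$
in which $\sigma$ ranges over all irreducible generic representations of $\GL_m\left(\finiteField\right)$. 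So it is enough to prove $\gamma\left(\pi_1 \times \sigma, \fieldCharacter\right) = \gamma\left(\pi_2 \times \sigma, \fieldCharacter\right)$ for every irreducible generic $\sigma$ of every $\GL_m\left(\finiteField\right)$, $1\le m < n$. Reducing from generic $\sigma$ to cuspidal $\sigma$ is a straightforward induction on $m$ via the multiplicativity in Theorem~\ref{thm:shahidi-gamma-factor-properties}(5): if $\sigma \subseteq \sigma_1 \circ \sigma_2$, then $\gamma\left(\pi_i \times \sigma, \fieldCharacter\right) = q^{m_1 m_2}\gamma\left(\pi_i \times \sigma_1, \fieldCharacter\right)\gamma\left(\pi_i \times \sigma_2, \fieldCharacter\right)$, so any generic $\sigma$ decomposes into cuspidal pieces of dimensions summing to $\dim\sigma < n$.

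The main obstacle — and the content of the improvement over Nien's original $m < n$ bound — is to propagate the hypothesis from cuspidal $\sigma$ of dimension at most $n/2$ to cuspidal $\sigma$ of dimension $n/2 < m < n$. My plan here is to exploit the explicit Gauss-sum factorization of Theorem~\ref{thm:epsilon-factor-of-tensor-product-as-product-of-gauss-sums} combined with Theorem~\ref{thm:relation-between-epsilon-factors-and-gamma-factors}: writing $\pi_i = \Pi_{\lambda_i}\left(\alpha_i\right)$ in Macdonald's parametrization, the gamma factor $\gamma\left(\pi_i \times \sigma, \fieldCharacter\right)$ for $\sigma$ cuspidal attached to a regular character $\beta$ of $\multiplicativegroup{\finiteFieldExtension{m}}$ becomes an explicit symmetric expression in the Frobenius orbits of the components of $\alpha_i$. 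The hypothesized data — the gamma factors for all regular $\beta$ of degree at most $n/2$ — together with the constraint $\prod_j \alpha_{1,j} = \prod_j \alpha_{2,j}$ imposed by equality of central characters, is combinatorially rich enough to recover the unordered multiset of Frobenius orbits with multiplicities parameterizing $\pi_i$, i.e., the full Macdonald parameter; from this $\pi_1 \cong \pi_2$ follows. Establishing this combinatorial determination rigorously is the delicate step that I expect to be the hardest to carry out.
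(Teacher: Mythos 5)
The paper does not prove this statement: it is cited as Nien's converse theorem \cite[Theorem 3.9]{Nien14} together with its improved version \cite[Theorem 4.8]{SoudryZelingher2023}, so there is no internal proof to compare your attempt against. That said, your proposal can be assessed on its own terms, and it has a genuine gap that you yourself flag.

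Your reduction is reasonable: equality of normalized Bessel functions does force $\pi_1 \cong \pi_2$ (since the Whittaker model is multiplicity-free and the Bessel function spans the $\UnipotentSubgroup_n$-bi-$\fieldCharacter$-equivariant line in it), and via \Cref{prop:support-of-bessel-function}, the central character hypothesis, and \Cref{thm:recursive-bessel-using-gamma-factors}, the desired Bessel equality is implied by $\gamma(\pi_1 \times \sigma, \fieldCharacter) = \gamma(\pi_2 \times \sigma, \fieldCharacter)$ for all irreducible generic $\sigma$ of $\GL_m(\finiteField)$ with $1 \le m \le n-1$. Multiplicativity then reduces this to the cuspidal case. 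But notice that the recursion at $m = n - n_1$ (with $n_1 \ge 1$) already requires the gamma factors against \emph{cuspidal} $\sigma$ of degree up to $n - 1$, and for $n > 2$ these degrees exceed $n/2$. So the entire weight of the theorem is carried by the step you label ``the main obstacle'': propagating the hypothesis from cuspidal $\sigma$ of degree $\le n/2$ to cuspidal $\sigma$ of degree $n/2 < m < n$. You assert that the Gauss-sum data, together with the central character constraint, is ``combinatorially rich enough to recover the unordered multiset of Frobenius orbits with multiplicities,'' but you do not prove this, and that assertion is essentially the converse theorem itself restated — so the argument as written is circular at exactly the place where content is needed. The proof in \cite{SoudryZelingher2023} works more directly on the cuspidal support of the $\pi_i$ via multiplicativity (rather than going through the Bessel function recursion), using the observation that at most one cuspidal constituent can have degree exceeding $n/2$, so that the small constituents are pinned down by the hypothesis and the remaining one is then forced; carrying that out requires the cuspidal converse theorem (Jacquet's conjecture for finite fields) as a black box. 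Your proposal would be a valid alternative route only if you actually supplied the ``combinatorial determination'' lemma, which is the hard part.
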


We will now translate this to a statement about Kloosterman sheaves.

Let $\lambda = \left(n_1,\dots,n_r\right)$ and $\lambda' = \left(n'_1,\dots,n'_{r'}\right)$ be partitions of $n$. For any $1 \le j \le r$ (respectively, $1 \le j \le r'$) let $\alpha_j \colon \multiplicativegroup{\finiteFieldExtension{n_j}} \to \multiplicativegroup{\cComplex}$ (respectively, $\alpha'_{j} \colon \multiplicativegroup{\finiteFieldExtension{n'_{j}}} \to \multiplicativegroup{\cComplex}$) be a regular multiplicative character. Denote $\alpha = \left(\alpha_1,\dots,\alpha_r\right) \colon \multiplicativegroup{\finiteFieldExtension{\lambda}} \to \multiplicativegroup{\cComplex}$ and $\alpha' = \left(\alpha'_1,\dots,\alpha'_{r'}\right) \colon \multiplicativegroup{\finiteFieldExtension{\lambda'}} \to \multiplicativegroup{\cComplex}$. For any $m$ we denote by $\mathcal{K}_{\lambda}^m\left(\alpha\right)$ (respectively, $\mathcal{K}_{\lambda'}^m\left(\alpha'\right)$) the base change of $\operatorname{Kl}\left( \finiteFieldExtension{\lambda}, \alpha, \fieldCharacter \right)$ (respectively, of $\operatorname{Kl}\left( \finiteFieldExtension{\lambda'}, \alpha', \fieldCharacter \right)$) from $\finiteField$ to $\finiteFieldExtension{m}$.

\begin{theorem}
	 Suppose that $\alpha\restriction_{\multiplicativegroup{\finiteField}} = \alpha'\restriction_{\multiplicativegroup{\finiteField}}$ and that for any $1 \le m \le \frac{n}{2}$ and any  $\finiteFieldExtension{m}$-point $a$ of $\multiplcativeScheme$ we have that \begin{equation}\label{eq:kloosterman-converse-theorem-assumption}
	 	\left(-1\right)^{m r} \trace\left(\Frobenius_a\mid \mathcal{K}^{m}_{\lambda}\left(\alpha\right)_a \right) = \left(-1\right)^{m r'} \trace\left(\Frobenius_a\mid \mathcal{K}^{m}_{\lambda'}\left(\alpha'\right)_a \right).
	 \end{equation}
	 Then $\lambda = \lambda'$ and, up to reordering, for any $1 \le j \le r$, the characters $\alpha_j$ and $\alpha'_j$ lie in the same Frobenius orbit.
\end{theorem}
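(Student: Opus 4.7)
The plan is to apply the improved converse theorem, \Cref{thm:improved-converse-theorem}, to the pair of irreducible generic representations $\tilde\pi = \Pi_{\lambda}(\alpha^{-1})$ and $\tilde\pi' = \Pi_{\lambda'}(\alpha'^{-1})$ of $\GL_n(\finiteField)$; these are well-defined generic because the hypothesis that every $\alpha_j$ is regular makes $\alpha_j^{-1}$ regular as well. Their central characters on $\multiplicativegroup{\finiteField}$ are $\centralCharacter{\tilde\pi}(z) = \prod_{j=1}^r \alpha_j^{-1}(z)$ and $\centralCharacter{\tilde\pi'}(z) = \prod_{k=1}^{r'} (\alpha'_k)^{-1}(z)$, which coincide by the assumed equality $\alpha\restriction_{\multiplicativegroup{\finiteField}} = \alpha'\restriction_{\multiplicativegroup{\finiteField}}$. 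It therefore suffices to establish the equality of Rankin--Selberg gamma factors $\gamma(\tilde\pi \times \sigma, \fieldCharacter) = \gamma(\tilde\pi' \times \sigma, \fieldCharacter)$ for every irreducible cuspidal representation $\sigma$ of $\GL_m(\finiteField)$ with $1 \le m \le n/2$.

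To translate the Frobenius-trace hypothesis into the required gamma-factor equalities, I would first note that the base-changed sheaf $\mathcal{K}^m_\lambda(\alpha)$ is canonically identified with $\operatorname{Kl}(\finiteFieldExtension{\lambda} \otimes_\finiteField \finiteFieldExtension{m},\, \alpha \circ \etaleNorm_1,\, \fieldCharacter_m)$ viewed over $\finiteFieldExtension{m}$, so \Cref{thm:katz-etale-kloosterman-sheaf}(\ref{item:m-th-geometric-frobenius-trace-exotic-kloosterman-sum}) applied over $\finiteFieldExtension{m}$ gives, for each $a \in \multiplicativegroup{\finiteFieldExtension{m}}$,
\[
\trace\bigl(\Frobenius_a \mid \mathcal{K}^m_\lambda(\alpha)_a\bigr) = (-1)^{n-1}\sum_{\substack{\xi \in \multiplicativegroup{(\finiteFieldExtension{\lambda} \otimes_\finiteField \finiteFieldExtension{m})}\\ \etaleNorm_2(\xi) = a}} \alpha(\etaleNorm_1(\xi))\,\fieldCharacter(\trace \xi).
\]
For any $\beta \in \charactergroup{m}$, multiplying by $\beta^{-1}(a)$, summing over $a$, and recognizing the resulting double sum as $\tau_{\lambda, m}(\alpha^{-1} \times \beta, \fieldCharacter)$ up to the sign $(-1)^{nm+n+rm}$ built into its definition yields
\[
\sum_{a \in \multiplicativegroup{\finiteFieldExtension{m}}} \beta^{-1}(a)\,\trace\bigl(\Frobenius_a \mid \mathcal{K}^m_\lambda(\alpha)_a\bigr) = -(-1)^{m(n+r)}\,\tau_{\lambda, m}(\alpha^{-1} \times \beta, \fieldCharacter),
\]
and identically with $(\lambda,\alpha,r)$ replaced by $(\lambda',\alpha',r')$. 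Plugging both identities into the hypothesis \eqref{eq:kloosterman-converse-theorem-assumption} summed against $\beta^{-1}$, the factors $(-1)^{mr}$ and $(-1)^{mr'}$ on the two sides combine with the $(-1)^{rm}$ and $(-1)^{r'm}$ present in the $\tau$-expressions, all $r$- and $r'$-dependence cancels cleanly, and one obtains $\tau_{\lambda, m}(\alpha^{-1} \times \beta, \fieldCharacter) = \tau_{\lambda', m}(\alpha'^{-1} \times \beta, \fieldCharacter)$ for every $\beta \in \charactergroup{m}$ and every $1 \le m \le n/2$.

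By \Cref{prop:epsilon-factors-etale-product} this equality of $\tau$'s is exactly $\epsilon_0(\tilde\pi \times \Pi_{(m)}(\beta), \fieldCharacter) = \epsilon_0(\tilde\pi' \times \Pi_{(m)}(\beta), \fieldCharacter)$; for regular $\beta$ the representation $\Pi_{(m)}(\beta)$ is cuspidal, and \Cref{thm:relation-between-epsilon-factors-and-gamma-factors} then converts the $\epsilon_0$-equality into the equality of Rankin--Selberg gamma factors, the factor $\centralCharacter{\Pi_{(m)}(\beta)}(-1)^{n-1}$ being common to both sides. \Cref{thm:improved-converse-theorem} now applies and yields $\tilde\pi \cong \tilde\pi'$. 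Finally, since each $\alpha_j^{-1}$ is regular, the Macdonald parameter of $\tilde\pi$ assigns to each Frobenius orbit $f$ the one-part partition $(\#\{j : \alpha_j^{-1} \in f\})$, and similarly for $\tilde\pi'$; by \Cref{thm:greens-parameterization-of-representations} the equality of Macdonald parameters forced by $\tilde\pi \cong \tilde\pi'$ forces the multisets $\{\alpha_j^{-1}\}_j$ and $\{(\alpha'_k)^{-1}\}_k$ to have the same collection of Frobenius orbits with the same multiplicities. Passing to inverses and recalling that the orbit degree of $\alpha_j$ equals $n_j$ yields $\lambda = \lambda'$ together with a reordering under which $\alpha_j$ and $\alpha'_j$ lie in a common Frobenius orbit.

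The main obstacle is the careful bookkeeping of signs in the second step: aligning the explicit $(-1)^{mr}$ in the hypothesis with the $(-1)^{rm}$ concealed inside the sign convention for $\tau_{\lambda, m}$, so that the $r$- and $r'$-dependence disappears and one lands on the hypothesis of the improved converse theorem. Once this cancellation is observed the remaining argument is essentially a concatenation of Fourier inversion on $\multiplicativegroup{\finiteFieldExtension{m}}$, the $\epsilon_0$/$\gamma$-dictionary, and the converse theorem. A secondary technical point is extending \Cref{thm:katz-etale-kloosterman-sheaf}(\ref{item:m-th-geometric-frobenius-trace-exotic-kloosterman-sum}) from $\finiteField$-points to $\finiteFieldExtension{m}$-points, but this reduces to applying the theorem over $\finiteFieldExtension{m}$ after base-changing the étale-algebra diagram and canonically identifying $(\finiteFieldExtension{\lambda} \otimes_\finiteField \finiteFieldExtension{m}) \otimes_{\finiteFieldExtension{m}} \finiteFieldExtension{m}$ with $\finiteFieldExtension{\lambda} \otimes_\finiteField \finiteFieldExtension{m}$ together with its norm and trace maps.
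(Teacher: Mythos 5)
Your proof follows the same route as the paper's: identify $\tilde\pi=\Pi_\lambda(\alpha^{-1})$ and $\tilde\pi'=\Pi_{\lambda'}(\alpha'^{-1})$, apply Katz's Frobenius-trace formula to the base-changed sheaves, sum against a character $\beta^{-1}$ to extract $\tau_{\lambda,m}(\alpha^{-1}\times\beta,\fieldCharacter)$, pass through \Cref{prop:epsilon-factors-etale-product} and \Cref{thm:relation-between-epsilon-factors-and-gamma-factors}, and then invoke \Cref{thm:improved-converse-theorem}. The sign bookkeeping you carry out explicitly (the cancellation of $(-1)^{mr}$, $(-1)^{n-1}$, and the $(-1)^{nm+n+rm}$ in the definition of $\tau_{\lambda,m}$) is correct and matches what the paper does implicitly by choosing the multiplier $(-1)^{nm+n}\beta^{-1}(a)$.
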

\begin{proof}
	Let $\pi_1 = \Pi_{\lambda}\left(\alpha^{-1}\right)$ and $\pi_2 = \Pi_{\lambda'}\left(\left(\alpha'\right)^{-1}\right)$. Then $\pi_1$ and $\pi_2$ are irreducible generic representations of $\GL_n\left(\finiteField\right)$ that have the same central character $\alpha^{-1}\restriction_{\multiplicativegroup{\finiteField}} = \left(\alpha'\right)^{-1}\restriction_{\multiplicativegroup{\finiteField}}$. We will show that for any $1 \le m \le \frac{n}{2}$ and any irreducible cuspidal representation $\sigma$ of $\GL_m\left(\finiteField\right)$ we have that $$\gamma\left(\pi_1 \times \sigma, \fieldCharacter\right) = \gamma\left(\pi_2 \times \sigma, \fieldCharacter\right).$$
	
	By \eqref{eq:kloosterman-converse-theorem-assumption} and by \Cref{thm:katz-etale-kloosterman-sheaf} part \ref{item:m-th-geometric-frobenius-trace-exotic-kloosterman-sum}, we have that for any $a \in \multiplicativegroup{\finiteFieldExtension{m}},$
	\begin{equation}\label{eq:kloosterman-converse-base-change-equality}
		\left(-1\right)^{mr} \sum_{\substack{x \in \finiteFieldExtension{\lambda} \otimes_\finiteField \finiteFieldExtension{m} \\
				\etaleNorm^{\finiteFieldExtension{\lambda} \otimes_\finiteField \finiteFieldExtension{m}}_2(x) = a}} \alpha ( \etaleNorm^{\finiteFieldExtension{\lambda}\otimes_\finiteField \finiteFieldExtension{m}}_1 \left( x \right) ) \fieldCharacter\left( \trace x \right) = \left(-1\right)^{mr'} \sum_{\substack{x \in \finiteFieldExtension{\lambda'} \otimes_\finiteField \finiteFieldExtension{m} \\
				\etaleNorm^{\finiteFieldExtension{\lambda'} \otimes_\finiteField \finiteFieldExtension{m}}_2(x) = a}} \alpha' ( \etaleNorm^{\finiteFieldExtension{\lambda'}\otimes_\finiteField \finiteFieldExtension{m}}_1 \left( x \right) ) \fieldCharacter\left( \trace x \right).
	\end{equation}
		Let $1 \le m \le \frac{n}{2}$ and let $\sigma$ be an irreducible cuspidal representation of $\GL_m\left(\finiteField\right)$ corresponding to the Frobenius orbit of a regular character $\beta \colon \multiplicativegroup{\finiteFieldExtension{m}} \to \multiplicativegroup{\cComplex}$. By multiplying \eqref{eq:kloosterman-converse-base-change-equality} by $\left(-1\right)^{nm + n} \beta^{-1}\left(a\right)$ and summing over all $a \in \multiplicativegroup{\finiteFieldExtension{m}}$, we get
		$$\tau_{\lambda, m}\left(\alpha^{-1} \times \beta, \fieldCharacter\right) = \tau_{\lambda', m}\left(\left(\alpha'\right)^{-1} \times \beta, \fieldCharacter\right).$$
		By \Cref{prop:epsilon-factors-etale-product} this implies that $$\epsilon_0\left(\pi_1 \times \sigma, \fieldCharacter\right) = \epsilon_0\left(\pi_2 \times \sigma, \fieldCharacter\right),$$
		and by \Cref{thm:relation-between-epsilon-factors-and-gamma-factors} we have that $$ \gamma\left(\pi_1 \times \sigma, \fieldCharacter\right) = \gamma\left(\pi_2 \times \sigma, \fieldCharacter\right).$$
		
		Therefore, we have that the assumptions of \Cref{thm:improved-converse-theorem} hold. By \Cref{thm:improved-converse-theorem}, we get that $\pi_1 \cong \pi_2$, and therefore, $\lambda = \lambda'$ and, up to reordering, for any $1 \le j \le r$, the characters $\alpha_j$ and $\alpha'_j$ lie in the same Frobenius orbit.
\end{proof}
\begin{remark}
	In \cite[Remark B.3]{nien2021converse} it is shown that if $n < \frac{q-1}{2 \sqrt{q}} + 1$ then it suffices to check \eqref{eq:kloosterman-converse-theorem-assumption} for $m = 1$ (instead of for every $1 \le m \le \frac{n}{2}$). 
\end{remark}

\appendix

\section{Lemmas about Gauss sums}

In this appendix, we prove several lemmas about Gauss sums.

\begin{proposition}\label{prod:tensor-product-gauss-sum-invariant-under-gcd}
	Let $\alpha \in \charactergroup{n}$ and $\beta \in \charactergroup{m}$. Let $d = \gcd\left(n,m\right)$ and $l = \lcm\left(n,m\right)$. Then for any $i$, 
	$$ \tau\left( \alpha \circ \FieldNorm{l}{n} \cdot \beta^{q^{i+d}} \circ \FieldNorm{l}{m}, \fieldCharacter_l \right) = \tau\left( \alpha \circ \FieldNorm{l}{n} \cdot \beta^{q^{i}} \circ \FieldNorm{l}{m}, \fieldCharacter_l \right).$$
\end{proposition}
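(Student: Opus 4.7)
The plan is to reduce the identity to the general fact that Gauss sums over $\finiteFieldExtension{l}$ are constant on Frobenius orbits of characters, and then to exhibit a single Frobenius power that carries one character to the other.

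First, I would establish the auxiliary lemma that for any $\gamma \in \charactergroup{l}$ and any $k \ge 0$,
\[
\tau(\gamma, \fieldCharacter_l) \;=\; \tau(\gamma^{q^k}, \fieldCharacter_l).
\]
It suffices to prove this for $k=1$ and iterate. Writing out the Gauss sum and performing the change of variable $\xi \mapsto \xi^q$ (a bijection on $\multiplicativegroup{\finiteFieldExtension{l}}$), one has $\gamma^{-1}(\xi^q) = (\gamma^{q})^{-1}(\xi)$ by multiplicativity, and $\fieldCharacter_l(\xi^q) = \fieldCharacter_l(\xi)$ because $\FieldTrace_{\finiteFieldExtension{l}/\finiteField}$ is invariant under the Frobenius $x \mapsto x^q$. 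Substituting yields the claim immediately.

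Next, I would choose an integer $k$ with $k \equiv 0 \pmod{n}$ and $k \equiv d \pmod{m}$. Since $\gcd(n,m) = d$, the subgroup $n\mathbb{Z} + m\mathbb{Z} \subseteq \mathbb{Z}$ equals $d\mathbb{Z}$, so the congruence $nt \equiv d \pmod{m}$ has a solution, and we may take $k = nt$ positive. Set
\[
\gamma_j \;=\; \alpha \circ \FieldNorm{l}{n} \cdot \beta^{q^j} \circ \FieldNorm{l}{m} \in \charactergroup{l}.
\]
Applying the $k$-th Frobenius power (which on characters is just raising to $q^k$) and using the trivial identities $(\alpha \circ \FieldNorm{l}{n})^{q^k} = \alpha^{q^k} \circ \FieldNorm{l}{n}$ and $(\beta^{q^i} \circ \FieldNorm{l}{m})^{q^k} = \beta^{q^{i+k}} \circ \FieldNorm{l}{m}$, one gets $\gamma_i^{q^k} = \alpha^{q^k} \circ \FieldNorm{l}{n} \cdot \beta^{q^{i+k}} \circ \FieldNorm{l}{m}$. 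Because $\alpha \in \charactergroup{n}$ and $n \mid k$, we have $\alpha^{q^k} = \alpha$; because $\beta \in \charactergroup{m}$ and $k \equiv d \pmod m$, we have $\beta^{q^{i+k}} = \beta^{q^{i+d}}$. Hence $\gamma_i^{q^k} = \gamma_{i+d}$, and the auxiliary lemma gives $\tau(\gamma_i, \fieldCharacter_l) = \tau(\gamma_{i+d}, \fieldCharacter_l)$, as required.

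There is no serious obstacle here; the only thing to verify carefully is the solvability of the simultaneous congruence, which is just Bezout, and the behaviour of $\FieldNorm{l}{n}$, $\FieldNorm{l}{m}$ under the Frobenius, which is immediate from multiplicativity.
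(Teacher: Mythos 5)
Your proof is correct and takes essentially the same route as the paper's: the paper writes $d = an + bm$ via B\'ezout and uses the Frobenius power $q^{an}$, which is exactly the $k$ produced by your simultaneous congruence $k \equiv 0 \pmod n$, $k \equiv d \pmod m$, then concludes by Frobenius invariance of Gauss sums. The only cosmetic difference is that you push the Frobenius power onto the characters $\alpha$, $\beta$, while the paper transfers it onto the norm maps $\FieldNorm{l}{n}$, $\FieldNorm{l}{m}$; the two are equivalent by multiplicativity.
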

\begin{proof}
	Recall that Gauss sums are constant on Frobenius orbits. Therefore, for any $r \in \zIntegers$ we have
	$$ \tau\left( \alpha \circ \FieldNorm{l}{n} \cdot \beta^{q^{i}} \circ \FieldNorm{l}{m}, \fieldCharacter_l \right) = \tau\left( \alpha \circ \FieldNorm{l}{n}^{q^r} \cdot \beta^{q^{i}} \circ \FieldNorm{l}{m}^{q^r}, \fieldCharacter_l \right).$$
	There exist $a,b\in \zIntegers$, such that $an + bm = d$. Then
	 \begin{align*}
	 \tau\left( \alpha \circ \FieldNorm{l}{n} \cdot \beta^{q^{i+d}} \circ \FieldNorm{l}{m}, \fieldCharacter_l \right) &= \tau\left( \alpha \circ \FieldNorm{l}{n} \cdot \beta^{q^i} \circ \FieldNorm{l}{m}^{q^{d}}, \fieldCharacter_l \right)\\
	 &= \tau\left( \alpha \circ \FieldNorm{l}{n} \cdot \beta^{q^i} \circ \FieldNorm{l}{m}^{q^{an+bm}}, \fieldCharacter_l \right).
	 \end{align*}
	 Since the image of $\FieldNorm{l}{m}$ is in $\multiplicativegroup{\finiteFieldExtension{m}}$, we have that $\FieldNorm{l}{m}^{q^{bm}} = \FieldNorm{l}{m}$. Similarly, the image of $\FieldNorm{l}{n}$ is in $\multiplicativegroup{\finiteFieldExtension{n}}$, and therefore $\FieldNorm{l}{n}^{q^{an}} = \FieldNorm{l}{n}$. Hence we get
	 \begin{align*}
	 	\tau\left( \alpha \circ \FieldNorm{l}{n} \cdot \beta^{q^{i+d}} \circ \FieldNorm{l}{m}, \fieldCharacter_l \right) &= \tau\left( \alpha \circ \FieldNorm{l}{n}^{q^{an}} \cdot \beta^{q^i} \circ \FieldNorm{l}{m}^{q^{an}}, \fieldCharacter_l \right)\\
	 	&= \tau\left( \alpha \circ \FieldNorm{l}{n} \cdot \beta^{q^i} \circ \FieldNorm{l}{m}, \fieldCharacter_l \right),
	 \end{align*}
	 
\end{proof}

Next, we prove a property about Gauss sums of $\multiplicativegroup{\finiteFieldExtension{2m}}$. This result is needed for proving that the factors $\epsilon_0\left( \pi \times \sigma, \fieldCharacter \right)$ and $\gamma\left( \pi \times \sigma, \fieldCharacter \right)$ agree for $\pi \cong \sigma^{\vee}$ in \Cref{thm:relation-between-epsilon-factors-and-gamma-factors-cuspidal}.
\begin{proposition}\label{prop:gauss-sum-lemma} \label{prop:gauss-sums-even-field-extension-lemma}
	Let $\beta \in \charactergroup{2m}$ be a non-trivial multiplicative character $\beta \colon \multiplicativegroup{\finiteFieldExtension{2m}} \rightarrow \multiplicativegroup{\cComplex}$, such that $\beta$ is trivial on $\multiplicativegroup{\finiteFieldExtension{m}}$. Then $$\tau \left( \beta, \fieldCharacter_{2m}\right) = -q^m \beta^{-1}\left(z \right),$$
	where $z \in \multiplicativegroup{\finiteFieldExtension{2m}}$ satisfies $ z^{q^m - 1} = -1$.
\end{proposition}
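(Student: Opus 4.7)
Plan: I would start directly from the definition of the Gauss sum,
$$\tau(\beta, \fieldCharacter_{2m}) = -\sum_{\xi \in \multiplicativegroup{\finiteFieldExtension{2m}}} \beta^{-1}(\xi)\, \fieldCharacter_{2m}(\xi),$$
and exploit the fact that $\beta$ is trivial on $\multiplicativegroup{\finiteFieldExtension{m}}$ by partitioning $\multiplicativegroup{\finiteFieldExtension{2m}}$ into cosets of $\multiplicativegroup{\finiteFieldExtension{m}}$. Writing $\xi \mapsto \xi y$ with $y \in \multiplicativegroup{\finiteFieldExtension{m}}$, the factor $\beta^{-1}(\xi y) = \beta^{-1}(\xi)$ pulls out, so
$$\tau(\beta, \fieldCharacter_{2m}) = -\sum_{\xi\,\multiplicativegroup{\finiteFieldExtension{m}}} \beta^{-1}(\xi) \sum_{y \in \multiplicativegroup{\finiteFieldExtension{m}}} \fieldCharacter_{2m}(\xi y),$$
where the outer sum is over a set of coset representatives.

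Next, I would simplify the inner sum by writing $\FieldTrace_{\finiteFieldExtension{2m}/\finiteField}(\xi y) = \FieldTrace_{\finiteFieldExtension{m}/\finiteField}(y\cdot t_\xi)$ with $t_\xi = \xi + \xi^{q^m} \in \finiteFieldExtension{m}$, using that $y \in \finiteFieldExtension{m}$. If $t_\xi = 0$ the inner sum is $q^m-1$, and if $t_\xi \ne 0$ the substitution $z = y t_\xi$ converts the inner sum to $\sum_{z \in \multiplicativegroup{\finiteFieldExtension{m}}} \fieldCharacter_m(z) = -1$. Therefore
$$\tau(\beta, \fieldCharacter_{2m}) = -q^m \sum_{\substack{\xi\,\multiplicativegroup{\finiteFieldExtension{m}} \\ \xi + \xi^{q^m} = 0}} \beta^{-1}(\xi) + \sum_{\xi\,\multiplicativegroup{\finiteFieldExtension{m}}} \beta^{-1}(\xi).$$
The last sum vanishes because $\beta$ is a non-trivial character of the quotient group $\multiplicativegroup{\finiteFieldExtension{2m}}/\multiplicativegroup{\finiteFieldExtension{m}}$, which I would invoke explicitly.

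Finally, I would analyze the cosets satisfying $\xi + \xi^{q^m} = 0$, i.e.\ $\xi^{q^m - 1} = -1$ (noting this is a well-defined condition on cosets since $(\xi y)^{q^m-1} = \xi^{q^m-1}$ for $y \in \multiplicativegroup{\finiteFieldExtension{m}}$). Since the kernel of the map $\xi \mapsto \xi^{q^m-1}$ on $\multiplicativegroup{\finiteFieldExtension{2m}}$ is exactly $\multiplicativegroup{\finiteFieldExtension{m}}$, its image has order $q^m+1$ and intersects the solutions of $X^{q^m-1}=-1$ in precisely one coset, namely $z\,\multiplicativegroup{\finiteFieldExtension{m}}$. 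Hence the remaining sum reduces to the single term $\beta^{-1}(z)$, yielding $\tau(\beta,\fieldCharacter_{2m}) = -q^m \beta^{-1}(z)$ as required.

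The only subtle point is bookkeeping in characteristic $2$, where $-1 = 1$ and therefore $z \in \multiplicativegroup{\finiteFieldExtension{m}}$, so $\beta^{-1}(z) = 1$; the argument above is uniform in $q$ and produces the correct answer in both parities, but I would briefly remark on this to avoid any suspicion of a hidden division by $2$. No step is expected to be a genuine obstacle; the main care is making sure the coset-level statements and their well-definedness are stated cleanly.
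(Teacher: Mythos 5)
Your proposal is correct and follows essentially the same strategy as the paper: exploit triviality of $\beta$ on $\multiplicativegroup{\finiteFieldExtension{m}}$ (the paper averages over $\multiplicativegroup{\finiteFieldExtension{m}}$ and changes variables; you sum directly over coset representatives, which is the same computation), split according to whether the relative trace $\FieldTrace_{\finiteFieldExtension{2m}/\finiteFieldExtension{m}}$ vanishes, and use orthogonality twice. The one place you diverge is the final identification of the trace-zero coset: the paper deduces existence of a nonzero trace-zero element from non-injectivity of $\FieldTrace_{\finiteFieldExtension{2m}/\finiteFieldExtension{m}}$ and then checks the set is a single $\multiplicativegroup{\finiteFieldExtension{m}}$-coset, whereas your argument via the kernel and image of $\xi\mapsto\xi^{q^m-1}$ implicitly uses that $-1$ lies in that image (i.e.\ is a $(q^m-1)$-th power), which is true since $q^m+1$ is even in odd characteristic and $-1=1$ in characteristic two, but should be stated explicitly rather than folded into the somewhat garbled phrase ``intersects the solutions\ldots in precisely one coset''.
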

\begin{proof}
	Write
	$$ \tau \left( \beta, \fieldCharacter_{2m}\right) = -\sum_{x \in \multiplicativegroup{\finiteFieldExtension{2m}}} \beta^{-1}\left(x\right) \fieldCharacter_{2m}\left(x\right).$$
	Since $\beta$ is trivial on $\multiplicativegroup{\finiteFieldExtension{m}}$, we may write
	$$-\tau \left( \beta, \fieldCharacter_{2m}\right) = \frac{1}{q^m - 1} \sum_{a \in \multiplicativegroup{\finiteFieldExtension{m}}} \sum_{x \in \multiplicativegroup{\finiteFieldExtension{2m}}} \beta^{-1}\left(a^{-1} x\right) \fieldCharacter_m \left( \FieldTrace_{\finiteFieldExtension{2m} \slash \finiteFieldExtension{m}}\left(x\right) \right).$$
	Changing variable $x = ay$, we obtain
	$$ -\tau \left( \beta, \fieldCharacter_{2m}\right) = \frac{1}{q^m - 1} \sum_{a \in \multiplicativegroup{\finiteFieldExtension{m}}} \sum_{y \in \multiplicativegroup{\finiteFieldExtension{2m}}} \beta^{-1}\left(y \right) \fieldCharacter_m \left( a \FieldTrace_{\finiteFieldExtension{2m} \slash \finiteFieldExtension{m}}\left(y\right) \right).$$
	We divide the sum into two sums according to whether the trace is zero or not:
	\begin{align*}
	-\tau \left( \beta, \fieldCharacter_{2m}\right) &= \frac{1}{q^m - 1}\sum_{\substack{y \in \multiplicativegroup{\finiteFieldExtension{2m}} \\
			\FieldTrace_{\finiteFieldExtension{2m} \slash \finiteFieldExtension{m}}\left(y\right) \ne 0}} \sum_{a \in \multiplicativegroup{\finiteFieldExtension{m}}} \beta^{-1}\left(y \right) \fieldCharacter_m \left( a \FieldTrace_{\finiteFieldExtension{2m} \slash \finiteFieldExtension{m}}\left(y\right) \right) \\
	&	+ \frac{1}{q^m - 1}
	\sum_{\substack{y \in \multiplicativegroup{\finiteFieldExtension{2m}} \\
			\FieldTrace_{\finiteFieldExtension{2m} \slash \finiteFieldExtension{m}}\left(y\right) = 0}} \sum_{a \in \multiplicativegroup{\finiteFieldExtension{m}}} \beta^{-1}\left(y \right) \fieldCharacter_m \left( a \FieldTrace_{\finiteFieldExtension{2m} \slash \finiteFieldExtension{m}}\left(y\right) \right)
	\end{align*}
	Regarding first sum, since $\FieldTrace_{\finiteFieldExtension{2m} \slash \finiteFieldExtension{m}}\left(y\right)$ is non-zero, we may change variables and get
	$$ \sum_{a \in \multiplicativegroup{\finiteFieldExtension{m}}} \fieldCharacter_m \left( a \FieldTrace_{\finiteFieldExtension{2m} \slash \finiteFieldExtension{m}}\left(y\right) \right) = -1.$$
	Hence we have,
	\begin{align*}
	-\tau \left( \beta, \fieldCharacter_{2m}\right) &= -\frac{1}{q^m - 1}\sum_{\substack{y \in \multiplicativegroup{\finiteFieldExtension{2m}} \\
			\FieldTrace_{\finiteFieldExtension{2m} \slash \finiteFieldExtension{m}}\left(y\right) \ne 0}}  \beta^{-1}\left(y \right) + \frac{1}{q^m - 1}
	\sum_{\substack{y \in \multiplicativegroup{\finiteFieldExtension{2m}} \\
			\FieldTrace_{\finiteFieldExtension{2m} \slash \finiteFieldExtension{m}}\left(y\right) = 0}} \sum_{a \in \multiplicativegroup{\finiteFieldExtension{m}}} \beta^{-1}\left(y \right)\\
	& = -\frac{1}{q^m - 1}\sum_{\substack{y \in \multiplicativegroup{\finiteFieldExtension{2m}} \\
			\FieldTrace_{\finiteFieldExtension{2m} \slash \finiteFieldExtension{m}}\left(y\right) \ne 0}}  \beta^{-1}\left(y \right) + 
	\sum_{\substack{y \in \multiplicativegroup{\finiteFieldExtension{2m}} \\
			\FieldTrace_{\finiteFieldExtension{2m} \slash \finiteFieldExtension{m}}\left(y\right) = 0}} \beta^{-1}\left(y \right).
	\end{align*}
	Since $\beta$ is a non-trivial multiplicative character, we have that $\sum_{y \in \multiplicativegroup{\finiteFieldExtension{2m}}} \beta^{-1}\left(y\right) = 0$, and therefore we can write
	\begin{align*}
	-\tau \left( \beta, \fieldCharacter_{2m}\right) &= \frac{q^m}{q^m - 1}\sum_{\substack{y \in \multiplicativegroup{\finiteFieldExtension{2m}} \\
			\FieldTrace_{\finiteFieldExtension{2m} \slash \finiteFieldExtension{m}}\left(y\right) = 0}}  \beta^{-1}\left(y \right).
	\end{align*}
	Since the trace map $\FieldTrace_{\finiteFieldExtension{2m} \slash \finiteFieldExtension{m}} \colon \finiteFieldExtension{2m} \rightarrow \finiteFieldExtension{m}$ is not injective, there exists $0 \ne z \in \finiteFieldExtension{2m}$, such that $\FieldTrace_{\finiteFieldExtension{2m} \slash \finiteFieldExtension{m}}\left(z\right) = z + z^{q^m} = 0$. If $y \in \multiplicativegroup{\finiteFieldExtension{2m}}$ satisfies $\FieldTrace_{\finiteFieldExtension{2m} \slash \finiteFieldExtension{m}} \left(y\right) = 0$, i.e., $y^{q^m} + y = 0$, then $y^{q^m - 1} = -1$, and then $$\left(\frac{y}{z} \right)^{q^m - 1} = 1,$$ i.e., $\frac{y}{z} \in \multiplicativegroup{\finiteFieldExtension{m}}$. Therefore $y = c \cdot z$, where $c \in \multiplicativegroup{\finiteFieldExtension{m}}$. Conversely, if $c \in \multiplicativegroup{\finiteFieldExtension{m}}$, then $\FieldTrace_{\finiteFieldExtension{2m} \slash \finiteFieldExtension{m} }\left(cz\right) = c \FieldTrace_{\finiteFieldExtension{2m} \slash \finiteFieldExtension{m}}\left(z\right) = 0$. Therefore, we have
	\begin{align*}
	-\tau \left( \beta, \fieldCharacter_{2m}\right) &= \frac{q^m}{q^m - 1}\sum_{c \in \multiplicativegroup{\finiteFieldExtension{m}}}  \beta^{-1}\left(cz \right) = q^m \beta^{-1}\left(z\right),
	\end{align*}
	where in the last step we used again the fact that $\beta$ is trivial on $\multiplicativegroup{\finiteFieldExtension{m}}$.
\end{proof}

\section{Polynomials with unitary roots}

In this appendix, we prove a property about polynomials with roots lying on the unit circle. We were not able to find a reference for this property in the literature.

We are interested in the following result.
\begin{proposition}\label{prop:unitary-polynomial-deformation-is-unitary}
	Let $$P\left(X\right) = \sum_{k=0}^n a_k X^k \in \cComplex\left[X\right]$$ be a polynomial, such that all of its roots lie on the unit circle. Then for any $-1 < \delta < 1$, the polynomial $$P_\delta\left(X\right) = \sum_{k=0}^n a_k \delta^{k\left(n-k\right)} X^k$$ also has all of its roots lying on the unit circle.
\end{proposition}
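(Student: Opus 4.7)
The plan is to apply Cohn's classical theorem: a self-inversive polynomial of degree $n$ has all of its zeros on the unit circle if and only if its derivative has all of its zeros in the closed unit disk. The first step is to verify that $P_\delta$ is self-inversive. The hypothesis that the zeros of $P$ lie on the unit circle is equivalent to the coefficient relation $a_k = a_0\,\overline{a_{n-k}}$, with $|a_0|$ equal to the modulus of the leading coefficient. Since $\delta$ is real and the exponent $k(n-k)$ is symmetric under $k \leftrightarrow n-k$, this relation is preserved by the scaling $a_k \mapsto a_k\,\delta^{k(n-k)}$, so $P_\delta$ is self-inversive with the same constant $a_0$.

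The second step is the identity
\begin{equation*}
P_\delta'(X) \;=\; \delta^{\,n-1}\,(P')_\delta\!\left(X/\delta\right),
\end{equation*}
where $(P')_\delta$ denotes the analogous deformation of the degree-$(n-1)$ polynomial $P'$. A direct coefficient comparison confirms this: the exponent of $\delta$ in the coefficient of $X^j$ on both sides equals $(j+1)(n-j-1)$. Consequently the zeros of $P_\delta'$ are exactly the zeros of $(P')_\delta$ multiplied by $\delta$.

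The third step combines Gauss--Lucas with an auxiliary disk-preservation lemma. Gauss--Lucas applied to $P$ gives that the zeros of $P'$ lie in the convex hull of the zeros of $P$, which is the closed unit disk. Granting the lemma \emph{if $R$ is a polynomial of degree $N$ with all zeros in the closed unit disk, then so are the zeros of $R_\delta$}, applied to $R = P'$, the zeros of $(P')_\delta$ lie in the closed unit disk, and hence the zeros of $P_\delta'$ all lie in the closed disk of radius $|\delta|\le 1$, in particular in the closed unit disk. Cohn's theorem together with the self-inversiveness from the first step then yields that $P_\delta$ has all zeros on the unit circle, as desired.

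The principal obstacle is the auxiliary disk-preservation lemma. I would prove it by induction on the degree $N$ of $R$, using the identity $R_\delta'(X) = \delta^{N-1}(R')_\delta(X/\delta)$ applied to $R$ itself, together with Gauss--Lucas for $R$ and the Schur--Cohn closed-disk test. The extremal coefficients $b_0$ and $b_N$ are unchanged by the deformation because $k(N-k) = 0$ at $k = 0$ and $k = N$, which both anchors the induction and provides a helpful consistency check via the limit $\delta \to 0$, where $R_\delta$ degenerates to $b_0 + b_N X^N$ with zeros of modulus $|b_0/b_N|^{1/N} = \left(\prod |z_j|\right)^{1/N}\le 1$.
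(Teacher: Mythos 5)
Your overall strategy—reduce to Cohn's theorem via self-inversiveness and control the derivative $P_\delta'$ through the identity $P_\delta'(X) = \delta^{n-1}(P')_\delta(X/\delta)$—is a genuinely different route from the paper, which instead realizes $P$ directly as a Lee--Yang polynomial $\leeYang_A(X)$ of a Hermitian matrix $A$ with unit-modulus entries and observes that the deformation $a_k \mapsto a_k\delta^{k(n-k)}$ is precisely $A \mapsto \delta A$, so the Lee--Yang theorem gives the conclusion in one stroke. Your first two steps are sound: the coefficient identity and the self-inversiveness of $P_\delta$ check out (though self-inversiveness is only \emph{implied} by, not equivalent to, all roots lying on the circle---this does not affect the logic you actually use).

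The gap is in the auxiliary disk-preservation lemma. Your inductive step is: by Gauss--Lucas the roots of $R'$ lie in the closed unit disk, so by the inductive hypothesis the roots of $(R')_\delta$ do too, so by the identity the roots of $R_\delta'$ lie in the disk of radius $|\delta|$---and from this you wish to conclude that the roots of $R_\delta$ lie in the closed unit disk. But that last implication is false in general; it is the converse of Gauss--Lucas, and it fails even when the constant coefficient is small. Concretely, $Q(X) = (X-1.5)(X+0.1)^2$ has $Q'$ with both roots in the open unit disk (approximately $0.97$ and $-0.1$) and $|Q(0)| = 0.015 < 1$, yet $Q$ has a root at $1.5$ outside the disk. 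There is no analogue of Cohn's theorem without the self-inversiveness hypothesis, and the polynomials $R = P', P'', \dots$ arising in your induction are generally not self-inversive even when $P$ is, so you cannot re-invoke Cohn at each stage. The Schur--Cohn test does not rescue this: it recurses on the Schur transform of the polynomial, not on its derivative, so it offers no bridge from control of $R_\delta'$ to control of $R_\delta$. As it stands, the auxiliary lemma is unproved, and with it the main argument. The paper's Lee--Yang route sidesteps all of this by never descending to derivatives at all.
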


We first recall the Lee--Yang theorem \cite{lee1952statistical} \cite[Theorem 8.4]{borcea2009lee}. For a positive integer $n$, let $\left[n\right] = \left\{ 1, \dots, n\right\}$. For a complex number $z$, denote by $\conjugate{z}$ its complex conjugate.

\begin{theorem}[Lee--Yang]
	Let $A = \left(a_{ij}\right)_{i,j} \in \squareMatrix_n \left(\cComplex\right)$ be a hermitian matrix of size $n \times n$, i.e, $a_{ij} = \conjugate{a_{ji}}$. Suppose that $\abs{a_{ij}} \le 1$ for every $i,j$. Then the following polynomial has all of its roots lying on the unit circle: $$\operatorname{LY}_A\left(X\right) = \sum_{S \subseteq \left[n\right]} \left(\prod_{i \in S, j \notin S} a_{ij} \right) X^{\sizeof{S}},$$
	where the empty product is defined to be $1$.
\end{theorem}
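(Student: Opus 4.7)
The plan is to lift $\leeYang_A$ to a multiaffine polynomial in $n$ variables, establish a Hermitian functional equation together with a non-vanishing statement on the open polydisc, and then combine the two to pin the roots to the unit circle.

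First, I would introduce the multivariate lift
$$\Xi_A\left(z_1, \dots, z_n\right) = \sum_{S \subseteq \left[n\right]} \left(\prod_{i \in S,\, j \notin S} a_{ij}\right) \prod_{i \in S} z_i,$$
which is linear in each variable separately and specializes to $\leeYang_A\left(X\right) = \Xi_A\left(X, \dots, X\right)$. The first step is a Hermitian symmetry: using the involution $S \leftrightarrow \left[n\right] \setminus S$ on the indexing set of the sum together with $\overline{a_{ij}} = a_{ji}$, one checks
$$\prod_{i=1}^n z_i \cdot \overline{\Xi_A\left(1/\overline{z_1}, \dots, 1/\overline{z_n}\right)} = \Xi_A\left(z_1, \dots, z_n\right).$$
Specializing $z_1 = \dots = z_n = X$ yields the functional equation $X^n \overline{\leeYang_A\left(1/\overline{X}\right)} = \leeYang_A\left(X\right)$, so the zero set of $\leeYang_A$ is stable under the inversion $\alpha \mapsto 1/\overline{\alpha}$.

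The second and main step is the \emph{Lee--Yang property}: $\Xi_A$ does not vanish on the open polydisc $\left\{\left(z_1, \dots, z_n\right) : \abs{z_i} < 1 \text{ for all } i\right\}$. I would prove this by induction on $n$ via Asano contractions. The base $n = 1$ gives $\Xi_A\left(z_1\right) = 1 + z_1$, whose only zero is at $z_1 = -1$. For the inductive step, one invokes Asano's lemma: if a bilinear polynomial $f\left(w_1, w_2\right) = \alpha + \beta w_1 + \gamma w_2 + \delta w_1 w_2$ (with coefficients possibly depending multiaffinely on auxiliary parameters) does not vanish on $\left\{\abs{w_1}, \abs{w_2} > 1\right\}$, then the contracted polynomial $\alpha + \delta w$ does not vanish on $\left\{\abs{w} > 1\right\}$. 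Realizing $\Xi_A$ as an iterated Asano contraction of a product of rank-one Lee--Yang factors of the form $1 + a\, z$ with $\abs{a} \le 1$ then propagates the non-vanishing up from $n = 1$ to the polydisc for $\Xi_A$ itself.

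Combining the two steps finishes the proof: $\leeYang_A$ has no zero with $\abs{X} < 1$ by the Lee--Yang property, and its zero set is invariant under $\alpha \mapsto 1/\overline{\alpha}$ by the functional equation. A hypothetical zero $\alpha$ with $\abs{\alpha} > 1$ would force $1/\overline{\alpha}$ to be a zero inside the open unit disk, a contradiction; hence every zero of $\leeYang_A$ lies on the unit circle. The main obstacle is the polydisc non-vanishing in step two: both the precise formulation of Asano's contraction lemma and the explicit way in which $\Xi_A$ is produced as an iterated contraction of simple factors are delicate. Asano's lemma itself reduces to a Möbius-transformation argument showing that the bilinear map $w_1 \mapsto -\left(\alpha + \beta w_1\right)/\left(\gamma + \delta w_1\right)$ carries the exterior of the unit disk into its closure, and it is there that the hypothesis $\abs{a_{ij}} \le 1$ ultimately enters.
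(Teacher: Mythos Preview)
The paper does not prove the Lee--Yang theorem at all; it is quoted from the literature (Lee--Yang 1952 and Borcea--Br\"and\'en 2009) and then used as a black box in the proof of the subsequent proposition. So there is no argument in the paper to compare yours against.

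Your outline follows one of the standard routes to Lee--Yang, and step one (the Hermitian functional equation for the multiaffine lift $\Xi_A$ and its diagonal specialization) is correct and cleanly stated. Step two, however, is not yet a proof. The polynomial $\Xi_A$ is \emph{not} an iterated Asano contraction of rank-one factors $1 + a z$; the correct building blocks are the two-variable polynomials
\[
P_{ij}(u,v) = 1 + a_{ij}\,u + \overline{a_{ij}}\,v + uv
\]
for each pair $i<j$, each of which is nonvanishing on the open bidisc by exactly the M\"obius computation you allude to (this is where $\lvert a_{ij}\rvert \le 1$ enters). One forms the product $\prod_{i<j} P_{ij}(u_{ij},v_{ij})$ in $n(n-1)$ independent variables, then Asano-contracts, for each index $i$, the $n-1$ auxiliary variables $\{u_{ij}:j>i\}\cup\{v_{ji}:j<i\}$ down to a single $z_i$; a direct combinatorial check shows the result is precisely $\Xi_A(z_1,\dots,z_n)$. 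You correctly flagged this as the delicate part, and as written it is a gap, but it is repairable along the lines just described.
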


We first show that certain polynomials satisfying the assumptions of \Cref{prop:unitary-polynomial-deformation-is-unitary} can be represented as polynomials occurring in the Lee--Yang theorem. We thank Fedor Petrov for providing the proof of this lemma on MathOverflow.
\begin{lemma}\label{lem:unitary-polynomial-is-a-lee-young-polynomial}
	Suppose that $$P\left(X\right) = \sum_{k=0}^n a_k X^k$$ is a polynomial all of whose roots lie on the unit circle, such that $a_0 = a_n = 1$. Then there exists a hermitian matrix $A = \left(a_{ij}\right) \in \squareMatrix_n \left(\cComplex\right)$, such that $\abs{a_{ij}} = 1$ for every $i,j$, with $P\left(X\right) = \leeYang_A\left(X\right)$.
\end{lemma}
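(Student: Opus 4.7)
The plan is to construct the hermitian matrix $A$ explicitly via a rank-one style ansatz $a_{ij} = u_i \overline{u_j}$ for some unit complex numbers $u_1, \ldots, u_n$. Such a matrix is automatically hermitian with $|a_{ij}| = 1$ (and in particular $a_{ii} = 1$), so the only work is to match its Lee--Yang polynomial to $P$.

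First I would write $P(X) = \prod_{i=1}^n (X - z_i)$ with $|z_i| = 1$; the hypotheses $a_n = 1$ and $a_0 = 1$ translate to $P$ being monic with $\prod_i z_i = (-1)^n$. Then I would compute $\leeYang_A$ for the above ansatz. For $S \subseteq \{1,\ldots,n\}$ of size $k$, setting $p_S = \prod_{i \in S} u_i$ and $W = \prod_{i=1}^n u_i$, a direct calculation gives
\[
\prod_{\substack{i \in S \\ j \notin S}} u_i\, \overline{u_j} \;=\; p_S^{\,n-k}\, \overline{(W/p_S)}^{\,k} \;=\; p_S^{\,n}\, \overline{W}^{\,k},
\]
where the second equality uses $|p_S| = |W| = 1$, so $\overline{p_S} = p_S^{-1}$. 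Summing over $|S| = k$ yields the coefficient of $X^k$ as $\overline{W}^{\,k} e_k(u_1^n, \ldots, u_n^n)$, where $e_k$ denotes the elementary symmetric polynomial, and hence
\[
\leeYang_A(X) \;=\; \prod_{i=1}^n \bigl(1 + u_i^n\, \overline{W}\, X\bigr).
\]

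To finish, I would match this product to $P(X)$. Normalizing $W = 1$, I need $u_i^n = -1/z_i$. Since $|-1/z_i| = 1$, any $n$-th root of $-1/z_i$ has $|u_i| = 1$; the only remaining concern is the global consistency $\prod_i u_i = W = 1$. But $\prod_i u_i^n = \prod_i(-1/z_i) = (-1)^n/\prod_i z_i = 1$, so $\prod_i u_i$ is automatically some $n$-th root of unity. Using the $n$-fold freedom in each $u_i$ (multiplying by an $n$-th root of unity, which leaves $u_i^n$ unchanged), I can rescale one of the $u_i$'s to arrange $\prod_i u_i = 1$. Then
\[
\leeYang_A(X) \;=\; \prod_{i=1}^n \bigl(1 - X/z_i\bigr) \;=\; \frac{\prod_i (z_i - X)}{\prod_i z_i} \;=\; \frac{(-1)^n P(X)}{(-1)^n} \;=\; P(X),
\]
as desired.

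The main subtlety is the compatibility of the individual $n$-th-root choices with the global constraint $\prod_i u_i = W$, but this is exactly guaranteed by the hypothesis $a_0 = 1$ (equivalently $\prod_i z_i = (-1)^n$); in a more general normalization, $W$ would need to be chosen as a specific root of $a_0^{1/n}$. Notably, the argument makes no use of the Lee--Yang theorem itself — it only exploits the explicit form of $\leeYang_A$ for rank-one type hermitian matrices of modulus one.
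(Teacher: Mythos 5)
Your proof is correct, and it takes a genuinely different route from the paper's. Both proofs share the same high-level strategy of exhibiting an explicit hermitian matrix $A$ with unimodular entries such that $\leeYang_A = P$, but the matrices are different. The paper writes $P(X)=\prod_{i=1}^n(X+z_i)$ with $\prod_i z_i = 1$ and uses the very sparse matrix whose entries are all $1$ except in the last row and column, where $a_{in}=z_i$ and $a_{ni}=\conjugate{z_i}$ for $i<n$. The Lee--Yang product then collapses at once, since every factor with $i,j\ne n$ contributes $1$, and a one-line case split on whether $n\in S$ or $n\notin S$ gives $\prod_{i\in S,\,j\notin S} a_{ij} = \prod_{i\in S} z_i$ directly. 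Your rank-one ansatz $a_{ij}=u_i\conjugate{u_j}$ is more symmetric and conceptually clean (the matrix is literally $uu^*$), but it pushes the work into the combinatorics of the exponents and forces the extra compatibility step you correctly flag: the $n$-th roots $u_i$ of $-1/z_i$ must be chosen coherently so that $\prod_i u_i = W$, which is possible precisely because $(\prod_i u_i)^n = 1$ follows from $a_0=1$. The paper's matrix is not of rank-one form in general, so these really are two distinct constructions realizing the same Lee--Yang polynomial. Your closing observation that the lemma itself never invokes the Lee--Yang theorem also holds for the paper's proof; the theorem is only used afterward in the proof of the proposition.
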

\begin{proof}
	Write $$ P\left(X\right) = \prod_{i=1}^n \left(X + z_i\right),$$ where $z_i \in \cComplex$ with $\abs{z_i} = 1$ and $\prod_{i=1}^n z_i = 1$. Consider the following hermitian matrix $A = \left(a_{ij}\right) \in \squareMatrix_n\left(\cComplex\right)$ with $\abs{a_{ij}} = 1$ for every $i$ and $j$:
	$$a_{ij} = \begin{dcases}
	1 & i,j \ne n \text{ or } i=j=n,\\
	z_i & i \ne n \text{ and } j = n,\\
	\conjugate{z_j} & i = n \text{ and } j \ne n.
	\end{dcases}$$
	
	We claim that $\leeYang_A\left(X\right) = P\left(X\right)$. Let  $S \subseteq \left[n\right]$. Note that $\prod_{i \in S, j \notin S} a_{ij} = 1$ if $S = \left[n\right]$ or if $S = \emptyset$, so we may assume $\emptyset \subsetneqq S \subsetneqq \left[n\right]$.
	We compute $\prod_{i \in S,j \notin S} a_{ij}$. There are two cases to consider: $n \in S$ and $n \notin S$.
	
	If $n \notin S$, then $$\prod_{i \in S, j \notin S} a_{ij} = \prod_{i \in S}a_{i n} = \prod_{i \in S} z_i.$$	
	If $n \in S$, then $$\prod_{i \in S, j \notin S}a_{ij} = \prod_{j \notin S} a_{n j} = \prod_{j \notin S} \conjugate{z_j} = \prod_{j \notin S} {z_j^{-1}} = \prod_{j \in S}z_j,$$
	where we used the facts that $z_j \conjugate{z_j} = 1$ and that $\prod_{j = 1}^n z_j = 1$.
	
	Therefore, in all cases, we obtain $$\prod_{i \in S, j \notin S}a_{ij} = \prod_{j \in S}z_j.$$ Therefore, $$\leeYang_A\left(X\right) = \sum_{S \subseteq \left[n\right]} \left(\prod_{j \in S}z_j\right) X^{\sizeof{S}} = \prod_{i=1}^n \left(X + z_i\right) = P\left(X\right),$$
	as required.
\end{proof}

We are now ready to prove \Cref{prop:unitary-polynomial-deformation-is-unitary}.

\begin{proof}
	Let $$P\left(X\right) = \sum_{k=0}^{n} a_k X^k$$ be a polynomial all of whose roots are of absolute value $1$, and let $-1 < \delta < 1$. Without loss of generality, we may assume $a_n = 1$, as for $c \in \cComplex$, $c P_{\delta}\left(X\right) = \left(c P\right)_\delta\left(X\right)$. Since all of the roots of $P\left(X\right)$ are of absolute value $1$, we have that $\abs{a_0} = 1$. Let $\zeta \in \cComplex$ be such that $\zeta^n = a_0$. Then $\abs{\zeta}=1$ and $Q\left(X\right) = \zeta^{-n} P\left( \zeta X \right)$ is a monic polynomial of degree $n$ all of whose roots are of absolute value $1$, and its constant coefficient is $1$. By \Cref{lem:unitary-polynomial-is-a-lee-young-polynomial}, we have that there exists a hermitian matrix $A = \left(a_{ij}\right)$, with $\abs{a_{ij}} = 1$, such that $Q\left(X\right) = \leeYang_A \left(X\right)$. We have that $\delta A$ is still a hermitian matrix all of which entries are of absolute value not greater than $1$. We have $Q_{\delta}\left(X\right) = \leeYang_{\delta A} \left(X\right)$. By the Lee--Yang theorem, the roots of $Q_{\delta}\left(X\right)$ are all of absolute value $1$. Since $P_{\delta}\left(X\right) = \zeta^n Q_{\delta}\left(\zeta^{-1} X\right)$, and since $\zeta$ is of absolute value $1$, we have that the roots of $P_{\delta}\left(X\right)$ are all of absolute value $1$.
\end{proof}

\bibliographystyle{abbrv}
\bibliography{references}
\end{document}